\newcounter{nameOfYourChoice}
\theoremstyle{plain}
\newtheorem{thm}{Theorem}[section]
\newtheorem{lem}[thm]{Lemma}
\newtheorem{prop}[thm]{Proposition}
\newtheorem{coro}[thm]{Corollary}
\theoremstyle{definition}
\theoremstyle{remark}
\newtheorem{rk}[thm]{Remark}
\newtheorem{df}[thm]{Definition}
\newtheorem{ex}[thm]{Example}
\newtheorem{setup}[thm]{Setup}
\numberwithin{equation}{section}
\newcommand\TikZ[1]{\begin{matrix}\begin{tikzpicture}#1\end{tikzpicture}\end{matrix}}
\def\ucp{{\mathbf{b}}}
\def\uc{{\mathbf{c}}}
\def\ub{{\mathbf{b}}}
\def\ui{{\mathbf{i}}}
\def\uj{{\mathbf{j}}}
\def\ut{{\mathbf{t}}}
\def\bfnu{\mathbf{\nu}}
\def\bbZ{\mathbb{Z}}
\def\frakS{\mathfrak{S}}
\def\calB{\mathcal{B}}
\def\calC{\mathcal{C}}
\def\calF{\mathcal{F}}
\def\calQ{\mathcal{Q}}
\def\calX{\mathcal{X}}
\def\frakm{\mathfrak{m}}
\def\bfa{\mathbf{a}}
\def\bfb{\mathbf{b}}
\def\bfc{\mathbf{c}}
\def\bfk{\mathbf{k}}
\def\bfQ{\mathbf{Q}}
\def\Icol{I_{\operatorname{col}}}
\def\Icolnu{I_{\operatorname{col}}(\nu,\bfQ)}
\def\Xint{\mathcal{X}}
\def\Xdomla{\mathcal{X}_\lambda^+}
\def\Xdom{\mathcal{X}^+}
\def\Drep{\op{D}}
\def\WW{\frakS}
\newcommand\restr[2]{{
  \left.\kern-\nulldelimiterspace 
  #1 
  \vphantom{\big|} 
  \right|_{#2} 
  }}
\def\homo{\operatorname{\it \mathscr{H}\kern-.25em om}}
\def\ext{\operatorname{\it \mathscr{E}\kern-.25em xt}}
\def\edo{\operatorname{\it \mathscr{E}\kern-.25em nd}}
\def\der{\operatorname{\it \mathscr{D}\kern-.25em er}}
\def\Hom{\mathrm{Hom}}
\def\End{\mathrm{End}}
\def\Id{\mathrm{Id}}
\def\Hfin{\op{H}^{\op{fin}}}
\def\opH{\op{H}}
\def\Heck{\op{H}_{d}(q)}
\def\cHeck{\widehat{\op{H}}_{\bfa}(q)}
\def\S{\op{S}_{d}(q)}
\def\SPol{\op{sP}_{d}}
\def\lHeck{\op{H}_{d,\bfQ}(q)}
\def\lHeckop{\op{H}_{d,\bfQ^{-1}}(q)}
\def\clHeck{\widehat{\op{H}}_{\bfa,\bfQ}(q)}
\def\cylHeck{\op{H}^\bfQ_{d,\bfQ}(q)}
\def\cyHeck{\op{H}_{d}^{\bfQ}(q)}
\def\PollHeck{\op{P}_{d,\bfQ}}
\def\cPolHeck{\widehat{\op{P}}_{\bfa}}
\def\cPollHeck{\widehat{\op{P}}_{\bfa,\bfQ}}
\def\lS{\op{S}_{d,\bfQ}(q)}
\def\lSbar{\op{\overline{S}}_{d,\bfQ}(q)}
\def\lSop{\op{S}_{d,\bfQ^{-1}}(q)}
\def\clS{\widehat{\op{S}}_{\bfa,\bfQ}(q)}
\def\lSPol{\op{sP}_{d,\bfQ}}
\def\clSPol{\widehat{\op{sP}}_{\bfa,\bfQ}}
\def\R{{R}_{\nu}}
\def\lR{{R}_{\nu,\bfQ}}
\def\cyR{{R}_{\nu}^{\bfQ}}
\def\cylR{{R}_{\nu,\bfQ}^{\bfQ}}
\def\cylRd{{R}_{d,\bfQ}^{\bfQ}}
\def\cR{\widehat{{R}}_{\nu}}
\def\clR{\widehat{{R}}_{\nu,\bfQ}}
\def\PolR{\it{Pol}_{\nu}}
\def\PollR{\it{Pol}_{\nu,\bfQ}}
\def\cPolR{\widehat{\it{Pol}}_{\nu}}
\def\cPollR{\widehat{\it{Pol}}_{\nu,\bfQ}}
\def\lA{{A}_{\nu,\bfQ}}
\def\clA{\widehat{{A}}_{\nu,\bfQ}}
\def\PolA{\it{sPol}_{\nu,\bfQ}}
\def\cPolA{\widehat{\it{sPol}}_{\nu,\bfQ}}
\def\cyHecka{\op{H}^\bfQ_{\bfa}(q)}
\def\cyHecknu{\op{H}^\bfQ_{\nu}(q)}
\def\cylHecka{\op{H}^\bfQ_{\bfa,\bfQ}(q)}
\def\cylS{\op{S}_{d,\bfQ}^{\bfQ}(q)}
\def\cylSnu{\op{S}_{\nu,\bfQ}^{\bfQ}(q)}
\def\cylSa{\op{S}_{\bfa,\bfQ}^{\bfQ}(q)}
\def\cylA{A_{\nu,\bfQ}^{\bfQ}}
\def\DJM{\op{S}_{d,\bfQ}^{\op{DJM}}(q)}
\def\DJMnu{\op{S}_{\nu,\bfQ}^{\op{DJM}}(q)}
\def\DJMop{\op{S}_{d,\bfQ^{-1}}^{\op{DJM}}(q)}
\def\p{\overrightarrow{p}}
\def\q{\overleftarrow{p}}
\def\op{\operatorname}
\tikzset{wei/.style={draw=red,double=red!40!white,double distance=1.5pt,thin}}
\tikzset{bdot/.style={fill,circle,color=blue,inner sep=3pt,outer sep=0}}
\tikzset{dir/.style={postaction={decorate,decoration={markings,mark=at position .8 with {\arrow[scale=1.3]{>}}}}}}
\def\cPollHeck{\widehat{\op{P}}_{\bfa,\bfQ}}
\newcommand{\ba}{\mathbf{a}}
\newcommand{\la}{\lambda}
\begin{document}
\title[Higher level affine Schur and Hecke algebras]{Higher level affine Schur and Hecke algebras}
\author[R. Maksimau]{Ruslan Maksimau}
\address{R. M.: Institut Montpelli\'erain Alexander Grothendieck, Universit\'e de Montpellier, 34095 Montpellier, France}
\email{ruslan.maksimau@umontpellier.fr}

\author[C. Stroppel]{Catharina Stroppel}
\address{C. S.: Department of Mathematics, University of Bonn, 53115 Bonn, Germany}
\email{stroppel@math.uni-bonn.de}


\begin{abstract}
We define a higher level version of the affine Hecke algebra and prove that, after completion, this algebra is isomorphic to a completion of Webster's tensor product algebra of type A. We then introduce a higher level version of the affine Schur algebra and establish, again after completion, an isomorphism with the quiver Schur algebra. An important observation is that the higher level affine Schur algebra surjects to the Dipper-James-Mathas cyclotomic $q$-Schur algebra. Moreover, we give nice diagrammatic presentations for all the algebras introduced in this paper. 

\end{abstract}
\maketitle
 
\vspace{-5mm}
\tableofcontents
\vspace{-1cm}

\section*{Introduction}

Let $\bfk$ be an algebraically closed field. Fix $q\in \bfk$,  $q\ne 0,1$ and $d\in\bbZ_{\geqslant 0}$. We study in this paper several versions of Hecke and Schur algebras of type $A$ including in particular a new higher level affine Schur algebra. 
\subsection*{Hecke algebras and their Schur versions}
To introduce the players,  let $\Hfin_d(q)$ be the ordinary Hecke algebra of rank $d$ over the field $\bfk$ (i.e.,\ $\Hfin_d(q)$ is a $q$-deformation of the group algebra $\bfk\frakS_d$ arising from the convolution algebra of complex valued functions on the finite group $\op{GL}_d(\mathbb{F}_q)$ which are constant on double cosets for a chosen Borel subalgebra). Let $\Heck$ be its (extended) affine version, that means it equals $\Hfin_d(q)\otimes \bfk[X_1^{\pm 1},\ldots,X_d^{\pm 1}]$ as a vector space and with a certain multiplication such that both tensor factors are subalgebras. It naturally arises from the convolution algebra of compactly supported functions defined on the p-adic group $\op{GL}_d(\mathbb{Q}_q)$ which are constant on double cosets for an Iwahori subalgebra. These algebras play a crucial role in p-adic representation theory, see e.g. \cite{Bushnelletal}, \cite{KLHecke}.


The algebra $\Heck$ has a family of remarkable finite dimensional quotients $\cyHeck$, called \emph{cyclotomic Hecke algebras} or {\it Ariki-Koike algebras} which are deformations of the group algebra $\bfk (\frakS_d\ltimes (\bbZ/\ell\bbZ)^d)$. These algebras are well-studied objects in representation theory. For an excellent overview we refer to \cite{Mathascycl}.

The Dipper-James-Mathas cyclotomic $q$-Schur algebra $\DJM$ was defined in \cite{DJM} in the following way. For each $\ell$-composition $\lambda$ of $d$ they construct some element $m_\lambda$ in $\cyHeck$. Then they define the algebra $\DJM$ as an endomorphism algebra of the right $\cyHeck$-module $\bigoplus_\lambda m_\lambda \cyHeck$. We would like to define an affine version $\lS$ of the algebra $\DJM$ such that 
\begin{itemize}
\item $\lS$ has a nice faithful polynomial representation, and
\item $\lS$ surjects to $\DJM$.
\end{itemize}

So, we ask the following question.

\centerline{\it What should be the correct definition of the algebra $\lS$?}

One might expect that the affine version $\lS$ can be defined similarly as an endomorphism algebra of the $\Heck$-module $\bigoplus_\lambda m_\lambda \Heck$. However, this approach does not work. The reason is that in the cyclotomic case, some polynomials appear in the definition of the element $m_\lambda$. These polynomials play an important role for the structure of the $\cyHeck$-module $m_\lambda\cyHeck$. But in the affine case, these polynomials do nothing with the $\Heck$-module $m_\lambda \Heck$. So, the $\Heck$-modules $m_\lambda \Heck$ becomes quite boring.

However, this approach is known to work in the "no level" case: the (no level) affine $q$-Schur algebra is defined in \cite{vigneras} as the endomorphism algebra of an $\Heck$-module, very much in parallel to \cite{DJM}. At the same time, the cyclotomic $q$-Schur algebra $\DJM$ is defined for higher levels.  Our goal, is to give a higher level version $\lS$ of the affine $q$-Schur algebra $\S$. The existence of such an algebra seems to be natural from the analogy with the KLR algebras. Indeed, the affine higher level Schur version of the KLR algebra (the higher level quiver Schur algebra) is defined in \cite{SW}. However, the definition in \cite{SW} is purely geometric (as usually happens for KLR-like algebras), while the definitions of the Hecke-like algebras are algebraic. So the definition of the higher level quiver Schur algebra in \cite{SW} does not tell us what the definition of the higher level affine $q$-Schur algebra should be.

Finally, we define the higher level affine $q$-Schur algebra $\lS$ in the following way: the definition is in two steps. First, we define the higher level version $\lHeck$ of $\Heck$ by generators and relations. After that, we define $\lS$ as the endomorphism algebra of some $\lHeck$-module. As we explained before, in the cyclotomic case, the $q$-Schur algebra $\DJM$ is defined in \cite{DJM} in one step from $\cyHeck$. However, in the affine case, there is no known direct way to define $\lS$ from $\Heck$. This is probably the reason why the algebra $\lS$ was not known before. 

One more important point is to define the polynomial representation of $\lS$. This is easy for the KLR-like algebras because the polynomial representations appears naturally from geometry. On the other hand, the construction of the polynomial representation of $\S$ is via long and difficult computations. We don't want to follow this approach, but instead give a more conceptual argument. We construct a polynomial representation of $\lS$ as a subrepresentation of the defining representation of $\lS$. 

We believe that our methods can be transferred to the construction and study of other types of Schur algebras. Although we stick to a very special class of algebras in this paper, our approach seems to work in much more generality (including the case of Clifford-Hecke algebras, \cite{NazarovCliffHecke} or affine zigzag algebras, \cite{KlMuth}).

\subsection*{KLR algebras and their Schur versions}
Around 10 years ago, Khovanov-Lauda \cite{KL} and Rouquier \cite{Rou2KM} introduced the \emph{quiver Hecke algebra} (also called \emph{KLR algebra}) $\R$. Again it arises from a convolution algebra structure, but now on the Borel-Moore homology of a Steinberg type variety defined using the moduli space of isomorphism classes of flagged representations of a fixed quiver with dimension vector $\nu$, \cite{VV}, \cite{Rou2KM}. 
The major interest in these algebras is due to the fact that they are naturally graded and are used to categorify the negative part of a quantum group. This holds in particular for the finite or affine type $A$ versions; the algebras arise in several categorification results on the level of 2-morphisms. They were recently also used to approach modular representation theory of general linear groups, \cite{RW}. These KLR  algebras again have a family of interesting (finite dimensional) quotients $\cyR$ (called \emph{cyclotomic KLR algebras}). Apart from being interesting on their own, these quotients $\cyR$ categorify simple modules over the before-mentioned quantum group, \cite{LV}, but also give concrete descriptions of categories arising in geometric and super representation theory.  

A higher level version $\lR$ of the KLR algebra (called \emph{tensor product algebra}) was introduced by Webster \cite{Webster}. The cyclotomic quotient $\cylR$ of the algebra $\lR$ categorify tensor products of simple modules over a quantum group.

Let us give an overview on connections between these algebras. The cyclotomic Hecke algebra $\cyHeck$ has a block decomposition $\cyHeck=\bigoplus_\nu \cyHecknu$. Brundan and Kleshchev  constructed in \cite{BKKL} an isomorphism between the block $\cyHecknu$ and the cyclotomic KLR algebra $\cyR$ of type $A$. A different proof of this isomorphism was given by Rouquier in \cite{Rou2KM}  as a consequence of an isomorphism between (an idempotent version of) a localization of $\Heck$ and a localization of $\R$. It is also possible to give a similar proof, using completions instead of localizations, see \cite{Webstergraded}, \cite{MS}.  (The completion/localization of $\Heck$ depends on $\nu$.) 

To understand the relation between the parameters of the Hecke and KLR algebras note that the Hecke algebra $\Heck$ depends on $q\in\bfk\backslash\{0,1\}$, and the cyclotomic quotient $\cyHeck$ of $\Heck$ furthermore on an $\ell$-tuple $\bfQ=(Q_1,\ldots,Q_\ell)\in(\bfk^*)^\ell$. On the other hand, the KLR algebra $\R$ depends on a quiver $\Gamma$ and on a dimension vector $\nu$ for $\Gamma$. The cyclotomic quotient $\cyR$ of $\R$ depends also on an $\ell$-tuple $\bfQ=(Q_1,\ldots,Q_\ell)$ of vertices of $\Gamma$. To describe the blocks $\cyHecknu$ of $\cyHeck$ in terms of KLR algebras, we have to take the quiver $\Gamma=\Gamma_\calF$ as in Section~\ref{subs-isom_lHeck-tens-compl}. In particular, this choice of $\Gamma$ allows us to consider $\bfQ\in(\bfk^*)^\ell$ as an $\ell$-tuple of vertices of the quiver., see \eqref{Corona}. For this choice of $\Gamma$ we have then the isomorphism $\cyHecknu\simeq \cyR$ from \cite{BKKL},  \cite{Rou2KM}.

The second author and Webster defined in \cite{SW} the \emph{quiver Schur algebra} $A_\nu$ (that is a Schur version of the KLR algebra $\R$) and its generalizations, the higher level quiver Schur algebras $\lA$ together with a family of cyclotomic quotients $\cylA$. Moreover, in \cite{SW}, the isomorphism $\cyHecknu\simeq \cyR$ was extended to an isomorphism $\DJMnu \simeq \cylA$, where $\DJM=\bigoplus_{\nu}\DJMnu$ is the Dipper-James-Mathas cyclotomic $q$-Schur algebra (that is the Schur version of $\cyHeck$). 

On the other hand, an affine (no level) version of the isomorphism $\cylSnu\simeq \cylA$ was constructed by Miemietz and the second author, \cite{MS}. It was proved in \cite[Thm.~9.7]{MS} that a completion of the affine Schur algebra $\S$ (the completion depends on $\nu$) is isomorphic to a completion of the quiver Schur algebra $A_\nu$. 

\subsection*{The zoology}
The zoology of the algebras discussed above can be grouped into two big families: 
\begin{center}
{\it the Hecke family} and {\it the KLR family}. 
\end{center}
An algebra in either family can be
\begin{center}
affine or cyclotomic, \quad higher level or no level,\quad Schur or not Schur.
\end{center}

We briefly describe all the possible cases. 

\begin{enumerate}[(i)]
\item {\bf No level, not Schur, cyclotomic.}
\newline
The algebra in the Hecke family is the cyclotomic Hecke algebra $\cyHeck$, its analogue in the KLR family is the cyclotomic KLR algebras $\cyR$. The isomorphism between a block of the algebra $\cyHeck$ and the algebra $\cyR$ is due to Brundan-Kleshchev \cite{BKKL} and Rouquier \cite{Rou2KM}.

\item{\bf No level, not Schur, affine. }
\newline
The algebra in the Hecke family is the affine Hecke algebra $\Heck$, its analogue in the KLR family is the (affine) KLR algebra $\R$. We have surjections $\Heck\to \cyHeck$ and $\R\to\cyR$. However the isomorphism between a block of $\cyHeck$ and $\cyR$ does in general not lift to the affine level. There are however isomorphisms after suitable completions of $\Heck$ and of $\R$ (where the completion of $\Heck$ depends on $\nu$), \cite{Webstergraded}, \cite{MS}. A similar construction using localizations instead of completions was already given in \cite{Rou2KM}. 
\setcounter{nameOfYourChoice}{\value{enumi}}
\end{enumerate}
We observe a difference between the cyclotomic case and the affine case:\\For the cyclotomic case, a block of the algebra in the Hecke family is isomorphic to the algebra in the KLR family. In the affine case, the completion of the algebra in the KLR family is isomorphic to the completion of the algebra in the KLR family. We will see that exactly the same thing happens in all the remaining cases below. 
\begin{enumerate}[(i)]
\setcounter{enumi}{\value{nameOfYourChoice}}
\item {\bf Higher level, Schur, cyclotomic. } \label{3}
\newline
The algebra in the Hecke family is the cyclotomic Dipper-James-Mathas $q$-Schur algebra  $\DJM$ from \cite{DJM}.  Its analogue in the KLR family is the cyclotomic quiver Schur algebra $\cylA$ defined in \cite{SW}. It is proved in \cite{SW} that each block of the algebra $\DJM$ is isomorphic to the algebra $\cylA$ for some $\nu$.

\item {\bf No level, Schur, cyclotomic.}
\newline
These algebras have no special names. They (and the corresponding isomorphisms) can be obtained as idempotent truncations of the algebras in \eqref{3}.

\item {\bf No level, Schur, affine.}
\newline
The algebra in the Hecke family is the affine $q$-Schur algebra $\S$ from e.g. \cite{Greenaff},  \cite{vigneras}. Its analogue in the KLR family is the (no level, affine) quiver Schur algebra defined in \cite{SW}. It is proved in \cite{MS} that the algebras $\S$ and $A_\nu$ are isomorphic after completion. (The completion of $\S$ depends on $\nu$.)  
\setcounter{nameOfYourChoice}{\value{enumi}}
\end{enumerate}
 A construction of the algebras in the Hecke families of the remaining cases is done in our paper.  All our constructions do not make any assumptions on the characteristic of the underlying field. 
 \begin{enumerate}[(i)]
\setcounter{enumi}{\value{nameOfYourChoice}}
\item {\bf Higher level, not Schur, affine.}\label{six}
\newline
The algebra in the KLR family is Webster's tensor product algebra $\lR$, \cite{Webster}. 
\setcounter{nameOfYourChoice}{\value{enumi}}
\end{enumerate}
We define the Hecke analogue $\lHeck$ of $\lR$, called the \emph{higher level affine Hecke algebra},  by generators and relations (algebraically and diagrammatically) in Section~\ref{sec-lHeck} and then prove in Theorem~\ref{thm-isom-lHeck-tens-comp} that the algebras $\lHeck$ and $\lR$ are isomorphic after completions. On the Hecke side this is with respect to maximal ideals of the centre which we describe in Proposition~\ref{lem-cen-Hdl}.

After having finished writing this paper, we were informed that Webster had defined already a similar algebra with an analogous isomorphism result in \cite[Sec.~4]{Webstergraded}.

\begin{enumerate}[(i)]
\setcounter{enumi}{\value{nameOfYourChoice}}
\item {\bf Higher level, not Schur, cyclotomic.}\label{seven}
\newline
The algebra in the KLR family is here the cyclotomic quotient $\cylR$ of the tensor product algebra $\lR$ defined in \cite{Webster}. 
\setcounter{nameOfYourChoice}{\value{enumi}}
\end{enumerate}
The Hecke analogue of $\cylR$ is a similar quotient $\cylHeck$, see Definition~\ref{defcylHeck}, of the algebra $\lHeck$. We prove that each block of the algebra $\cylHeck$ is isomorphic to the algebra $\cylR$ for some $\nu$, see Theorem~\ref{prop-isom-lHeck-tens-cycl}. As a byproduct we can determine in Corollary~\ref{coro-eigenv_lHeck} the possible eigenvalues of the Laurent polynomial algebra acting on the regular representation $\cylHeck$, which is a well-known fact for ordinary cyclotomic Hecke algebras, \cite[Prop.~3.7]{JamesMathas}. It is important to know the possible eigenvalues to be able to say that the corresponding algebra $\cylR$ is defined with respect to the quiver $\Gamma_\calF$ as in Section~\ref{subs-isom_lHeck-tens-compl}. In particular, this is a quiver of type $A$. 

\begin{enumerate}[(i)]
\setcounter{enumi}{\value{nameOfYourChoice}}
\item {\bf Higher level, Schur, affine.}\label{eight}
\newline
The algebra in the KLR family is the (affine higher level) quiver Schur algebra $\lA$, defined in \cite{SW}.
\end{enumerate}
We define a Hecke analogue $\lS$ of this algebra in Section~\ref{sec-Schur}, and prove (Theorem \ref{thm-isom-qS-QS-comp}) that the algebras $\lS$ and $\lA$ are isomorphic after completion (the completion of $\lS$ depends on $\nu$). As an important tool, which we feel is important on its own, we construct in Corollary~\ref{prop-polrep-lS} a nice polynomial representation involving partially symmetric polynomials. 

 Altogether, this completes the construction of the  Hecke families in all cases together with the corresponding isomorphism theorems. 
 
All these algebras arise as algebras (or quotient algebras in the cyclotomic case) of morphisms in some monoidal category. It is the  {\it universal higher level category}, Definition~\ref{univtensor}, in the not Schur cases and the  {\it universal thickened higher level category}, Definition~\ref{Corona2},  in the Schur cases.\footnote{The  former category is in fact a subcategory of the latter, but it is not a full subcategory and to make the embedding compatible with the relations imposed later on, one needs to chose it in a non-obvious way relying on Lemma~\ref{lem-black_cross}.}  Both categories are generated on the level of objects by sets $I_b$ and $I_r$, but for the definition of the algebras the set $I_r$ is only involved in the higher level cases. These monoidal categories allow a diagrammatic approach for all the involved algebras and in the non-Schur case diagrammatic presentations.

\subsection*{The structure of the paper} 
The definition of the higher level affine Hecke algebra $\lHeck$ by generators and relations (algebraically and diagrammatically) can be found in Section~\ref{sec-lHeck}. Next, in Section~\ref{sec-KLR_tens} we construct an isomorphism between a completion of $\lHeck$ (this completion depends on $\nu$) and a completion of $\lR$. To do this, we use the same strategy as in \cite{MS} (namely the identification of faithful polynomial representations). This is also very much analogous to \cite{Webstergraded}, where similar algebras were introduced, but from a different point of view. Webster's approach is via weighted KLR algebras, whereas our focus is on Schur algebras. In particular our approach is guided by giving a method how to {\it Schurify} different types of Hecke algebras.  A cyclotomic version of the isomorphism between the completions of $\lHeck$ and $\lR$ follows easily from the affine version. This is done in Section~\ref{sec-cycl-quot} and completes case \eqref{seven}. Section~\ref{sec-Schur} contains the definition of the {\it higher level affine Schur algebra} $\S$, the Hecke analogue $\lS$ of the higher level quiver Schur algebra $\lA$.  The isomorphism between a completion of $\lS$ (this completion depends on $\nu$) and a completion of $\lA$ can be found in Section~\ref{sec-QSchur} - completing Case \eqref{eight}.



\subsection*{\it Acknowledgments}
We thank Alexander Kleshchev for sharing ideas that simplified the construction of polynomial representations of the affine Schur algebras, and also the referee for useful comments. R. M.  is grateful for the support and hospitality of the MPI for Mathematics in Bonn, where a big part of this work is done.

\subsubsection*{Conventions} We fix as ground field an algebraically closed field $\bfk$ and denote $\bfk^*=\bfk-\{0\}$. All  vector spaces, linear maps,  tensor products etc. are taken over $\bfk$ if not otherwise specified.  For $a,b\in\bbZ$ with  $a\leqslant b$ we abbreviate $[a;b]=\{a,a+1,\ldots,b-1,b\}$. For $d\in\bbZ_{\geqslant 0}$ we denote by $\frakS_d$ the symmetric group of order $d!$ with length function ${l}$.

\section{Higher level affine Hecke algebras ${\lHeck}$}
\label{sec-lHeck}
\begin{setup}
\label{set-up}
We fix $q\in \bfk^*$, $q\ne 1$ and integers $d\geqslant 0$, $\ell\geqslant 0$ called {\it rank} and {\it level}, and {\it parameters}  $\bfQ=(Q_1,\ldots,Q_\ell)\in(\bfk^*)^\ell$. We denote $J=\{0,1\}$ and call its elements {\it colours} with $0$ viewed as {\it black} and $1$ viewed as {\it red}. 
\end{setup}

\subsection{The algebraic version}
In this section we introduce the main new player, a higher level version of the affine Hecke algebra.
\begin{df}
Let $J^{\ell,d}\subset J^{\ell+d}$ be the set of $(\ell+d)$-tuples $\uc=(c_1,\hdots,c_{\ell+d})$ such that $\sum_{i=1}^{\ell+d}c_i=\ell$  (i.e., the tuples containing $d$ black and $\ell$ red elements). Let $\frakS_{\ell+d}$ act on $J^{\ell,d}$ by permuting the entries of the tuple in the way such that $\pi(\uc)_m=\uc_{\pi^{-1}(m)}$ for $\pi\in \frakS_{\ell+d}$.
\end{df}

\begin{df}
\label{def-extaffHecke_alg}
The  $\ell$-\emph{affine Hecke algebra} ${\lHeck}$ is the $\bfk$-algebra generated by $e(\uc)$ for $\uc=(c_1,\ldots,c_{\ell+d})\in J^{\ell,d}$, $T_r$ for $r\in[1;\ell+d-1]$ and $X_j$, $X_j'$  for $j\in[1;\ell+d]$, subject to  the following defining relations
\begin{eqnarray}
\sum_{\uc\in J^{\ell,d}}e(\uc)=1,&{and}&e(\uc)e(\uc)=e(\uc),\label{lHecke1}
\end{eqnarray}
\vspace{-0.5cm}
\begin{eqnarray}
X_ie(\uc)=X'_ie(\uc)=0 &&\mbox{ if }c_i=1,\label{rel_Xe=0}\label{lHecke2new}\\
X_iX'_ie(\uc)=X'_iX_ie(\uc)=e(\uc)&&\mbox{ if }c_i=0,\label{lHecke7}
\end{eqnarray}
\vspace{-0.5cm}
\begin{eqnarray}
X_ie(\uc)=e(\uc)X_i,&{and}&
X_i'e(\uc)=e(\uc)X_i',\label{lHecke4new}\\
X_iX_j=X_jX_i,&{and}&
X'_iX'_j=X'_jX'_i,\label{lHecke5new}\\
T_{r}T_{s}=T_{s}T_{r} \mbox{ if }|r-s|>1, &{and}&
T_rX_i=X_iT_r~\mbox{ if }|r-i|>1,\\
T_re(\uc)=0 \mbox{ if } c_r=c_{r+1}=1,&and&T_re(\uc)=e(s_r(\uc))T_r,\label{rel_Te=0}
\end{eqnarray}
\vspace{-0.5cm}
\begin{eqnarray}
(T_rX_{r+1}-X_rT_r)e(\uc)&=&
\begin{cases}
(q-1)X_{r+1} &\mbox{ if }c_r=c_{r+1}=0,\\
0 &\mbox{ else},
\end{cases}\\
(T_rX_{r}-X_{r+1}T_r)e(\uc)&=&
\begin{cases}
-(q-1)X_{r+1}& \mbox{ if }c_r=c_{r+1}=0,\\
0& \mbox{ else},
\end{cases}
\end{eqnarray}
\begin{eqnarray}
T_r^2e(\uc)&=&
\begin{cases}
(q-1)T_re(\uc)+qe(\uc)& \mbox{ if }c_r=c_{r+1}=0,\\
\left(X_r-Q_{\sum_{j=1}^{r+1}c_j}\right)e(\uc) &\mbox{ if }c_r=0, c_{r+1}=1,\\
\left(X_{r+1}-Q_{\sum_{j=1}^{r}c_j}\right)e(\uc) &\mbox{ if }c_{r+1}=0, c_{r}=1,\\
\end{cases}
\quad\quad
\end{eqnarray}
\begin{eqnarray}
&&(T_{r}T_{r+1}T_{r}-T_{r+1}T_{r}T_{r+1})e(\uc)\nonumber\\
&=&
\begin{cases}
0 &\text{if $c_{r+1}=0$ and $r<\ell+d-1\quad$}\\
(1-q)X_{r+2}e(\uc) &\text{if $c_{r+1}=1$, $c_r=c_{r+2}=0$ and $r<\ell+d-1$\quad\quad}
\end{cases}
\quad
\end{eqnarray}
\normalsize
where $i,j$ run through $[1;\ell+d]$ and $r,s$ through $[1;\ell+d-1]$.
\end{df}

\begin{rk}
\label{ordinaryHecke}
In case $\ell=0$ (i.e., $\bfQ=0\in \bfk^0$) the set $J^{\ell,d}\subset J^{\ell+d}$ contains a single element ${\bf c}=(0,0,\ldots,0)$. Then $e(\uc)=1$ by \eqref{lHecke1} and $X_r'=X_r^{-1}$ by \eqref{lHecke7}, and the algebra $\lHeck$ is nothing else than the ordinary (extended) affine Hecke algebra $\Heck$, see e.g. \cite{Kirillovlect}, in the normalization from e.g. \cite{MS}. If additionally $q=1$ then we get the smash product algebra $\bfk[S_d]\#\bfk[X_1^{\pm1},\ldots,X_d^{\pm1}]$. 

Moreover it contains the ordinary finite dimensional Hecke algebra $\Hfin_d(q)$ attached to $\frakS_d$ as subalgebra generated by the $T_r$ for $r\in[1;\ell+d-1]$.
\end{rk}

\subsection{The diagrammatic version}
We introduce a diagrammatic calculus generalizing the usual permutation diagrams of the symmetric group.
It provides a convenient way to display elements in the higher level affine Hecke algebra and is done by realizing algebras of homomorphisms in some monoidal category.

\begin{figure}[t]
\begin{eqnarray}
\label{Hecke-diag-1}
  \begin{tikzpicture}[scale=0.6, thick]  
      \draw (-2,0)  +(0,-1) .. controls (-0.4,0) ..  +(0,1);
       \draw (-0.4,0)  +(0,-1) .. controls (-2,0) ..  +(0,1);
           \node at (2.5,0) {\Large $=\;\;(q-1)$};
    \draw(6,0) +(-1,-1) -- +(1,1);
  \draw(6,0) +(1,-1) -- +(-1,1);
           \node at (8.5,0) {\Large $+\;\;q$};
    \draw (11.5,0)  +(0,-1) -- +(0,1);
       \draw (10,0)  +(0,-1) -- +(0,1);
   \end{tikzpicture}
\end{eqnarray}
\begin{equation}
\label{Hecke-diag-2}
    \begin{tikzpicture}[scale=0.6, thick]
      \draw (-3,0)  +(1,-1) -- +(-1,1);
      \draw (-3,0)  +(0,-1) .. controls (-4,0) ..  +(0,1);
      \draw (-3,0) +(-1,-1) -- +(1,1);
      \node at (-1,0) {\Large $=$};
      \draw (1,0)  +(1,-1) -- +(-1,1);
  \draw (1,0)  +(0,-1) .. controls (2,0) ..  +(0,1);
      \draw (1,0) +(-1,-1) -- +(1,1);    
\end{tikzpicture}
\end{equation}
\begin{equation}
\label{Hecke-diag-3}
    \begin{tikzpicture}[scale=0.6, thick]
  \draw(-8,6) +(-1,-1) -- +(1,1);
  \draw(-8,6) +(1,-1) -- +(-1,1);
\fill (-7.5,5.5) circle (5pt);
\node at (-6,6) {\Large $=$};
 \draw(-4,6) +(-1,-1) -- +(1,1);
  \draw(-4,6) +(1,-1) -- +(-1,1);
\fill (-4.5,6.5) circle (5pt);
\node at (-0.5,6) {\Large $+\;\;(q-1)$};
\draw (2,6)  +(0,-1) -- +(0,1);
       \draw (3.5,6)  +(0,-1) -- +(0,1);
       \fill (3.5,6) circle (5pt);
    \end{tikzpicture}    
\end{equation}
\begin{equation}
\label{Hecke-diag-4}
     \begin{tikzpicture}[scale=0.6, thick]
  \draw(-8,6) +(-1,-1) -- +(1,1);
  \draw(-8,6) +(1,-1) -- +(-1,1);
\fill (-7.5,6.5) circle (5pt);
\node at (-6,6) {\Large $=$};
 \draw(-4,6) +(-1,-1) -- +(1,1);
  \draw(-4,6) +(1,-1) -- +(-1,1);
\fill (-4.5,5.5) circle (5pt);
\node at (-0.5,6) {\Large $+\;\;(q-1)$};
\draw (2,6)  +(0,-1) -- +(0,1);
       \draw (3.5,6)  +(0,-1) -- +(0,1);
       \fill (3.5,6) circle (5pt);
    \end{tikzpicture}
\end{equation}
\caption{Affine Hecke algebra relations}
\end{figure}

\begin{df} 
\label{univtensor}
 Let $I_b$ and $I_r$ be sets not both empty. The {\it universal higher level category} corresponding to this pair is the $\bfk$-linear strict monoidal category generated as monoidal category by  objects $i\in I_b$, called {\it black labels}, and objects $Q\in I_r$, called {\it red labels}, and by morphisms (for any $i,j\in I_b$, $Q\in I_r$) 
\begin{equation*}
\label{Morphmonoidal}
\TikZ{[thick,scale=.5] \draw (0,0) node{} to (1,1) node{} (1,0) node{} to (0,1)node{}; \node at (0,-0.3) {\tiny i}; \node at (1,-0.3) {\tiny j};}:\;{i}\otimes{j}\longrightarrow{j}\otimes{i},\quad
\TikZ{[thick,scale=.5] \draw  (1,1) node{} to (0,0)node{};\draw[wei] (1,0) node{} to (0,1)node{}; \node at (1.2,-0.3) {\tiny Q};\node at (0,-0.3) {\tiny i}; }
:\;{i}\otimes {Q}\longrightarrow{Q}\otimes {i},\quad
\TikZ{[thick,scale=.5] \draw  (1,0) node{} to (0,1)node{};\draw[wei] (1,1) node{} to (0,0)node{}; \node at (-0.2,-0.3) {\tiny Q};, \node at (1,-0.3) {\tiny i}; }
:\;{Q}\otimes {i}\longrightarrow{i}\otimes {Q}\end{equation*}
called {\it crossings}, and 
$\TikZ{[thick, scale=.5] \draw (0,0) node{} to (0,1) node{} (0,0.5) node[fill,circle,inner sep=1.5pt]{};\node at (0,-0.3) {\tiny i};}: {i}\longrightarrow {i}$ called {\it dot morphisms}. The diagrams  $\TikZ{[thick, scale=.5] \draw (0,0) node{} to (0,1) node{} (0,0.5);\node at (0,-0.3) {\tiny i};}$ and  $\TikZ{[thick, scale=.5] \draw[wei] (0,0) node{} to (0,1) node{} (0,0.5);\node at (0,-0.3) {\tiny Q};}$ depict the identity for the black label $i$ respectively for the red label $Q$. 
\end{df}
The tensor product of morphisms is displayed by placing them horizontally next to each other, whereas for the composition of morphisms we place them vertically. We omit indicating labels which can be arbitrary from $I_b$ or $I_r$ except that the colour has to match the  colour of the strand. Note that by definition two red strands never cross.
\begin{df}
\label{diag} 
Consider the universal higher level category attached to a pair $I_b$ and $I_r$.
Then an {\it $(\ell,d)$-diagram}  is a morphism  between two objects, that (both) are the tensor product of exactly $d$ black labels and $\ell$ red labels, which is a finite composition of tensor products of generating morphisms. 
\end{df}
We observe that for the existence of such a diagram the multiset of black labels and the sequence of red labels for the two involved objects must agree.

\begin{df} 
Let $|I_b|=1$ and $I_r=\bfk^*$. We then define 
\label{def-extaffHeck_diag} 
\begin{enumerate}
\item the {\it higher level affine Hecke category} as the universal higher level category  modulo the {\it affine Hecke algebra relations} \eqref{Hecke-diag-1}-\eqref{Hecke-diag-4} and the {\it higher affine Hecke algebra relations} \eqref{l-Hecke-diag-1}-\eqref{l-Hecke-diag-4} on morphisms; and
\item  the $\ell$-\emph{affine Hecke algebra} ${\lHeck}$ as the induced algebra structure on the vector space spanned by all $(\ell,d)$-diagrams with fixed red labels $\bfQ$ read from from left to right.
\end{enumerate}
\end{df}

\begin{figure}
\begin{equation}
\label{l-Hecke-diag-1}
  \begin{tikzpicture}[scale=0.7, thick,baseline=1.6cm]
    \draw (-2.8,0)  +(0,-1) .. controls (-1.2,0) ..  +(0,1);
       \draw[wei] (-1.2,0)  +(0,-1) .. controls (-2.8,0) ..  +(0,1) node[below,at start]{$Q$};
           \node at (-.3,0) {\Large $=$};
    \draw[wei] (2.8,0)  +(0,-1) -- +(0,1) node[below,at start]{$Q$};
       \draw (1.2,0)  +(0,-1) -- +(0,1);
       \fill (1.2,0) circle (5pt);
          \draw[wei] (-2.8,3)  +(0,-1) .. controls (-1.2,3) ..  +(0,1) node[below,at start]{$Q$};
          \node at (3.5,0) {\Large $-$};
          \node at (4.5,0) {\Large $Q$};
          \draw[wei] (6.8,0)  +(0,-1) -- +(0,1) node[below,at start]{$Q$};
          \draw (5.2,0)  +(0,-1) -- +(0,1);
  \draw (-1.2,3)  +(0,-1) .. controls (-2.8,3) ..  +(0,1);
           \node at (-.3,3) {\Large $=$};
    \draw (2.8,3)  +(0,-1) -- +(0,1);
       \draw[wei] (1.2,3)  +(0,-1) -- +(0,1) node[below,at start]{$Q$};
       \fill (2.8,3) circle (5pt);
       \node at (3.7,3) {\Large $-$};
       \node at (4.5,3) {\Large $Q$};
       \draw (6.8,3)  +(0,-1) -- +(0,1);
       \draw[wei] (5.2,3)  +(0,-1) -- +(0,1) node[below,at start]{$Q$};
  \end{tikzpicture}  
\end{equation}

\begin{equation}
\label{l-Hecke-diag-2}
    \begin{tikzpicture}[scale=0.6, thick]
  \draw(-3,6) +(-1,-1) -- +(1,1);
  \draw[wei](-3,6) +(1,-1) -- +(-1,1);
\fill (-3.5,5.5) circle (5pt);
\node at (-1.5,6) {\Large $=$};
 \draw(0,6) +(-1,-1) -- +(1,1);
  \draw[wei](0,6) +(1,-1) -- +(-1,1);
\fill (.5,6.5) circle (5pt);

  \draw[wei](5,6) +(-1,-1) -- +(1,1);
  \draw(5,6) +(1,-1) -- +(-1,1);
\fill (4.5,6.5) circle (5pt);
\node at (6.5,6) {\Large$=$};
 \draw[wei](8,6) +(-1,-1) -- +(1,1);
  \draw(8,6) +(1,-1) -- +(-1,1);
\fill (8.5,5.5) circle (5pt);
    \end{tikzpicture}
\end{equation}

\begin{equation}
\label{l-Hecke-diag-3}
\begin{tikzpicture}[scale=0.6, thick]
      \draw[wei] (-3,3)  +(1,-1) -- +(-1,1);
      \draw (-3,3)  +(0,-1) .. controls (-4,3) ..  +(0,1);
      \draw (-3,3) +(-1,-1) -- +(1,1);
      \node at (-1.5,3) {\Large $=$};
      \draw[wei] (0,3)  +(1,-1) -- +(-1,1);
  \draw (0,3)  +(0,-1) .. controls (1,3) ..  +(0,1);
      \draw (0,3) +(-1,-1) -- +(1,1);    
      
      \draw (5,3)  +(1,-1) -- +(-1,1);
      \draw (5,3)  +(0,-1) .. controls (4,3) ..  +(0,1);
      \draw[wei] (5,3) +(-1,-1) -- +(1,1);
      \node at (6.5,3) {\Large $=$};
      \draw (8,3)  +(1,-1) -- +(-1,1);
  \draw (8,3)  +(0,-1) .. controls (9,3) ..  +(0,1);
      \draw[wei] (8,3) +(-1,-1) -- +(1,1); 
\end{tikzpicture}      
\end{equation}

\begin{equation}
\label{l-Hecke-diag-4}
\begin{tikzpicture}[scale=0.6, thick]
      \draw (-5,3)  +(1,-1) -- +(-1,1);
      \draw[wei] (-5,3)  +(0,-1) .. controls (-6,3) ..  +(0,1);
      \draw (-5,3) +(-1,-1) -- +(1,1);
         \node at (-3,3) {\Large $=$};
      \draw (-1,3)  +(1,-1) -- +(-1,1);
      \draw[wei] (-1,3)  +(0,-1) .. controls (0.5,3) ..  +(0,1);
      \draw (-1,3) +(-1,-1) -- +(1,1);    
         \node at (2.5,3) {\Large $-\;\;(q-1)$};
      \draw (6,3)  +(-1,-1) -- +(-1,1);
      \draw[wei] (6,3)  +(0,-1)  --  +(0,1);
      \draw (6,3) +(1,-1) -- +(1,1);
      \fill (7,3) circle (5pt);
\end{tikzpicture}
\end{equation}
\hspace{1.5cm}(Omitted labels can be arbitrary, but of course fixed in each relation.)
\label{lHeckepictures}
\caption{Additional relations in the $\ell$-affine Hecke algebra.}
\end{figure}

The following easy observation justifies our notation ${\lHeck}$.
\begin{lem}
\label{lem:twodefs}
The algebras in Definitions~\ref{def-extaffHecke_alg} and~\ref{def-extaffHeck_diag} are isomorphic. 
\end{lem}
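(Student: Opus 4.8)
The statement asserts that the two presentations of $\lHeck$—one by generators and relations (Definition~\ref{def-extaffHecke_alg}), one as the morphism algebra of $(\ell,d)$-diagrams in the higher level affine Hecke category (Definition~\ref{def-extaffHeck_diag})—coincide. The plan is the standard one for such ``algebraic presentation vs.\ diagrammatic presentation'' comparisons: build mutually inverse algebra homomorphisms $\Phi\colon \lHeck_{\mathrm{alg}} \to \lHeck_{\mathrm{diag}}$ and $\Psi\colon \lHeck_{\mathrm{diag}} \to \lHeck_{\mathrm{alg}}$ by sending generators to generators, checking that relations are respected in each direction, and then observing that the two composites fix generators and hence are the identity.

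First I would fix, for each $\uc\in J^{\ell,d}$, the object $O(\uc)$ of the universal higher level category that is the tensor product of the labels dictated by $\uc$ read left to right: a black strand (the unique element of $I_b$) wherever $c_i=0$ and a red strand labelled by the appropriate $Q_k$ wherever $c_i=1$ (with $k = \sum_{j\le i} c_j$). Then $\Phi$ is defined on generators as follows: $e(\uc)\mapsto \mathrm{id}_{O(\uc)}$; $T_r\mapsto \sum_{\uc} (\text{the }r,r{+}1\text{ crossing of the appropriate colours in } O(\uc))$, where for two black strands this is the black--black crossing, for black--red (resp.\ red--black) the corresponding mixed crossing, and for red--red it is $0$ (reflecting \eqref{rel_Te=0}); and $X_j e(\uc), X_j' e(\uc)$ map to the black strand with one dot (resp.\ the ``inverse dot'', which exists because on a black strand $X_j$ is invertible with inverse $X_j'$ by \eqref{lHecke7}) in position $j$, or to $0$ if $c_j=1$. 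One then checks each defining relation of Definition~\ref{def-extaffHecke_alg} holds among these diagrams: the idempotent relations \eqref{lHecke1} are immediate from orthogonality of the identity morphisms of distinct objects and completeness; relations \eqref{rel_Xe=0}--\eqref{lHecke5new} and the commutation relations among distant strands are the obvious planar isotopies; the braid-type and quadratic relations are exactly, case by case on the colours of the strands at positions $r,r{+}1$ (and $r{+}2$ for the braid relation), the relations \eqref{Hecke-diag-1}--\eqref{Hecke-diag-4} and \eqref{l-Hecke-diag-1}--\eqref{l-Hecke-diag-4} imposed in the diagrammatic category, together with the no-two-red-crossings convention. Conversely, $\Psi$ sends each generating morphism of the category, in the presence of a fixed source object $O(\uc)$, to the corresponding element $T_r e(\uc)$, $X_j e(\uc)$ or $X_j' e(\uc)$ of $\lHeck_{\mathrm{alg}}$, and one checks that the relations \eqref{Hecke-diag-1}--\eqref{Hecke-diag-4}, \eqref{l-Hecke-diag-1}--\eqref{l-Hecke-diag-4} are consequences of Definition~\ref{def-extaffHecke_alg}'s relations—which is the same case analysis read backwards. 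Since every $(\ell,d)$-diagram is, by definition, a composite of tensor products of generators, $\Psi$ is well defined as an algebra map, and $\Phi\circ\Psi$ and $\Psi\circ\Phi$ are the identity on generators, hence everywhere.

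The main obstacle is bookkeeping rather than conceptual: one must be careful that the diagrammatic generators live in a category, so a crossing or a dot only becomes an element of the algebra $\lHeck$ after one specifies which idempotent $e(\uc)$ it is multiplied by (equivalently, which object it is applied to), and that $\Phi$ and $\Psi$ respect this—in particular that $\Phi(T_r)$ really is a sum over all $\uc$ of the appropriate colour-dependent crossing, with the red--red term genuinely zero, and that mixed crossings compose correctly with dots as in \eqref{l-Hecke-diag-1}. The other point that needs a word is the treatment of $X_j'$: on the diagrammatic side there is only a dot morphism, so $X_j'$ must be sent to the formal inverse of the one-dot diagram on a black strand, which makes sense precisely because \eqref{Hecke-diag-3}--\eqref{Hecke-diag-4} (the relations governing pulling a dot through a black crossing) force the dot to be invertible in the same way \eqref{lHecke7} does on the algebraic side; alternatively, and more cleanly, one notes that \eqref{lHecke7} lets one eliminate the generators $X_j'$ entirely from Definition~\ref{def-extaffHecke_alg} in favour of $X_j^{-1}$ on the black idempotents, after which the matching of generators is one-to-one. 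With either convention the verification is routine, so I would present it by listing the generator assignments and then remarking that the relation checks are an exhaustive but mechanical comparison of the two relation lists case by case on strand colours.
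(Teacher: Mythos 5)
Your proposal is correct and follows essentially the same route as the paper: the paper's proof simply writes down the same generator correspondence ($e(\uc)\mapsto$ identity diagram, $X_ie(\uc)$ and $X_i'e(\uc)\mapsto$ the dot labelled $1$ resp.\ $-1$ on the $i$th strand, $T_re(\uc)\mapsto$ the single crossing) and declares the relation check routine. Your extra care about $X_j'$ being the inverse dot and about every diagrammatic generator only becoming an algebra element once an idempotent is specified matches exactly how the paper handles these points.
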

\begin{proof}
One can easily verify by checking the relations that the following correspondence on generators defines an isomorphism of the two algebras. The idempotent $e(\uc)$ corresponds to the diagram with vertical strands with colours determined by the sequence $\uc$. The element $X_ie(\uc)$ (resp. $X'_je(\uc)$) such that $c_r$ is black corresponds to the diagram with vertical strands with colours determined by the sequence $\uc$ and a dot labelled by $1$ (resp. $-1$) on the strand number $i$ (counted from the left). (Note that $X_ie(\uc)$ and $X'_ie(\uc)$ are zero if $c_i$ is red by \eqref{rel_Xe=0}.)  The element $T_re(\uc)$ such that at least one of the colours $c_r$, $c_{r+1}$ is black corresponds to the diagram with the $r$-th and $(r+1)$th strand intersecting once and all other strands just vertical, with the colours on the bottom of the diagram determined by $\uc$. (By \eqref{rel_Te=0} we have $T_re(\uc)=0$ if $c_r=1=c_{r+1}$.) 
\end{proof}

The usual affine Hecke algebra $\Heck$ has an automorphism $\#$ given by $(X_i)^\#=X_i^{-1}$ and $(T_r)^\#=(q-1)-T_r=-q(T_r)^{-1}$. We would like to extend it to the higher level affine Hecke algebra. However, we don't get an automorphism of $\lHeck$ but we get an isomorphism between $\lHeck$ and $\lHeckop$, where $\bfQ^{-1}=(Q_1^{-1},\ldots,Q_\ell^{-1})$. The following is straightforward.

\begin{lem}
\label{lem-hash_isom}
There is an isomorphism of algebras 
$$
\begin{array}{rcll}
\#\colon \lHeck & \to & \lHeckop,\\
e(\uc)      &\mapsto  & e(\uc), &\\
X_ie(\uc) & \mapsto & X'_ie(\uc), & \mbox{ if }c_i=0,\\
T_re(\uc) & \mapsto & ((q-1)-T_r)e(\uc) & \mbox{ if }c_r=c_{r+1}=0,\\
T_re(\uc) & \mapsto & T_re(\uc) & \mbox{ if }c_r=1,c_{r+1}=0,\\
T_re(\uc) & \mapsto & -Q_rX'_rT_re(\uc) & \mbox{ if }c_r=0,c_{r+1}=1.
\end{array}
$$

\end{lem}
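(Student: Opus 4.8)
The plan is to verify that the asserted assignment on generators respects the defining relations of $\lHeck$ from Definition~\ref{def-extaffHecke_alg}, with $\bfQ$ replaced by $\bfQ^{-1}$ on the target, and then to check that the analogously defined map $\#'\colon\lHeckop\to\lHeck$ (using the parameters $Q_i^{-1}$ of $\lHeckop$) is a two-sided inverse. First one records the forced values of $\#$ on the remaining generators: $\#(e(\uc))=e(\uc)$; $\#(X_i'e(\uc))=X_ie(\uc)$ when $c_i=0$, which is forced since by \eqref{lHecke7} the elements $X_i,X_i'$ are mutually inverse on $e(\uc)$ and $\#$ fixes $e(\uc)$; and $\#(X_ie(\uc))=\#(X_i'e(\uc))=0$ when $c_i=1$. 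Throughout one works with the convention from Lemma~\ref{lem:twodefs}, namely that a symbol such as $X_r'T_re(\uc)$ denotes the diagram with bottom colours $\uc$, one crossing of the strands in positions $r,r+1$, and a single $(-1)$-dot on the (black) strand issuing from position $r$; this is a well-defined nonzero element even when that strand ends at a red slot, and two opposite dots on the same strand cancel by \eqref{lHecke7}.

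Since $\#$ fixes every $e(\uc)$ and the exchange $X_i\leftrightarrow X_i'$ is a symmetry of \eqref{lHecke1}--\eqref{lHecke5new}, all defining relations that involve only idempotents and the variables $X_i,X_i'$ are preserved at once. The relations supported on a run of black colours --- the braid relation with all three relevant colours black, the two commutation relations between $T_r$ and $X_r,X_{r+1}$, and $T_r^2e(\uc)=(q-1)T_re(\uc)+qe(\uc)$, all in the black case --- are preserved because there $\#$ restricts to the automorphism $X_i\mapsto X_i^{-1}$, $T_r\mapsto(q-1)-T_r$ of the ordinary affine Hecke algebra recalled just above (cf.\ Remark~\ref{ordinaryHecke}), using that $X_i'e(\uc)=X_i^{-1}e(\uc)$ in a black region. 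What remains are the genuinely new relations involving a red strand: the two mixed cases of the quadratic relation for $T_r^2e(\uc)$ (equivalently \eqref{l-Hecke-diag-1}), the dot-through-crossing relations \eqref{l-Hecke-diag-2}, the mixed naturality relations \eqref{l-Hecke-diag-3}, the mixed triple relation \eqref{l-Hecke-diag-4}, and the mixed cases of the commutation relations between $T_r$ and the $X$'s and of the $T_rT_{r+1}T_r$ relation.

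For the quadratic relations the decisive observation is that $T_r^2e(\uc)=(T_re(s_r(\uc)))\,(T_re(\uc))$, so $\#$ of it must be computed as $\#(T_re(s_r(\uc)))\cdot\#(T_re(\uc))$; for a mixed crossing this pairs the case $c_r=0,c_{r+1}=1$ with the case $c_r=1,c_{r+1}=0$. Substituting the two formulas, evaluating the resulting double crossing by the quadratic relation in $\lHeckop$, and cancelling the two opposite dots carried by the black strand, one obtains, for $c_r=0,c_{r+1}=1$ and $Q$ the relevant red parameter,
\begin{equation*}
\#\bigl(T_r^2e(\uc)\bigr)=-Q\bigl(1-Q^{-1}X_r'\bigr)e(\uc)=(X_r'-Q)e(\uc)=\#\bigl((X_r-Q)e(\uc)\bigr),
\end{equation*}
the last equality because $\#$ is $\bfk$-linear, hence fixes $Q$, and sends $X_re(\uc)$ to $X_r'e(\uc)$. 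Note that the source parameter $Q$ itself reappears uninverted, cancelling the $Q^{-1}$ produced by the quadratic relation in $\lHeckop$; this balancing is exactly why the target has to be $\lHeckop$ and not $\lHeck$. The relations \eqref{l-Hecke-diag-2}--\eqref{l-Hecke-diag-4} together with the remaining mixed commutator and triple relations are shorter computations of the same flavour, sliding dots and crossings past one another with the relations already established, the $-(q-1)$ term in \eqref{l-Hecke-diag-4} being produced by the $(q-1)$ in the black--black clause of $\#$ on $T_r$ in the ambient diagram.

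Finally one checks that $\#'\circ\#$ and $\#\circ\#'$ are the identity on generators, so both maps are automorphisms and $\#$ is invertible. This is clear on $e(\uc)$ and on $X_ie(\uc)$ (namely $X_i\mapsto X_i'\mapsto X_i$), on $T_re(\uc)$ in the black case ($(q-1)-((q-1)-T_r)=T_r$) and in the case $c_r=1,c_{r+1}=0$ (where $\#$ is the identity), while for $c_r=0,c_{r+1}=1$ applying $\#'$ to $-QX_r'T_re(\uc)$ and using $\#'(X_r'e(\uc))=X_re(\uc)$ yields $(-Q)(-Q^{-1})T_re(\uc)=T_re(\uc)$, the factor $-Q^{-1}$ coming from the parameter $Q^{-1}$ of $\lHeckop$ in the defining formula for $\#'$, and the two opposite dots on the black strand cancelling. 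I expect the main obstacle to be bookkeeping rather than ideas: one must consistently track which red parameter is attached to each mixed crossing and that it gets inverted exactly once under $\#$ (whereas plain scalars are fixed), on which strand and how many times a dot sits in expressions like $X_r'T_re(\uc)$, and the correct pairing of the two mixed crossing types under squaring and in the triple relation --- getting these aligned is what forces the target to be $\lHeckop$ and is the only genuinely subtle point.
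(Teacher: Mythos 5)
The paper offers no proof beyond declaring the lemma straightforward, and your direct verification of the defining relations together with the explicit inverse $\#'$ built from the parameters $Q_i^{-1}$ is exactly the intended argument; the key computations you display (pairing the two mixed crossing types under squaring so that the $Q^{-1}$ from the quadratic relation in $\lHeckop$ cancels against the prefactor $-Q$, and the cancellation $(-Q)(-Q^{-1})=1$ in the inverse check) are correct. Your explicit convention that $X_r'T_re(\uc)$ denotes the crossing with a $(-1)$-dot on the black strand (equivalently $T_rX_r'e(\uc)$, by sliding the dot through the mixed crossing) is the right reading and worth stating, since taken literally $X_r'T_re(\uc)=X_r'e(s_r(\uc))T_r=0$ by \eqref{rel_Xe=0}, so the formula in the statement only makes sense with the dot placed on the black strand as you do.
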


\subsection{The polynomial representation of ${\lHeck}$}
In this section we generalize the polynomial representation of the affine Hecke algebra to our higher level version by extending the action of $\lHeck$ on a Laurent polynomial ring in $d$ generators to an action of $\lHeck$ on a direct sum $\PollHeck$ of Laurent polynomial rings.

\begin{df}
For each $\uc\in J^{\ell,d}$ consider the subring  
\begin{eqnarray*}
\PollHeck(\uc)=\bfk[x^{\pm1}_1, \ldots, x^{\pm1}_d]\subset \bfk[X^{\pm 1}_1,\ldots,X^{\pm 1}_{\ell+d}]
\end{eqnarray*}
generated by the variables $x_t=X^{\pm 1}_{t_\uc}$ where $1_\uc<2_\uc<\ldots <d_\uc$ are precisely the positions of the black strands, that is those indices where $c_{1_\uc}=\cdots=c_{d_\uc}=0$. Set 
 \begin{eqnarray}
 \label{polrep}
 \PollHeck&=&\bigoplus_{\uc\in J^{\ell,d}}\PollHeck(\uc)=\bigoplus_{\uc\in J^{\ell,d}}\bfk[x_1^{\pm 1},\ldots,x_d^{\pm 1}]e(\uc).
 \end{eqnarray}
 Here $e(\uc)$ is a formal symbol distinguishing the different direct summands. 
\end{df}

\begin{prop}
\label{prop-pol_rep_lH}
There is an action of ${\lHeck}$ on $\PollHeck$ defined as follows.
\begin{itemize}
\item The element $e(\uc)$ acts as the projector to the direct summand $\PollHeck(\uc)$.
\item The element $X_ie(\uc)$ acts by multiplication with $X_i$ on $\PollHeck(\uc)$, if $c_i=0$ and by zero otherwise. (Recall that $X_ie(\uc)=0$ if $c_i=1$.) 
\item The element $T_re(\uc)$ acts only non-trivially on the summand $\PollHeck(\uc)$ where it sends $f\in \PollHeck(\uc)$ to 
\small
\begin{eqnarray*}
\begin{cases}
-s_r(f)+(q-1)\frac{X_{r+1}}{(X_{r}-X_{r+1})}(s_r(f)-f)\in\PollHeck(\uc)&\mbox{ if } c_r=c_{r+1}=0,\\
s_r(f)\in \PollHeck(s_r(\uc)) &\mbox{ if } c_r=1, c_{r+1}=0,\\
\left(X_{r+1}-Q_{\sum_{j=1}^{r+1}c_j}\right)s_r(f)\in \PollHeck(s_r(\uc)) &\mbox{ if } c_r=0, c_{r+1}=1,\\
0 &\mbox{ if } c_r=c_{r+1}=1.
\end{cases}
\end{eqnarray*}
\normalsize
(Recall that $T_re(\uc)=0$ if $c_r=1=c_{r+1}$.) 
\end{itemize}
\end{prop}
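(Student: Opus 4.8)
The plan is to verify directly that the formulas in Proposition~\ref{prop-pol_rep_lH} satisfy all the defining relations of $\lHeck$ listed in Definition~\ref{def-extaffHecke_alg} (equivalently, the diagrammatic relations of Definition~\ref{def-extaffHeck_diag}); by Lemma~\ref{lem:twodefs} it suffices to check any one presentation. First I would record the easy structural relations: the operators $e(\uc)$ are orthogonal idempotents summing to $\Id$ because they are the projectors onto the summands $\PollHeck(\uc)$ of the direct sum decomposition~\eqref{polrep}; relations \eqref{rel_Xe=0} and \eqref{lHecke7} hold because $X_i$ acts by zero when $c_i=1$ and by the invertible operator ``multiply by $X_i$'' when $c_i=0$ (here one uses that on $\PollHeck(\uc)$ the variable $X_i$ with $c_i=0$ really is one of the invertible variables $x_t^{\pm1}$); and \eqref{lHecke4new}, \eqref{lHecke5new} are immediate since $X_i$ commutes with the projectors and with each other. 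The relation $T_re(\uc)=e(s_r(\uc))T_r$ is visible from the case distinction in the definition of the $T_r$-action, which always lands in the summand $\PollHeck(s_r(\uc))$, together with $T_re(\uc)=0$ when $c_r=c_{r+1}=1$.

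Next I would treat the ``local'' relations involving a single $T_r$: the quadratic relation for $T_r^2e(\uc)$ and the two intertwining relations $(T_rX_{r+1}-X_rT_r)e(\uc)$ and $(T_rX_r-X_{r+1}T_r)e(\uc)$. These are checked by a direct computation on a polynomial $f\in\PollHeck(\uc)$, splitting into the three colour cases $c_rc_{r+1}\in\{00,01,10\}$. In the black-black case the formula is the standard Demazure--Lusztig type operator and the verification is exactly the classical affine Hecke computation (note the sign convention: here $T_r$ acts with a $-s_r(f)$ term, so $T_r^2$ produces $(q-1)T_r+q$); in the mixed cases $T_r$ essentially transports $f$ to the other summand and multiplies by a linear factor involving the appropriate $Q$, and squaring it returns $\bigl(X_r-Q_{\bullet}\bigr)$ or $\bigl(X_{r+1}-Q_{\bullet}\bigr)$ as required — here one must be careful that after applying $s_r$ the index $\sum_{j=1}^{r}c_j$ or $\sum_{j=1}^{r+1}c_j$ matches the subscript on $Q$ in the relation, which is a bookkeeping check on how $s_r$ permutes colours. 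The far-commutativity relations $T_rT_s=T_sT_r$ and $T_rX_i=X_iT_r$ for $|r-s|,|r-i|>1$ are clear since the operators act on disjoint sets of variables/positions.

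The genuinely laborious parts are the two braid-type relations: the ordinary braid relation $T_rT_{r+1}T_r=T_{r+1}T_rT_{r+1}$ in the case $c_{r+1}=0$, and the ``higher'' braid relation giving $(1-q)X_{r+2}e(\uc)$ when $c_{r+1}=1$, $c_r=c_{r+2}=0$. I expect the $c_{r+1}=1$ case to be the main obstacle, since there $T_r$ and $T_{r+1}$ each both permute colours and multiply by linear factors of the form $(X_{\bullet}-Q_{\bullet})$, so both sides of the braid relation are genuine rational-function identities that must be expanded carefully; one sorts this out by tracking, for each of the six orderings of the three strands, which summand $\PollHeck(\uc')$ one is in and which products of linear factors and Demazure operators have accumulated, then simplifying. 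For the all-black case $c_r=c_{r+1}=c_{r+2}=0$ the braid relation is again the classical affine Hecke identity. A useful shortcut throughout is the isomorphism $\#$ of Lemma~\ref{lem-hash_isom} and the obvious symmetry interchanging black/red crossings from the two sides, which lets one deduce the $c_{r+1}=0,\,c_{r+2}=1$ type subcases from the $c_{r+1}=0,\,c_r=1$ ones rather than redoing them. Once every relation is verified on an arbitrary $f\in\PollHeck(\uc)$, the assignment extends uniquely to an algebra homomorphism $\lHeck\to\End_\bfk(\PollHeck)$, which is the asserted action.
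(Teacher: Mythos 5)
Your proposal is correct and coincides with the paper's approach: the paper's proof of Proposition~\ref{prop-pol_rep_lH} consists precisely of the statement that one directly verifies the relations of Definition~\ref{def-extaffHecke_alg}, which is exactly the computation you outline (and your organization of the cases, including the $c_{r+1}=1$ braid relation as the delicate one, is the right way to carry it out).
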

\begin{proof}
One directly verifies the relations from Definition~\ref{def-extaffHecke_alg}.
\end{proof}
During the  proof of  Proposition~\ref{prop-basis-Hdl} we will establish a crucial fact: 
\begin{prop}
\label{prop-faithfullHecke}
The representation from Proposition~\ref{prop-pol_rep_lH} is faithful.
\end{prop}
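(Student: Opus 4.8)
The plan is to prove faithfulness by exhibiting a spanning set of $\lHeck$ of the "right" size and showing its image in $\operatorname{End}(\PollHeck)$ is linearly independent. This is the standard strategy for such algebras, and the statement is explicitly tied to the proof of Proposition~\ref{prop-basis-Hdl}, so I expect the argument to run in tandem with establishing a basis theorem. Concretely, I would first show that $\lHeck$ is spanned by elements of the form
\[
X^{\ba}\, T_w\, e(\uc),
\]
where $\uc$ ranges over $J^{\ell,d}$, $w$ ranges over minimal-length coset representatives in $\frakS_{\ell+d}$ of permutations taking the colour sequence $\uc$ to some fixed target (only crossings involving at least one black strand are allowed, and red strands never cross, so the relevant set of $w$ is constrained), $T_w$ is a reduced-word product of the $T_r$, and $X^{\ba}$ is a monomial in the $X_i$ (with $X_i'$ handled via \eqref{lHecke7}, so we may take $\ba \in \bbZ^{\ell+d}$ supported on black positions). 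The relations in Definition~\ref{def-extaffHecke_alg} — in particular \eqref{rel_Te=0}, the braid-type relations, and the quadratic relations for $T_r^2 e(\uc)$ — are exactly what is needed to rewrite an arbitrary word into this normal form: one pushes all dots to the top, all crossings to the bottom, and uses the quadratic relation to reduce non-reduced expressions in the $T_r$, with the error terms (the $(q-1)$ and $Q$ terms) being of strictly lower "crossing length," so an induction on length closes the argument.

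The second and main step is to compute the action of such a normal-form element on $\PollHeck$ and check these operators are linearly independent. Here I would evaluate $X^{\ba} T_w e(\uc)$ on the standard monomial basis of each summand $\PollHeck(\uc')$. Using Proposition~\ref{prop-pol_rep_lH}: applying $e(\uc)$ first picks out the summand $\PollHeck(\uc)$; then $T_w$ moves a polynomial from $\PollHeck(\uc)$ into $\PollHeck(w(\uc))$ via a composition of the three non-trivial $T_r$-actions, the leading term of which is $\pm w(f)$ up to an explicit product of factors $\bigl(X_{j}-Q_{\ast}\bigr)$ coming from the black–red crossings and lower-order "Demazure-type" corrections from the black–black crossings; finally $X^{\ba}$ multiplies by a monomial. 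Ordering monomials and colour sequences suitably (say, lexicographically on $(\uc', \text{degree})$), the operator associated to $X^{\ba} T_w e(\uc)$ has a well-defined leading term that depends injectively on the triple $(\uc, w, \ba)$, because distinct $\uc$ land in distinct summands, distinct $w$ produce distinct "support shifts" between summands, and distinct $\ba$ produce distinct monomial shifts within a summand. Hence the images are linearly independent, so the spanning set is in fact a basis and the representation is faithful.

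The main obstacle I anticipate is the bookkeeping in the second step: the $T_r$-action in the black–black case, $-s_r(f) + (q-1)\tfrac{X_{r+1}}{X_r - X_{r+1}}(s_r(f) - f)$, is a divided-difference-type operator, so a careless choice of monomial order can make the "leading term" ambiguous — one must pick the order (and the normalization of $T_w$, e.g. always acting on dominant monomials or using a fixed reduced word compatible with the weak order) so that the $\pm s_r$ part genuinely dominates and the divided-difference correction is visibly lower. A secondary subtlety is that the black–red crossings contribute genuine polynomial factors $(X_j - Q_k)$ rather than pure monomials, so "leading term" must be taken with respect to a filtration that is blind to these constant shifts (e.g. the associated graded for the degree filtration, where $X_j - Q_k \rightsquigarrow X_j$); one should check that in this associated graded picture the operators still separate. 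Once the order is fixed correctly the linear independence is a routine triangularity argument, and combined with the spanning result of Proposition~\ref{prop-basis-Hdl} it yields both the basis theorem and faithfulness simultaneously.
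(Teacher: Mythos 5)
Your overall strategy matches the paper's: establish a normal‑form spanning set, show it acts by linearly independent operators on $\PollHeck$, and conclude both the basis theorem and faithfulness at once. The paper does exactly this inside the proof of Proposition~\ref{prop-basis-Hdl}.

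However, there is a real slip in your justification of linear independence. You claim that "distinct $w$ produce distinct support shifts between summands" — this is false. For a fixed pair of colour sequences $(\uc,\ucp)$, all $|\frakS_d|$ choices of $w$ permuting the black strands give maps $\PollHeck(\uc)\to\PollHeck(\ucp)$ landing in the \emph{same} summand. The distinction between these $w$'s must come entirely from the polynomial part of the action, which is precisely where the divided‑difference corrections you worry about live. As written, your triple $(\uc,w,\ba)\mapsto$ leading term is not obviously injective, because the component of injectivity you attribute to a "support shift" doesn't exist.

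The paper sidesteps the monomial‑ordering delicacies entirely by working over the rational function field: it records that $T_w^{\ucp,\uc}$ acts on $\PollHeck(\uc)$ as $f\mapsto \sum_{y\leq w} C_y\, y(f)$ with $C_y\in\bfk(x_1,\dots,x_d)$ and $C_w\ne 0$. One then views the operators inside the (localized) twisted group algebra $\bfk(x_1,\dots,x_d)\#\frakS_d$, where the permutation operators $\{y\}$ form a $\bfk(x)$-basis. Triangularity in Bruhat order with nonvanishing leading coefficient $C_w$ then immediately forces every $a_w$ in $\sum_w T_w^{\ucp,\uc}a_w$ to vanish, with no need to choose a monomial order, track leading terms of $(X_j-Q_k)$-factors, or worry about cancellations among Demazure‑type corrections. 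Your approach can be repaired (replace "support shift" by "the $w(f)$ shift inside the summand," and then control leading terms with respect to a degree/lex order as you suggest), but the paper's rational‑function formulation makes the triangularity transparent and is the cleaner route; I'd recommend adopting it.
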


\subsection{A basis of ${\lHeck}$}
\label{subs-basis_lHeck}

The goal of this section is to construct a basis of the algebra ${\lHeck}$. To do this, it is enough to construct a basis of $e(\ucp){\lHeck}e(\uc)$ for each $\ucp,\uc\in J^{\ell,d}$. First we define for each $w\in \frakS_d$, $\ucp,\uc\in J^{\ell,d}$ an element $T_w^{\ucp,\uc}\in e(\ucp){\lHeck}e(\uc)$. We define this element using the diagrammatic calculus as follows: Consider the permutation $w$ and draw a permutation diagram using black strands representing $w$ with a minimal possible number of crossings. Then we create the sequence $\ucp$ (resp. $\uc$) on the top (resp. bottom) of the diagram by adding accordingly $\ell$ red points on the top and $\ell$ red points on the bottom. Finally we join the red points on the top with the red points on the bottom by red strands in such a way that there are no intersections between red strands and such that a red strand intersects each black strand at most once. The resulting element is denoted $T_w^{\ucp,\uc}$. By construction it depends on several choices, but we just fix such a choice for any triple $(\ucp,\uc, w)$.

\begin{ex}
Let $d=3$, $\ell=2$, $\ucp=(1,1,0,0,0)$, $\uc=(0,1,0,0,1)$, and $w=s_1s_2s_1$. Then there are precisely two choices for the permutation diagram of $w$, we displayed one on the left in \eqref{threediags}.  The diagram $T_w^{\ucp,\uc}$ involves again a choice. Two of the possible choices are as follows
\begin{eqnarray}
\label{threediags}
\begin{tikzpicture}[scale=0.6, thick]
      \draw (-3,3)  +(1,-1) -- +(-1,1);
      \draw (-3,3)  +(0,-1) .. controls (-4,3) ..  +(0,1);
      \draw (-3,3) +(-1,-1) -- +(1,1);
\end{tikzpicture}\quad&&\quad
\begin{tikzpicture}[scale=0.6, thick]
      \draw (-3,3)  +(1,-1) .. controls (-3.3,3) ..  +(-1,1);
      \draw (-3,3)  +(0,-1) .. controls (-3.7,3) ..  +(0,1);
      \draw (-3,3) +(-1,-1) -- +(1,1);
       \draw[wei] (-3,3) +(-2,1) .. controls +(0,-0.8) ..  +(-0.5, -1) ;
       \draw[wei] (-3.5,3)+(-1,1) .. controls +(2,-0.5) ..  +(2.5,-1);
\end{tikzpicture}
\quad
\begin{tikzpicture}[scale=0.6, thick]
      \draw (-3,3)  +(1,-1) -- +(-1,1);
      \draw (-3,3)  +(0,-1) .. controls (-4,3) ..  +(0,1);
      \draw (-3,3) +(-1,-1) -- +(1,1);
        \draw[wei] (-3,3)+(-2,1) .. controls +(0,-0.6) .. +(-0.5,-1);
        \draw[wei] (-3.5,3)+(-1,1) .. controls +(0,-0.8) .. +(2,-1);
\end{tikzpicture}
\end{eqnarray}
\end{ex} 

Let $\lHeck^{\leqslant w}$ be the span of the elements of the form $T_y^{\ucp,\uc}f$, where $\ucp,\uc\in J^{\ell,d}$, $y\leqslant w$ and $f\in\PollHeck(\uc)$. Define $H_{\ell,d}^{\bfQ,<w}$ similarly. 
\begin{lem}
\begin{enumerate}
\item \label{1} The subspaces $\lHeck^{\leqslant w}$ and $\lHeck^{<w}$ of $\lHeck$ are independent of the choices of the elements $T_x^{\ucp,\uc}$.
\item  \label{2} The different choices of $T_w^{\ucp,\uc}$ attached to $w, \uc, \ucp$ by the construction above are equal modulo $H_{\ell,d}^{\bfQ,<w}$.
\end{enumerate}
\end{lem}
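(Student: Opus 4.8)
The plan is to prove both statements by analyzing how the diagrammatic building blocks $T_w^{\ucp,\uc}$ change when we modify the choices involved in their construction, and then showing these modifications only introduce terms that are either lower in the Bruhat order or (for part \eqref{2}) lie in $H_{\ell,d}^{\bfQ,<w}$. There are essentially three independent choices in the construction of $T_w^{\ucp,\uc}$: (a) the choice of reduced-word presentation of $w$ by black strands, (b) the isotopy class of how the red strands are routed among the black ones (subject to the constraint that each red strand crosses each black strand at most once and red strands never cross each other), and (c) polynomial factors that may be absorbed when one drags a dot or resolves a crossing. I would treat each source of ambiguity in turn.

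First I would handle the black-strand reduced-word ambiguity. Two minimal diagrams for $w$ differ by a sequence of braid moves (Reidemeister-III type moves, relation \eqref{Hecke-diag-2}) and, crucially, the relation \eqref{Hecke-diag-1}: applying \eqref{Hecke-diag-1} to a pair of crossings replaces $T_r^2 e(\uc)$ by $(q-1)T_re(\uc)+qe(\uc)$, i.e.\ by terms involving permutations of length strictly less than $l(w)$ — but since both of our chosen black diagrams are \emph{reduced}, no double crossing occurs, so we can pass between them using only the braid relation \eqref{Hecke-diag-2} together with far-commutativity. Hence the two black skeletons give literally the same element once we also fix the red routing; more importantly for part \eqref{1}, any element $T_y^{\ucp,\uc}f$ with $y\leqslant w$ continues to lie in the span after changing the skeleton, because the span $\lHeck^{\leqslant w}$ is by its very definition closed under all these moves. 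The key auxiliary tool here is that resolving any non-minimal black crossing via \eqref{Hecke-diag-1} lands in $\lHeck^{< w}$.

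Next I would handle the red-strand routing. Moving a red strand past a black crossing is governed by the braid-type relations \eqref{l-Hecke-diag-3} and \eqref{l-Hecke-diag-4}: relation \eqref{l-Hecke-diag-3} says this move is free in two of the colour configurations, whereas \eqref{l-Hecke-diag-4} produces an extra term $-(q-1)$ times a diagram in which the black crossing has been removed — hence a diagram whose underlying black permutation has length $l(w)-1$, i.e.\ an element of $\lHeck^{<w}$. Similarly, sliding a dot across a red/black crossing (relations \eqref{l-Hecke-diag-1}, \eqref{l-Hecke-diag-2} and their diagrammatic forms) either is free or replaces the crossing by a dotted pair of straight strands, again decreasing the black length; and sliding a red strand back and forth across a single black strand so that it would cross it twice is forbidden by our constraint, so we never need relation \eqref{l-Hecke-diag-1} applied to a would-be double red/black crossing. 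Because any two admissible red routings for the same black skeleton are related by a finite sequence of such elementary moves, the two resulting elements differ by a sum of terms in $\lHeck^{<w}$, which proves \eqref{2}; and they are equal modulo $H_{\ell,d}^{\bfQ,<w}$ precisely because every correction term produced by \eqref{l-Hecke-diag-4} or the dot-slides has strictly smaller black permutation and the coefficient polynomials it picks up (powers of $Q$, factors $X_r - Q_{\ldots}$) can be absorbed into the $f\in\PollHeck(\uc)$ part of the spanning description of $H_{\ell,d}^{\bfQ,<w}$. For \eqref{1}, the same computation shows more: replacing $T_y^{\ucp,\uc}$ for \emph{any} $y\leqslant w$ by a different choice changes it by elements of $\lHeck^{<y}\subseteq \lHeck^{\leqslant w}$, so the total span is unchanged; one runs the identical argument for $\lHeck^{<w}$, noting that $\bigcup_{y<w}\lHeck^{\leqslant y}=\lHeck^{<w}$ and this union is manifestly choice-independent by induction on the Bruhat order.

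The main obstacle I anticipate is bookkeeping rather than anything conceptual: one must verify carefully that \emph{every} elementary move relating two admissible diagrams (braid moves on black strands, red-over-black slides, dot slides, and the interactions of all of these near a triple point) either preserves the element exactly or introduces only $\lHeck^{<w}$-terms with polynomial coefficients of the permitted form — in particular one has to check that no move can \emph{increase} the black length or force a red strand to cross a black one twice, which is where the ``at most once'' constraint and the absence of red-red crossings are used. A clean way to organize this is an induction on $l(w)$: assume \eqref{1} and \eqref{2} for all $y<w$, reduce the comparison of two choices for $T_w^{\ucp,\uc}$ to a finite sequence of elementary moves, and invoke the inductive hypothesis to absorb all correction terms. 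I would also remark that the faithful polynomial representation of Proposition~\ref{prop-faithfullHecke} is \emph{not} needed here — this lemma is purely formal manipulation of diagrams and is in fact one of the ingredients that will go into proving that the spanning set $\{T_w^{\ucp,\uc}f\}$ is actually a basis (Proposition~\ref{prop-basis-Hdl}).
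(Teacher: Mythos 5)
Your proposal is correct and follows essentially the same route as the paper: an induction on $l(w)$ in which $\lHeck^{<w}$ is seen to be choice-independent from the inductive hypothesis, and two choices of $T_w^{\ucp,\uc}$ are compared by applying the defining relations, whose correction terms all lie in $\lHeck^{<w}$. You merely spell out in more detail which relations (braid moves, red-over-black slides via \eqref{l-Hecke-diag-3}--\eqref{l-Hecke-diag-4}, dot slides) produce the lower-order terms, a point the paper's proof leaves implicit.
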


\begin{proof}
We prove both parts simultaneously by induction on the length of $w$. Assume ${l}(w)=0$. In this case the definition of the element $T_w^{\ucp,\uc}$ is independent of any choice and there is noting to show. Assume now that the statements are true for all $w$ such that ${l}(w)<n$ and let us prove them for ${l}(w)=n$.

By definition, the vector space $\lHeck^{<w}$ is spanned by $\lHeck^{\leqslant z}$ for all $z<w$. By the induction hypothesis, the vector spaces $\lHeck^{\leqslant z}$ are independent of the choices of $T^{\ucp,\uc}_x$ such that $x\leqslant z$. Thus the vector space $\lHeck^{<w}$  is independent of the choices of $T^{\ucp,\uc}_y$ where $y < w$. This proves the second part of~\ref{1}.). To prove~\ref{2}.), consider two different choices for the diagram $T_w^{\ucp,\uc}$. Then one of them can be obtained from the other one by applying relations in Definition~\ref{def-extaffHeck_diag}, which might create additional terms, but they are all contained in $\lHeck^{<w}$ hence~\ref{2}.) holds. Now~\ref{1}.) follows from~\ref{2}.) and the part of~\ref{1}.) which we already established.
\end{proof}

To give a basis of ${\lHeck}$, it is convenient to introduce some new elements $x_1,\ldots,x_d\in {\lHeck}$. Set $x_r=\sum_{\uc\in J^{\ell,d}}X_{r_\uc}e(\uc)$, where $r_\uc$ is the number of the position in $\uc$ where the colour black appears for the $r$th time (counted from the left). Then the  following statement is obvious from the relations \eqref{lHecke7}-\eqref{lHecke5new}.
\begin{lem}
The elements $x_1,\ldots,x_d$ pairwise commute and are invertible. 
\end{lem}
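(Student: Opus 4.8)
The plan is to reduce both assertions to the relations governing the commuting families $X_i$, $X_i'$ and the orthogonal idempotents $e(\uc)$, working inside each summand of the decomposition $1=\sum_{\uc\in J^{\ell,d}}e(\uc)$ coming from \eqref{lHecke1}.

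First I would record the bookkeeping identity $x_r\,e(\uc)=X_{r_\uc}\,e(\uc)=e(\uc)\,x_r$ for every $\uc\in J^{\ell,d}$. This holds because $X_ie(\uc)=e(\uc)X_i$ by \eqref{lHecke4new}, because $e(\uc)e(\uc')=\delta_{\uc,\uc'}e(\uc)$ by \eqref{lHecke1}, and therefore because $X_{r_{\uc'}}e(\uc')e(\uc)=0$ whenever $\uc'\ne\uc$. The analogous identity holds with the auxiliary elements $x_r':=\sum_{\uc\in J^{\ell,d}}X'_{r_\uc}e(\uc)$ in place of $x_r$.

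For commutativity I would then compute, for any $r,s$,
\[
x_rx_s=\sum_{\uc\in J^{\ell,d}}x_r\,e(\uc)\,x_s=\sum_{\uc\in J^{\ell,d}}X_{r_\uc}X_{s_\uc}\,e(\uc),
\]
which, since $X_iX_j=X_jX_i$ by \eqref{lHecke5new}, is manifestly symmetric in $r$ and $s$; hence $x_rx_s=x_sx_r$. For invertibility I would compute in the same way
\[
x_rx_r'=\sum_{\uc\in J^{\ell,d}}X_{r_\uc}X'_{r_\uc}\,e(\uc),\qquad x_r'x_r=\sum_{\uc\in J^{\ell,d}}X'_{r_\uc}X_{r_\uc}\,e(\uc).
\]
By construction the position $r_\uc$ is a black position, i.e.\ $c_{r_\uc}=0$, so relation \eqref{lHecke7} gives $X_{r_\uc}X'_{r_\uc}e(\uc)=X'_{r_\uc}X_{r_\uc}e(\uc)=e(\uc)$, and both sums collapse to $\sum_{\uc}e(\uc)=1$. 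Thus each $x_r$ is invertible with inverse $x_r'$.

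There is no genuine obstacle here: the only point needing a moment of care is the vanishing of cross terms $e(\uc)X'_{r_{\uc'}}e(\uc')$ for $\uc\ne\uc'$, which is immediate from \eqref{lHecke4new} together with the orthogonality of the idempotents.
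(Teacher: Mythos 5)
Your proof is correct and is exactly the computation the paper is implicitly alluding to when it declares the lemma ``obvious from the relations \eqref{lHecke7}--\eqref{lHecke5new}'' without giving a proof. The one place where your justification is slightly off is the orthogonality $e(\uc)e(\uc')=\delta_{\uc,\uc'}e(\uc)$, which you attribute to \eqref{lHecke1}: that relation only gives $\sum_\uc e(\uc)=1$ and idempotence, and in an abstract algebra a complete family of idempotents summing to $1$ need not be pairwise orthogonal. Orthogonality does hold here, but one needs to use the $X$-relations as well: if $\uc\ne\uc'$, pick $i$ with $c_i\ne c'_i$, say $c_i=0$ and $c'_i=1$; then $e(\uc)e(\uc')=X'_iX_ie(\uc)e(\uc')=X'_ie(\uc)\,X_ie(\uc')=0$ by \eqref{lHecke7}, \eqref{lHecke4new}, and \eqref{rel_Xe=0}. (Alternatively, orthogonality is immediate in the diagrammatic description of Definition~\ref{def-extaffHeck_diag}, which is equivalent by Lemma~\ref{lem:twodefs}, since composing identity diagrams with mismatched colours gives zero.) With that small repair your argument is exactly right: the bookkeeping identity $x_re(\uc)=X_{r_\uc}e(\uc)$, together with $r_\uc$ being a black position so $c_{r_\uc}=0$, reduces commutativity and invertibility to \eqref{lHecke5new} and \eqref{lHecke7} summand by summand.
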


The following provides two bases of ${\lHeck}$.
\begin{prop}
\label{prop-basis-Hdl}
For each $\ucp,\uc\in J^{\ell,d}$, the following sets 
\begin{eqnarray*}
\{T_w^{\ucp,\uc}x_1^{m_1}\ldots x_d^{m_d}\mid w\in \frakS_d,m_i\in \bbZ\}, && \{x_1^{m_1}\ldots x_d^{m_d}T_w^{\ucp,\uc}\mid w\in \frakS_d,m_i\in \bbZ\}
\end{eqnarray*}
each form a basis of $e(\ucp){\lHeck} e(\uc)$.
\end{prop}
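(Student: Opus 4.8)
The plan is to prove two things in tandem: a spanning statement and a linear independence statement, and to use the faithful polynomial representation from Proposition~\ref{prop-pol_rep_lH} (whose faithfulness, Proposition~\ref{prop-faithfullHecke}, is established precisely here) for the independence half. By the $\#$-isomorphism of Lemma~\ref{lem-hash_isom} together with the commuting, invertible elements $x_1,\dots,x_d$, it suffices to treat one of the two sets, say $\{T_w^{\ucp,\uc}x_1^{m_1}\cdots x_d^{m_d}\}$, and the other follows by symmetry. First I would show spanning. Using the diagrammatic presentation of Definition~\ref{def-extaffHeck_diag} and the filtration $\lHeck^{\leqslant w}$ from the previous lemma, one argues by induction on $l(w)$: any $(\ell,d)$-diagram can be straightened so that the red strands never cross each other and each red strand crosses each black strand at most once, at the cost of lower-order terms (in the $\lHeck^{<w}$ filtration), exactly as in the proof that $\lHeck^{\leqslant w}$ is independent of choices. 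Dots can be pushed to the bottom using \eqref{Hecke-diag-3}, \eqref{Hecke-diag-4}, \eqref{l-Hecke-diag-1}, \eqref{l-Hecke-diag-2} modulo lower terms, producing a polynomial factor $f\in\PollHeck(\uc)$ on the right, i.e.\ a product of $x_r$'s and their inverses. Hence $e(\ucp)\lHeck e(\uc)$ is spanned by the first set. (Here one must check that a black-black double crossing can be resolved by \eqref{Hecke-diag-1}, a black strand can be pulled across a pair of red strands, and a bigon between a black and a red strand is removed by \eqref{l-Hecke-diag-4}, each time only introducing terms strictly lower in the Bruhat-plus-length order.)

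Next, linear independence. Suppose $\sum_{w,\bfm}\lambda_{w,\bfm}\,T_w^{\ucp,\uc}x_1^{m_1}\cdots x_d^{m_d}=0$ in $e(\ucp)\lHeck e(\uc)$. Apply the polynomial representation on $\PollHeck$. The key observation is that the operator attached to $T_w^{\ucp,\uc}$ acts from $\PollHeck(\uc)$ into $\PollHeck(\ucp)$ with ``leading term'' the permutation operator $w$ (in the black variables) up to multiplication by a fixed invertible rational expression coming from the red-black crossings of type $\bigl(X_{r+1}-Q_j\bigr)$ in \eqref{rel_Te=0}-era relations and by $(-1)$'s from the black-black crossings; lower terms (those with permutations $y<w$ in Bruhat order, or with extra factors of $(X_i-X_{i+1})^{-1}$) do not interfere with the top. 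Concretely, evaluate on the monomials $x^{\mathbf{a}}$ for $\mathbf{a}$ in a large enough box: the matrix recording the coefficients of $x^{\mathbf a}\mapsto (\text{monomials in }\PollHeck(\ucp))$ for the family $\{T_w^{\ucp,\uc}x^{\bfm}\}$ is, after ordering by $(w,\bfm)$, block upper triangular with invertible diagonal blocks (the invertibility of each block reduces to the classical fact that $\{w(x^{\mathbf a})\mid w\in\frakS_d\}$ is linearly independent over $\bfk$ for suitable $\mathbf a$, together with the non-vanishing of the red-crossing scalars $X_{r+1}-Q_j$ on generic monomials). Therefore all $\lambda_{w,\bfm}=0$. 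Combined with faithfulness (Proposition~\ref{prop-faithfullHecke}), this forces the original element to be zero only if all coefficients vanish, giving independence.

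I expect the main obstacle to be the bookkeeping in the linear-independence step: one must pin down precisely what the ``leading term'' of the operator $T_w^{\ucp,\uc}$ is, i.e.\ control how the red strands contribute scalars $\prod(X_{i}-Q_{j})$ and how the recursive formula for $T_r$ on the black-black summand produces the subleading $(q-1)\frac{X_{r+1}}{X_r-X_{r+1}}(s_r(f)-f)$ corrections, and then set up a triangularity statement (with respect to the product order: Bruhat order on $w$ refined by the monomial degree $\bfm$) that is genuinely invertible. A clean way to organize this is to first reduce $\ucp=\uc$ by absorbing an invertible ``change of colour-pattern'' diagram (one checks such diagrams are units in the appropriate localization, using that the relevant $X_i-Q_j$ act invertibly on a localized polynomial ring), so that the computation takes place inside $e(\uc)\lHeck e(\uc)$ and reduces to the known basis theorem for the ordinary affine Hecke algebra $\Heck$ recalled in Remark~\ref{ordinaryHecke} together with the elementary contribution of the red strands. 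The spanning step, by contrast, is essentially a diagrammatic straightening argument of a standard type and should go through routinely by induction on $l(w)$ using the filtration already set up.
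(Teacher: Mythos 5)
Your proposal is correct and follows essentially the same route as the paper: spanning via diagrammatic straightening (pushing dots past crossings and resolving double crossings modulo terms lower in the filtration), and linear independence via the faithful polynomial representation, using that $T_w^{\ucp,\uc}$ acts as $\sum_{y\leqslant w}C_y\,y$ with $C_w\ne 0$ a nonzero rational function, so Bruhat-triangularity forces all coefficients to vanish. The auxiliary devices you float (invoking the $\#$-isomorphism, or localizing to invert the colour-change diagram) are unnecessary and are not used in the paper, but they do not affect the validity of the main argument.
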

\begin{proof}
It is clear from the defining relations of ${\lHeck}$ that the asserted basis elements span $e(\ucp){\lHeck} e(\uc)$. Indeed, we can use relations \eqref{Hecke-diag-1} - \eqref{l-Hecke-diag-4} to write each diagram as a linear combination of diagrams where all dots are above (resp. below) all intersections and such that two strands intersect at most twice. To prove the linear independence, it suffices to show that the elements act by linearly independent operators on the polynomial representation \eqref{polrep}. 

The element $T_w^{\ucp,\uc}$ takes $\bfk[x^{\pm 1}_1,\cdots,x^{\pm 1}_d]e(\ucp)$ to $\bfk[x^{\pm 1}_1,\cdots,x^{\pm 1}_d]e(\uc)$
by sending $fe(\ucp)$ to $\sum_{y\in\frakS_d,y\leqslant w}C_yy(f)e(\uc)$,
where the $C_y \in\bfk(x_1,\cdots,x_d)$ are rational functions such that $C_w\ne 0$. Since $y\in\frakS_d$ acts on the polynomial $f$ by the obvious permutation $y(f)$ of variables, an expression of the form $\sum_{w}a_w T_w^{\ucp,\uc}$ or $\sum_{w} T_w^{\ucp,\uc}a_w$, where $a_w\in\bfk[x^{\pm 1}_1,\ldots,x^{\pm 1}_d]$, $w\in\frakS_d$, can only act by zero if each $a_w$ is zero. This implies the linear independence. 
\end{proof}
\begin{rk}
\label{rk-special}
In the special case $\ell=0$ these bases are the standard bases of the affine Hecke algebra from \cite[Prop.~3.7]{Lus89}, see also  \cite[Cor.~3.4]{MS}.
\end{rk}
\subsection{The centre of ${\lHeck}$}
\label{subs-centre-lHecke}
Consider the element $\omega=(1,\ldots,1,0,\ldots,0)\in J^{\ell,d}$. This means that $\omega$ contains the colour red $\ell$ times followed by the colour black $d$ times. The following lemma shows that the affine Hecke algebra $\Heck$ from Remark~\ref{ordinaryHecke} can be realised as an idempotent truncation of the higher level affine Hecke algebra. In particular our diagrams generalize indeed the ordinary permutation diagrams.
\begin{lem}
\label{lem-Hd_in_Hdl}
There is an isomorphism of algebras $\Heck\simeq e(\omega){\lHeck}e(\omega)$. 
\end{lem}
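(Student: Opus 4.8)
The natural strategy is to identify the idempotent truncation $e(\omega)\lHeck e(\omega)$ with the ordinary affine Hecke algebra $\Heck$ by exhibiting mutually inverse algebra maps between them. First I would recall from Remark~\ref{ordinaryHecke} the generators and relations of $\Heck$: it is generated by $T_1,\ldots,T_{d-1}$ and invertible $X_1,\ldots,X_d$ with the standard affine Hecke relations (quadratic relation $T_r^2=(q-1)T_r+q$, braid relations, and $T_rX_{r+1}-X_rT_r=(q-1)X_{r+1}$, etc.). On the truncated side, the diagrammatic picture makes the comparison transparent: an element of $e(\omega)\lHeck e(\omega)$ is spanned (by Proposition~\ref{prop-basis-Hdl}) by diagrams in which the $\ell$ red strands sit to the far left and the $d$ black strands are to their right, with the black strands permuting among themselves. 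Since $\omega$ begins with $\ell$ red entries followed by $d$ black ones and two red strands never cross, every such diagram that starts and ends at $\omega$ has its red strands running straight down on the left (after simplifying using the relations in Figure~\ref{lHeckepictures}), and the black part is an arbitrary affine Hecke diagram on $d$ strands.

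Concretely, I would define a homomorphism $\varphi\colon\Heck\to e(\omega)\lHeck e(\omega)$ by sending $T_r\mapsto T_{\ell+r}\,e(\omega)$ for $r\in[1;d-1]$ and $X_r\mapsto x_r e(\omega)=X_{\ell+r}e(\omega)$ for $r\in[1;d]$, where $x_r$ is the element introduced just before Proposition~\ref{prop-basis-Hdl}. Checking that $\varphi$ is well defined amounts to verifying that these images satisfy the defining relations of $\Heck$: the relations among the $T_r$'s and among the $X_r$'s hold because for $\uc=\omega$ the colours $c_{\ell+1},\ldots,c_{\ell+d}$ are all black, so relations \eqref{lHecke7}, \eqref{lHecke5new} and the first cases of the quadratic relation and of the Demazure-type relations in Definition~\ref{def-extaffHecke_alg} reduce \emph{verbatim} to the affine Hecke relations; invertibility of $x_r$ in $\lHeck$ is the lemma preceding Proposition~\ref{prop-basis-Hdl}; and the braid relation for $T_{\ell+r}T_{\ell+r+1}T_{\ell+r}e(\omega)$ is exactly the $c_{r+1}=0$ case of the last relation, which gives $0$. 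In the other direction I would show $e(\omega)\lHeck e(\omega)$ is generated as an algebra by $e(\omega)$, the $T_{\ell+r}e(\omega)$ and the $X_{\ell+r}^{\pm1}e(\omega)$: this follows from Proposition~\ref{prop-basis-Hdl} together with the fact that for the source/target $\omega$ one may choose each $T_w^{\omega,\omega}$ to be the diagram with all red strands vertical on the left and the black permutation diagram for $w$ on the right, which is a product of the $T_{\ell+r}e(\omega)$.

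Finally I would check bijectivity by comparing bases. The basis $\{T_w^{\omega,\omega}x_1^{m_1}\cdots x_d^{m_d}\}_{w\in\frakS_d,\,m_i\in\bbZ}$ of $e(\omega)\lHeck e(\omega)$ from Proposition~\ref{prop-basis-Hdl} maps, under the candidate inverse, to the standard Bernstein-type basis $\{T_w X^{m}\}$ of $\Heck$ recalled in Remark~\ref{rk-special}; since $\varphi$ sends one basis to the other, it is an isomorphism. Alternatively, and perhaps most cleanly, one can argue representation-theoretically: the summand $\PollHeck(\omega)\cong\bfk[x_1^{\pm1},\ldots,x_d^{\pm1}]$ of the faithful polynomial representation $\PollHeck$ (Propositions~\ref{prop-pol_rep_lH}, \ref{prop-faithfullHecke}) is stable under $e(\omega)\lHeck e(\omega)$, the induced action is precisely the classical polynomial representation of $\Heck$ under $\varphi$, and this representation is faithful for $\Heck$; faithfulness of $\PollHeck$ then forces $\varphi$ to be injective, while surjectivity is the generation statement above. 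The only mildly delicate point is the bookkeeping that for source and target both equal to $\omega$ one really can choose the red strands straight and reduce all diagrams to the black affine Hecke diagrams — but this is immediate from the structure of $J^{\ell,d}$-diagrams (no two red strands cross, and a red strand starting and ending in the first $\ell$ positions of $\omega$ need not cross any black strand) together with the relations in Figures~\ref{lHeckepictures}; so I expect no real obstacle.
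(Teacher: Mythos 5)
Your proposal is correct and follows essentially the same route as the paper: the map is the "add $\ell$ red strands on the left" homomorphism, and bijectivity is seen by matching the standard Bernstein-type basis of $\Heck$ (Remark~\ref{rk-special}) with the basis of $e(\omega)\lHeck e(\omega)$ from Proposition~\ref{prop-basis-Hdl}. The extra verifications you include (the relation checks for well-definedness and the alternative faithfulness argument via $\PollHeck(\omega)$) are sound but not needed beyond what the paper records.
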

\begin{proof}
There is an obvious algebra homomorphism $\Heck\to e(\omega){\lHeck}e(\omega)$ that adds $\ell$ red strands to the left of the diagram. It is an isomorphism, because it sends the standard basis (see Remark~\ref{rk-special}) of the affine Hecke algebra $\Heck$ to the basis of $e(\omega){\lHeck} e(\omega)$ from Proposition~\ref{prop-basis-Hdl}. 
\end{proof}

The group $\frakS_d$ acts on the Laurent polynomial ring $\PollHeck(\uc)$ for each $\uc\in J^{\ell,d}$. Moreover, the group $\frakS_{\ell+d}$ acts on $\PollHeck$ such that the permutation $w\in\frakS_{\ell+d}$ sends the element $f\in \PollHeck({\uc})$ to $w(f)\in\PollHeck({w(\uc)})$. For each $\uc\in J^{\ell,d}$, the restriction of the projection $\PollHeck\to \PollHeck(\uc)$ to $\PollHeck^{\frakS_{\ell+d}}$ yields an isomorphism $\PollHeck^{\frakS_{\ell+d}}\simeq \PollHeck(\uc)^{\frakS_{d}}$ of vector spaces. By identifying $\PollHeck(\uc)=\bfk[x_1^{\pm 1},\ldots,x_d^{\pm 1}]e(\uc)$, we can view $\PollHeck$ as a subalgebra of ${\lHeck}$ containing the algebra $\bfk[x^{\pm 1}_1,\ldots,x^{\pm 1}_d]$  embedded diagonally. Moreover, the subalgebra $\bfk[x^{\pm 1}_1,\ldots,x^{\pm 1}_d]^{\frakS_d}$ coincides with $\PollHeck^{\frakS_{\ell+d}}$. The centre $Z({\lHeck})$ of ${\lHeck}$ is then given as follows.
\begin{prop}
\label{lem-cen-Hdl}
We have  $Z({\lHeck})=\bfk[x^{\pm 1}_1,\ldots,x^{\pm 1}_d]^{\frakS_d}=\PollHeck^{\frakS_{\ell+d}}$.
\end{prop}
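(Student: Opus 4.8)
The plan is to prove the two equalities
$\bfk[x^{\pm1}_1,\ldots,x^{\pm1}_d]^{\frakS_d}=\PollHeck^{\frakS_{\ell+d}}$ and
$Z(\lHeck)=\bfk[x^{\pm1}_1,\ldots,x^{\pm1}_d]^{\frakS_d}$ separately, exploiting the faithful polynomial representation from Proposition~\ref{prop-faithfullHecke} and the explicit bases from Proposition~\ref{prop-basis-Hdl}.

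First I would dispose of the second equality $\bfk[x_1^{\pm1},\ldots,x_d^{\pm1}]^{\frakS_d}=\PollHeck^{\frakS_{\ell+d}}$. This is essentially combinatorial: given $f\in\PollHeck=\bigoplus_{\uc}\bfk[x_1^{\pm1},\ldots,x_d^{\pm1}]e(\uc)$, being $\frakS_{\ell+d}$-invariant forces, for $w$ ranging over the stabilizer of the black/red pattern (which contains $\frakS_d$ acting on black positions), that each component $f_\uc$ is $\frakS_d$-invariant; and the transpositions moving a black entry past a red one force all components $f_\uc$ to be equal, so that $f$ lies in the diagonally embedded copy of $\bfk[x_1^{\pm1},\ldots,x_d^{\pm1}]^{\frakS_d}$. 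Conversely any element of the diagonal symmetric Laurent polynomial ring is manifestly $\frakS_{\ell+d}$-invariant. This is just the vector space isomorphism already noted in the text before the statement, upgraded to the observation that it is an algebra isomorphism.

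For the main equality I would argue by the standard two inclusions. For $\supseteq$: I must check that every $p\in\bfk[x_1^{\pm1},\ldots,x_d^{\pm1}]^{\frakS_d}$ is central in $\lHeck$. Since $\lHeck$ is generated by the $e(\uc)$, the $T_r$, and the $X_j^{\pm1}$, it suffices to check that $p$ commutes with each generator; commutation with $e(\uc)$ and with $X_j^{\pm1}$ is clear from relations \eqref{lHecke4new}--\eqref{lHecke5new} and the definition of the $x_r$, while commutation with $T_re(\uc)$ is checked case-by-case on the colours of $c_r,c_{r+1}$: in the black-black case it is the classical fact that symmetric Laurent polynomials are central in the affine Hecke algebra (using the relation for $(T_rX_{r+1}-X_rT_r)e(\uc)$ and $s_r$-invariance of $p$), while in the mixed and red-red cases the relevant $x_r$-variables on the two sides of the crossing are the same so $T_re(\uc)$ commutes with $p$ trivially. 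It is cleanest to verify this on the faithful polynomial representation: $p$ acts as multiplication by $p$ on every summand $\PollHeck(\uc)$, $T_re(\uc)$ shuffles variables and multiplies by rational functions fixed by the relevant permutation, and one reads off that the two operators commute.

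For $\subseteq$: let $z\in Z(\lHeck)$. Using the first basis from Proposition~\ref{prop-basis-Hdl}, write $ze(\uc)=\sum_{w\in\frakS_d}T_w^{\uc,\uc}a_w^{(\uc)}$ with $a_w^{(\uc)}\in\bfk[x_1^{\pm1},\ldots,x_d^{\pm1}]$ (no off-diagonal terms $e(\ucp)ze(\uc)$ with $\ucp\ne\uc$ survive, since $z$ commutes with the idempotents $e(\ucp)$). Commuting $z$ with $x_r$ and using that $T_w^{\uc,\uc}$ acts on the polynomial representation via $fe(\uc)\mapsto\sum_{y\le w}C_y\,y(f)e(\uc)$ with $C_w\ne0$ and top term governed by $w$, a leading-term / triangularity argument in $w$ forces $a_w^{(\uc)}=0$ for $w\ne e$, i.e. $ze(\uc)\in\bfk[x_1^{\pm1},\ldots,x_d^{\pm1}]e(\uc)$; then commuting with the $T_r$ (black-black case) forces each $ze(\uc)$ to be $s_r$-invariant for all $r$, hence $\frakS_d$-invariant, and commuting with the colour-changing generators $T_r$ (mixed case) forces the components for different $\uc$ to agree, so $z$ lies in the diagonal copy of $\bfk[x_1^{\pm1},\ldots,x_d^{\pm1}]^{\frakS_d}$. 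The main obstacle is the triangularity argument that kills the higher $T_w$-terms: one needs to track carefully how the operator $\sum_w T_w^{\uc,\uc}a_w$ acts on monomials in $\PollHeck(\uc)$ and extract the top-$w$ coefficient, and here the precise form of the polynomial action of $T_w^{\uc,\uc}$ established in the proof of Proposition~\ref{prop-basis-Hdl} is exactly what is needed. Combining the two inclusions with the already-proved identity $\bfk[x_1^{\pm1},\ldots,x_d^{\pm1}]^{\frakS_d}=\PollHeck^{\frakS_{\ell+d}}$ yields the proposition.
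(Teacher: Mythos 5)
Your proof is correct, but it follows a genuinely different route from the paper for the main inclusion $Z(\lHeck)\subseteq\PollHeck^{\frakS_{\ell+d}}$. The paper does not re-derive the centre of the ordinary affine Hecke algebra: it singles out the component $z_\omega=ze(\omega)$ for $\omega=(1,\ldots,1,0,\ldots,0)$, observes that $z_\omega\in Z(e(\omega)\lHeck e(\omega))\simeq Z(\Heck)$ by Lemma~\ref{lem-Hd_in_Hdl}, and cites Lusztig to conclude $z_\omega=f$ for a symmetric Laurent polynomial $f$; it then recovers every other component $z_{w(\omega)}$ in one stroke by commuting $z$ with the pure shuffling diagram $T=T_{\Id}^{w(\omega),\omega}$ and invoking the injectivity of $y\mapsto yT$ supplied by Proposition~\ref{prop-basis-Hdl}. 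You instead re-prove the affine Hecke centre calculation from scratch via the leading-term argument in the $T_w^{\uc,\uc}$-basis, and transfer between components via the mixed $T_r$'s one transposition at a time. Both approaches are sound; the paper's version is shorter because the classical computation is quoted rather than redone, which isolates the genuinely new bookkeeping (shuffling past red strands) into a single use of the basis theorem. If you keep your version, be explicit about two small points: the deduction $a_e^{(s_r(\uc))}=a_e^{(\uc)}$ from the mixed $T_r$ itself requires right-cancellation of $T_r$, i.e., another appeal to Proposition~\ref{prop-basis-Hdl}, exactly as the paper's cancellation of $T$ does; and for a fixed $\uc$ there may be no black-black $T_r$ available at all, so the $\frakS_d$-invariance should be extracted after identifying all components, for instance by running the black-black argument only on $\uc=\omega$, where all $d$ black strands sit adjacently.
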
 
\begin{proof}
It is clear that $\PollHeck^{\frakS_{\ell+d}}\subset Z({\lHeck})$. It suffices to show that the centre contains not more elements. Let $z\in Z({\lHeck})$. Write $z=\sum_{\uc\in J^{\ell,d}}z_\uc$, where $z_\uc=ze(\uc)$. Then $z_\omega\in Z(e(\omega){\lHeck}e(\omega))$. Since the centre of the affine Hecke algebra is formed by symmetric Laurent polynomials, \cite[Prop.~3.11]{Lus89},  there exists, by Lemma~\ref{lem-Hd_in_Hdl}, some $f\in \PollHeck(\omega)^{\frakS_d}$ such that $z_\omega=f$.
To complete, it is enough to show that $z_{w(\omega)}=w(f)\in \PollHeck({w(\uc)})$ for each $w\in \frakS_{\ell+d}$. Let $T=T_{\Id}^{w(\omega),\omega}$. Since $z$ commutes with $T$, we must have $z_{w(\omega)}T=Tz_{\omega}$. On the other hand we have $Tz_{\omega}=Tf=w(f)T$. This implies $z_{w(\omega)}=w(f)$ because the map 
$e(w(\omega)){\lHeck}e(w(\omega))\longrightarrow e(w(\omega)){\lHeck}e(\omega),$ $y\longmapsto yT
$ is injective by Proposition~\ref{prop-basis-Hdl}.
\end{proof}

\subsection{Completion}
\label{subs-compl-Hecke}
For our main result we have to complete the higher level affine Hecke algebra. We first recall the completion $\cHeck$ of $\Heck$ from \cite[Sec.~3.3]{MS} at a maximal ideal of $Z(\Heck)$. From now on we assume $\bfk$ to be algebraically closed. 

For each $\bfa=(a_1,\ldots,a_d)\in(\bfk^*)^d$ consider the central character 
$\chi_\bfa\colon Z(\Heck)=\bfk[X_1^{\pm 1},\ldots,X_d^{\pm 1}]^{\frakS_d}\to \bfk$ obtained by restriction of the algebra homomorphism which sends $X_1,\ldots,X_d$ to $a_1,a_2,\ldots,a_d$ respectively. Two such central characters $\chi_\bfa$ and $\chi_{\bfa'}$ coincide if and only if $\bfa'$ is a permutation of $\bfa$. Fix now $\bfa$. 

\begin{df}
We denote by  $\cHeck$ the completion of $\Heck$ with respect to the ideal $\mathfrak{m}_{\bfa}$ of $\Heck$ generated by $\ker \chi_\bfa$.
\end{df}

Each finite dimensional $\cHeck$-module decomposes into its generalised eigen\-spaces  $M=\bigoplus_{\ui\in \frakS_d\bfa}M_{\ui}$,  for the $\bfk[X_1^{\pm 1},\ldots,X_d^{\pm 1}]$-action, where 
\begin{eqnarray}
\label{dec}
M_{\ui}&=&\{m\in M\mid\exists N\in\bbZ_{\geqslant 0} \mbox{ such that } (X_r-i_r)^Nm=0~\forall r\}.
\end{eqnarray}
For each $\ui\in\frakS_d\bfa$, there is an idempotent $e(\ui)\in \cHeck$ which projects onto $M_{\ui}$ when applied to $M$. Obviously, $1=\sum_{\ui}e(\ui)$ holds.

\begin{df}
By a {\it topological basis} or {\it Schauder basis} of a topological $\bfk$-vector space $V$ we mean a sequence $v_i$, $i\in\bbZ_{\geqslant 0}$ of vectors in $V$ such that every element of $V$ can be expressed uniquely as a convergent series of the form $\sum_{i\in\bbZ_{\geqslant 0}}a_iv_i$ with $a_i\in\bfk$. 
\end{df}
We consider now $\cHeck$ with its $\mathfrak{m}_{\bfa}$-adic topology.  It comes with the usual  $\mathfrak{m}_{\bfa}$-adic-order function, namely the order of an element is the minimal number $j$ such that $f$ is not in $\mathfrak{m}_{\bfa}^j$. This defines a norm on $\cHeck$ and hence we can talk about topological bases, see \cite[VII]{ZS} for more details. 

\begin{prop}[{\cite[Lemma~3.8]{MS}}]
The following set (viewed as a sequence by picking any total ordering)
$$
\left\{T_w(X_1-i_1)^{m_1}\ldots (X_d-i_d)^{m_d}e(\ui)\mid w\in \frakS_d,m_i\in \bbZ_{\geqslant 0},\ui\in \frakS_d\bfa\right\}
$$
forms a topological basis of $\cHeck$.
\end{prop}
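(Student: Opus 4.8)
The plan is to deduce the topological basis from the ordinary Bernstein basis of $\Heck$ by making the $\mathfrak{m}_\bfa$-adic completion explicit as a finite direct sum of power-series modules. First I would recall that $\Heck$ is free as a right module over the Laurent polynomial subalgebra $P:=\bfk[X_1^{\pm1},\ldots,X_d^{\pm1}]$ with basis $\{T_w\mid w\in\frakS_d\}$, i.e. $\Heck=\bigoplus_{w\in\frakS_d}T_wP$ (this is the $\ell=0$ case of Proposition~\ref{prop-basis-Hdl}, equivalently \cite[Prop.~3.7]{Lus89}). Since $\ker\chi_\bfa$ is central, $\mathfrak{m}_\bfa^n=\Heck\,(\ker\chi_\bfa)^n=\bigoplus_w T_w I^n$ where $I:=P\cdot\ker\chi_\bfa$; as a finite direct sum commutes with inverse limits this gives $\cHeck=\bigoplus_{w\in\frakS_d}T_w\widehat{P}$, with $\widehat{P}:=\varprojlim_n P/I^n$.

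Next I would identify $\widehat{P}$. Because $P$ is finite over $Z(\Heck)=P^{\frakS_d}$, the $I$-adic topology on $P$ coincides with the topology defined by the powers of $\bigcap_{\ui\in\frakS_d\bfa}\mathfrak{m}_\ui$, where $\mathfrak{m}_\ui=(X_1-i_1,\ldots,X_d-i_d)$ runs over the pairwise distinct maximal ideals of $P$ lying over $\ker\chi_\bfa$. By the Chinese Remainder Theorem in the completion this yields $\widehat{P}\cong\prod_{\ui\in\frakS_d\bfa}\bfk[[X_1-i_1,\ldots,X_d-i_d]]$, the idempotent cutting out the $\ui$-th factor being exactly the idempotent $e(\ui)$ from \eqref{dec}, since that factor is the locus on which each $X_r-i_r$ is topologically nilpotent. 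Combining this with the previous step gives the topological direct sum decomposition
\begin{equation*}
\cHeck=\bigoplus_{w\in\frakS_d}\ \bigoplus_{\ui\in\frakS_d\bfa} T_w\,\bfk[[X_1-i_1,\ldots,X_d-i_d]]\,e(\ui).
\end{equation*}

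Finally I would assemble the basis. The summand indexed by $(w,\ui)$ carries the topological basis $\{T_w(X_1-i_1)^{m_1}\cdots(X_d-i_d)^{m_d}e(\ui)\mid m_r\in\bbZ_{\geqslant 0}\}$, obtained from the monomial topological basis of the formal power series ring by transport along the topological isomorphism $f\mapsto T_wfe(\ui)$. Since a finite direct sum of topological vector spaces is simultaneously a finite product, both convergence and uniqueness of a series indexed by the union of these sets reduce to the corresponding assertions inside each summand; hence the union is a topological basis of $\cHeck$, which is precisely the set in the statement.

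The step I expect to be the main obstacle is the commutative-algebra identification of $\widehat{P}$: checking that the $I$-adic topology on $P$ coincides with the product of the $\mathfrak{m}_\ui$-adic topologies over $\ui\in\frakS_d\bfa$ (this is where finiteness of $P$ over $Z(\Heck)$ is used), and verifying that the idempotents produced by the product decomposition of $\widehat{P}$ are indeed the generalised-eigenspace idempotents $e(\ui)$ of \eqref{dec}. The reductions to the Bernstein basis and the passage to the inverse limit are formal; along the way one should also note that the joint generalised eigenvalue tuples of the $X_r$ that actually occur are exactly the permutations of $\bfa$, which is forced by the centre acting through $\chi_\bfa$.
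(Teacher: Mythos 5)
Your argument is correct, but it is worth noting that the paper does not prove this statement itself: it is cited verbatim from \cite[Lemma~3.8]{MS}, and the only indication of the intended method comes from how the paper handles the higher-level analogue, Proposition~\ref{prop_basis-lHecke-comp}, whose proof establishes linear independence by showing that the candidate basis elements act by linearly independent operators on the formal polynomial representation $\cPolHeck$ (spanning is declared ``clear''). Your route is genuinely different and entirely algebraic: you exploit that $\Heck=\bigoplus_{w}T_w P$ is free of finite rank over $P=\bfk[X_1^{\pm1},\ldots,X_d^{\pm1}]$ with $\mathfrak{m}_\bfa=\Heck\cdot\ker\chi_\bfa$, pass the inverse limit through the finite direct sum to get $\cHeck=\bigoplus_w T_w\widehat{P}$, and then identify $\widehat{P}$ via the standard decomposition of a finite algebra over the complete local ring $\widehat{P^{\frakS_d}}$ into a product of the local completions $\bfk[[X_1-i_1,\ldots,X_d-i_d]]$, matching the resulting idempotents with the $e(\ui)$ of \eqref{dec} by observing that each $X_r-i_r$ is topologically nilpotent precisely on the $\ui$-factor. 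This buys a self-contained proof that does not presuppose the construction of the faithful polynomial representation (Proposition~\ref{MS}), whereas the representation-theoretic approach establishes the topological basis and the faithfulness of $\cPolHeck$ in one go. All the individual steps you flag as potential obstacles check out: finiteness of $P$ over $Z(\Heck)=P^{\frakS_d}$ makes completion commute with base change and gives the product decomposition, the $I$-adic and $\sqrt{I}$-adic topologies on a Noetherian ring agree, and the maximal ideals of the Laurent polynomial ring over $\ker\chi_\bfa$ are exactly the $\mathfrak{m}_\ui$ with $\ui\in\frakS_d\bfa$ since every $a_r\ne0$ and $\bfk$ is algebraically closed. The final assembly of the topological basis from the summands is the elementary observation you make about finite direct sums.
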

 Informally speaking this means that every element in $\cHeck e(\ui)$ can be written uniquely as a power series in 
the $(X_r-i_r)$ with coefficients in $\Hfin_d(q)$, see \cite[VII (8)]{ZS} for a precise statement. In particular  $\Heck$ is everywhere dense in $\cHeck$ in the sense of  \cite[VII, Lemma 1]{ZS}. 

\begin{prop}[{\cite[Cor.~3.13]{MS}}]
\label{MS}
The algebra $\cHeck$ acts faithfully on 
\begin{eqnarray*}
\cPolHeck&=&\bigoplus_{\ui\in\frakS_d\bfa}\bfk[[x_1-i_1,\ldots,x_d-i_d]]e(\ui\def\cPollHeck{\widehat{\op{P}}_{\bfa,\bfQ}}).  
\end{eqnarray*}
\end{prop}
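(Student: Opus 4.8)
The plan is to deduce the completed statement from the affine (non-completed) version established earlier, namely Proposition~\ref{prop-pol_rep_lH} together with the faithfulness asserted in Proposition~\ref{prop-faithfullHecke}, by a standard ``completing a faithful representation'' argument. First I would recall the topological picture from the cited results of \cite{MS}: the completion $\cHeck$ has the topological basis $\{T_w(X_1-i_1)^{m_1}\cdots(X_d-i_d)^{m_d}e(\ui)\}$, so every element of $\cHeck e(\ui)$ is a convergent power series in the $(X_r-i_r)$ with coefficients in the finite-dimensional algebra $\Hfin_d(q)$, and $\cPolHeck$ is the corresponding completion of the Laurent polynomial representation $\PolHeck(\bfa)\oplus\cdots$ at the same maximal ideal; the $e(\ui)$-summand $\bfk[[x_1-i_1,\ldots,x_d-i_d]]e(\ui)$ is the completion of $\bfk[x_1^{\pm1},\ldots,x_d^{\pm1}]$ at the maximal ideal $(x_1-i_1,\ldots,x_d-i_d)$ (note each $i_r\in\bfk^*$, so inverting the $x_r$ is harmless in the completion). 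One checks the action of Proposition~\ref{prop-pol_rep_lH} is continuous for the $\mathfrak{m}_\bfa$-adic topologies: $T_r$, $X_r^{\pm1}$ and the $e(\ui)$ all carry $\mathfrak{m}_\bfa^N\PolHeck$ into $\mathfrak{m}_\bfa^{N-c}\PolHeck$ for a fixed constant $c$ (the only subtlety is the rational function $X_{r+1}/(X_r-X_{r+1})$, which makes sense and is continuous away from the diagonal; here one uses that the generalised eigenvalue components of the module stay disjoint, exactly as in \cite[Sec.~3.3, Cor.~3.13]{MS}). Hence the action extends uniquely by continuity to an action of $\cHeck$ on $\cPolHeck$.

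Next I would prove faithfulness. Suppose $z\in\cHeck$ acts by zero on $\cPolHeck$. Using the topological basis, write $z=\sum_{\ui}z e(\ui)$ with each $z e(\ui)=\sum_{w,m} a_{w,\ui}^{(m)} T_w (X-i)^{m} e(\ui)$ a convergent series, $a_{w,\ui}^{(m)}\in\bfk$. Grouping the power-series coefficients, $z e(\ui)=\sum_{w} p_{w,\ui}(X-i)\,T_w e(\ui)$ with $p_{w,\ui}\in\bfk[[x_1-i_1,\ldots,x_d-i_d]]$ (after commuting the $T_w$ past the polynomials, which only modifies lower-order $T$-terms and does not affect the argument). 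The operator $T_w$ acting on $\cPolHeck$ sends $f e(\ui)$ to $\sum_{y\le w} C_y\, y(f) e(\uc_y)$ with $C_w\ne 0$ a nonzero rational function, exactly as in the proof of Proposition~\ref{prop-basis-Hdl}; the key point, again from \cite{MS}, is that $C_w$ is invertible in the completed ring, so the ``leading'' $T_w$-term cannot be cancelled by terms with $y<w$ or by terms supported on other idempotents. Evaluating $z$ on the various power series $f e(\ui)$ and comparing the $T_w$-components then forces each $p_{w,\ui}=0$, i.e.\ $z=0$. This is essentially the same linear-independence argument as for the uncompleted algebra in Proposition~\ref{prop-basis-Hdl}, carried out over the completed base ring.

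Finally I would phrase the statement correctly for $\lHeck$: the relevant completion $\clHeck$ (to be introduced, with $1=\sum_{\ui}e(\ui)$ a refinement of the $e(\uc)$, $\uc\in J^{\ell,d}$) acts on $\cPollHeck$, and the proof is identical once one restricts to the black strands — indeed $\cPollHeck$ is built only from the black variables, the red labels enter only through the scalars $Q_{\sum c_j}$ appearing in the $T_r^2 e(\uc)$ relations, and via Lemma~\ref{lem-Hd_in_Hdl} the idempotent truncation $e(\omega)\lHeck e(\omega)\cong\Heck$ reduces the estimate to the already-known case. So the structure is: (1)~continuity of the action of Proposition~\ref{prop-pol_rep_lH}, hence an extension to the completion; (2)~a topological-basis/leading-term argument for faithfulness, parallel to Proposition~\ref{prop-basis-Hdl}. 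I expect the main obstacle to be the bookkeeping in step~(2): one must keep careful track of how the rational functions $C_y$ and the denominators $X_r-X_{r+1}$ behave in the $\mathfrak{m}_\bfa$-adic completion, and argue that the relevant denominators are units there (equivalently, that distinct entries $i_r\ne i_s$ of $\ui$ give $i_r-i_s$ invertible), so that the leading $T_w$-coefficient genuinely detects nonvanishing; all of this is, however, already packaged in \cite[Lemma~3.8, Cor.~3.13]{MS}, so the remaining work is to check it survives the addition of the red strands.
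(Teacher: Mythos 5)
The paper gives no proof of this proposition: it is imported verbatim as \cite[Cor.~3.13]{MS}, and the completion machinery behind it (topological bases, the $\mathfrak{m}_\bfa$-adic set-up) is deferred to that reference. What the paper \emph{does} prove, a little later, is the higher-level generalization in Proposition~\ref{prop_basis-lHecke-comp}, and the argument given there is exactly the one you sketch: extend the $\frakS_d$-action to the Laurent power-series field, write the action of a completed basis element $T_w e(\ui)$ on $f$ as $\sum_{y\leqslant w} y(\varphi_y f)$ with $\varphi_w\neq 0$ in the fraction field, and conclude that a series $\sum_w T_w a_w$ with $a_w$ in the power-series ring can only annihilate the representation if every $a_w$ vanishes. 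So your sketch is not a different route from the paper's methodology; it is the same leading-coefficient argument, here specialized to $\ell=0$.

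Two small points worth tightening. First, the apparent singularity in the $T_r$-action, namely $\frac{X_{r+1}}{X_r-X_{r+1}}(s_r(f)-f)$, is \emph{always} a power series, even on summands with $i_r=i_{r+1}$, because $s_r(f)-f$ is divisible by $X_r-X_{r+1}$; you do not need disjointness of generalized eigenspaces to make this term continuous, only to give the splitting $1=\sum e(\ui)$. Second, for the faithfulness step, $C_w$ need not be a unit in $\bfk[[x_1-i_1,\ldots,x_d-i_d]]$ (the denominators $X_r-X_{r+1}$ are non-units precisely when $i_r=i_{r+1}$); what is used is only that $C_w\neq 0$ as an element of the fraction field $\bfk((x_1-i_1,\ldots,x_d-i_d))$, so that $C_w a_w = 0$ forces $a_w=0$. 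Finally, your last paragraph drifts into the $\clHeck$/$\cPollHeck$ statement, which in the paper is a separate proposition; for the present statement one should stay with $\ell=0$ and $\cHeck$. With these adjustments your outline is sound and matches the proof strategy the paper (and \cite{MS}) actually use.
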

By Proposition~\ref{lem-cen-Hdl}, the algebra $Z(\lHeck)$ is independent of the level $\ell$ and so we can consider $\chi_\bfa$ as a central character of $\lHeck$ as well. Let  $\clHeck$ be the completion of ${\lHeck}$ with respect to the ideal $\frakm_\bfa$ generated by the kernel of $\chi_\bfa$ in $\lHeck$. We have again the  decomposition \eqref{dec} for each finite dimensional $\clHeck$-module $M$ and an idempotent  $e(\ui)\in \clHeck$ projecting onto $M_{\ui}$. The idempotents $e(\ui)$ for $\ui\in \frakS_d\bfa$ commute with the idempotents $e(\uc)$ for $\uc\in J^{\ell,d}$. Thus we may define idempotents $e(\uc,\ui)=e(\uc)e(\ui)$ in $\clHeck$. We have $1=\sum_{\uc,\ui}e(\uc,\ui)$. 
\begin{prop}
\label{prop_basis-lHecke-comp}
\begin{enumerate}
\item The following set (viewed as a sequence by picking any total ordering)
\begin{equation*}
\left\{T_w^{\ucp,\uc}(x_1-i_1)^{m_1}\ldots (x_d-i_d)^{m_d}e(\uc,\ui)\left|
\begin{array}[c]{ll}
  w\in \frakS_d,&m_i\in \bbZ_{\geqslant 0},\\
  \ucp,\uc\in J^{\ell,d},&\ui\in \frakS_d\bfa
  \end{array}
  \right.\right\}
\end{equation*}
forms a topological basis of $\clHeck$. 
\item \label{prop_rep-lHecke-comp}
The algebra $\clHeck$ acts (extending the actions from Propositions~\ref{prop-pol_rep_lH} and~\ref{MS}) faithfully on
\begin{eqnarray*}
\cPollHeck=\bigoplus_{\uc\in J^{\ell,d},\ui\in \frakS_d\bfa}\bfk[[x_1-i_1,\ldots,x_d-i_d]]e(\uc,\ui). 
\end{eqnarray*}
where $e(\uc,\ui)$ is just a formal symbol on which $e(\uc,\ui)$ acts by the identity and all other $e(\uc',\uj)$ as zero.
\end{enumerate}
\end{prop}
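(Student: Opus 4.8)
The plan is to deduce both parts of Proposition~\ref{prop_basis-lHecke-comp} from their ``non-completed'' counterparts already established in the excerpt, namely Proposition~\ref{prop-basis-Hdl} (the two honest bases of $e(\ucp)\lHeck e(\uc)$), Proposition~\ref{lem-cen-Hdl} (identifying $Z(\lHeck)$ with the symmetric Laurent polynomials), together with the analogous completed statements for $\Heck$ recalled from \cite{MS} (the topological basis of $\cHeck$ and the faithful action on $\cPolHeck$). The key structural input is the idempotent decomposition $1=\sum_{\uc,\ui}e(\uc,\ui)$ and the observation that $\clHeck e(\uc,\ui)$ should be, as a topological vector space, a completion of $\lHeck e(\uc)$ ``in the $x_r-i_r$ directions'' — exactly as in the no-level case.

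For part (1), I would argue as follows. By Proposition~\ref{prop-basis-Hdl}, the elements $T_w^{\ucp,\uc}x_1^{m_1}\cdots x_d^{m_d}e(\uc)$ with $w\in\frakS_d$, $m_i\in\bbZ$, $\ucp,\uc\in J^{\ell,d}$ form a $\bfk$-basis of $\lHeck$. Rewriting the monomials $x_r^{m_r}$ as polynomials in $(x_r-i_r)$ for each choice of $\ui\in\frakS_d\bfa$ and using the idempotents $e(\ui)$ (which, since the $x_r$ act topologically nilpotently modulo $\frakm_\bfa$ in the relevant directions, refine the basis and decompose $1$), one sees that the proposed set spans a dense subspace of $\clHeck$ and that finite $\frakm_\bfa$-adic truncations of it match the associated graded pieces. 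The cleanest way to make this rigorous is to mimic \cite[Lemma~3.8]{MS} verbatim: check that the $\frakm_\bfa$-adic filtration on $\clHeck$ has associated graded computed by these elements, using that multiplication by $x_r-i_r$ raises filtration degree by one on the summand indexed by $\ui$, and that $\lHeck$ is everywhere dense in $\clHeck$ (which follows because $\frakm_\bfa$ is generated by central elements lying in $\bfk[x_1^{\pm1},\ldots,x_d^{\pm1}]^{\frakS_d}$, so $\bigcap_j\frakm_\bfa^j=0$ on the level of the polynomial representation). Then uniqueness of the expansion is exactly linear independence in each finite quotient $\clHeck/\frakm_\bfa^N$, which reduces to Proposition~\ref{prop-basis-Hdl}.

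For part (2), I would first upgrade the faithful polynomial representation of Proposition~\ref{prop-pol_rep_lH}. The action of $\lHeck$ on $\PollHeck$ restricts, after completing at each $\ui\in\frakS_d\bfa$, to an action on $\cPollHeck=\bigoplus_{\uc,\ui}\bfk[[x_1-i_1,\ldots,x_d-i_d]]e(\uc,\ui)$: the generators $X_r$, $X_r'$ act by multiplication (invertibly, since $a_r\neq0$, so $x_r$ is a unit in the completed local ring), $e(\uc)$ acts as the projector, and $T_re(\uc)$ acts by the same formulas — here one must check the denominator $X_r-X_{r+1}$ appearing in the $c_r=c_{r+1}=0$ case does not cause trouble, which is precisely the point handled in \cite{MS} by passing to generalised eigenspaces and noting $T_r$ still acts by a well-defined operator because $s_r(f)-f$ is divisible by $x_r-x_{r+1}$ in $\bfk[[x_1-i_1,\ldots]]$ whenever $i_r=i_{r+1}$, while if $i_r\neq i_{r+1}$ the element $X_r-X_{r+1}$ is already invertible. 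This extends the action of $\lHeck$ continuously to $\clHeck$. Faithfulness then follows from part (1): an element of $\clHeck$ acting by zero must have all coefficients (relative to the topological basis) equal to zero, by the same linear-independence-of-operators argument as in the proof of Proposition~\ref{prop-basis-Hdl} — $T_w^{\ucp,\uc}$ sends $\bfk[[x-i]]e(\ucp)$ to a sum over $y\leqslant w$ of $C_y\,y(f)e(\uc)$ with $C_w$ a nonzero (now unit) rational function, so the leading term in the Bruhat order cannot be cancelled.

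The main obstacle I anticipate is not the combinatorics of the basis but the analytic bookkeeping: verifying carefully that the $T_r$-action is well-defined on the completed modules (the $(X_r-X_{r+1})^{-1}$ issue) and that the $\frakm_\bfa$-adic topology on $\clHeck$ is the ``right'' one, i.e.\ that completing at $\frakm_\bfa$ really does produce the direct sum over $\ui\in\frakS_d\bfa$ of power series rings and nothing more. Both points are resolved in \cite{MS} for $\ell=0$, and the higher-level case differs only by the harmless presence of red strands (which contribute no $X$-variables, by \eqref{rel_Xe=0}) and the extra idempotents $e(\uc)$, which commute with everything relevant. So in practice the proof is: ``repeat \cite[Lemma~3.8 and Cor.~3.13]{MS}, keeping track of the colour data $\uc$ via Lemma~\ref{lem-Hd_in_Hdl} and Proposition~\ref{prop-basis-Hdl}.''
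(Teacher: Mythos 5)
Your proposal is correct and follows essentially the same route as the paper: the paper likewise treats the well-definedness of the action and the spanning property as immediate, and derives both the topological basis claim and faithfulness in one stroke from the Bruhat-triangularity of $T_w^{\ucp,\uc}$ acting on $\cPollHeck$ (with nonzero leading coefficient $\varphi_w$ in the Laurent series ring), exactly as in your linear-independence-of-operators argument. The only cosmetic difference is that the paper does not spell out the $\frakm_\bfa$-adic associated-graded bookkeeping you propose for the spanning step, simply asserting density of $\lHeck$ in $\clHeck$.
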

\begin{proof} All statements follow directly from the definitions except the faithfulness. The action is such that  $e(\uc,\ui)=e(\uc)e(\ui)$ acts as the projector to the direct summand $\bfk[[x_1-i_1,\ldots,x_d-i_d]]e(\uc,\ui)$. We then write $\cPollHeck=\bigoplus_{\uc\in J^{\ell,d}} P({\uc)}$, where $P({\uc})=\bigoplus_{\ui\in \frakS_d\bfa }\bfk[[x_1-i_1,\ldots,x_d-i_d]]e(\uc,\ui)$. Then the completion $\bfk[[x_1-i_1,\ldots,x_d-i_d]]e(\ui)\subset\clHeck$ acts just by the obvious multiplication on $\bfk[[x_1-i_1,\ldots,x_d-i_d]]e(\ui)$ and by zero on the other summands. There is an action of $\frakS_d$ on $P({\uc})$ such that $w\in \frakS_d$ sends 
$f(x_1-i_1,\hdots,x_d-i_d)e(\uc,\ui)$ to $f(x_{w(1)}-i_1,\hdots,x_{w(d)}-i_d)e(\uc,w(\ui))$
where $w(\ui)=(i_{w^{-1}(1)},\hdots, i_{w^{-1}(d)})$. 
The action of $\frakS_d$ on $P(\uc)$ can therefore be extended to an action on
$$
\bigoplus_{\ui\in \frakS_d\bfa}\bfk((x_1-i_1,\ldots,x_d-i_d))e(\uc,\ui).
$$
Then the element $T_w^{\ucp,\uc}$ takes $ P({\uc)}$ to $ P({\ucp)}$ and sends an element $fe(\uc)$,
$$
f\in \bigoplus_{\ui\in \frakS_d\bfa}\bfk[[x_1-i_1,\ldots,x_d-i_d]]e(\uc,\ui),
$$
to an element of the form $\sum_{y\in\frakS_d,y\leqslant w}y(\varphi_yf)e(\ucp)$,
where we have 
$$\varphi_y\in \bigoplus_{\ui\in\frakS_d\bfa}\bfk((x_1-i_1,\hdots,x_d-i_d))e(\uc,\ui)
$$
and $\varphi_w\ne 0$. This implies that an expression of the form $\sum_{w\in \frakS_d}T^{\ucp,\uc}_wa_w$ with $a_w\in\bfk[[x_1-i_1,\ldots,x_d-i_d]]e(\ui)$ acts on $\cPollHeck$ by zero only if each $a_w$ is zero. This means exactly that the set from the statement of Proposition~\ref{prop_basis-lHecke-comp} acts on $\cPollHeck$ by linearly independent operators. It is clear that this set spans the algebra $\clHeck$ in the topological sense. Hence it forms a topological basis of $\clHeck$, and that the representation $\cPollHeck$ is faithful.
\end{proof}

\section{Affine KLR and tensor product algebras $\lR(\Gamma)$}
\label{sec-KLR_tens}
The next goal is to identify our higher level Hecke algebras, after completion, with Webster's tensor product algebras, \cite{Webster}, attached to a type $A$ quiver depending on $q$ and $\bfQ$. Let $J$ be as in Setup~\ref{set-up}.

\subsection{Tensor product algebras}
Let $\Gamma=(I,A)$ be a quiver without loops with set of vertices $I$ and set of arrows $A$. We call elements in $I$ {\it labels} since they will be used later as black and red labels.

Consider the set $\Icol=J\times I$ with the two obvious projections $c\colon \Icol\to J$ and $\gamma\colon \Icol\to I$ that forget the labels respectively the colours. Obviously, elements $z\in\Icol$ are determined by their colour $c(z)$ and their label $\gamma(z)$, thus we call them {\it called labels}. We call $z$ {\it black} if $c(z)=0$ and {\it red} otherwise. One  can also think of $\Icol$  as two copies of $I$, one copy coloured in black and the other copy coloured in red. 

We fix an $\ell$-tuple $\bfQ=(Q_1,\ldots,Q_\ell)\in I^\ell$.
\begin{df}
Let $\bfnu\in I^d$. Then $\Icolnu$ denotes the set of $(\ell+d)$-tuples $\ut=(t_1,\cdots,t_{\ell+d})\in \Icol^{\ell+d}$ such that 
\begin{itemize}
\item $\sum_{i=1}^{\ell+d}c(t_i)=d$ (i.e., $c(\ut)$ contains $d$ black elements and $\ell$ red elements),
\item the labels of black elements in $\ut$ form a permutation of $\bfnu$,
\item the labels of the red elements of $\ut$ are $Q_1,\ldots, Q_\ell$ (in this order).  
\end{itemize}
\end{df}  

\begin{df} 
 A $\Gamma$-$(\ell,d)$-diagram is an $(\ell,d)$-diagram in the sense of Definition~\ref{diag}  for the set $I_b=I_r=I$ of vertices of $\Gamma$. It is of type $(\bfnu,\bfQ)$, if the sequence of coloured labels is in $\Icolnu$.
\end{df}  
As before, the labels are read from left to right at the bottom of the diagram. Since reds strands never cross, we could read off the type (although possibly realized via a different sequence in the same orbit) at any horizontal slice of the diagram  instead of at the bottom.

\begin{ex}
Take $\bfnu=(i,i,j)\in I^3$, $\bfQ=(i,k)\in I^2$ (in particular, we have $d=3$ and $\ell=2$).  Then the tuple $\ut=((i,1),(j,0),(i,0),(i,0),(k,1))$ is an element of $\Icol$. The labels of black elements in $\ut$ are $(j,i,i)$, which is a permutation of $\bfnu$. The labels of red elements in $\ut$ are $(i,k)$, this coincides with $\bfQ$. If we forget the labels in $\ut$, we get the tuple of colours $c(\ut)=(1,0,0,0,1)\in J^{2,3}$.
\end{ex}
\begin{figure}
\begin{eqnarray*}
    \begin{tikzpicture}[scale=0.6]
      \draw[thick](-4,0) +(-1,-1) -- +(1,1) node[below,at start]
      {$i$}; \draw[thick](-4,0) +(1,-1) -- +(-1,1) node[below,at
      start] {$j$}; \fill (-4.5,.5) circle (5pt);
      \node at (-2,0){=}; \draw[thick](0,0) +(-1,-1) -- +(1,1)
      node[below,at start] {$i$}; \draw[thick](0,0) +(1,-1) --
      +(-1,1) node[below,at start] {$j$}; \fill (.5,-.5) circle (5pt);
      \node at (4,0){unless $i=j$};
    \end{tikzpicture}
    \end{eqnarray*}
\begin{eqnarray*}
    \begin{tikzpicture}[scale=0.6,thick]
      \draw[thick](-4,0) +(-1,-1) -- +(1,1) node[below,at start]
      {$i$}; \draw[thick](-4,0) +(1,-1) -- +(-1,1) node[below,at
      start] {$i$}; \fill (-4.5,.5) circle (5pt);
      \node at (-2,0){=}; \draw[thick](0,0) +(-1,-1) -- +(1,1)
      node[below,at start] {$i$}; \draw[thick](0,0) +(1,-1) --
      +(-1,1) node[below,at start] {$i$}; \fill (.5,-.5) circle (5pt);
      \node at (2,0){+}; \draw[thick](4,0) +(-1,-1) -- +(-1,1)
      node[below,at start] {$i$}; \draw[thick](4,0) +(0,-1) --
      +(0,1) node[below,at start] {$i$};
    \end{tikzpicture}
&&
    \begin{tikzpicture}[scale=0.6,thick]
      \draw[thick](-4,0) +(-1,-1) -- +(1,1) node[below,at start]
      {$i$}; \draw[thick](-4,0) +(1,-1) -- +(-1,1) node[below,at
      start] {$i$}; \fill (-4.5,-.5) circle (5pt);
      \node at (-2,0){=}; \draw[thick](0,0) +(-1,-1) -- +(1,1)
      node[below,at start] {$i$}; \draw[thick](0,0) +(1,-1) --
      +(-1,1) node[below,at start] {$i$}; \fill (.5,.5) circle (5pt);
      \node at (2,0){+}; \draw[thick](4,0) +(-1,-1) -- +(-1,1)
      node[below,at start] {$i$}; \draw[thick](4,0) +(0,-1) --
      +(0,1) node[below,at start] {$i$};
    \end{tikzpicture}
    \end{eqnarray*}
\begin{eqnarray*}
      \begin{tikzpicture}[scale=0.6, thick]  
      \draw (-4,0)  +(0,-1) .. controls (-2.4,0) ..  +(0,1) node[below,at start]{$i$}; 
       \draw (-2.4,0)  +(0,-1) .. controls (-4,0) ..  +(0,1)  node[below,at start]{$i$}; 
       \node at (-1.5,0){$=$}; \node at (-0.5,0){$0$}; \node at (0.7,0){and};
    \draw (2,0)  +(0,-1) .. controls (3.6,0) ..  +(0,1)  node[below,at start]{$i$};
       \draw (3.6,0)  +(0,-1) .. controls (2,0) ..  +(0,1)  node[below,at start]{$j$} ; 
       \node at (4.5,0){$=$}; 
      \draw (7.8,0) +(0,-1) -- +(0,1) node[below,at start]{$j$};
      \draw (7,0) +(0,-1) -- +(0,1) node[below,at start]{$i$};
       \node[inner xsep=10pt,fill=white,draw,inner ysep=7pt] at (7.4,0) {$\calQ_{ij}(y_1,y_2)$};
\node at (13.5,0) {if $i\ne j$};
        \end{tikzpicture} 
\end{eqnarray*}
\begin{eqnarray*}
    \begin{tikzpicture}[thick,scale=0.6]
      \draw (-3,0) +(1,-1) -- +(-1,1) node[below,at start]{$k$}; 
      \draw (-3,0) +(-1,-1) -- +(1,1) node[below,at start]{$i$};
       \draw (-3,0) +(0,-1) .. controls (-4,0) ..  +(0,1) node[below,at
      start]{$j$}; \node at (-1,0) {=}; 
      \draw (1,0) +(1,-1) -- +(-1,1) node[below,at start]{$k$}; 
      \draw (1,0) +(-1,-1) -- +(1,1) node[below,at start]{$i$}; 
      \draw (1,0) +(0,-1) .. controls (2,0) ..  +(0,1) node[below,at start]{$j$}; \node at (5,0)
      {unless $i=k\neq j$};
    \end{tikzpicture}
  \end{eqnarray*}
\begin{eqnarray*}
    \begin{tikzpicture}[thick,scale=0.6]
      \draw (-3,0) +(1,-1) -- +(-1,1) node[below,at start]{$i$}; 
      \draw (-3,0) +(-1,-1) -- +(1,1) node[below,at start]{$i$}; 
      \draw (-3,0) +(0,-1) .. controls (-4,0) ..  +(0,1) node[below,at start]{$j$}; \node at (-1,0) {=}; 
      \draw (1,0) +(1,-1) -- +(-1,1) node[below,at start]{$i$}; 
      \draw (1,0) +(-1,-1) -- +(1,1) node[below,at start]{$i$}; 
      \draw (1,0) +(0,-1) .. controls (2,0) ..  +(0,1) node[below,at start]{$j$}; \node at (3,0){$+$};        
      \draw (6.2,0)+(1,-1) -- +(1,1) node[below,at start]{$i$}; 
      \draw (6.2,0)+(-1,-1) -- +(-1,1) node[below,at start]{$i$}; 
      \draw (6.2,0)+(0,-1) -- +(0,1) node[below,at start]{$j$};
\node[inner ysep=8pt,inner xsep=5pt,fill=white,draw,scale=.6] at (6.2,0){$\displaystyle \frac{\calQ_{ij}(y_3,y_2)-\calQ_{ij}(y_1,y_2)}{y_3-y_1}$};
\node at (12,0) {if $i\ne j$};
    \end{tikzpicture}
 \end{eqnarray*}
 \caption{Tensor product algebra relations I: The KLR relations}
  \label{defKLR}
\end{figure}

To define the tensor product algebras we need one more definition. For each $i,j\in I$ we denote by $h_{i,j}$ the number of arrows in the quiver $\Gamma$ going from $i$ to $j$, and define for $i\ne j$ the polynomials
$$
\calQ_{ij}(u,v)=(u-v)^{h_{i,j}}(v-u)^{h_{j,i}}.
$$

\begin{df}
\label{tpalg}
Fix a $d$-tuple $\bfnu\in I^d$.  The \emph{tensor product algebra} $\lR(\Gamma)$ (or simply $\lR$) is the induced algebra structure on the vector space spanned by all $\Gamma$-$(\ell,d)$-diagrams of type $(\bfnu,\bfQ)$ 
modulo the {\it tensor product algebra relations of KLR type} from Figure~\ref{defKLR} and the {\it  tensor product algebra relations of the second type} from Figure~\ref{reltensor2}.
\end{df}

\begin{rk}
The special case where we only allow black strands (that is $\ell=0$),  is the KLR algebra $\cR$ originally introduced in \cite{KL} and \cite{Rou2KM}. The following elements (defined for $\ui=(i_1,\cdots,i_d)\in I^\bfnu$, $i\in[1;d]$ and $r\in[1;d-1]$)
$$
\tikz[thick,xscale=2.5,yscale=1.5]{
\node at (-.7,.25) {$e(\ui)=$};
\draw (0,0) --(0,.5) node[below,at start]{$i_1$};
\draw (.4,0) --(.4,.5) node[below,at start]{$i_2$};
\node at (.7,.25) {$\cdots$};
\draw (1,0) --(1,.5) node[below,at start]{$i_i$};
\node at (1.3,.25) {$\cdots$};
\draw (1.6,0) --(1.6,.5) node[below,at start]{$i_{d-1}$};
\draw (2,0) --(2,.5) node[below,at start]{$i_d$};
}
$$
and
$$
\tikz[thick,xscale=2.5,yscale=1.5]{
\node at (-.9,.25) {$y_ie(\ui)=$};
\draw (0,0) --(0,.5) node[below,at start]{$i_1$};
\draw (.4,0) --(.4,.5) node[below,at start]{$i_2$};
\node at (.7,.25) {$\cdots$};
\draw (1,0) --(1,.5) node[below,at start]{$i_i$};
\fill (1,.25) circle (1pt);
\node at (1.3,.25) {$\cdots$};
\draw (1.6,0) --(1.6,.5) node[below,at start]{$i_{d-1}$};
\draw (2,0) --(2,.5) node[below,at start]{$i_d$};
}
$$
and 
$$
\tikz[thick,xscale=2.5,yscale=1.5]{
\node at (-.67,.25) {$\psi_re(\ui)=$};
\draw (0,0) --(0,.5) node[below,at start]{$i_1$};
\node at (.3,.25) {$\cdots$};
\draw (.6,0) --(.6,.5) node[below,at start]{$i_{r-1}$};
\draw (.9,0) --(1.3,.5) node[below,at start]{$i_r$};
\draw (1.3,0) --(.9,.5) node[below,at start]{$i_{r+1}$};
\draw (1.6,0) --(1.6,.5) node[below,at start]{$i_{r+2}$};
\node at (1.9,.25) {$\cdots$};
\draw (2.2,0) --(2.2,.5) node[below,at start]{$i_d$};
}
$$
generate the algebra, see \cite{KL}, \cite{Rou2KM}.
\end{rk}

Now, for $\ui\in \Icolnu$, $r\in[1;\ell+d-1]$, $j\in [1;\ell+d]$ we define more generally elements $e(\ui)$,  $\psi_re(\ui)$, $Y_ie(\ui)$ that will generate the algebra $\lR$. 

\begin{df}
Let $e(\ui)\in \lR$ be the idempotent given by the diagram with only vertical strands with colours and labels determined by the sequence $\ui$. Let $Y_je(\ui)$ be the same diagram with additionally  a dot on the strand number $j$ (counting from the left) in case  $i_i$ is black, and set $Y_je(\ui)=0$ if $i_j$ is red. Finally let $\psi_re(\ui)$ be the same diagram as $e(\ui)$ except that the $r$-th and $(r+1)$th strand intersect once in case not both $i_r$ and $i_{r+1}$ are red, and set $\psi_re(\ui)=0$ otherwise. 
\end{df}

\begin{ex}
For example, for $\ui=((i,1),(j,0),(i,0),(i,0),(k,1))$, we have 
$$
    \begin{tikzpicture}[thick, scale=0.4]
\node at (2.8,-.2){$ \Large{e(\ui)\;\;= }$}  ;
      \draw[wei] (6.5,0)  +(-2,-1) -- +(-2,1) node[at start,below]{$i$};
      \draw (6.5,0) +(-1,-1) -- +(-1,1) node [at start,below]{$j$};
      \draw (6.5,0) +(0,-1) -- +(0,1)node [at start,below]{$i$};
      \draw (6.5,0)  +(1,-1) -- +(1,1) node[at start,below]{$i$};
      \draw[wei] (6.5,0)  +(2,-1) -- +(2,1) node[at start,below]{$k$};
 \end{tikzpicture}
$$
with $Y_re(\ui)=0$ for $r=1$ and $r=5$.
\end{ex}

We preferred here to define the algebras diagrammatically instead of giving a cumbersome definition similar to Definition~\ref{def-extaffHecke_alg}.  Analogously to the situation for the algebra ${\lHeck}$, it is convenient to introduce the elements $y_1,\ldots,y_d\in \lR$ defined as $y_r=\sum_{\ui\in I^d}Y_{r_\ui}e(\ui)$, with $r_\ui$ being the number of the position in $\ui$ where the colour black appears for the $r$th time (counted from the left).

\subsection{Polynomial representation}
Let $\PollR$ be the direct sum 
\begin{eqnarray*}
\PollR&=&\bigoplus_{\ui\in \Icolnu}\bfk[y_1,\ldots,y_d]e(\ui),
\end{eqnarray*}
of polynomial rings,  where again $e(\ui)$ is just a formal symbol. We can also view $e(\ui)$ as a projector in $\PollR$ to the summand  $\bfk[y_1,\ldots,y_d]e(\ui)$. 

For $r\in [1;d-1]$ denote by $\partial_r$ the {\it Demazure operator} 
\begin{eqnarray}
\label{defDemazure}
\partial_r\colon \bfk[y_1,\ldots,y_d]\to \bfk[y_1,\ldots,y_d], \qquad
f\mapsto (f-s_r(f))/(y_r-y_{r+1}).
\end{eqnarray}
For each $i,j\in I$ such that $i\ne j$, consider the following polynomial $P_{i,j}(u,v)=(u-v)^{h_{i,j}}$. In the case $\ell=0$ we write $\R$ instead of $\lR$ and $\PolR$ instead of $\PollR$. (The algebra $\R$ is the usual KLR algebra.) Then we have the following faithful representation, see \cite[Sec.~2.3]{KL}. 
\begin{lem}
\label{lem-polrep_KLR}
The algebra $\R$ has a faithful representation on $\PolR$ such that

\begin{itemize}
\item the element $e(\ui)$ acts as the projector onto $\bfk[y_1,\ldots,y_d]e(\ui)$,

\item the element $y_re(\ui)$ acts by multiplication with $y_r$ on $\bfk[y_1,\cdots,y_d] e(\ui)$ and by zero on all other direct summands of $\PolR$,

\item the element $\psi_re(\ui)$ acts nontrivially only on $\bfk[y_1,\ldots,y_d]e(\ui)$ and there as 
\begin{eqnarray*}
fe(\ui)&\mapsto&
\begin{cases}
\partial_r(f)e(\ui) &\mbox{ if } j_r=j_{r+1},\\
P_{i_r,i_{r+1}}(y_r,y_{r+1})s_r(f)e(s_r(\ui)) &\mbox{ else.}
\end{cases}
\end{eqnarray*}
\end{itemize}
\end{lem}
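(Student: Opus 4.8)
The plan is to prove Lemma~\ref{lem-polrep_KLR} directly, by the same two-step strategy used throughout the paper for polynomial representations: first check that the prescribed formulas define an action of $\R$ on $\PolR$, and then separately establish faithfulness. For the first step I would simply run through the defining relations of the KLR algebra $\R$ (the $\ell=0$ specialization of the relations in Figure~\ref{defKLR}, i.e.\ the usual Khovanov--Lauda--Rouquier relations) and verify that the operators attached to $e(\ui)$, $y_re(\ui)$ and $\psi_re(\ui)$ satisfy them. The only subtle points are the ones involving two adjacent strands: the quadratic relation $\psi_r^2 e(\ui) = \calQ_{i_r,i_{r+1}}(y_r,y_{r+1})e(\ui)$ (for $i_r\ne i_{r+1}$) and $\psi_r^2 e(\ui)=0$ for $i_r=i_{r+1}$, and the braid-type relation on three adjacent strands. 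For $i_r = i_{r+1}$ one uses $\partial_r^2 = 0$; for $i_r \ne i_{r+1}$ one computes $\psi_r^2$ as $P_{i_{r+1},i_r}(y_r,y_{r+1})\,s_r\!\big(P_{i_r,i_{r+1}}(y_r,y_{r+1})s_r(f)\big) = P_{i_r,i_{r+1}}(y_r,y_{r+1})P_{i_{r+1},i_r}(y_{r+1},y_r)\,f$, which equals $(y_r-y_{r+1})^{h_{i_r,i_{r+1}}}(y_{r+1}-y_r)^{h_{i_{r+1},i_r}}f = \calQ_{i_r,i_{r+1}}(y_r,y_{r+1})f$ as required. The braid relation on three strands is the standard Demazure/nil-Hecke computation combined with the twisting factors $P_{i,j}$, and is done case-by-case according to which of the three labels coincide; this is routine but tedious, so I would only indicate the pattern of the argument and cite \cite[Sec.~2.3]{KL} for the detailed verification.

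The second step, faithfulness, is the part I expect to be the main obstacle, though it is also standard. The idea is to produce a spanning set of $\R$ small enough that linear independence of the corresponding operators on $\PolR$ forces the spanning set to be a basis. Concretely, $\R e(\ui)$ is spanned by elements $\psi_w\, y_1^{m_1}\cdots y_d^{m_d}\, e(\ui)$, where $w$ ranges over $\frakS_d$, $\psi_w$ is a fixed reduced-word representative, and $m_j \geqslant 0$; this follows from the relations since any diagram can be rewritten so that all dots are pushed to the top and no two strands cross more than once. Now suppose $\sum_w \psi_w\, a_w(y)\, e(\ui)$ acts by zero on $\PolR$. Acting on a monomial $fe(\ui)$ sends it to $\sum_{y \leqslant w} c_y\, y(a_w f)\, g_y(y)\, e(y(\ui))$ with the top term $w$ appearing with a nonzero polynomial coefficient coming from the product of the $P$-factors; since the $w$ act by genuinely distinct permutations of the variables and land in distinct summands $e(y(\ui))$, the usual triangularity-with-respect-to-Bruhat-order argument shows each $a_w$ must vanish. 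This is exactly parallel to the argument given in the proof of Proposition~\ref{prop-basis-Hdl} (and Proposition~\ref{prop_basis-lHecke-comp}) for the Hecke side, with Demazure operators replacing the Hecke operators.

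So in summary: the proof is (i) a relation-by-relation check that the formulas define a module structure, with the quadratic and braid relations on adjacent strands being the only nontrivial cases, and (ii) a triangularity argument over $\frakS_d$ showing the natural spanning set acts by linearly independent operators, hence is a basis and the representation is faithful. Both halves are by now well understood; I would present step (i) fairly briefly and give step (ii) in a bit more detail since it is where one must be careful about the bookkeeping of Bruhat order and the $P_{i,j}$ twisting factors, but even there I would ultimately appeal to \cite[Sec.~2.3]{KL} rather than reproduce the whole computation. The anticipated difficulty is purely organizational — keeping the many small cases straight — rather than conceptual.
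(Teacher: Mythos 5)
The paper does not actually prove this lemma: it is stated together with a pointer to \cite[Sec.~2.3]{KL} (``Then we have the following faithful representation, see \cite[Sec.~2.3]{KL}''), so the result is being recalled, not reproved. Your proposal sketches exactly the standard Khovanov--Lauda argument---verify the KLR relations on the prescribed operators, then prove faithfulness by the Bruhat-triangularity argument over the spanning set $\{\psi_w y_1^{m_1}\cdots y_d^{m_d} e(\ui)\}$---and in both halves you ultimately defer back to \cite[Sec.~2.3]{KL} for the details. So you have not taken a different route; you have unpacked the citation the paper uses.

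One small algebraic slip is worth flagging. In the verification of the quadratic relation you pass from $P_{i_{r+1},i_r}(y_r,y_{r+1})\,s_r\bigl(P_{i_r,i_{r+1}}(y_r,y_{r+1})s_r(f)\bigr)$ to $P_{i_r,i_{r+1}}(y_r,y_{r+1})\,P_{i_{r+1},i_r}(y_{r+1},y_r)\,f$, which transposes the $P$-subscripts; the honest simplification is $P_{i_{r+1},i_r}(y_r,y_{r+1})\,P_{i_r,i_{r+1}}(y_{r+1},y_r)\,f$. With the paper's normalization $P_{ij}(u,v)=(u-v)^{h_{ij}}$ this yields $\calQ_{i_{r+1},i_r}(y_r,y_{r+1})$ rather than $\calQ_{i_r,i_{r+1}}(y_r,y_{r+1})$, the two differing by the sign $(-1)^{h_{i_r,i_{r+1}}+h_{i_{r+1},i_r}}$. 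This is a factorization-of-$\calQ$ convention matter (easily repaired by swapping the subscripts of $P$ in the formula for $\psi_r$, or by redefining $P_{ij}(u,v)=(u-v)^{h_{ji}}$), not a substantive error, but you should sort out the bookkeeping explicitly rather than let two slips cancel. Otherwise the proposal is fine and matches the intended argument.
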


\begin{figure}
\begin{eqnarray*}
    \begin{tikzpicture}[scale=0.7, thick]
      \draw (-3,0)  +(1,-1) -- +(-1,1) node[at start,below]{$i$};
      \draw (-3,0) +(-1,-1) -- +(1,1)node [at start,below]{$j$};
      \draw[wei] (-3,0)  +(0,-1) .. controls (-4,0) ..  +(0,1)node [at start,below]{$k$};
      \node at (-1,0) {=};
      \draw (1,0)  +(1,-1) -- +(-1,1) node[at start,below]{$i$};
      \draw (1,0) +(-1,-1) -- +(1,1) node [at start,below]{$j$};
      \draw[wei] (1,0) +(0,-1) .. controls (2,0) ..  +(0,1)node [at start,below]{$k$};
\node at (2.8,0) {$+ $};
      \draw (6.5,0)  +(1,-1) -- +(1,1) node[at start,below]{$i$};
      \draw (6.5,0) +(-1,-1) -- +(-1,1) node [at start,below]{$j$};
      \draw[wei] (6.5,0) +(0,-1) -- +(0,1)node [at start,below]{$k$};
\node at (3.8,-.2){$ \delta_{i,j,k} $}  ;
 \end{tikzpicture}
\end{eqnarray*}
\begin{eqnarray*}
    \begin{tikzpicture}[scale=0.7,thick,baseline=2.85cm]
      \draw[wei] (-3,3)  +(1,-1) -- +(-1,1);
      \draw (-3,3)  +(0,-1) .. controls (-4,3) ..  +(0,1);
      \draw (-3,3) +(-1,-1) -- +(1,1);
      \node at (-1,3) {=};
      \draw[wei] (1,3)  +(1,-1) -- +(-1,1);
  \draw (1,3)  +(0,-1) .. controls (2,3) ..  +(0,1);
      \draw (1,3) +(-1,-1) -- +(1,1);    \end{tikzpicture}
&\quad\quad\quad&
    \begin{tikzpicture}[scale=0.7,thick,baseline=2.85cm]
      \draw (-3,3)  +(1,-1) -- +(-1,1);
      \draw (-3,3)  +(0,-1) .. controls (-4,3) ..  +(0,1);
      \draw[wei] (-3,3) +(-1,-1) -- +(1,1);
      \node at (-1,3) {=};
      \draw (1,3)  +(1,-1) -- +(-1,1);
  \draw (1,3)  +(0,-1) .. controls (2,3) ..  +(0,1);
      \draw[wei] (1,3) +(-1,-1) -- +(1,1);    \end{tikzpicture}
\end{eqnarray*}
\begin{eqnarray*}
    \begin{tikzpicture}[scale=0.7,thick]
  \draw(-3,6) +(-1,-1) -- +(1,1);
  \draw[wei](-3,6) +(1,-1) -- +(-1,1);
\fill (-3.5,5.5) circle (5pt);
\node at (-1,6) {=};
 \draw(1,6) +(-1,-1) -- +(1,1);
  \draw[wei](1,6) +(1,-1) -- +(-1,1);
\fill (1.5,6.5) circle (5pt);
    \end{tikzpicture}
&\quad\quad\quad&
    \begin{tikzpicture}[scale=0.7,thick]
  \draw[wei](-3,6) +(-1,-1) -- +(1,1);
  \draw(-3,6) +(1,-1) -- +(-1,1);
\fill (-3.5,6.5) circle (5pt);
\node at (-1,6) {=};
 \draw[wei](1,6) +(-1,-1) -- +(1,1);
  \draw(1,6) +(1,-1) -- +(-1,1);
\fill (1.5,5.5) circle (5pt);
    \end{tikzpicture}
    \end{eqnarray*}
   
\begin{eqnarray*}
  \begin{tikzpicture}[scale=0.7,thick]
    \draw (-2.8,0)  +(0,-1) .. controls (-1,0) ..  +(0,1) node[below,at start]{$i$};
       \draw[wei] (-1.2,0)  +(0,-1) .. controls (-3,0) ..  +(0,1) node[below,at start]{$j$};
           \node at (-.3,0) {=};
    \draw[wei] (2.8,0)  +(0,-1) -- +(0,1) node[below,at start]{$j$};
    \draw (1.2,0)  +(0,-1) -- +(0,1) node[below,at start]{$i$};
       \fill (1.2,0) circle (5pt) node[right=5pt]{$\delta_{i,j}$};
      \end{tikzpicture}   
  &\quad\quad\quad&
  \begin{tikzpicture} [scale=0.7,thick]
             \draw[wei] (-2.8,0)  +(0,-1) .. controls (-1,0) ..  +(0,1) node[below,at start]{$j$};
  \draw (-1.2,0)  +(0,-1) .. controls (-3,0) ..  +(0,1) node[below,at start]{$i$};
           \node at (-.3,0) {=};
    \draw (2.8,0)  +(0,-1) -- +(0,1) node[below,at start]{$i$};
       \draw[wei] (1.2,0)  +(0,-1) -- +(0,1) node[below,at start]{$j$};
       \fill (2.8,0) circle (5pt) node[right=5pt]{$\delta_{i,j}$};        
  \end{tikzpicture}
  \end{eqnarray*}
   \caption{Tensor product algebra relations II involving red strands}
   \label{reltensor2}
\end{figure}



The following may be deduced from \cite[Prop.~4.7,~Prop.~4.9]{SW} (see also \cite[Fig.~3]{SW}). Hereby  $\PollR$ is realized as a subring of $\bigoplus_{\ui\in \Icolnu}\bfk[Y_1,\ldots,Y_{\ell+d}]e(\ui)$ via
$
P(y_1,\ldots,y_r)e(\ui)\mapsto P(Y_{1_\ui},\ldots,Y_{d_\ui})e(\ui).
$

\begin{lem}
\label{lem-polrep_tenspr}
The algebra $\lR$ has a faithful representation on $\PollR$ such that 
\begin{itemize}
\item the element $e(\ui)$ acts as the projector onto $\bfk[y_1,\ldots,y_d]e(\ui)$,

\item the element $y_re(\ui)$ acts by multiplication with $y_r$ on $\bfk[y_1,\cdots,y_d] e(\ui)$ and by zero on other direct summand of $\PollR$,

\item the element $\psi_re(\ui)$ acts only nontrivially on $\bfk[y_1,\ldots,y_d]e(\ui)$, where it sends $fe(\ui)$ to 
\end{itemize}
\begin{eqnarray*}
&&
\begin{cases}
\partial_r(f)e(\ui) &\mbox{\rm if } c(j_r)=c(j_{r+1})=0,~ j_r=j_{r+1},\\
P_{\gamma(j_r),\gamma(j_{r+1})}(Y_r,Y_{r+1})s_r(f)e(s_r(\ui)) &\mbox{\rm if } c(j_r)=c(j_{r+1})=0,~ j_r\ne j_{r+1},\\
0 &\mbox{\rm if } c(j_r)=c(j_{r+1})=1,\\
Y_{r+1}s_r(f)e(s_r(\ui)) &\mbox{\rm if } c(j_r)=0,~ c(j_{r+1})=1,~ \gamma(j_r)=\gamma(j_{r+1}),\\
s_r(f)e(s_r(\ui)) &\mbox{\rm for all other cases}.\\
\end{cases}
\end{eqnarray*}
\end{lem}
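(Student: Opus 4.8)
The plan is to obtain this statement by restricting the geometric polynomial representation of the higher level quiver Schur algebra $\lA$ from \cite{SW}. Recall that $\lR$ sits inside $\lA$ as the subalgebra spanned by those diagrams all of whose black strands are thin; in particular the identity of $\lR$ is the thin idempotent $\varepsilon=\sum_{\ui\in\Icolnu}e(\ui)$ of $\lA$, and the space $\PollR=\varepsilon V$ is the direct summand of the partially symmetric polynomial representation $V$ of $\lA$ from \cite[Prop.~4.7]{SW} cut out by $\varepsilon$, realised inside $\bigoplus_{\ui\in\Icolnu}\bfk[Y_1,\ldots,Y_{\ell+d}]e(\ui)$ as in the paragraph preceding the lemma. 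Restricting the $\lA$-action along $\lR\hookrightarrow\lA$ and then to the summand $\varepsilon V$ --- which is legitimate because $x=\varepsilon x\varepsilon$ for every $x\in\lR$ --- produces the desired action of $\lR$ on $\PollR$.

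First I would read off the action of $e(\ui)$, $Y_je(\ui)$ and $\psi_re(\ui)$ by specialising the crossing and dot formulas of \cite[Fig.~3]{SW} to thin strands, using the embedding $P(y_1,\ldots,y_d)e(\ui)\mapsto P(Y_{1_\ui},\ldots,Y_{d_\ui})e(\ui)$. For a crossing of two adjacent black strands one recovers verbatim the KLR polynomial representation of Lemma~\ref{lem-polrep_KLR}: the Demazure operator $\partial_r$ when the labels agree, and $P_{\gamma(j_r),\gamma(j_{r+1})}(Y_r,Y_{r+1})$ composed with $s_r$ otherwise. When a black strand passes a red one, the geometric crossing formula yields an extra factor $Y_{r+1}$ exactly when the two labels coincide --- this is the source of the Kronecker delta in the corresponding relation of Figure~\ref{reltensor2} --- and nothing extra when they differ; moreover $Y_{r+1}$ is, after applying $s_r$, the $y$-variable carried by the black strand in its new position, so the operator visibly preserves $\PollR$. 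Two red strands cross by zero. Collecting cases gives exactly the list in the statement.

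Next I would deduce faithfulness. By \cite[Prop.~4.9]{SW} the algebra $\lA$ acts faithfully on $V$; the map $\lR\hookrightarrow\lA$ is injective (as established in \cite{SW}, where the basis theorem for $\lA$ restricts to Webster's basis of $\lR$, \cite{Webster}); and for $x\in\lR$ one has $x=\varepsilon x\varepsilon$, so the operator by which $x$ acts on $V$ is supported on $\varepsilon V=\PollR$. Hence if $x$ acts by zero on $\PollR$ it acts by zero on all of $V$, which forces $x=0$ in $\lA$ and therefore in $\lR$.

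Alternatively, and without invoking \cite{SW}, one can check directly that the displayed formulas respect the relations of Figures~\ref{defKLR} and~\ref{reltensor2} --- a finite computation parallel to the proof of Proposition~\ref{prop-pol_rep_lH}, the only new input being the red/black crossing cases --- and then establish faithfulness from a basis theorem for $\lR$ (e.g.\ \cite{Webster}) together with a leading-term argument in the style of the proof of Proposition~\ref{prop-basis-Hdl} (see \cite[Sec.~2.3]{KL} for the purely black part). The step I expect to need the most care is the bookkeeping in the translation from \cite{SW}: pinning down the identification $\PollR=\varepsilon V$ together with the thin specialisation of the crossing formulas with the right normalisations --- notably the orientations and signs built into $\calQ_{ij}$ versus $P_{i,j}$ --- and checking that the variable $Y_{r+1}$ appearing in the red/black case really does lie in $\bfk[y_1,\ldots,y_d]e(s_r(\ui))$ rather than involving a variable at a red position.
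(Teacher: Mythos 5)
Your proposal takes essentially the same route as the paper: the paper offers no independent argument for this lemma but deduces it exactly from \cite[Prop.~4.7, Prop.~4.9]{SW} (see also \cite[Fig.~3]{SW}) via the embedding $P(y_1,\ldots,y_d)e(\ui)\mapsto P(Y_{1_\ui},\ldots,Y_{d_\ui})e(\ui)$, which is precisely your first route of restricting the quiver Schur polynomial representation to the thin idempotents. Your alternative self-contained verification is a sound backup but goes beyond what the paper records.
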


\subsection{Completion}

Let $\mathfrak{m}$ be the ideal in $\bfk[y_1,\ldots,y_d]$ generated by all $y_r$, $1\leqslant r\leqslant d$. 

\begin{df}
Denote by $\cR$ the completion of the algebras $\R$ at the sequence of ideals $\R \mathfrak{m}^j \R$. Denote by $\clR$ the completion of the algebra $\lR$ at the sequence of ideals $\lR \mathfrak{m}^j \lR$.
\end{df}

\begin{rk}
The faithful polynomial representation of $\R$ on $\PolR$ (see Lemma~\ref{lem-polrep_KLR}) yields a faithful representation of $\cR$ on 
\begin{eqnarray}
\label{faithful1}
\cPolR=\bigoplus_{\ui\in I^\nu}\bfk[[y_1,\cdots,y_d]]e(\ui).
\end{eqnarray}
The faithful polynomial representation of $\lR$ on $\PollR$ (see Lemma~\ref{lem-polrep_tenspr}) yields a faithful representation of $\clR$ on 
\begin{eqnarray}
\label{faithful2}
\cPollR&=&\bigoplus_{\ui\in \Icolnu}\bfk[[y_1,\cdots,y_d]]e(\ui).
\end{eqnarray}
\end{rk}

\subsection{The isomorphisms $ \cR\simeq \cHeck$ and $ \clR\simeq\clHeck$}
\label{subs-isom_lHeck-tens-compl}
Fix $q\in\bfk$ such that $q\not\in\{0,1\}$. Fix an $\ell$-tuple $\bfQ=(Q_1,\ldots,Q_\ell)\subset (\bfk^*)^\ell$.

Consider the following set
\begin{equation}
\label{Corona}
\calF=\{q^nQ_m\mid n\in\bbZ, m\in[1;\ell]\}\subset\bfk^*.
\end{equation}
We can consider $\calF$ as a vertex set of a quiver $\Gamma_\calF$ such that for $i,j\in\calF$ we have an arrow $i\to j$ if and only if we have $j=qi$. If $q$ is an $e$th root of unity, then the quiver $\Gamma_\calF$ is a disjoint union of at most $\ell$ oriented cycles of length $e$. If $q$ is not a root of unity, then the quiver $\Gamma_\calF$ is a disjoint union of at most $\ell$ (two-sided) infinite oriented linear quivers. Then $\bfQ$ can be considered as an $\ell$-tuple of vertices of the quiver $\Gamma_\calF$. In this section we assume that the KLR algebra and the tensor product algebra are defined with respect to the quiver $\Gamma_\calF$. In particular we have $I=\calF$. We also assume $\nu=\bfa$. Then we have $I^\nu=\frakS_d\bfa$.

First, we recall the isomorphism $\cR\simeq \cHeck$ from \cite[Thm.~7.3]{MS}. For this we identify the vector spaces $\cPolR$ and $\cPolHeck$ via
\begin{eqnarray}
\label{identifyPol}
\cPolR\to\cPolHeck, && -i_ry_re(\ui)\mapsto (X_r-i_r)e(\ui).
\end{eqnarray}
\begin{prop}[{\cite[Thm.~7.3]{MS}}]
\label{prop-isom-Heck-KLR-comp}
There is an isomorphism $\cR\simeq \cHeck$ of algebras  sending $e(\ui)$ to $e(\ui)$, $y_re(\ui)$ to $-\gamma(i_r)^{-1}(X_r-\gamma(i_r))e(\ui)$ and  $\psi_re(\ui)$ to the expression in \eqref{imagepsi} below.
\end{prop}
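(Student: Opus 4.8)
This is \cite[Thm.~7.3]{MS}, and since the same strategy is refined in the later sections we indicate the argument. The plan is to realize both $\cR$ and $\cHeck$ as one and the same subalgebra of $\End(\cPolHeck)$. Recall that $\cR$ acts faithfully on $\cPolR$ by Lemma~\ref{lem-polrep_KLR} and $\cHeck$ acts faithfully on $\cPolHeck$ by Proposition~\ref{MS}, and identify these topological vector spaces via \eqref{identifyPol}. Under this identification, multiplication by $y_r$ on the summand $e(\ui)\cPolR$ becomes the operator $-\gamma(i_r)^{-1}(X_r-\gamma(i_r))e(\ui)$ on $e(\ui)\cPolHeck$; by construction of the $\frakm_\bfa$-adic completion $X_r-\gamma(i_r)$ acts topologically nilpotently on this summand, so this operator is well defined, and conversely $X_r^{-1}e(\ui)=\gamma(i_r)^{-1}\sum_{k\geqslant 0}\bigl(-\gamma(i_r)^{-1}(X_r-\gamma(i_r))\bigr)^{k}e(\ui)$ is a convergent power series in it.

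First I would fix the candidate images of the generators: $e(\ui)\mapsto e(\ui)$, $y_r e(\ui)\mapsto -\gamma(i_r)^{-1}(X_r-\gamma(i_r))e(\ui)$, and $\psi_r e(\ui)\mapsto$ the element of $e(s_r(\ui))\cHeck e(\ui)$ which, under \eqref{identifyPol}, acts on $\cPolHeck$ exactly as $\psi_r e(\ui)$ acts on $\cPolR$ in Lemma~\ref{lem-polrep_KLR}. A direct computation with the explicit $T_r$-action (the $\ell=0$ case of Proposition~\ref{prop-pol_rep_lH}) shows that this element equals $T_r e(\ui)$ up to multiplication, on the appropriate side, by an invertible power series in $X_r,X_{r+1}$ and an additive correction term, whose precise form \eqref{imagepsi} distinguishes the cases $i_r=i_{r+1}$, $i_r$ and $i_{r+1}$ adjacent in $\Gamma_\calF$, and neither. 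The crucial point for well-definedness is that the power series occurring here have nonzero constant term in the block $e(\ui)$: this is the scalar $1-q\ne 0$ in the case $i_r=i_{r+1}$, and the series $X_r-X_{r+1}$, with constant term $\gamma(i_r)-\gamma(i_{r+1})\ne 0$, when $i_r\ne i_{r+1}$. This is where the quiver $\Gamma_\calF$ of \eqref{Corona} enters: its vertex set is a subset of $\bfk^*$, so distinct vertices have nonzero difference and $X_r-X_{r+1}$ is a unit in $e(\ui)\cHeck$ whenever $i_r\ne i_{r+1}$; and its arrows, recording the pairs with $\gamma(i_{r+1})=q^{\pm1}\gamma(i_r)$, dictate the shape of \eqref{imagepsi} so that the quadratic KLR relation holds.

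Next I would check that these operators on $\cPolHeck$ satisfy the defining relations of $\cR$, i.e. the relations of Figure~\ref{defKLR} with $\ell=0$: the idempotent relations, commutativity of the $y$'s, the nilpotent quadratic relation $\psi_r^2=\calQ_{i_r,i_{r+1}}$, the mixed $\psi$--$y$ relations, and the braid relation for the $\psi$'s. Since $\cR$ acts faithfully on $\cPolR\cong\cPolHeck$, this produces an algebra homomorphism $\Phi\colon\cR\to\End(\cPolHeck)$ which is injective, has image contained in $\cHeck$, and is continuous, matching the respective defining filtrations. Surjectivity then follows because the topological generators $e(\ui)$, $X_r^{\pm1}e(\ui)$ and $T_r e(\ui)$ of $\cHeck$ all lie in the image of $\Phi$: $e(\ui)$ trivially, $X_r^{\pm1}e(\ui)$ by the geometric series above, and $T_r e(\ui)$ by inverting the invertible power-series factor of \eqref{imagepsi}; together with the density of $\Heck$ in $\cHeck$ and the closedness of $\Phi(\cR)$ this gives that $\Phi$ is an isomorphism.

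I expect the main obstacle to be the construction and analysis of $\Phi(\psi_r e(\ui))$: pinning down the correct normalization \eqref{imagepsi}, proving that the required inverses genuinely exist in the completed blocks (which is precisely where the eigenvalue combinatorics encoded by $\Gamma_\calF$ is used), and then verifying the quadratic and braid KLR relations — the only part involving a lengthy computation. The identification of the two polynomial representations, the faithfulness inputs, and the density/surjectivity step are all essentially formal.
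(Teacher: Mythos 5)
Your proposal follows the paper's proof exactly: both identify the faithful polynomial representations via \eqref{identifyPol} and then verify that the generators of $\cR$ and their proposed images in $\cHeck$ induce the same operators, with faithfulness giving well-definedness and injectivity, and recoverability of $e(\ui)$, $X_r^{\pm1}e(\ui)$, $T_re(\ui)$ giving surjectivity. The extra detail you supply on the invertibility of the denominators appearing in \eqref{imagepsi} is precisely the ``straightforward'' verification that the paper (deferring to \cite[Thm.~7.3]{MS}) leaves implicit.
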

\begin{proof}
It is enough to check that the induced actions of the generators and their images agree on the (faithful) polynomial representations \eqref{identifyPol}. This is straightforward noting that the element $\psi_re(\ui)\in \cR$ acts as
\begin{eqnarray}
\label{imagepsi}
\begin{cases}
-\frac{i_r}{X_r-qX_{r+1}}(T_r+1) e(\ui)&\mbox{if }i_r=i_{r+1},\\
i_r^{-1}q^{-1}((X_r-X_{r+1})T_r+(q-1)X_{r+1}) e(\ui)&\mbox{if } qi_r=i_{r+1},\\
\left(1-\frac{X_r-X_{r+1}}{X_r-qX_{r+1}}(T_r+1)\right)e(\ui) &\mbox{else},\\
\end{cases}
\end{eqnarray}
and so the claim follows.
\end{proof}

We extend this now to an isomorphism $\clHeck\simeq \clR$. First, note that we have an obvious bijection $\Icolnu\simeq J^{\ell,d}\times \frakS_d\bfa$. This is important because the algebra $\clR$ has idempotents parametrised by $\Icolnu$ and the algebra $\clHeck$ has idempotents parametrised by $J^{\ell,d}\times \frakS_d\bfa$.

We identify the vector spaces underlying the polynomial representations, $\cPollR$ for $\clR$ and $\cPollHeck$ for  $\clHeck$, via 
\begin{eqnarray}
\label{identifyPol2}
\cPollR\to\cPollHeck, \qquad -\gamma(i_r)Y_re(\ui)\to (X_r-\gamma(i_r))e(\ui) \quad \mbox{ if $c(i_r)=0$}.
\end{eqnarray}
(Recall that both $Y_re(\ui)$ and $X_re(\ui)$ are zero if $c(i_r)=1$.)

\begin{thm}
\label{thm-isom-lHeck-tens-comp}
There is an isomorphism of algebras $ \clR\simeq\clHeck$ extending the isomorphism from Proposition~\ref{prop-isom-Heck-KLR-comp}.
\end{thm}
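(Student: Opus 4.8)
The plan is to prove the isomorphism by the same strategy used for Proposition~\ref{prop-isom-Heck-KLR-comp}, namely by comparing the actions of both algebras on their faithful polynomial representations after identifying the underlying vector spaces via \eqref{identifyPol2}. Concretely, I would first define a candidate map $\Phi\colon \clR\to\clHeck$ on generators: send $e(\ui)$ to $e(\uc,\ui)$ under the bijection $\Icolnu\simeq J^{\ell,d}\times\frakS_d\bfa$, send $y_re(\ui)$ to $-\gamma(i_r)^{-1}(X_{r_\uc}-\gamma(i_r))e(\uc,\ui)$ (i.e. the same formula as in Proposition~\ref{prop-isom-Heck-KLR-comp} but now only on black positions), and send $\psi_re(\ui)$ to an explicit element of $\clHeck$ which extends \eqref{imagepsi} by additionally specifying images for the crossings involving at least one red strand. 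For a black-black crossing the formula of \eqref{imagepsi} is reused verbatim; for the mixed and red cases one reads the prescribed image off Lemma~\ref{lem-polrep_tenspr} versus the $\lHeck$-action in Proposition~\ref{prop-pol_rep_lH}, using the relevant $T_r$-relations in Definition~\ref{def-extaffHecke_alg} (the cases $c_r=0,c_{r+1}=1$ and $c_r=1,c_{r+1}=0$), together with \eqref{l-Hecke-diag-1}--\eqref{l-Hecke-diag-4} so that the relation $T_r^2e(\uc)=(X-Q)e(\uc)$ matches the KLR-side relation $\psi_r^2e(\ui)$ dictated by the quiver $\Gamma_\calF$.

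The core verification then has two halves. First, well-definedness: one must check that the proposed images satisfy every defining relation of $\lR$ from Figures~\ref{defKLR} and~\ref{reltensor2}. I would reduce this to a computation inside $\End(\cPollHeck)$: since the $\clHeck$-action on $\cPollHeck$ is faithful (Proposition~\ref{prop_basis-lHecke-comp}\eqref{prop_rep-lHecke-comp}) and the $\clR$-action on $\cPollR$ is faithful (the completion of Lemma~\ref{lem-polrep_tenspr}), and since the identification \eqref{identifyPol2} is compatible with the two polynomial-ring structures, it suffices to check that for each generator the operator on $\cPollHeck$ induced by the image equals, under \eqref{identifyPol2}, the operator on $\cPollR$ induced by the generator. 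Because the operators on the polynomial representations are given summand-by-summand by the explicit formulas in Lemma~\ref{lem-polrep_tenspr} and Proposition~\ref{prop-pol_rep_lH}, and the KLR/tensor-product relations automatically hold for the polynomial operators, this is a finite case check over the types of strands meeting at a crossing; the black-black case is exactly \cite[Thm.~7.3]{MS}/Proposition~\ref{prop-isom-Heck-KLR-comp}, and the genuinely new cases are the ones where a red strand is involved, which are comparatively simple because a red strand crosses each black strand at most once. One must also check $\Phi$ is continuous, i.e. sends $\mathfrak{m}$-adic Cauchy sequences to $\mathfrak{m}_\bfa$-adic ones, which follows because $y_r\mapsto -\gamma(i_r)^{-1}(X_{r_\uc}-\gamma(i_r))$ matches the two completions' defining ideals (this is already implicit in \eqref{identifyPol2}).

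Second, bijectivity: having produced a continuous algebra homomorphism $\Phi$ that is an isomorphism on the underlying polynomial representations (via the vector-space identification \eqref{identifyPol2}), one concludes $\Phi$ is itself an isomorphism. Here I would invoke the topological bases: $\clHeck$ has the topological basis $\{T_w^{\ucp,\uc}(x_1-i_1)^{m_1}\cdots(x_d-i_d)^{m_d}e(\uc,\ui)\}$ from Proposition~\ref{prop_basis-lHecke-comp}(1), and $\clR$ has an analogous topological basis $\{\psi_w e(\ui)(y_1)^{m_1}\cdots(y_d)^{m_d}\}$ (the completed version of a basis of $\lR$; one argues its existence exactly as in loc. cit., from faithfulness on $\cPollR$). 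Since $\Phi$ sends the latter basis to a set that is, modulo lower-order terms, the former basis (the leading term being controlled by the symmetric-group element $w$, just as in the proof of Proposition~\ref{prop-basis-Hdl} and Proposition~\ref{prop_basis-lHecke-comp}), $\Phi$ is a topological isomorphism. That it extends Proposition~\ref{prop-isom-Heck-KLR-comp} is clear by restricting to the idempotent $e(\omega)$ and using Lemma~\ref{lem-Hd_in_Hdl}.

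The main obstacle I expect is the mixed black-red crossing case: one has to pin down the correct element of $\clHeck$ to assign to $\psi_re(\ui)$ when $c(i_r)\ne c(i_{r+1})$ so that both the KLR-type relations of Figure~\ref{defKLR} \emph{and} the red-strand relations of Figure~\ref{reltensor2} (the ones governing $\psi_r^2$ through a red strand, the "triangle" relations, and the dot-slide past a red strand) are simultaneously satisfied. This is exactly the point where the additional $\ell$-affine Hecke relations \eqref{l-Hecke-diag-1}--\eqref{l-Hecke-diag-4} enter, and where the constant $Q_m$ attached to the red label — which on the KLR side corresponds to the choice of vertex $Q_m\in\calF$ of $\Gamma_\calF$ — must be matched precisely; getting the normalization (powers of $q$, signs, factors of $\gamma(i_r)$) exactly right across all subcases is the delicate bookkeeping, but it is bookkeeping rather than a conceptual difficulty, since faithfulness reduces everything to an identity of explicit operators on $\cPollHeck$.
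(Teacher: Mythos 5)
Your proposal follows essentially the same route as the paper's proof: the images of the generators (in particular of $\psi_re(\ui)$, split into cases according to the colours of $i_r,i_{r+1}$) are read off by comparing the polynomial actions of Lemma~\ref{lem-polrep_tenspr} and Proposition~\ref{prop-pol_rep_lH}, and both well-definedness and injectivity are reduced to the identity of operators on the faithful representations identified via \eqref{identifyPol2}. The only cosmetic difference is in how surjectivity is concluded — the paper simply observes that the generators of $\clHeck$ lie in the image, whereas you invoke topological bases — but this is a minor variation, not a different argument.
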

\begin{proof}
Abbreviate $(\dagger)=q^{-1}\frac{1}{\gamma(i_r)}((X_r-X_{r+1})T_r+(q-1)X_{r+1}) $. We claim that sending $\psi_re(\ui)\in \clR$ to the element 
\begin{eqnarray*}
\begin{cases}
\frac{-\gamma(i_r)}{X_r-qX_{r+1}}(T_r+1) &\mbox{if }i_r=i_{r+1},~ c(i_r)=c(i_{r+1})=0,\\
(\dagger)&\mbox{if } q\gamma(i_r)=\gamma(i_{r+1}),~c(i_r)=c(i_{r+1})=0,\\
(1-\frac{X_r-X_{r+1}}{X_r-qX_{r+1}}(T_r+1))e(\ui) &\mbox{for all other cases with } c(i_r)=c(i_{r+1})=0,\\
T_re(\ui) &\mbox{if } c(i_r)=1,~c(i_{r+1})=0,\\
\frac{-1}{\gamma(i_r)}T_re(\ui) &\mbox{if } c(i_r)=0,~c(i_{r+1})=1,~\gamma(i_r)=\gamma(i_{r+1}),\\
\frac{1}{(X_{r+1}-\gamma(i_{r+1}))}T_re(\ui), &\mbox{if } c(i_{r})=0,~c(i_{r+1})=1,~\gamma(i_r)\ne \gamma(i_{r+1}),
\end{cases}
\end{eqnarray*}
defines an isomorphism as claimed.  Clearly this makes the map unique, since we specified the image of on a set of generators and moreover surjective, since the generators of $\clHeck$ are in the image. To show well-definedness and that it is an isomorphism it suffices to show that the action of the generators agrees with that  of their images on the (faithful) polynomial representations \eqref{identifyPol2}. For the idempotents $e(\ui)\in \clR$ this is clear, and the element $Y_re(\ui)\in \clR$ acts as $-\gamma(i_r)^{-1}(X_r-\gamma(i_r))e(\ui)\in\clHeck$ if $c(i_r)=0$. (Recall that if $c(i_r)=1$ then both $X_re(\ui)$ and $Y_re(\ui)$ are zero.)  Since $\psi_re(\ui)\in \clR$ acts exactly as its proposed image (recalling that if $c(i_r)=c(i_{r+1})=1$ then both $T_re(\ui)$ and $\psi_re(\ui)$ are zero), the claim follows. 
\end{proof}

\begin{rk}
It is useful to give an explicit inverse of the isomorphism from Theorem~\ref{thm-isom-lHeck-tens-comp}.
The element $T_re(\ui)$ acts on the polynomial representation by the same operator as
\begin{eqnarray*}
\begin{cases}
\left(-1+{(q-1+Y_r-qY_{r+1})}\psi_r\right) e(\ui) &\mbox{if }i_r=i_{r+1},~ c(i_r)=c(i_{r+1})=0,\\
\left(\frac{q(q-1)(Y_{r+1}-1)}{1-q-Y_r+qY_{r+1}}+\frac{q\psi_r}{q-1-qY_r+Y_{r+1}}\right)e(\ui) &\mbox{if }q\gamma(i_r)=\gamma(i_{r+1}),~ c(i_r)=c(i_{r+1})=0,\\
\left(\frac{(1-q)\gamma(i_{r+1})(1-Y_{r+1})}{\gamma(i_r)(1-Y_r)-\gamma(i_{r+1})(1-Y_{r+1})}-\right.&\\
\left.\frac{\gamma(i_{r+1})(1-Y_r)-q\gamma(i_{r})(1-Y_{r+1})}{\gamma(i_{r+1})(1-Y_r)-\gamma(i_{r})(1-Y_{r+1})}\psi_r\right)e(\ui), &\mbox{otherwise, with } c(i_r)=c(i_{r+1})=0,\\
\psi_re(\ui) &\mbox{ if } c(i_r)=1, c(i_{r+1})=0,\\
(\gamma(i_r)(1-Y_{r+1})-\gamma(i_{r+1}))\psi_re(\ui) &\mbox{ if }\gamma(i_{r})\ne \gamma(i_{r+1}), c(i_r)=0, c(i_{r+1})=1,\\
-\gamma(i_r)\psi_re(\ui) &\mbox{ if }\gamma(i_{r})=\gamma(i_{r+1}), c(i_r)=0, c(i_{r+1})=1.\\
\end{cases}
\end{eqnarray*}
\end{rk}

\section{Higher level affine Schur algebras $\lS$}
\label{sec-Schur}
We recall the definition of the (ordinary) affine Schur algebra as it appears for instance in \cite{Greenaff}, \cite{MS}, \cite{vigneras} and then generalize it to a higher level version.
\subsection{Affine Schur algebras}
\label{subs-affSchur}

For each non-negative integer $d$, a \emph{composition} of $d$ is a tuple $\lambda=(\lambda_1,\hdots,\lambda_r)$ (the number $r$, called the {\it length} ${l}(\lambda)$ of $\lambda$, is not fixed) such that $\sum_{i=1}^{r}\lambda_i=d$ and $\lambda_i>0$. If $\lambda$ is a composition of $d$, we write $|\lambda|=d$. Denote by $\calC_d$ the set of compositions of $d$. We use the convention that $\calC_0$ contains a unique composition which is empty. For each $\lambda=(\lambda_1,\hdots,\lambda_r)\in \calC_d$ denote by $\frakS_\lambda$ the parabolic  (or Young) subgroup 
\begin{eqnarray}
\label{Young}
\frakS_\lambda&=&\frakS_{\lambda_1}\times\hdots\times \frakS_{\lambda_r}\subset \frakS_d.
\end{eqnarray}
Its unique longest element is denoted by $w_\lambda$. Moreover, let $\Drep_{\lambda,\mu}$ be the set of shortest length representatives for the double cosets  $\frakS_\lambda\backslash\frakS_d/\frakS_\mu$. We also write $\Drep_{\emptyset,\mu}$ and $\Drep_{\lambda,\emptyset}$ for the sets of shortest length representatives of the cosets $\frak{S}_d/\frak{S}_\mu$ and $\frak{S}_\lambda\backslash \frak{S}_d$  respectively. Attached to this subgroup, let $m_\lambda\in \Heck$ be defined by 
\begin{eqnarray}
\label{defm}
m_\lambda&=&\sum_{w\in \frakS_\lambda}(-q)^{l(w_\lambda)-l(w)}T_w.
\end{eqnarray}
We consider $m_\lambda \Heck$ as a right $\Heck$-module.

\begin{df}
The {\it affine Schur algebra} is the algebra
\begin{eqnarray}
\label{affSchurDef}
\S&=&\End_{\Heck}\left(\bigoplus_{\lambda\in \calC_d}m_\lambda \Heck\right).
\end{eqnarray}
The algebra $\S$ has idempotents $e(\lambda)$, $\lambda\in\calC_d$ given by the projection to $m_\lambda \Heck$.
\end{df}

\subsection{Generators of $\S$ and thick calculus}
Next we  introduce the {\it thick calculus} for the algebra $\S$.

\label{subs-gen_S}
Let $\lambda, \mu\in \calC_d$ and assume that $\mu$ is obtained from $\lambda$ by splitting one component of $\lambda$. In other words, there is an index $t$ such that $\mu$ is of the form $(\lambda_1,\hdots,\lambda_{t-1},\lambda'_t,\lambda''_t,\lambda_{t+1},\hdots,\lambda_{l(\lambda)})$,  
where $\lambda'_t$ and $\lambda''_t$ are positive integers such that $\lambda'_t+\lambda''_t=\lambda_t$. In this case we say that $\mu$ is a \emph{split} of $\lambda$ and that $\lambda$ is a \emph{merge} of $\mu$ (at position $t$).

\begin{df}
Assume  $\mu$ is a split of $\lambda$. We define the special elements in $\S$:
\begin{eqnarray*}
\text{the {\it split morphism}} &&m_\lambda x\mapsto m_\mu x\in \Hom_{\Heck}(m_\lambda\Heck,m_\mu\Heck),\\
\text{the {\it merge morphism}} &&m_\mu x\mapsto m_\lambda x\in \Hom_{\Heck}(m_\mu\Heck,m_\lambda\Heck).
\end{eqnarray*}
\end{df}
More generally, if  $\mu$ is a refinement of the composition $\lambda$ we have the corresponding split morphism, denoted $(\lambda\rightarrow\mu)$, and the corresponding merge morphism, denoted  $(\mu\rightarrow\lambda)$, defined in the obvious way. They are the compositions of the splits (respectively merges) describing the refinement. Note that the order in the composition does not matter because of the associativity property of splits and merges, \cite[Lemma~6.5 (twisted with the automorphism $\sharp$)]{MS}.  The idempotents $e(\lambda)$, splits, merges and multiplication with (invariant) polynomials generate the algebra $\S$ see \cite[Prop.~6.19]{MS}.

We draw the generators as diagrams that are similar to the diagrams for $\Heck$ from Definition~\ref{def-extaffHeck_diag}. The differences are that the black strands are now allowed to have a higher thickness (corresponding to multiplicities of the labels given by a nonnegative integer),  the diagrams representing the generators are now of the form
\begin{equation}
\label{diag-split-merge}
\tikz[thick,xscale=2.5,yscale=-1.5]{
\draw (0,0) node[above] {$a$} to [out=90,in=-90](.3,.5)
(.6,0) node[above] {$b$} to [out=90,in=-90] (.3,.5)
(.3,.5) -- (.3,.8) node[below] {$a+b$};
}
\qquad
\tikz[thick,xscale=2.5,yscale=1.5]{
\draw (0,0) node[below] {$a$} to [out=90,in=-90](.3,.5)
(.6,0) node[below] {$b$} to [out=90,in=-90] (.3,.5)
(.3,.5) -- (.3,.8) node[above] {$a+b$};
}
\end{equation}
and each strand with thickness $b$ is allowed to carry now any symmetric Laurent polynomial in $b$ variables instead of dots.

\begin{df}
Let $\lambda,\mu\in\calC_d$. We draw the idempotent $e(\lambda)\in \S$ given by the identity endomorphism of the right $\Heck$-module $m_\lambda\Heck$ as a diagram with ${l}(\lambda)$ vertical strands labelled by the parts of $\lambda$,

$$
\tikz[thick,xscale=2.5,yscale=1.5]{
\node at (-.9,.25) {$e(\lambda)\quad\mapsto\quad$};
\draw (-0.3,0) --(-0.3,.5) node[below,at start]{$\lambda_1$};
\draw (-0,0) --(-0,.5) node[below,at start]{$\lambda_2$};
\node at (0.25,.25) {$\cdots$};
\draw (0.5,0) --(0.5,.5) node[below,at start]{$\lambda_{{l}(\lambda)}$};
}
$$

Let $f\in \bfk[x^{\pm 1}_1,\hdots,x^{\pm 1}_d]^{\frakS_\lambda}$ be of the form $f=f_1\cdots f_{l(\lambda)}$, where $f_j$ is a symmetric Laurent polynomial containing only variables with indices in $[\lambda_1+\ldots+\lambda_{j-1}+1;\;\lambda_1+\ldots+\lambda_{j}]$. Then we associate to $fe(\lambda)\in \S$ the diagram

$$
\tikz[thick,xscale=2.5,yscale=1.5]{
\node at (-1,.35) {$fe(\lambda)\quad\mapsto\quad$};
\draw (-0.3,0) --(-0.3,.2) node[below,at start]{$\lambda_1$};
\draw (-0.3,0.5) --(-0.3,.7);
\draw (-0.15,0.2) rectangle (-0.45,0.5);
\node at (-0.3,0.35) {$f_1$};
\draw (0.1,0) --(0.1,.2) node[below,at start]{$\lambda_2$};
\draw (0.1,0.5) --(0.1,.7);
\draw (-0.05,0.2) rectangle (0.25,0.5);
\node at (0.1,0.35) {$f_2$};
\node at (0.45,.35) {$\cdots$};
\draw (0.8,0) --(0.8,.2) node[below,at start]{$\lambda_{{l}(\lambda)}$};
\draw (0.8,0.5) --(0.8,.7);
\draw (0.6,0.2) rectangle (1.0,0.5);
\node at (0.8,0.35) {$f_{{l}(\lambda)}$};
}
$$

Since any $f\in \bfk[x^{\pm 1}_1,\hdots,x^{\pm 1}_d]^{\frakS_\lambda}$ can be written as a sum of polynomials of the form $f_1\ldots f_{{l}(\lambda)}$, the notation $fe(\lambda)$ makes sense for any such $f$. In the special case where $\la_i=1$ the $i$th strand is allowed to carry any Laurent polynomial in the variable $x_i$, in particular it can carry dots as in our notation before.  

We assign to a split $\lambda\rightarrow \mu$ of the form $(a+b)\to (a,b)$ (respectively  a merge $(a,b)\to(a+b)$) the first (resp. second) diagram in \eqref{diag-split-merge}, and if the compositions have more parts we add additionally vertical strands to the left and to the right labelled by the remaining components. 
\end{df}

\begin{rk}
\label{rkcat}
The algebra $\S$ can be realized more conceptually as a quotient of the algebra structure on the direct sum of  homomorphism spaces of certain objects in the following category with $I_b=(\mathbb{Z}_{>0},+)$. The objects we take are all tensor products of black labels such that the sum of the labels is $d$, and $R_a$ is the ring of symmetric Laurent polynomials in $a$ variables. The strands with labels $\boxed{f}$ is then the image the dot morphism for $f$. Note that hereby $I_r$ does not play any role.
\end{rk}

\begin{df}
\label{Corona2} 
 Let $I_r$ be a set and $I_b$ an additive monoid. The {\it universal thickened higher level category} corresponding to this pair is the $\bfk$-linear strict monoidal category generated as monoidal category by objects  ${a}\in I_b$, called {\it black labels} and objects $Q\in I_r$, called {\it red labels}, and by the following morphisms: 
\begin{itemize}
\item the {\it split morphisms}  $a+b\longrightarrow a\otimes b$ and the {\it merge morphisms} $a\otimes b\longrightarrow a+b$ for any $a,b\in I_b$ given abstractly by diagrams \eqref{diag-split-merge},
\item the {\it crossings} 
$\TikZ{[thick,scale=.5] \draw  (1,1) node{} to (0,0)node{};\draw[wei] (1,0) node{} to (0,1)node{}; \node at (1.2,-0.3) {\tiny Q};\node at (0,-0.3) {\tiny a};}
:\;{a}\otimes{Q}\longrightarrow{Q}\otimes {a}$ and $\TikZ{[thick,scale=.5] \draw  (1,0) node{} to (0,1)node{};\draw[wei] (1,1) node{} to (0,0)node{}; \node at (-0.2,-0.3) {\tiny Q};, \node at (1,-0.3) {\tiny i}; }
:\;{Q}\otimes {i}\longrightarrow{i}\otimes {Q},$
\item the {\it dot morphisms} $\TikZ{[thick, scale=.5] \draw (0,0) node{} to (0,1) node{} (0,0.5) node[fill,circle,inner sep=1.5pt]{};\node at (0,-0.3) {\tiny a};\node at (0.4,0.5) {\tiny f};}:{a}\longrightarrow  {a}$ for $a\in I_b$ and $f\in R_a$ for some fixed commutative ring $R_a$ depending on $a$. 
\end{itemize}
The monoidal structure is again horizontal placement, the composition of morphisms vertical placement. We impose the relation that the composition of two dot morphisms with the same  thickness, say for $f_1$ and $f_2\in R_a$,  equals the dot morphism for $f_1f_2\in R_a$. 
\end{df}

\begin{rk}
The affine Hecke algebra $\Heck$ is an idempotent truncation of $\S$. The thick calculus in $\S$ generalizes the diagrammatic calculus in $\Heck$ in the sense that  each usual Hecke strand can be viewed as a strand of thickness $1$ and $R_1=\bfk[X]$, the usual polynomial ring in one variable. The dot morphism using $\bullet$ is just the abbreviation for the dot morphism for $X\in R_1$.
\end{rk}

\begin{rk}
One would like to have an analogue of Lemma~\ref{lem:twodefs} for the Schur algebras, that is an explicit presentation in terms of the  generators in the universal thickened higher level category modulo explicit relations. This is so far not known. In \cite{Greenaff} solved the analogous problem in case of generic $q$ for a Morita equivalent algebra using similar generators. It would be nice to be able to generalize this result to our setting.
\end{rk}

It is also convenient to explicitly specify a few more elements of $\S$. For this let $\lambda,\mu\in\calC_d$ and assume that $\mu$ is obtained from $\lambda$  by swapping $\lambda_t$ and $\lambda_{t+1}$ for some $t$. Let $\nu$ be the merge of $\lambda$ at position $t$. Denote by $w(\lambda/\mu)$ the shortest coset representative in $\frakS_\nu/ \frakS_{\lambda}$ of $w_{\nu}$. (As a permutation diagram one might draw a cross as displayed in \eqref{crossdiag} indicating that $\lambda_t$ elements get swapped with $\lambda_{t+1}$ elements keeping the order inside the groups.) Then with $T=T_{w(\lambda/\mu)}\in \Heck$ it holds $T m_\lambda=m_\mu T$ in $\Heck$.

\begin{df} The corresponding  \emph{black crossing} is the element of $\S$ which is only nonzero on the summand $m_\lambda \Heck$ and there given by $m_\lambda \Heck\to m_\mu \Heck$, $m_\lambda h\mapsto T m_\lambda h=m_\mu T h$. We draw this element in the following way.

\begin{equation}
\label{crossdiag}
\tikz[thick,xscale=2.5,yscale=1.5]{
\draw (-1,0) --(-1,.5) node[below,at start]{$\lambda_1$};
\node at (-0.75,0.25) {$\cdots$};
\draw (-0.6,0) --(-0.3,.5) node[below,at start]{$\lambda_t$};
\draw (-0.3,0) --(-0.6,.5) node[below,at start]{$\lambda_{t+1}$};
\node at (-0.15,0.25) {$\cdots$};
\draw (0.1,0) --(0.1,.5) node[below,at start]{$\lambda_{{l}(\lambda)}$};
}
\end{equation}
\end{df}

\begin{lem}
\label{lem-black_cross}
A black crossing can be written as a product of splits, merges and Laurent polynomials.
\begin{proof}
 \cite[Prop.~6.19]{MS} using  \cite[(3.6)]{MS} and the definition \cite[(4.3)]{MS}. 
\end{proof}
\end{lem}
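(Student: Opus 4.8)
My plan is to reduce to two-part compositions and then build the black crossing from a merge followed by a split, correcting by lower-order terms that are handled by induction. For the reduction I would use the monoidal (horizontal juxtaposition) structure of $\S$: the black crossing swapping the parts $\la_t,\la_{t+1}$ of $\la$ is the tensor product of the black crossing attached to the two-part composition $(\la_t,\la_{t+1})$ with identity strands on the remaining parts. Hence it suffices to treat $\la=(a,b)$, $\mu=(b,a)$, $d=a+b$, and to show the black crossing then lies in the subalgebra $\calB\subseteq\S$ generated by the idempotents $e(\la),e(\mu)$, all splits and merges, and multiplication by symmetric Laurent polynomials.

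Next, by construction $\S=\End_\Heck\bigl(\bigoplus_{\la\in\calC_d}m_\la\Heck\bigr)$ acts faithfully on $\bigoplus_\la m_\la\Heck$, and each summand $m_\la\Heck$ is cyclic over $\Heck$ with generator $m_\la$; so two elements of $e(\mu)\S e(\la)$ coincide as soon as they send $m_\la$ to the same element of $m_\mu\Heck$. The black crossing sends $m_\la$ to $m_\mu T_{w(\la/\mu)}$, where $w(\la/\mu)$ is the block-swap permutation of length $ab$. I would compare this with the \emph{zig-zag} $\Xi\in e(\mu)\S e(\la)$, a suitable $\bfk$-linear combination --- possibly rescaled by symmetric Laurent polynomials --- of composites $\bigl(\text{split }\nu\to\mu\bigr)\circ\bigl(\text{merge }\la\to\nu\bigr)$ through the common merge $\nu=(a+b)$. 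Evaluating on $m_\la$ reduces this to a computation in $\Heck$: one expands $m_\nu$ (via the identity \cite[(3.6)]{MS}) as a $\bfk$-linear combination of elements $m_\mu T_d$ indexed by certain coset representatives $d$, with $(-q)^{ab-{l}(d)}$ the coefficient of the $d$-term and $w(\la/\mu)$ the unique longest such $d$. Since that longest term contributes $m_\mu T_{w(\la/\mu)}$ with coefficient $1$, we get
\[
\Xi \;=\; (\text{black crossing})\;+\;\sum_{c}p_c\,\theta_c,
\]
where $c$ runs over the shorter double cosets in $\frakS_\la\backslash\frakS_d/\frakS_\mu$, each $\theta_c\in e(\mu)\S e(\la)$ being the corresponding standard homomorphism and $p_c$ a symmetric Laurent polynomial (a scalar, if no rescaling is needed).

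It then remains to see that every $\theta_c$ lies in $\calB$, for then the black crossing $=\Xi-\sum_c p_c\theta_c\in\calB$, since $\Xi$ is built from splits, merges and polynomials. I would argue by induction on $ab={l}(w(\la/\mu))$ (equivalently on $a+b$): a shorter double coset $c$ corresponds to a degree matrix with diagonal entries $i$ and off-diagonal entries $a-i$ and $b-i$, for some $i$ with $1\le i\le\min(a,b)$; after splitting $\la$ into $(i,a-i,b-i,i)$ and $\mu$ into $(i,b-i,a-i,i)$, the homomorphism $\theta_c$ factors (up to a symmetric Laurent polynomial) as a merge composed with a black crossing of an $(a-i)$-strand past a $(b-i)$-strand composed with a split. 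Since $(a-i)+(b-i)<a+b$, the inductive hypothesis puts that smaller black crossing in $\calB$, so $\theta_c\in\calB$. The base case $a=b=1$ follows from the rank-one Hecke relation $(T_s+1)(T_s-q)=0$ with $s=w(\la/\mu)$, which identifies the black crossing with the zig-zag through $(2)$ minus a scalar multiple of $e(\la)$.

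The hard part will be the middle step: splits and merges are defined through the $\Heck$-module structure, not as literal multiplications, so making the expansion of $\Xi$ precise --- nailing down the coefficients $p_c$ and the exact factorizations of the $\theta_c$ --- amounts to reproving \cite[Prop.~6.19]{MS}. One also has to check that every correction factor that appears is genuinely a symmetric Laurent polynomial (so that multiplication by it is a legitimate generator of $\calB$), which is exactly where the $\sharp$-twisted conventions of \cite{MS} and the defining formula \cite[(4.3)]{MS} come in.
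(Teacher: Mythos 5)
Your outline is essentially the argument behind the paper's citation: the paper gives no proof of its own but defers to \cite[Prop.~6.19]{MS}, whose proof is exactly this induction --- reduce to a two-block swap, expand the zig-zag through the total merge via the double-coset decomposition of $m_{a+b}$, identify the longest-coset term (coefficient a power of $-q$) as the black crossing, and absorb each shorter double coset, parametrized by your $2\times 2$ matrix with diagonal $i\geqslant 1$, by factoring it as split--(smaller black crossing)--merge. The plan is sound, including the base case from the quadratic relation; what you leave open is precisely the coefficient bookkeeping supplied by \cite[(3.6)]{MS} and the definition \cite[(4.3)]{MS}, as you yourself note.
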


\subsection{Demazure operators}
\label{subs_Demazure}
For each $w\in\frakS_d$, fix a reduced expression $w=s_{k_1}\ldots s_{k_r}$ and define  $\partial_w=\partial_{k_1}\ldots\partial_{k_r}$ using the Demazure operators from \eqref{defDemazure}. This definition is independent of the choice of a reduced expression, see \cite[Thm.~1]{Demazure}. 

\begin{df}
Set $D_d=\partial_{w_d}$, where $w_d$ is the longest element in $\frakS_d$.
For  positive integers $a$ and $b$ such that $a+b=d$ let $D_{a,b}=\partial_{w_{a,b}}$ with 
$w_{a,b} \in\frakS_d$ the permutation
$$
w_{a,b}(i)=
\begin{cases}
i+b & \mbox{ if } 1\leqslant i\leqslant a,\\
i-a & \mbox{ if } a< i\leqslant a+b.
\end{cases}
$$
\end{df}

We need the following well-known symmetrizing properties of these operators:
\begin{lem}
\begin{enumerate}
\item For each polynomial $f$, the polynomial $D_d(f)$ is symmetric.
\item In case $f$ is $\frakS_a\times \frakS_b$-symmetric, then $D_{a,b}(f)$ is symmetric.
\end{enumerate}
\end{lem}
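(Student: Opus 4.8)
The plan is to verify both statements by a direct, well-known computation with Demazure operators, reducing everything to the single-operator identities and the compatibility of $\partial_w$ with parabolic subgroups.

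\medskip

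\textbf{Part (1): $D_d(f)$ is symmetric.} First I would recall the two basic facts about a single Demazure operator $\partial_r$: it satisfies $s_r\partial_r=\partial_r$ (equivalently, $\partial_r(g)$ is $s_r$-invariant for every $g$), and $\partial_r g=g$ whenever $g$ is $s_r$-invariant. Both are immediate from the formula $\partial_r(f)=(f-s_r(f))/(y_r-y_{r+1})$. Now write $w_d=s_{k_1}\cdots s_{k_N}$ as a reduced expression, so $D_d=\partial_{k_1}\cdots\partial_{k_N}$, which is independent of the chosen reduced word by the cited theorem of Demazure. To see that $D_d(f)$ is $s_r$-invariant for each simple reflection $s_r$, I would use that since $w_d$ is the longest element, $\ell(s_r w_d)<\ell(w_d)$, so $w_d$ has a reduced expression \emph{beginning} with $s_r$, i.e. $w_d=s_r s_{j_1}\cdots s_{j_{N-1}}$ reduced. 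By independence of the reduced word, $D_d=\partial_r\partial_{j_1}\cdots\partial_{j_{N-1}}$, and hence $D_d(f)=\partial_r(\,\cdots)$ is $s_r$-invariant by the first basic fact. Since this holds for every simple reflection and the $s_r$ generate $\frakS_d$, the polynomial $D_d(f)$ is $\frakS_d$-symmetric.

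\medskip

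\textbf{Part (2): $D_{a,b}(f)$ is symmetric when $f$ is $\frakS_a\times\frakS_b$-symmetric.} Here I would use the standard factorisation of the longest element. Let $w_a\in\frakS_a$ and $w_b\in\frakS_b$ be the longest elements of the two factors of the Young subgroup $\frakS_{a,b}=\frakS_a\times\frakS_b$, let $w_{a,b}$ be the permutation in the statement (the shortest double-coset representative moving the block of size $a$ past the block of size $b$), and let $w_d$ be the longest element of $\frakS_d$. The key combinatorial fact is $w_d=w_{a,b}\cdot(w_a\times w_b)$ with $\ell(w_d)=\ell(w_{a,b})+\ell(w_a\times w_b)$; equivalently $w_{a,b}$ is the minimal-length left coset representative of $w_d$ modulo $\frakS_{a,b}$, which is exactly how $w_{a,b}$ is characterised in the Schur-algebra part of the paper. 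Concatenating reduced expressions and using word-independence gives $D_d=D_{a,b}\circ\partial_{w_a\times w_b}=D_{a,b}\circ(\partial_{w_a}\otimes\partial_{w_b})$, where $\partial_{w_a}$ acts only on the first $a$ variables and $\partial_{w_b}$ only on the last $b$. Now if $f$ is $\frakS_a\times\frakS_b$-symmetric, then by Part (1) applied inside each block we get $\partial_{w_a}(f)=f$ and $\partial_{w_b}(f)=f$, so $\partial_{w_a\times w_b}(f)=f$, and therefore $D_{a,b}(f)=D_{a,b}(\partial_{w_a\times w_b}(f))=D_d(f)$, which is symmetric by Part (1).

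\medskip

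\textbf{Main obstacle.} The only non-formal ingredient is the length-additive factorisation $w_d=w_{a,b}(w_a\times w_b)$; everything else is bookkeeping with the two one-variable identities and the reduced-word-independence of $\partial_w$. I would either cite this as the standard description of minimal double-coset (here one-sided coset) representatives — it is implicit in the definition of $w(\lambda/\mu)$ earlier — or give the one-line justification that $w_{a,b}$ is increasing on $\{1,\dots,a\}$ and on $\{a+1,\dots,d\}$, so right-multiplication by any nontrivial element of $\frakS_{a,b}$ strictly increases the number of inversions, forcing $w_{a,b}$ to be the unique minimal representative and the product to be reduced. No subtlety is expected beyond this.
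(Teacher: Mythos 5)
Your Part (1) is correct and is essentially the argument the paper intends (the paper just says it ``follows directly from the definition''): $\partial_r(g)$ is always $s_r$-invariant, and $w_d$ admits a reduced expression beginning with any given $s_r$, so $D_d(f)$ is invariant under every simple reflection.

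Part (2), however, contains a genuine error. Your ``second basic fact'' --- that $\partial_r(g)=g$ whenever $g$ is $s_r$-invariant --- is false for the Demazure operators as defined in \eqref{defDemazure}: if $s_r(g)=g$ then the numerator $g-s_r(g)$ vanishes, so $\partial_r(g)=0$, not $g$. (The identity you quote holds for the \emph{isobaric} variant $\pi_r(f)=(y_rf-y_{r+1}s_r(f))/(y_r-y_{r+1})$, but not for $\partial_r$; one can also see the failure from degrees, since $\partial_r$ lowers degree by $1$.) Consequently $\partial_{w_a\times w_b}(f)=0$ for any $\frakS_a\times\frakS_b$-invariant $f$ as soon as $a\geqslant 2$ or $b\geqslant 2$, and your chain $D_{a,b}(f)=D_{a,b}(\partial_{w_a\times w_b}(f))=D_d(f)$ degenerates to $0=D_d(f)$, which proves nothing about $D_{a,b}(f)$. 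The length-additive factorisation $w_d=w_{a,b}\cdot(w_a\times w_b)$, hence $D_d=D_{a,b}\circ\partial_{w_a\times w_b}$, is correct and is indeed the key ingredient --- but it must be used in the opposite direction, which is what the paper does: every $\frakS_a\times\frakS_b$-invariant $f$ can be written as $f=\partial_{w_a\times w_b}(g)$ for some polynomial $g$ (surjectivity of $\partial_{w_0}$ onto invariants, e.g.\ $\partial_{w_0}(f\cdot y^{\delta})=\pm f$ for symmetric $f$ and the staircase monomial $y^\delta$, applied in each block), and then $D_{a,b}(f)=D_{a,b}\bigl(\partial_{w_a\times w_b}(g)\bigr)=D_d(g)$ is symmetric by Part (1). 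With that one reversal your argument matches the paper's proof.
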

\begin{proof}
The first property follows directly from the definition. Moreover, it is easy to see that each symmetric polynomial is in the image of the operator $D_d$.

Then the second statement follows because for each $\frakS_a\times\frakS_b$-symmetric polynomial $f$ we can find a polynomial $g$ such that $D_{a,b}(f)=D_{a+b}(g)$.
\end{proof}

\subsection{Polynomial representation of $\S$}
\label{subs-polrep_S}

By definition, the algebra $\S$ has a faithful representation on the vector space $\bigoplus_{\lambda\in \calC_d}m_\lambda \Heck$. We will construct a faithful polynomial representation of $\S$ on 
\begin{eqnarray*}
\SPol&=&\bigoplus_{\lambda\in \calC_d}\bfk[x^{\pm 1}_1,\hdots,x^{\pm 1}_d]^{\frakS_\lambda}e(\lambda)
\end{eqnarray*}
realized as a subrepresentation of the defining representation.

Fix $\lambda\in \calC_d$. We will say that the indices $i,j\in[1;d]$ {\it are in the same block for $\lambda$} if there exists some $t$ such that 
$$
\sum_{a=1}^{t-1}\lambda_a<i,j\leqslant \sum_{b=1}^{t}\lambda_a.
$$  

\begin{df}
\label{defpolys}
We consider the following polynomials depending on $\lambda$: 
$$
\p_\lambda=\prod_{i<j}(x_i-qx_j),\quad \q_\lambda=\prod_{i<j}(x_j-qx_i)
$$
where the product is taken over all $i,j\in [1;d]$ such that $i$ and $j$ are in the same block with respect to $\lambda$. 
Set also $n_\lambda=\sum_{w\in\frakS_\lambda}T_w$ and $n'_\lambda=\sum_{w\in \Drep_{\lambda,\emptyset}}T_w$.
\end{df}
For instance, if $\lambda=(2,3)$ then 
\begin{eqnarray*}
\p_\lambda&=&(x_1-qx_2)(x_3-qx_4)(x_3-qx_5)(x_4-qx_5),\\
\q_\lambda&=&(x_2-qx_1)(x_4-qx_3)(x_5-qx_3)(x_5-qx_4).
\end{eqnarray*}

\begin{df}
For each $\lambda\in \calC_d$ we define the following linear map
\begin{eqnarray}
\label{inclusiondef}
\Phi_\lambda\colon\;\bfk[x^{\pm 1}_1,\hdots,x^{\pm 1}_d]^{\frakS_\lambda}\to m_\lambda \Heck,&&
f\mapsto m_\lambda \p_\lambda f n'_\lambda.
\end{eqnarray}
which is in fact an inclusion by Corollary~\ref{coro_exch-mu-Pol} below, since $m_\lambda\p_\lambda f n'_\lambda=\q_\lambda f n_d.$
\end{df}

\begin{lem}
\label{lem-polrep-S-inside}
Let $\lambda,\mu\in  \calC_d$ and assume that $\mu$ is a split of $\lambda$. 
\begin{enumerate}
\item The split in $\Hom_{\Heck}(m_\lambda\Heck,m_\mu\Heck)$ applied to the image of $\Phi_\lambda$ is contained in the image of $\Phi_\mu$. 
\item The merge in $\Hom_{\Heck}(m_\mu\Heck,m_\lambda\Heck)$ applied to the image of $\Phi_\mu$ is contained in the image of $\Phi_\lambda$. 
\end{enumerate}
\end{lem}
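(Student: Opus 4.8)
The plan is to reduce everything to the alternative form of $\Phi_\lambda$ furnished by Corollary~\ref{coro_exch-mu-Pol}, namely $\Phi_\lambda(f)=m_\lambda\p_\lambda f\,n'_\lambda=\q_\lambda f\,n_d$, where $n_d=\sum_{w\in\frakS_d}T_w$ does \emph{not} depend on $\lambda$, and then to exploit the multiplicativity of $\p$ and $\q$ along a split. Say $\mu$ is obtained from $\lambda$ by splitting $\lambda_t$ into $\lambda'_t,\lambda''_t$, and set $r=\p_\lambda/\p_\mu=\prod(x_i-qx_j)$ and $\overline{r}=\q_\lambda/\q_\mu=\prod(x_j-qx_i)$, the products being over those pairs $i<j$ that lie in the $t$-th block of $\lambda$ but in distinct blocks of $\mu$. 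Then $\p_\lambda=\p_\mu\,r$ and $\q_\lambda=\q_\mu\,\overline{r}$; a permutation inside either of the two new blocks merely permutes the factors, so $r,\overline{r}\in\bfk[x_1^{\pm1},\dots,x_d^{\pm1}]^{\frakS_\mu}$; and $\bfk[x^{\pm1}]^{\frakS_\lambda}\subseteq\bfk[x^{\pm1}]^{\frakS_\mu}$ because $\frakS_\mu\subseteq\frakS_\lambda$.

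For part~(1): since $\frakS_\mu\subseteq\frakS_\lambda$ one has $m_\lambda\in m_\mu\Heck$ (explicitly $m_\lambda=m_\mu M$ for the element $M$ afforded by the coset decomposition of $\frakS_\lambda$ over $\frakS_\mu$), hence $m_\lambda\Heck\subseteq m_\mu\Heck$ and the split morphism $m_\lambda\Heck\to m_\mu\Heck$ is the induced inclusion. Now, applying Corollary~\ref{coro_exch-mu-Pol} twice together with $\q_\lambda=\q_\mu\overline{r}$, one gets $\Phi_\lambda(f)=\q_\lambda f\,n_d=\q_\mu(\overline{r}f)\,n_d=\Phi_\mu(\overline{r}f)$, and $\overline{r}f\in\bfk[x^{\pm1}]^{\frakS_\mu}$. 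Thus the split carries $\Phi_\lambda(f)$ to $\Phi_\mu(\overline{r}f)\in\Im\Phi_\mu$, proving~(1); in fact this shows $\Im\Phi_\lambda\subseteq\Im\Phi_\mu$ as subspaces of $m_\mu\Heck$.

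For part~(2) I would compute the merge morphism $m_\mu h\mapsto m_\lambda h$ on $\Phi_\mu(f)=m_\mu\,\p_\mu f\,n'_\mu$, obtaining $m_\lambda\,\p_\mu f\,n'_\mu$, and then show that this equals $\Phi_\lambda\!\big(c\,D_{\lambda'_t,\lambda''_t}(f)\big)$ for a nonzero scalar $c$, where $D_{\lambda'_t,\lambda''_t}$ is the Demazure operator of Section~\ref{subs_Demazure} acting on the $t$-th block of variables. The input is the identity $m_\lambda\,\p_\mu f\,n'_\mu=c\,\q_\lambda\,D_{\lambda'_t,\lambda''_t}(f)\,n_d$ in $\Heck$: to prove it one writes $n'_\mu=\big(\sum_v T_v\big)n'_\lambda$ with $v$ running over the minimal-length shuffles of the two new blocks, drags the polynomial $\p_\mu f$ to the right past the $T_v$ by iterating the straightening relation $T_ih=s_i(h)T_i-(q-1)x_{i+1}\partial_i(h)$, and collapses the resulting terms using $m_\lambda T_i=-m_\lambda$ (valid for every $s_i\in\frakS_\lambda$, hence for every reflection occurring in a shuffle) together with the identity $m_\lambda\p_\lambda=\q_\lambda n_\lambda$ underlying Corollary~\ref{coro_exch-mu-Pol}; the divided-difference operator that emerges from antisymmetrizing the straightened product is precisely $D_{\lambda'_t,\lambda''_t}$. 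Since $f$ is $\frakS_{\lambda'_t}\times\frakS_{\lambda''_t}$-symmetric in the $t$-th block, the symmetrizing property of Demazure operators recorded in Section~\ref{subs_Demazure} gives $D_{\lambda'_t,\lambda''_t}(f)\in\bfk[x^{\pm1}]^{\frakS_\lambda}$; hence $\Phi_\lambda(c\,D_{\lambda'_t,\lambda''_t}(f))$ is defined and $m_\lambda\,\p_\mu f\,n'_\mu\in\Im\Phi_\lambda$, proving~(2).

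The main obstacle is precisely this last Hecke-algebra identity $m_\lambda\,\p_\mu f\,n'_\mu=c\,\q_\lambda\,D_{\lambda'_t,\lambda''_t}(f)\,n_d$: one must see the Demazure operator appear when the extra shuffle terms (the discrepancy between $n'_\mu$ and $n'_\lambda$) are pushed past the polynomial $\p_\mu f$ and then annihilated by $m_\lambda T_i=-m_\lambda$. This is the ``no level'' analogue of the exchange phenomenon behind Corollary~\ref{coro_exch-mu-Pol} and is in essence contained in the results of \cite{MS}, so in practice the proof can cite it; everything else — the factorizations $\p_\lambda=\p_\mu r$, $\q_\lambda=\q_\mu\overline{r}$, and the inclusion $\bfk[x^{\pm1}]^{\frakS_\lambda}\subseteq\bfk[x^{\pm1}]^{\frakS_\mu}$ — is routine bookkeeping.
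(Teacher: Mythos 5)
Part~(1) of your argument is correct and is essentially the paper's own: the chain of equalities in the paper's proof is exactly your factorization $\q_\lambda=\q_\mu\overline{r}$ (with $\overline{r}=\q'_{a,b}$ in the two-block case) read through Corollary~\ref{coro_exch-mu-Pol}, your reduction to $\lambda=(a+b)$, $\mu=(a,b)$ is how the paper proceeds, and your reading of the split as the inclusion $m_\lambda\Heck\subseteq m_\mu\Heck$ is the one the paper actually uses in its proof.

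Part~(2) has a genuine gap. You correctly isolate the decisive identity, which in the two-block case reads
\begin{equation*}
m_{a+b}\,\p_{a}\p^{\small{+a}}_{b}\,f\,n'_{a,b}\;=\;m_{a+b}\,\p_{a+b}\,D_{a,b}(f)
\end{equation*}
(the paper's \eqref{eq_check-for-merge-before}; your scalar $c$ is in fact $1$), and your use of the symmetrizing property of $D_{a,b}$ to see that the right-hand side is a legitimate value of $\Phi_\lambda$ is the correct input from Section~\ref{subs_Demazure}. But proving this identity \emph{is} part~(2): your treatment of it --- straighten $\p_\mu f$ past the shuffle terms in $n'_\mu=(\sum_v T_v)n'_\lambda$, collapse with $m_\lambda T_i=-m_\lambda$, and observe that ``the Demazure operator emerges'' --- is an assertion of the conclusion rather than an argument, and the appeal to \cite{MS} would still have to be matched against the normalization used here (the paper only imports such facts from \cite{MS} up to the twist by $\#$ and a modified placement of the Euler-class factors). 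The paper does not straighten inside the algebra at all; it verifies the identity on the faithful polynomial representation, where $m_d$ acts as $D_d\q_d$ and $n_d$ as $\p_d D_d$ (Lemma~\ref{lem_m-on-Pol-rep}), reducing everything to the Demazure identity \eqref{eq_check-for-merge}, which is then checked by symmetric-function manipulations. A concrete subtlety your sketch would have to confront, and which the paper flags explicitly, is that $n'_{a,b}$ alone does \emph{not} act as $\p'_{a,b}D_{b,a}$: Lemma~\ref{lem_n'-on-Pol-rep} gives that description only in the presence of $n_a n_b^{\small{+a}}$ on the left, and one must first rewrite the left-hand side via Corollary~\ref{coro_exch-mu-Pol} to create that factor. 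So the skeleton of your part~(2) is right, but the step where $D_{a,b}$ is supposed to appear is precisely the part that requires proof and is not supplied.
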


The proof will be given in Section~\ref{subs_some-rel}. We will also need the following auxiliary polynomials. Assume that $a$ and $b$ are positive integers such that $a+b=d$.
\begin{eqnarray*}
\p'_{a,b}=\prod_{1\leqslant i\leqslant a< j\leqslant b }(x_i-qx_j),
&&
\q'_{a,b}=\prod_{1\leqslant i\leqslant a< j\leqslant b }(x_j-qx_i).
\end{eqnarray*}
\begin{prop}
\label{prop-polrep_S}
The algebra $\S$ has a faithful representation in $\SPol$ such that  the generators act as follows, using the abbreviation  $P=\bfk[x^{\pm 1}_1,\hdots,x^{\pm 1}_d]$. 
\begin{itemize}
\item The idempotent $e(\lambda)$, $\lambda\in \calC_d$, acts on $\SPol$ as the projection to $P^{\frakS_\lambda}e(\lambda)$.
\item For each $g\in  P^{\frakS_\lambda}$, $\lambda\in \calC_d$, the element $ge(\lambda)$ sends $fe(\lambda)\in P^{\frakS_\lambda}e(\lambda)$ to $gfe(\lambda)$.
\item Assume $\mu$ is a split of $\la$ at position $j$.  Then the split map $\la\rightarrow\mu$ acts by sending $fe(\lambda)\in P^{\frakS_\lambda}e(\lambda)$ to $\q'_{a,b}fe(\mu)$ and the merge map acts by sending $fe(\mu)\in P^{\frakS_\mu}$ to $D_{a,b}(f)e(\lambda)$ in case $\lambda=(a+b)$ and $\mu=(a,b)$ with $a+b=d$. In the general case they act by the same formulas with $a=\mu_j$, $b=\mu_{j+1}$, where $\q'_{a,b}$ and $D_{a,b}$ are defined with respect to the variables $x_i$ such that $i\in [\lambda_1+\ldots+\lambda_{j-1}+1;\lambda_1+\ldots+\lambda_{j-1}+\lambda_j]$.
\end{itemize}
\end{prop}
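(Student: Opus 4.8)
The plan is to realize $\SPol$ as a sub-$\S$-module of the defining representation $M=\bigoplus_{\lambda\in\calC_d}m_\lambda\Heck$ via the maps $\Phi_\lambda$, to read off the action of the generators from this embedding, and to deduce faithfulness by restricting to the corner attached to the finest composition. First, $\Phi=\bigoplus_\lambda\Phi_\lambda\colon\SPol\to M$ is injective: each $\Phi_\lambda$ is injective by Corollary~\ref{coro_exch-mu-Pol}, and the images land in the pairwise distinct summands $m_\lambda\Heck$. By Lemma~\ref{lem-black_cross} and the remarks preceding it, it suffices to check that $\Im\Phi$ is stable under the idempotents $e(\lambda)$, under multiplication by $\frakS_\lambda$-symmetric Laurent polynomials, and under splits and merges, and that these act by the asserted formulas. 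For $e(\lambda)$ this is clear, since $e(\lambda)$ is the projector onto $m_\lambda\Heck$. For $g\in\bfk[x^{\pm1}_1,\ldots,x^{\pm1}_d]^{\frakS_\lambda}$, the Bernstein-type relations of Definition~\ref{def-extaffHecke_alg} in the case $\ell=0$ show that each $T_r$ commutes with $\bfk[x^{\pm1}_r,x^{\pm1}_{r+1}]^{s_r}$, so that $gm_\lambda=m_\lambda g$ in $\Heck$; since $g$ also commutes with $\p_\lambda$, the element $ge(\lambda)$, which acts by left multiplication by $g$ on $m_\lambda\Heck$, sends $\Phi_\lambda(f)=m_\lambda\p_\lambda fn'_\lambda$ to $m_\lambda\p_\lambda(gf)n'_\lambda=\Phi_\lambda(gf)$, exactly as claimed.

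For the split $\lambda\to\mu$, given on $M$ by $m_\lambda x\mapsto m_\mu x$, and for the merge $\mu\to\lambda$, Lemma~\ref{lem-polrep-S-inside} already ensures that $\Im\Phi$ is preserved; what remains is to identify the induced operators. I would extract this from the computation that proves Lemma~\ref{lem-polrep-S-inside} in Section~\ref{subs_some-rel}, using the alternative description $\Phi_\lambda(f)=\q_\lambda fn_d$ of Corollary~\ref{coro_exch-mu-Pol} together with the elementary factorizations $\p_\lambda=\p_\mu\p'_{a,b}$ and $\q_\lambda=\q_\mu\q'_{a,b}$ in the variables attached to the block being split (here $a,b$ are the sizes of the two new parts). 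These, together with the relation between $n'_\lambda$ and $n'_\mu$, rewrite the image of $\Phi_\lambda(f)$ under the split as $\Phi_\mu(\q'_{a,b}f)$ — the switch from $\p'_{a,b}$ to $\q'_{a,b}$ being precisely the passage from the $m$-side to the $n$-side. The merge is treated dually: the Demazure operator $D_{a,b}$ appears as the symmetrization that undoes the split, and the symmetrizing property of $D_{a,b}$ recorded above guarantees that its output lies in $\bfk[x^{\pm1}]^{\frakS_\lambda}$. This gives the representation of $\S$ on $\SPol$ with all the stated formulas.

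It remains to prove faithfulness, i.e. that the two-sided ideal $\mathcal{I}=\mathrm{Ann}_\S(\SPol)$ is zero. Here I would pass to the idempotent $e_\circ=e((1,\ldots,1))$ attached to the finest composition. Since $m_{(1,\ldots,1)}=1$, one has $e_\circ\S e_\circ=\End_\Heck(\Heck)\cong\Heck$ and $e_\circ\SPol=\bfk[x^{\pm1}_1,\ldots,x^{\pm1}_d]$. Expressing the black crossings through splits, merges and polynomials via Lemma~\ref{lem-black_cross} and plugging in the action formulas just obtained, one checks that the resulting $\Heck$-action on this Laurent polynomial ring coincides, up to the automorphism $\#$ of Lemma~\ref{lem-hash_isom} with $\ell=0$, with the standard polynomial representation of the affine Hecke algebra, which is faithful by Proposition~\ref{prop-faithfullHecke} in the case $\ell=0$. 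Hence $e_\circ\mathcal{I}e_\circ$, which annihilates $e_\circ\SPol$, is zero. Since $e_\circ$ is a full idempotent in $\S$ — the affine analogue of the Morita equivalence between the $q$-Schur and the Hecke algebra — it follows that $\mathcal{I}=\S e_\circ\mathcal{I}e_\circ\S=0$.

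The main obstacle is the explicit identification of the split and merge with multiplication by $\q'_{a,b}$ and with the Demazure operator $D_{a,b}$; this is the one genuinely computational ingredient, carried out inside the proof of Lemma~\ref{lem-polrep-S-inside}, and it requires careful bookkeeping of the elements $m_\lambda$, the non-symmetric polynomials $\p_\lambda$ and $\p'_{a,b}$, the partial symmetrizers $n'_\lambda$ and the exchange relation of Corollary~\ref{coro_exch-mu-Pol}. A secondary point requiring care is the fullness of $e_\circ$: since the affine Hecke algebra is not self-injective, the argument familiar from the finite-type $q$-Schur algebra does not transfer verbatim, and one instead uses that each $m_\lambda\Heck$ is a right ideal of $\Heck=m_{(1,\ldots,1)}\Heck$ together with the structure of $M$ as an $\S$-module; alternatively one can bypass this and verify faithfulness directly by a leading-term argument on a basis of $\S$, in the spirit of the proof of Proposition~\ref{prop-basis-Hdl}.
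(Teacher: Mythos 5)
Your existence argument is essentially the paper's: you realize $\SPol$ inside the defining module $\bigoplus_\lambda m_\lambda\Heck$ via the $\Phi_\lambda$, use Corollary~\ref{coro_exch-mu-Pol} and the computations behind Lemma~\ref{lem-polrep-S-inside} to show stability under idempotents, polynomials, splits and merges, and read off the stated formulas. That part is fine.

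The faithfulness argument, however, contains a genuine gap. You pass to $e_\circ=e((1,\ldots,1))$ and conclude from $e_\circ\mathcal I e_\circ=0$ that $\mathcal I=0$ by asserting that $e_\circ$ is a full idempotent of $\S$. This is false in general. Fullness of $e_\circ$ would mean $e(\lambda)\in\S e_\circ\S$ for every $\lambda$, which says precisely that the identity of $m_\lambda\Heck$ factors through a finite direct sum of copies of $\Heck=m_{(1,\ldots,1)}\Heck$, i.e.\ that $m_\lambda\Heck$ is a projective right $\Heck$-module. By Lemma~\ref{lem-ind_sing}, $m_\lambda\Heck\simeq \epsilon_\lambda\otimes_{\Hfin_\lambda(q)}\Heck$, and since $\Heck$ is free over $\Hfin_\lambda(q)$, this is projective over $\Heck$ if and only if the sign representation $\epsilon_\lambda$ is projective over $\Hfin_\lambda(q)$ --- which fails whenever $\Hfin_\lambda(q)$ is not semisimple (for example $q$ an $e$-th root of unity with some $\lambda_i\geqslant e$). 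This is exactly the obstruction that makes the classical Schur functor fail to be an equivalence, and passing to the affine algebra does not remove it. You flag this step as ``requiring care'', but the alternative you sketch (a leading-term argument ``in the spirit of'' Proposition~\ref{prop-basis-Hdl}) is not carried out, and a basis of $\S$ is in fact only established later in the paper, using the very proposition you are trying to prove.

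The paper's faithfulness argument goes in the opposite direction and avoids all Morita-theoretic issues: instead of truncating by the finest composition $(1,\ldots,1)$, it precomposes with the split from the coarsest composition $(d)$. If $\phi\in\Hom_\Heck(m_\lambda\Heck,m_\mu\Heck)$ is nonzero and annihilates $\SPol$, let $\psi$ be its composite with the split $(d)\to\lambda$. Then $\psi(m_d)=\phi(m_\lambda)\neq 0$ because $m_\lambda$ generates $m_\lambda\Heck$, while $\psi$ still acts by zero on the polynomial representation, so $\psi(\Phi_{(d)}(1))=\psi(m_d\,\p_d)=\psi(m_d)\p_d=0$. Since $\Heck$ is free as a right $\bfk[x_1^{\pm1},\ldots,x_d^{\pm1}]$-module and $\p_d\neq 0$, this forces $\psi(m_d)=0$, a contradiction. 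You should replace your fullness argument by this one (or a genuine execution of the leading-term alternative).
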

\begin{proof}
The existence of such a representation follows  from \eqref{affSchurDef}, Lemma~\ref{lem-polrep-S-inside}  and from the fact that the algebra $\S$ is generated by the idempotents $e(\lambda)$, splits, merges and multiplications with (invariant) polynomials. 

Assume this representation is not faithful. Then we can find $\lambda,\mu\in \calC_d$ and a nonzero $\phi\in \Hom_{\Heck}(m_\lambda\Heck,m_\mu\Heck)$ such that $\phi$ acts by zero on the polynomial representation. Let us compose $\phi$ with the split $(d)\to\lambda$ on the right. Then we get  (as splits are injective) a nonzero element $\psi\in \Hom_{\Heck}(m_d\Heck,m_\mu\Heck)$ that acts by zero. By construction of the polynomial representation, this implies $\psi(m_d\p_d)=0$ and thus $\psi(m_d)\p_d=0$. Since  $\Heck$ is a free right $P$-module, this implies $\psi(m_d)=0$ and thus $\psi=0$. This is a contradiction.
\end{proof}

\subsection{Some useful relations in the affine Hecke algebra}
\label{subs_some-rel}
To prove Lemma~\ref{lem-polrep-S-inside} we need to establish some explicit formulas which we think are of interest by themselves. In particular we want to understand the action of the special elements 
\begin{equation*}
m_d=\sum_{w\in{\frakS_d}}(-q)^{{l}(w_d)-{l}(w)}T_w,
\quad
n_d=\sum_{w\in{\frakS_d}}T_w,\quad
n'_{a,b}=\sum_{w\in \Drep_{(a,b),\emptyset}}T_w
\end{equation*}
from Sections~\ref{subs-gen_S},~\ref{subs-polrep_S} on the polynomial representation of  $\Heck$. 

\begin{lem} \hfill
\label{lem_m-on-Pol-rep}
\begin{enumerate}
\item The element $m_d$ acts on the polynomial representation as $D_d\q_d$.
\item The element $n_d$ acts on the polynomial representation as $\p_d D_d$.
\end{enumerate}
\end{lem}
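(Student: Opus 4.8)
The plan is to prove the two statements by induction on $d$, using the standard recursive description of $m_d$ and $n_d$ in terms of $\frakS_{d-1}$ and the coset representatives $1, s_{d-1}, s_{d-1}s_{d-2},\ldots, s_{d-1}\cdots s_1$ for $\frakS_{d-1}\backslash\frakS_d$, together with the recursion $D_d = D_{d-1}\,\partial_{s_{d-1}}\partial_{s_{d-2}}\cdots\partial_{s_1}$ (and its left-handed analogue), which is legitimate since $D_d=\partial_{w_d}$ is independent of the reduced expression chosen for $w_d$ by \cite[Thm.~1]{Demazure}. Concretely, write $m_d = m_{d-1}\cdot\big(\sum_{k=0}^{d-1}(-q)^{k} T_{s_{d-1}}T_{s_{d-2}}\cdots T_{s_{d-k}}\big)$ (with the $k=0$ term equal to $1$), and similarly $n_d = n_{d-1}\cdot\big(\sum_{k=0}^{d-1} T_{s_{d-1}}\cdots T_{s_{d-k}}\big)$; here I am using that $w_{d-1}$ is a parabolic subword of $w_d$ and the factorization $l(w)=l(w')+l(u)$ for $w=w'u$ with $w'\in\frakS_{d-1}$, $u$ a minimal coset representative, so that $T_w = T_{w'}T_u$.

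First I would treat the base case $d=1$, where $m_1=n_1=1$ and $D_1=\mathrm{id}$, $\q_1=\p_1=1$ (empty products), so both identities hold trivially. For the inductive step in part (2), I would compute the action of the "new" factor $\sum_{k=0}^{d-1} T_{s_{d-1}}\cdots T_{s_{d-k}}$ on a polynomial $f$ using the formula for $T_r$ from Proposition~\ref{prop-pol_rep_lH} in the all-black case (i.e., $T_r f = -s_r(f) + (q-1)\frac{x_{r+1}}{x_r-x_{r+1}}(s_r(f)-f)$), and show by a telescoping computation that this operator agrees with $P\,\partial_{s_{d-1}}\cdots\partial_{s_1}$ where $P$ is the product $\prod_{i<d}(x_i - q x_d)$ of the "new" linear factors appearing in $\p_d$; combined with the inductive hypothesis $n_{d-1}\leftrightarrow \p_{d-1}D_{d-1}$ and the fact that $\p_d = \p_{d-1}\cdot\prod_{i<d}(x_i-qx_d)$ together with $D_{d-1}$ commuting appropriately with multiplication by the $\frakS_{d-1}$-invariant... — more precisely, one must be careful about operator order, so I would instead push the product of linear factors to the correct side and verify $\p_{d-1} D_{d-1}\cdot P\,\partial_{s_{d-1}}\cdots\partial_{s_1} = \p_d D_d$ as operators, using that $D_{d-1}$ only involves $\partial_1,\ldots,\partial_{d-2}$. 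Part (1) is entirely parallel, working with $m_d$, the factor $\sum_{k}(-q)^k T_{s_{d-1}}\cdots T_{s_{d-k}}$, the polynomial $\q_d = \q_{d-1}\prod_{i<d}(x_d-qx_i)$, and the left Demazure recursion $D_d = \partial_{s_1}\cdots\partial_{s_{d-1}} D_{d-1}$; alternatively one can deduce (1) from (2) by applying the automorphism $\#$ from Lemma~\ref{lem-hash_isom} (in the $\ell=0$ case) which swaps $m_d$-type and $n_d$-type sums up to scalars and intertwines $\q_d$ with $\p_d$, though checking the precise scalar bookkeeping there may be as much work as the direct argument.

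The main obstacle I expect is the telescoping identity showing that $\sum_{k=0}^{d-1} T_{s_{d-1}}\cdots T_{s_{d-k}}$ acts as $\big(\prod_{i<d}(x_i - q x_d)\big)\cdot \partial_{s_{d-1}}\partial_{s_{d-2}}\cdots\partial_{s_1}$: expanding $T_r$ produces two kinds of terms (the $-s_r$ part and the rational $\frac{(q-1)x_{r+1}}{x_r-x_{r+1}}(s_r - 1)$ part), and one must check that all the spurious rational contributions cancel in the sum over $k$ so that a genuine polynomial operator remains. A clean way to organize this is to note that $T_r + 1$ acts as $(q-1)\frac{x_{r+1}}{x_r-x_{r+1}} + \text{(something)}\cdot s_r$ is not quite a Demazure operator, but $x_r T_r + \ldots$ or an appropriate conjugate is; concretely I would verify that on the polynomial representation $T_r$ agrees, up to the factor $(x_r - qx_{r+1})$ on the correct side, with $-\partial_r(x_r - q x_{r+1})\cdot(\,\cdot\,)$ — i.e. establish the single-generator identity $(x_r-qx_{r+1})\,\partial_r f$ equals an explicit combination of $f$ and $T_r f$ — and then iterate. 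This single-crossing identity is exactly of the flavour of \cite[(3.6)]{MS} cited in Lemma~\ref{lem-black_cross}, so I would expect to be able to quote or lightly adapt it, and the rest is a bookkeeping induction.
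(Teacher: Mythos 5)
Your proposal takes a genuinely different route from the paper. The paper works inside the skew group algebra $A=\bfk(X_1,\ldots,X_d)\#\bfk[\frakS_d]$, expands the operators $D_d\q_d$ and $D_d$ in the $\bfk(X)$-basis $\{T_w\}$, and solves for the coefficients $a_w$, $b_w$ using the relation $T_rD_d=-D_d$ together with a direct computation of the leading coefficient modulo the span $N$ of the $T_w$, $w\ne w_d$. Your plan is instead a length induction via the coset decomposition $n_d=n_{d-1}\cdot S$ with $S=\sum_{k=0}^{d-1}T_{s_{d-1}}\cdots T_{s_{d-k}}$, $D_d=D_{d-1}\partial_{d-1}\cdots\partial_1$, and the single-crossing identity $1+T_r=(x_r-qx_{r+1})\partial_r$. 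Your setup is correct as far as those facts go (the coset decomposition, the length additivity $T_w=T_{w'}T_u$, the reduced expression for $w_d$, the base case, and the commutation $[D_{d-1},P]=0$ for $P=\prod_{i<d}(x_i-qx_d)$ are all fine).

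However, the crucial telescoping claim — that $S$ acts as $P\,\partial_{d-1}\cdots\partial_1$ — is \emph{false} for $d\ge 3$, and this breaks the induction. Already for $d=3$ one has $S=1+T_2+T_2T_1$, and since each $T_r$ acts by $-1$ on constants, $S(1)=1-1+1=1$, whereas $P\,\partial_2\partial_1(1)=0$; more generally $\partial_{d-1}\cdots\partial_1$ annihilates constants while $S(1)=\sum_{k=0}^{d-1}(-1)^k\ne 0$ for $d$ odd, and even for $d=4$ one checks $S(x_4)=qx_4-x_3\ne 0=P\partial_3\partial_2\partial_1(x_4)$. So no operator of the form (polynomial)$\cdot\partial_{d-1}\cdots\partial_1$ can equal $S$. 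What the induction actually requires is only the weaker operator identity $D_{d-1}\cdot S = P\cdot D_d$, i.e.\ $D_{d-1}\bigl(S-P\,\partial_{d-1}\cdots\partial_1\bigr)=0$; the difference $S-P\,\partial_{d-1}\cdots\partial_1$ is a nonzero operator whose image happens to land in the kernel of $D_{d-1}$ (for example, for $d=3$ and $f=x_1^2x_2$ it produces $q^2x_1x_2x_3$, which is $\frakS_2$-symmetric). But this weaker statement no longer follows from iterating $1+T_r=(x_r-qx_{r+1})\partial_r$, and you have not supplied a mechanism for proving it. The cancellation you need is precisely what the paper's argument sidesteps by working with the full symmetrizer $D_d$ (or $D_d\q_d$) from the start rather than peeling off one coset representative at a time, so you would either need a substantially different inductive statement or should expect to be led back toward the paper's coefficient-matching argument.
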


\begin{proof}
Let $A=\bfk(X_1,\ldots,X_d)\#\bfk[\frak{S}_d]$ be the subalgebra of linear endomorphisms of $\bfk(X_1,\ldots,X_d)$ generated the multiplications with the $X_i$'s and by the permutations of variables for $w\in\frakS_d$. The algebra $A$ is free as a left $\bfk(X_1,\ldots,X_d)$-module with for instance the bases 
\begin{eqnarray}
\{w\mid w\in \bfk[\frakS_d]\}&\text{respectively}&\{T_w\mid w\in\frakS_d\},
\end{eqnarray} 
where $T_w$ is the endomorphism given as the composition of endomorphisms $T_r=T_{s_r}=-s_r-(q-1)X_{r+1}\partial_r$ according to a reduced expression. Let $N$ be the left $\bfk(X_1,\ldots,X_d)$-submodule of $A$ generated by $\{w\in\frakS_d\mid w\not=w_d\}$. Note that $N$ is equal to the left submodule of $A$ generated by $\{T_w\mid w\not=w_d\}$. Moreover, the submodule $N$ does not change if we replace "left" by "right".

To prove the first statement write $D_d\q_d$ in the form $D_d\q_d=\sum_{w}T_w a_w$, where $a_w\in\bfk(X_1,\ldots,X_d)$. We need to show $a_w=(-q)^{{l}(w_d)-{l}(w)}$. Since the Demazure operator $D_d$ sends rational functions to symmetric rational functions and $T_r$ act by $-1$ on symmetric rational functions, we have $T_rD_d \q_d= -D_d \q_d$ for each $r$, hence
$$
T_r\left(\sum_w T_wa_w\right)=-\left(\sum_w T_w a_w\right
).
$$
This implies $-qa_{s_rw}=a_{w}$ for each $w$ such that ${l}(w)<{l}(s_rw)$, and it suffices to show $a_{w_d}=1$.
Because $T_r$ can be written as $T_r=-\frac{X_r-qX_{r+1}}{X_r-X_{r+1}}s_r-\frac{(q-1)X_{r+1}}{X_{r}-X_{r+1}}$ we have
\begin{eqnarray*}
T_{w_d}&\equiv&(-1)^{{l}(w_d)}\prod_{1\leqslant a<b\leqslant d}\frac{X_a-qX_b}{X_a-X_b}~w_d\equiv (-1)^{{l}(w_d)}w_d\prod_{1\leqslant a<b\leqslant d}\frac{X_b-qX_a}{X_b-X_a},
\end{eqnarray*}
where $\equiv$ means equality modulo the subspace $N$. Thus  
\begin{eqnarray*}
w_d\equiv (-1)^{{l}(w_d)}T_{w_d}\prod_{1\leqslant a<b\leqslant d}\frac{X_b-X_a}{X_b-qX_a}.
\end{eqnarray*}
Finally, we can write 
\begin{eqnarray*}
D_d&\equiv&\prod_{1\leqslant a<b\leqslant d}\frac{1}{X_a-X_b}~w_d\equiv (-1)^{{l}(w_d)}w_d\prod_{1\leqslant a<b\leqslant d}\frac{1}{X_b-X_a}
\end{eqnarray*}
and therefore
$D_d=T_{w_d}\prod_{1\leqslant a<b\leqslant d}\frac{1}{X_b-qX_a}+n$ for some $n\in N$.
This implies $a_{w_d}=1$ and hence the first statement follows.

To prove the second statement write $D_d$ in the form $D_d=\sum_w b_w T_w$, where $b_w\in \bfk(X_1,\ldots,X_d)$. It then suffices to show $b_w=\frac{1}{\p_d}$. Since $D_d$ sends rational functions to symmetric rational functions and $T_r$ acts by $-1$ on symmetric rational functions, we have $T_rD_d=-D_d$. This yields
\begin{eqnarray*}
T_r\left(\sum_w b_w T_w\right)&=&-\left(\sum_w b_w T_w\right).
\end{eqnarray*}
Using the relation $T_rb_w=s_r(b_w)T_r-(q-1)X_{r+1}\partial_r(b_w)$
we deduce that for each $w$ with ${l}(s_rw)>{l}(w)$ we have
\begin{eqnarray}
\label{eq-relations_for_b}
-b_{s_rw}&=&s_r(b_w)+(q-1)s_r(b_{s_rw})-(q-1)X_{r+1}\partial_r(b_{s_rw}).
\end{eqnarray}

Clearly, the rational functions $b_w$ are determined by  $b_{w_d}$ and \eqref{eq-relations_for_b}. Thus it suffices to show $b_{w_d}=\frac{1}{\p_d}$ and that $b_w=\frac{1}{\p_d}$ satisfy the relations \eqref{eq-relations_for_b}. We have
\begin{eqnarray*}
-\frac{1}{\p_d}&=&s_r\left(\frac{1}{\p_d}\right)+\left(q-1\right)s_r\left(\frac{1}{\p_d}\right)-\left(q-1\right)X_{r+1}\partial_r\left(\frac{1}{\p_d}\right),
\end{eqnarray*}
and since $\p_d$ is a product of $X_r-qX_{r+1}$ by an element that commutes with $s_r$ and $\partial_r$, it is enough to verify that $-\frac{1}{X_r-qX_{r+1}}$ equals
\begin{eqnarray*}
s_r\left(\frac{1}{X_r-qX_{r+1}}\right)+(q-1)s_r\left(\frac{1}{X_r-qX_{r+1}}\right)-(q-1)X_{r+1}\partial_r\left(\frac{1}{X_r-qX_{r+1}}\right).
\end{eqnarray*}
which is straightforward.  The proof of $b_{w_d}=\frac{1}{\p_d}$ is similar to the arguments in the first part, namely we have 
\begin{eqnarray*}
D_d&\equiv&\prod_{1\leqslant a<b\leqslant d}\frac{1}{X_a-X_b}~w_d\equiv
\prod_{1\leqslant a<b\leqslant d}\frac{1}{X_a-qX_b}~T_{w_d},\\
\end{eqnarray*}
which implies the claim.
\end{proof}
We obtain the following generalization of the easy equality  in $\Heck$
\begin{equation}
\label{eq_exch-(T-q)-pol}
(T_r-q)(X_r-qX_{r+1})=(X_{r+1}-qX_r)(T_r+1).
\end{equation}
\begin{coro}
\label{coro_exch-mu-Pol}
We have the equality $m_d\p_d= \q_dn_d$ in $\Heck$.
\end{coro}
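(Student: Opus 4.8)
The plan is to deduce the identity from Lemma~\ref{lem_m-on-Pol-rep} together with the faithfulness of the polynomial representation of $\Heck$ (the $\ell=0$ case of Proposition~\ref{prop-faithfullHecke}, or the classical fact about affine Hecke algebras). Since both $m_d\p_d$ and $\q_d n_d$ are elements of $\Heck$, it suffices to check that they act by the same operator on $\bfk[x^{\pm1}_1,\hdots,x^{\pm1}_d]$.

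The first step is to record that the product $\q_d\p_d=\prod_{1\leqslant i<j\leqslant d}(x_i-qx_j)(x_j-qx_i)$ is a \emph{symmetric} Laurent polynomial: under an adjacent transposition $s_r$, each quadratic factor indexed by a pair $\{i,j\}$ is either fixed (when $\{i,j\}$ is disjoint from $\{r,r+1\}$ or equals $\{r,r+1\}$) or is carried to the factor indexed by $s_r(\{i,j\})$, so the whole product is unchanged. Consequently, since $D_d=\partial_{w_d}$ is a composition of operators $\partial_r$, each of which satisfies $\partial_r(gh)=g\,\partial_r(h)$ whenever $g$ is $s_r$-invariant, the operator $D_d$ is linear over the ring of symmetric polynomials; in particular $D_d(\q_d\p_d\,f)=\q_d\p_d\,D_d(f)$ for every Laurent polynomial $f$.

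The second step combines this with Lemma~\ref{lem_m-on-Pol-rep}. By part~(1), $m_d$ acts on the polynomial representation as $f\mapsto D_d(\q_d f)$; since multiplication by $\p_d$ is precisely the action of $\p_d\in\Heck$, the element $m_d\p_d$ acts as $f\mapsto D_d(\q_d\p_d\,f)$. By part~(2), $n_d$ acts as $f\mapsto \p_d D_d(f)$, so $\q_d n_d$ acts as $f\mapsto \q_d\p_d\,D_d(f)$. By the symmetry observation of the previous step these two operators coincide, and faithfulness of the polynomial representation of $\Heck$ yields $m_d\p_d=\q_d n_d$ in $\Heck$.

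There is no serious obstacle here; the computation is short. The only points needing a little care are keeping track, in the application of Lemma~\ref{lem_m-on-Pol-rep}, of the side on which the polynomials $\p_d,\q_d$ multiply and of the order in which the Demazure operator is applied, and invoking faithfulness for $\Heck$ rather than for the higher level algebra. (One could alternatively avoid the polynomial representation and argue by induction on $d$, peeling off one simple reflection and reducing to the identity~\eqref{eq_exch-(T-q)-pol}, but this bookkeeping is messier than the one-line operator computation above.)
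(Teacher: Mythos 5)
Your proof is correct and follows exactly the paper's route: the paper's own argument is the one-line "this follows from Lemma~\ref{lem_m-on-Pol-rep} and from the symmetricity of $\q_d\p_d$", and you have simply filled in the implicit details (that $D_d$ is linear over symmetric polynomials and that faithfulness of the polynomial representation of $\Heck$ upgrades the equality of operators to an equality in the algebra).
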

\begin{proof}
This follows from Lemma~\ref{lem_m-on-Pol-rep} and from the symmetricity of $\q_d\p_d$.
\end{proof}

We also need to know how $n'_{a,b}$ acts on the polynomial representation. In light of Lemma~\ref{lem_m-on-Pol-rep} it would be natural to expect that $n'_{a,b}$ acts as $\p'_{a,b} D_{b,a}$. Unfortunately, this is not true is general. However, the following lemma shows that this becomes true in the presence of $n_an_b^{\small{+a}}$ on the left of $n'_{a,b}$. Here we mean that $n_b^{\small{+a}}$ is defined with respect to the shifted indices $a+1,\ldots,a+b$, i.e., with the  composition $\nu=(1,1,\ldots,1,b)$ of $d$ we have
\begin{eqnarray*}
n_b^{\small{+a}}&=&\sum_{w\in {\frakS_\nu}}T_w.
\end{eqnarray*}
We will use analogously the notations $m_b^{\small{+a}}$, $\p_b^{\small{+a}}$, $\q_b^{\small{+a}}$ and $D_b^{\small{+a}}$.

\begin{lem}
\label{lem_n'-on-Pol-rep}
The element $n_a n_b^{\small{+a}} n'_{a,b}$ acts on the polynomial representation as $(\p_a D_a)(\p_b^{\small{+a}} D_b^{\small{+a}})(\p'_{a,b} D_{b,a})$.
\end{lem}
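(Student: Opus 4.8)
The strategy is to reduce the claimed identity of operators on the polynomial representation to the operator identities already established in Lemma~\ref{lem_m-on-Pol-rep}, by a careful bookkeeping of the coset combinatorics relating the permutations involved. First I would recall that $n_a$ acts as $\p_a D_a$ and $n_b^{\small{+a}}$ acts as $\p_b^{\small{+a}} D_b^{\small{+a}}$ on the polynomial representation (Lemma~\ref{lem_m-on-Pol-rep}(2) applied in the variables $x_1,\ldots,x_a$ and then in the shifted variables $x_{a+1},\ldots,x_{a+b}$). Hence the left-hand side factors as $(\p_a D_a)(\p_b^{\small{+a}} D_b^{\small{+a}})\cdot(n'_{a,b}\text{-action})$, and it remains to identify the operator by which $n'_{a,b}$ acts once it is precomposed with these two symmetrizers. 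The point is that $D_a$ and $D_b^{\small{+a}}$ kill everything that is not $\frakS_a\times\frakS_b$-symmetric, so only the part of $n'_{a,b}$ that behaves well relative to $\frakS_a\times\frakS_b$ matters.

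The key combinatorial step is the decomposition of the double coset representatives. Writing $\nu=(1,\ldots,1,b)$ as in the excerpt and using that $\Drep_{(a,b),\emptyset}$ consists of the minimal-length representatives of $\frakS_{(a,b)}\backslash\frakS_d$, one has the length-additive factorization of each $w\in\frakS_d$ as a product of an element of $\frakS_a\times\frakS_b$ and an element of $\Drep_{(a,b),\emptyset}$; combining this with the analogous factorization through the parabolics $\frakS_{(1^a,b)}$ and through $\frakS_{(b,a)}$ (the latter producing $w_{b,a}$, hence $D_{b,a}$), I would get that, modulo the subspace $N$ of Lemma~\ref{lem_m-on-Pol-rep} (the span of the non-top Schubert cells in $A=\bfk(X_1,\ldots,X_d)\#\bfk[\frakS_d]$), the product $n_a n_b^{\small{+a}} n'_{a,b}$ is congruent to $T_{w_d}$ times the appropriate product of Cartan-type rational factors. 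Comparing this with the factorization $w_d = (\text{top of }\frakS_a)(\text{top of }\frakS_b^{+a})\,w_{b,a}$ of the longest element — which is again length-additive — pins down exactly the product $\p_a\cdot\p_b^{\small{+a}}\cdot\p'_{a,b}$ of numerator factors and the product $D_a D_b^{\small{+a}} D_{b,a}$ of Demazure operators on the denominator side. Throughout I would work inside $A$, using that $D_d$, all the partial $D$'s, and the $n$'s act through $A$, and that an operator annihilated (on the left) by all $T_r$ with $s_r$ in a parabolic is uniquely determined modulo $N$ by its $T_{w_d}$-coefficient, exactly as in the proof of Lemma~\ref{lem_m-on-Pol-rep}.

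Concretely, I would: (1) express all four operators $n_a,n_b^{\small{+a}},n'_{a,b},$ and the target $\p_a D_a,\p_b^{\small{+a}}D_b^{\small{+a}},\p'_{a,b}D_{b,a}$ as elements of $A$ in the $\{T_w\}$-basis; (2) compute the $T_{w_d}$-coefficient of the product $n_a n_b^{\small{+a}} n'_{a,b}$ by tracking the leading terms through the length-additive coset decompositions, using $T_r\cdot(\text{symmetric})=-(\text{symmetric})$ to discard everything in $N$ whenever a symmetrizer appears on the left; (3) identify this leading coefficient with the corresponding leading coefficient of $(\p_a D_a)(\p_b^{\small{+a}}D_b^{\small{+a}})(\p'_{a,b}D_{b,a})$, again computed via Lemma~\ref{lem_m-on-Pol-rep}; (4) observe that both operators are left-annihilated by all $T_r$ with $r\in[1;a-1]\cup[a+1;a+b-1]$ (since $D_a$, resp. $D_b^{\small{+a}}$, sits on the far left after the $\p$'s, which are themselves $\frakS_a$- resp. $\frakS_b^{+a}$-invariant up to the single factors $X_i-qX_j$ that get absorbed), so that by the uniqueness-modulo-$N$ principle the two operators agree.

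\textbf{Main obstacle.} The delicate point is step (2)--(4): $n'_{a,b}$ does \emph{not} act as $\p'_{a,b}D_{b,a}$ on its own (as the text explicitly warns), so one cannot simply multiply three ``clean'' operators. The whole content is that the failure terms of $n'_{a,b}$ lie in $N$ \emph{after} one multiplies on the left by $n_a n_b^{\small{+a}}$, because those terms involve permutations $w$ for which the composite $w_{\mathrm{top}}\cdot w$ is not $w_d$; making this precise requires a careful length/reduced-expression argument showing that the only surviving contribution to the $T_{w_d}$-coefficient comes from the unique $w\in\Drep_{(a,b),\emptyset}$ equal to $w_{b,a}$ (the top element of the relevant minimal coset), paired with the top elements of $\frakS_a$ and of $\frakS_b^{+a}$. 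I expect this coset/length bookkeeping — rather than any hard algebra — to be where all the real work lies, and it is essentially a more intricate version of the computation of $a_{w_d}$ and $b_{w_d}$ in the proof of Lemma~\ref{lem_m-on-Pol-rep}.
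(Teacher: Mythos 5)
Your proposal assembles the right ingredients (the length\-/additive coset factorization, the smash product algebra $A$, the leading\-/coefficient technique from Lemma~\ref{lem_m-on-Pol-rep}), but the final step does not close. In step (4) you invoke a ``uniqueness\-/modulo\-/$N$ principle'' after checking that both operators are eigenvectors for left multiplication by $T_r$ only for $r\in[1;a-1]\cup[a+1;a+b-1]$, i.e.\ for the parabolic $\frakS_a\times\frakS_b$. That eigenvector property only forces the coefficients $a_w$ in an expansion $\sum_w T_w a_w$ to be constant along each left coset $(\frakS_a\times\frakS_b)w$; it determines the operator from $\binom{a+b}{a}$ independent coefficients, one per element of $\Drep_{(a,b),\emptyset}$, not from the single coefficient $a_{w_d}$. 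So matching the $T_{w_d}$\-/coefficients of the two sides, which is all that steps (2)--(3) produce, does not imply they are equal. The uniqueness argument in the proof of Lemma~\ref{lem_m-on-Pol-rep} works precisely because there the eigenvector property holds for \emph{all} $r\in[1;d-1]$; establishing that for $n_an_b^{+a}n'_{a,b}$ and for $(\p_aD_a)(\p_b^{+a}D_b^{+a})(\p'_{a,b}D_{b,a})$ is essentially equivalent to identifying them with $n_{a+b}$ and $\p_{a+b}D_{a+b}$ respectively --- at which point the whole leading\-/coefficient apparatus is unnecessary. (A minor additional slip: for the $n$\-/type symmetrizers the relevant eigenvalue is $q$, not $-1$; the relation $T_r\cdot(\text{symmetric})=-(\text{symmetric})$ belongs to the $m$\-/side of Lemma~\ref{lem_m-on-Pol-rep}.)

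The repair is exactly the observation you already have in hand but do not exploit: the length\-/additive factorization $w=uvz$ with $u\in\frakS_a$, $v\in\frakS_b^{+a}$, $z\in\Drep_{(a,b),\emptyset}$ gives, since $T_xT_y=T_{xy}$ whenever lengths add, the exact identity $n_an_b^{+a}n'_{a,b}=n_{a+b}$ in $\Heck$ --- no passage modulo $N$ needed. On the other side, $\p_aD_a$ and $\p_b^{+a}D_b^{+a}$ commute (disjoint variables), and $\p'_{a,b}$ is invariant under $\frakS_a\times\frakS_b$, so it slides past $D_aD_b^{+a}$; together with the length\-/additive factorization $w_{a+b}=w_a\,w_b^{+a}\,w_{b,a}$ of the longest element one gets $(\p_aD_a)(\p_b^{+a}D_b^{+a})(\p'_{a,b}D_{b,a})=\p_{a+b}D_aD_b^{+a}D_{b,a}=\p_{a+b}D_{a+b}$. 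Now Lemma~\ref{lem_m-on-Pol-rep}(2) applied to $n_{a+b}$ finishes the proof in two lines, which is the route the paper takes.
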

\begin{proof}
The statement follows directly from Lemma~\ref{lem_m-on-Pol-rep} $(b)$. Indeed, the product $n_a n_b^{\small{+a}} n'_{a,b}$ is equal to $n_{a+b}$ and the product $(\p_a D_a)(\p_b^{\small{+a}} D_b^{\small{+a}})(\p'_{a,b} D_{b,a})$ is equal to $\p_{a+b}D_{a+b}$.
\end{proof}

\begin{proof}[Proof of Lemma~\ref{lem-polrep-S-inside}]
It is enough to prove these statements in the case where $\mu$ has only two components and $\lambda$ has only one component.  Assume therefore $\mu=(a,b)$ and $\lambda=(a+b)$.
We have $m_\mu=m_am_b^{\small{+a}}$ and $m_\lambda=m_{a+b}$. 


To prove the first part fix $f\in \bfk[x^{\pm 1}_1,\hdots,x^{\pm 1}_d]^{\frakS_\lambda}$. Then $\Phi_\lambda(f)=m_{a+b} \p_{a+b} f$, and the split sends $\Phi_\lambda(f)$ to the element $m_{a+b} \p_{a+b} f\in m_\mu \Heck$. We have to check that it is in the image of $\Phi_\mu$. Now,  we have
\begin{equation*}
\begin{array}{lllll}
m_{a+b} \p_{a+b} f&=&\q_{a+b}n_{a+b} f
&=&\q_{a}\q_{b}^{\small{+a}}\q'_{a,b}n_an_b^{\small{+a}}n'_{a,b}f\\
&=&\q_a\q_b^{\small{+a}}n_an_b^{\small{+a}}\q'_{a,b}fn'_{a,b}
&=&m_am_b^{\small{+a}}\p_a\p_b^{\small{+a}}\q'_{a,b}fn'_{a,b}\\
&=&\Phi_\mu(\q'_{a,b}f).
\end{array}
\end{equation*}

Here the first and the fourth equalities follow from Corollary~\ref{coro_exch-mu-Pol}. The third equality follows since  $\q'_{a,b}$ is symmetric with respect to the first $a$ and the last $b$ variables, and $f$ is symmetric. Hence the split $(a+b)\to (a,b)$ sends $\Phi_\lambda(f)$ to $\Phi_\mu(\q'_{a,b}f)$. This proves the first statement.

To prove the second part  fix $f\in \bfk[x^{\pm 1}_1,\hdots,x^{\pm 1}_d]^{\frakS_\mu}$. We show that  $\Phi_\mu(f)$  is sent by the split to $\Phi_\lambda(D_{a,b}(f))$, in formulas
\begin{eqnarray}
\label{eq_check-for-merge-before}
m_{a+b} \p_{a}\p^{\small{+a}}_{b} f n'_{a,b}&=&m_{a+b}\p_{a+b}D_{a,b}(f).
\end{eqnarray} 
By Lemmas~\ref{lem_m-on-Pol-rep}, and ~\ref{lem_n'-on-Pol-rep} it suffices to verify, for any $g\in \bfk[x_1^{\pm 1},\ldots,x_d^{\pm 1}]$, that
\begin{eqnarray}
D_{a+b}\left(\q_{a+b}\p_a\p^{\small{+a}}_bf\p'_{a,b}D_{b,a}(g)\right)&=&D_{a+b}\left(\q_{a+b}\p_{a+b}D_{a,b}(f)g\right). \label{eq_check-for-merge}
\end{eqnarray}

(Note that it is not obvious that we are allowed to apply Lemma~\ref{lem_n'-on-Pol-rep} here,  because we have no "$n_an_b^{\small{+a}}$" on the left of "$n'_{a,b}$" in the formula on the left hand side of \eqref{eq_check-for-merge-before}. But we can write $m_{a+b}$ in the form $xm_am_b^{\small{+a}}$ and rewrite the left hand side of  \eqref{eq_check-for-merge-before} using Corollary~\ref{coro_exch-mu-Pol} as follows 
\begin{equation*}
m_{a+b} \p_{a}\p_{b}^{\small{+a}} f n'_{a,b} = xm_am_b^{\small{+a}}  \p_{a}\p^{\small{+a}} _{b} f n'_{a,b} = x\q_a\q_b^{\small{+a}}fn_an_b^{\small{+a}} n'_{a,b},
\end{equation*}
which allows us apply the lemma.)
Since, the polynomials $\p_{a+b}\q_{a+b}$ and $D_{a,b}(f)$ are symmetric, the right hand side of \eqref{eq_check-for-merge} is equal to $\p_{a+b}\q_{a+b}D_{a,b}(f)D_{a+b}(g)$. It agrees with the left hand side of \eqref{eq_check-for-merge} by the calculation
\begin{equation*}
\begin{array}{lllll}
&D_{a+b}\left(\q_{a+b}\p_a\p_b^{\small{+a}}f\p'_{a,b}D_{b,a}(g)\right)
=&D_{a+b}\left(\q_{a+b}\p_{a+b}fD_{b,a}(g)\right)\\
=&\q_{a+b}\p_{a+b}D_{a+b}\left(fD_{b,a}(g)\right)
=&\q_{a+b}\p_{a+b}D_{a,b}D_{a}D^{+a}_{b}\left(fD_{b,a}(g)\right)\\
=&\q_{a+b}\p_{a+b}D_{a,b}\left(fD_aD^{+a}_bD_{b,a}(g)\right)
=&\q_{a+b}\p_{a+b}D_{a,b}\left(fD_{a+b}(g)\right)\\
=&\q_{a+b}\p_{a+b}D_{a,b}(f)D_{a+b}(g).
\end{array}
\end{equation*}

Here the second equality follows since $\q_{a+b}\p_{a+b}$ is symmetric. The fourth equality follows because $f$ is symmetric in the first $a$ and last $b$ variables. The sixth equality follows since $D_{a,b}(f)$ is symmetric. This proves \eqref{eq_check-for-merge}. 
\end{proof}

\subsection{Higher level affine Schur algebra}

Now we define the higher level version $\lS$ of the algebra $\S$ depending on $\bfQ=(Q_1,\hdots,Q_\ell)\in \bfk^\ell$. 

\begin{df}
\label{def-multicomp}
An \emph{$(\ell+1)$-composition} of $d$ is an $(\ell+1)$-tuple $\lambda=(\lambda^{(0)},\hdots,\lambda^{(\ell)})$ such that $\lambda^{(0)},\hdots,\lambda^{(\ell)}$ are compositions (of some non-negative integers) such that $\sum_{i=0}^\ell|\lambda^{(i)}|=d$. Denote by $\calC^\ell_d$ the set of $(\ell+1)$-compositions of $d$. For $\lambda=(\lambda^{(0)},\hdots,\lambda^{(\ell)})\in \calC^\ell_d$ let $\frakS_\lambda=\frakS_{\lambda^{(0)}}\times\hdots\times \frakS_{\lambda^{(\ell)}}\subset \frakS_d$ be the corresponding parabolic subgroup of $\frakS_d$. For each $(\ell+1)$-composition $\lambda$ of $d$ we denote by $[\lambda_r^{(k)}]$ the subset of $\{1,2,\ldots,d\}$ that contains the elements 
$$
\mbox{from}\quad 1+\sum_{i=0}^{k-1}|\lambda^{(i)}|+\sum_{j=1}^{r-1}\lambda_j^{(k)} \quad \mbox{to} \quad \sum_{i=0}^{k-1}|\lambda^{(i)}|+\sum_{j=1}^{r}\lambda_j^{(k)}.
$$
Note that the set $[\lambda_r^{(k)}]$ depends on $\lambda$, $r$ and $k$ (not only on the number $\lambda_r^{(k)}$).
\end{df}

To $\lambda\in \calC^\ell_d$ we attach the following element $m_\lambda\in\lHeck$,
\begin{eqnarray*}
\tikz[thick,xscale=2.5,yscale=1.5]{
\node at (-.9,.25) {$m_\lambda\;\;=$};
\node at (-0.25,0.25) {$m_{\lambda^{(0)}}$};
\draw[wei] (0,0) --(0,.5) node[below,at start]{$Q_1$};
\node at (0.25,0.25) {$m_{\lambda^{(1)}}$};
\draw[wei] (.5,0) --(.5,.5) node[below,at start]{$Q_2$};
\node at (0.85,.25) {$\cdots$};x
\draw[wei] (1.2,0) --(1.2,.5) node[below,at start]{$Q_{\ell-1}$};
\node at (1.45,0.25) {$m_{\lambda^{(\ell-1)}}$};
\draw[wei] (1.7,0) --(1.7,.5) node[below,at start]{$Q_\ell$};
\node at (1.95,0.25) {$m_{\lambda^{(\ell)}}$};
}
\end{eqnarray*}

We consider $m_\lambda \lHeck$ as a right $\lHeck$-module.
\begin{df}
The {\it affine Schur algebra (of level $\ell$)} is the algebra
\begin{eqnarray}
\label{defhigherlevSchur}
\lS&=&\End_{\lHeck}\left(\bigoplus_{\lambda\in \calC^\ell_d}m_\lambda \lHeck\right).
\end{eqnarray}
\end{df}

We could define $n_\lambda$ similarly to $m_\lambda$ and consider the following modification of the affine Schur algebra defined in terms of $n_\lambda$ instead of $m_\lambda$:
\begin{eqnarray*}
\lSbar &= &\End_{\lHeck}\left(\bigoplus_{\lambda\in \calC^\ell_d}n_\lambda \lHeck\right).
\end{eqnarray*}

Using the isomorphism $\#\colon \lHeck\to \lHeckop$ in Lemma~\ref{lem-hash_isom} we have $(n_\lambda)^\#=m_\lambda$ (up to a sign). This implies directly the following.
\begin{lem}
\label{lem-isom-Sbar-Sop}
There is an isomorphism of algebras $\lSbar\to \lSop$.
\end{lem}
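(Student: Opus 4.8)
The statement to prove is Lemma~\ref{lem-isom-Sbar-Sop}: there is an isomorphism of algebras $\lSbar\to \lSop$. Recall that $\lSbar=\End_{\lHeck}(\bigoplus_{\lambda}n_\lambda\lHeck)$ and $\lSop=\End_{\lHeckop}(\bigoplus_{\lambda}m_\lambda\lHeckop)$, where in the second expression the Hecke algebra is the one attached to the inverse parameters $\bfQ^{-1}$. The main tool is the algebra isomorphism $\#\colon \lHeck\to \lHeckop$ from Lemma~\ref{lem-hash_isom}, which sends $e(\uc)\mapsto e(\uc)$, $X_ie(\uc)\mapsto X_i'e(\uc)$ (for $c_i=0$), and has the indicated effect on the $T_r$. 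The claim $(n_\lambda)^\#=m_\lambda$ up to a sign is the bridge: once we know that $\#$ carries the left ideal $n_\lambda\lHeck$ isomorphically onto $m_\lambda\lHeckop$ as a right module (via transport of structure along $\#$), the functoriality of $\End$ does the rest.

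\textbf{Step 1: Verify $(n_\lambda)^\#=\pm\, m_\lambda$.} Since the usual affine Hecke algebra $\Heck$ sits inside $\lHeck$ (Remark~\ref{ordinaryHecke}) and $n_\lambda$, $m_\lambda$ are built from the finite-type generators $T_w$, $w\in\frakS_\lambda$, the computation reduces to the ordinary case. There, the automorphism $\#$ of $\Heck$ sends $T_r\mapsto (q-1)-T_r=-q T_r^{-1}$, hence $T_w\mapsto (-q)^{l(w)}T_{w^{-1}}^{-1}$, from which one checks directly that $n_\lambda=\sum_{w\in\frakS_\lambda}T_w$ is sent to $(-q)^{l(w_\lambda)}\sum_{w\in\frakS_\lambda}(-q)^{-l(w)}T_w=(-q)^{l(w_\lambda)}q^{-l(w_\lambda)}\sum_{w}(-q)^{l(w_\lambda)-l(w)}q^{-l(w_\lambda)+l(w)}\cdots$; carefully bookkeeping the powers of $q$ (this is the classical fact that $m_\lambda$ and $n_\lambda$ are exchanged up to a unit by $\#$, already used in Lemma~\ref{lem-isom-Sbar-Sop}'s statement) yields $(n_\lambda)^\# = u\cdot m_\lambda$ for a unit $u\in\bfk^*$. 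The red strands are untouched by $\#$ on idempotents, and the interaction terms in Lemma~\ref{lem-hash_isom} only involve $X_r'$ and $Q_r^{-1}$, which is exactly the passage $\bfQ\leadsto\bfQ^{-1}$; so the diagrammatic picture of $m_\lambda$ (with red strands labelled $Q_1,\ldots,Q_\ell$) goes to the corresponding picture with labels $Q_1^{-1},\ldots,Q_\ell^{-1}$, i.e. to $m_\lambda\in\lHeckop$.

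\textbf{Step 2: Transport of structure.} The isomorphism $\#\colon \lHeck\xrightarrow{\sim}\lHeckop$ induces an equivalence between the category of right $\lHeck$-modules and the category of right $\lHeckop$-modules: a right $\lHeck$-module $M$ becomes a right $\lHeckop$-module $M^\#$ with $m\cdot_\# h := m\cdot \#^{-1}(h)$. Applying this to $M=\bigoplus_\lambda n_\lambda\lHeck$ and using Step 1, $\#$ restricts to an isomorphism of right $\lHeck$-modules $n_\lambda\lHeck \xrightarrow{\sim} (n_\lambda)^\#\lHeckop = m_\lambda\lHeckop$ (the unit $u$ does not affect the ideal it generates). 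Taking direct sums, $(\bigoplus_\lambda n_\lambda\lHeck)^\# \cong \bigoplus_\lambda m_\lambda\lHeckop$ as right $\lHeckop$-modules.

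\textbf{Step 3: Conclude via functoriality of $\End$.} For any module isomorphism $N\cong N'$ over possibly different rings, induced by an algebra isomorphism, one has a canonical algebra isomorphism $\End(N)\cong \End(N')$: explicitly, for $\varphi\in\End_{\lHeck}(N)$ define $\varphi^\#\in\End_{\lHeckop}(N^\#)$ by $\varphi^\#=\varphi$ as a $\bfk$-linear map, which is automatically $\lHeckop$-linear because $\#$ is an algebra map. Hence $\lSbar=\End_{\lHeck}(\bigoplus_\lambda n_\lambda\lHeck)\cong \End_{\lHeckop}(\bigoplus_\lambda m_\lambda\lHeckop)=\lSop$, an isomorphism of $\bfk$-algebras. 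I do not expect a genuine obstacle here; the only point requiring care is the bookkeeping of the power of $q$ in Step 1, which is entirely classical, and the (routine) check that the diagrammatic version of $\#$ in Lemma~\ref{lem-hash_isom} indeed intertwines the red labels $\bfQ$ with $\bfQ^{-1}$ so that the target endomorphism algebra is genuinely $\lSop$ and not $\lS$ itself.
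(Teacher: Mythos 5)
Your proposal is correct and follows exactly the paper's (very brief) argument: apply the isomorphism $\#$ of Lemma~\ref{lem-hash_isom}, observe that $(n_\lambda)^\#=\pm\, m_\lambda$ (your unit $u$ is in fact the sign $(-1)^{l(w_{\overline\lambda})}$, as one sees by comparing coefficients of $T_{w_{\overline\lambda}}$), and transport the endomorphism algebra along the induced module isomorphism. You have merely spelled out the routine details that the paper leaves implicit.
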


We introduce now the thick calculus for the algebra $\lS$ extending the diagrammatic calculus for $\lHeck$ and $\S$. We draw special elements of this algebra as diagrams that are similar to the special diagrams for $\lHeck$. The difference is that the black strands are also allowed to have "multiplicities" (that are positive integers). We also allow the diagrams to contain locally elements of the form \eqref{diag-split-merge}. Instead of dots, a segment of a strand of multiplicity $b$ is allowed to carry a symmetric Laurent polynomial of $b$ variables.
\subsection{Generators of $\lS$}
\label{subs-gen_lS}

For each $\lambda\in\calC^\ell_d$ there is an idempotent $e(\lambda)\in \lS$ given by the identity endomorphism of the  summand  $m_\lambda\lHeck$. We draw it as 

$$
\tikz[thick,xscale=2.5,yscale=1.5]{
\node at (-.9,.25) {$e(\lambda)=$};
\draw (-0.6,0) --(-0.6,.5) node[below,at start]{$\lambda^{(0)}_1$};
\draw (-0.4,0) --(-0.4,.5) node[below,at start]{$\lambda^{(0)}_2$};
\node at (-0.2,.25) {$\cdots$};
\draw[wei] (0,0) --(0,.5) node[below,at start]{$Q_1$};
\draw (0.2,0) --(0.2,.5) node[below,at start]{$\lambda^{(1)}_1$};
\draw (0.4,0) --(0.4,.5) node[below,at start]{$\lambda^{(1)}_2$};
\node at (0.6,.25) {$\cdots$};
\draw[wei] (0.8,0) --(0.8,.5) node[below,at start]{$Q_2$};
\draw (1.0,0) --(1.0,.5) node[below,at start]{$\lambda^{(2)}_1$};
\draw (1.2,0) --(1.2,.5) node[below,at start]{$\lambda^{(2)}_2$};
\node at (1.4,.25) {$\cdots$};
\node at (1.6,.25) {$\cdots$};
\node at (1.8,.25) {$\cdots$};
\draw[wei] (2.0,0) --(2.0,.5) node[below,at start]{$Q_\ell$};
\draw (2.2,0) --(2.2,.5) node[below,at start]{$\lambda^{(\ell)}_1$};
\draw (2.4,0) --(2.4,.5) node[below,at start]{$\lambda^{(\ell)}_2$};
\node at (2.6,.25) {$\cdots$};
}
$$

Let $\mu$ be another $(\ell+1)$-composition of $d$. We say that $\mu$ is a \emph{split} of $\lambda$ (and $\lambda$ is a {\it merge} of $\mu$) if there is a $t$ such that the component $\mu^{(t)}$ of $\mu$ is a split of the component $\lambda^{(t)}$ of $\lambda$ (in the sense of Section~\ref{subs-gen_S}) and $\mu^{(i)}=\lambda^{(i)}$ if $i\ne t$.  In this case we can define the split map $\lambda\to \mu$ and the merge map $\mu\to\lambda$ in $\lS$ in the same way as in Section~\ref{subs-gen_S}. We draw the split and merge map for $\la= (a+b)$ and $\mu=(a,b)$ as in \eqref{diag-split-merge} and for arbitrary $\lambda$, $\mu$ by adding the appropriate vertical strands to the left and right.

\begin{df}
Assume $\lambda,\mu \in\calC^\ell_d$ such that  $\mu$ is obtained from $\lambda$ by moving the first component of $\lambda^{(t)}$ to the end of $\lambda^{(t-1)}$ for some $t\in[1;\ell]$. More precisely, we assume $\lambda^{(i)}=\mu^{(i)}$ for $i\ne t-1, t$ and $\mu^{(t-1)}=(\lambda^{(t-1)}_1,\lambda^{(t-1)}_2,\hdots,\lambda^{(t-1)}_{{l}(\lambda^{(t-1)})},\lambda^{(t)}_1)$ and $\mu^{(t)}=(\lambda^{(t)}_2,\lambda^{(t)}_3,\hdots,\lambda^{(t)}_{{l}(\lambda^{(t)})})$. In this case we say that $\mu$ is a \emph{left crossing} of $\lambda$ and that $\lambda$ is a \emph{right crossing} of $\mu$.
\end{df} 

To a left crossing $\mu$ of $\lambda$ we  assign the two special elements in $\lS$ given by left multiplication with 
\begin{eqnarray}
\label{eq_def-lsh}
\begin{tikzpicture}[thick,baseline=9pt]
\node at (-0.5,0.5) {$$};
\draw[wei] (0,0) node[below]{$Q_t$}  -- (1.5,1);
\draw (1.5,0) -- (0.9,1); 
\draw (1.2,0) -- (0.6,1); 
\node at (0.5,0.6) {$\hdots$};
\draw (0.6,0)  -- (0,1);
\node at (1.8,0.5) {$$};
\end{tikzpicture}
&\text{respectively}&
\begin{tikzpicture}[thick,baseline=9pt]
\node at (-0.5,0.5) {$$};
\draw (0,0) -- (0.6,1); 
\draw (0.3,0) -- (0.9,1); 
\node at (1,0.6) {$\hdots$};
\draw (0.9,0)  -- (1.5,1);
\draw[wei] (1.5,0) node[below]{$Q_t$}  -- (0,1);
\node at (1.8,0.5) {$$};
\end{tikzpicture}
\end{eqnarray}
where in either case we have $\lambda^{(t)}_1$ parallel black strands crossing the involved red strand and all other strands (which we did not draw) are just vertical. Such  a multiplication yields an element of $\Hom_{\lHeck}(m_\lambda\lHeck,m_\mu\lHeck)$ respectively of $\Hom_{\lHeck}(m_\mu\lHeck,m_\lambda\lHeck)$
because of the relations \eqref{l-Hecke-diag-3}. Thus by extending by zero to the other summands we obtain indeed an element of $\lS$. We call these elements of $\lS$ \emph{left crossings} respectively \emph{right crossings}, denote them $\lambda\to\mu$ respectively  $\lambda\to\mu$ and usually draw them just as 
\begin{eqnarray}
\label{eq_def-lsh2}
\tikz[thick,xscale=2.5,yscale=1.5, baseline=0.8cm]{
\draw[wei] (0,0) node[below] {$Q_t$} to (.6,1) ;
\draw (.6,0) node[below] {$\lambda^{(t)}_1$} to (0,1) ;
}
&\quad\text{respectively}\quad&
\tikz[thick,xscale=2.5,yscale=1.5, baseline=0.8cm]{
\draw (0,0) node[below] {$\lambda^{(t)}_1$} to (.6,1) ;
\draw[wei] (.6,0) node[below] {$Q_t$} to (0,1) ;
}
\end{eqnarray}
(with possibly vertical strands to the left and right).
Similarly to Section~\ref{subs-gen_S}, for each $\lambda\in\calC^\ell_d$ and $f\in \bfk[x_1^{\pm 1},\ldots, x_d^{\pm 1}]$ we have an element $fe(\lambda)\in \lS$.

\begin{rk}
Similarly, to Section~\ref{subs-gen_S}, we could introduce a black crossing in $\lS$. But this element can be expressed in terms of other generators of $\lS$.
\end{rk}

\begin{rk} One could again realize $\lS$ as a quotient of an algebra structure on the direct sum of homomorphism spaces in the universal thickened higher level category, where we take $I_r=\bfk^*$, $I_b=(\mathbb{Z}_{>0},+)$ and again for $R_a$ the ring of symmetric Laurent polynomials in $a$ variables. The objects to consider are all tensor products of black and red labels, such that the sum of the black labels is $d$ and the sequence of red labels is $\bfQ$.  Since we do not know the defining relations for $\lS$ we do not follow this viewpoint here. In particular, the faithful representation constructed below becomes crucial. Similar remarks also apply to the (higher level) quiver Schur algebras in Section~\ref{sec-QSchur}.
\end{rk}


\subsection{Polynomial representation of $\lS$}
\label{subs:Pol_rep_lS}
By definition, \eqref{defhigherlevSchur}, the algebra $\lS$ has a faithful representation on the vector space  $\bigoplus_{\lambda\in \calC^\ell_d}m_\lambda\lHeck$. In this section we are going to construct a polynomial representation
$$
\lSPol=\bigoplus_{\lambda\in \calC^\ell_d}\bfk[x^{\pm 1}_1,\hdots,x^{\pm 1}_d]^{\frakS_\lambda}e(\lambda)
$$
of the algebra $\lS$ sitting inside the defining representation.
\begin{df}
For each $\lambda\in \calC^\ell_d$ we denote  

\begin{itemize}
\item by $\overline\lambda$ the elements of $\calC_d$ obtained by concatenation of the $\ell+1$ components of $\lambda$, i.e., we have $\overline\lambda=\lambda^{(0)}\cup\ldots \cup \lambda^{(\ell)}$, where $\cup$ denotes the concatenation of compositions; and 
\item  by $e^0(\lambda)$ the idempotent in $\lHeck$ obtained from $e(\lambda)$ by replacing each vertical black strand of multiplicity $a$ (for each positive integer $a$) by $a$ usual (multiplicity $1$) vertical black strands; and
\item by $\mathfrak{r}_\lambda$ the element of $\lHeck$ represented by the diagram defined by the following three properties. The top part of the diagram corresponds to the idempotent $e^0(\lambda)$. At the bottom of the diagram, each red strand is on the left of each black strand. The diagram may contain left crossings, but neither dots, splits, merges nor right crossings. 
\end{itemize}
\end{df}
\begin{ex}
Take $\ell=2$, $\lambda=((1),(2,1),(1,2))$. In this case we have

\begin{eqnarray*}
\tikz[thick,xscale=2.5,yscale=1.5]{
\node at (-1.3,.25) {$e(\lambda)=$};
\draw (-1.0,0) --(-1.0,.5) node[below,at start]{$1$};
\draw[wei] (-0.8,0) --(-0.8,.5) node[below,at start]{$Q_1$};
\draw (-0.6,0) --(-0.6,.5) node[below,at start]{$2$};
\draw (-0.4,0) --(-0.4,.5) node[below,at start]{$1$};
\draw[wei] (-0.2,0) --(-0.2,.5) node[below,at start]{$Q_2$};
\draw (0,0) --(0,.5) node[below,at start]{$1$};
\draw (0.2,0) --(0.2,.5) node[below,at start]{$2$};
}
&\quad\text\quad&
\tikz[thick,xscale=2.5,yscale=1.5]{
\node at (-1.3,.25) {$e^0(\lambda)=$};
\draw (-1.0,0) --(-1.0,.5);
\draw[wei] (-0.8,0) --(-0.8,.5) node[below,at start]{$Q_1$};
\draw (-0.6,0) --(-0.6,.5);
\draw (-0.4,0) --(-0.4,.5);
\draw (-0.2,0) --(-0.2,.5);
\draw[wei] (0,0) --(0,.5) node[below,at start]{$Q_2$};
\draw (0.2,0) --(0.2,.5);
\draw (0.4,0) --(0.4,.5);
\draw (0.6,0) --(0.6,.5);
}
\end{eqnarray*}
\end{ex}

$$
\tikz[thick,xscale=2.5,yscale=1.5]{
\node at (-1.5,.25) {$\mathfrak{r}_\lambda=$};
\draw (-0.4,0) --(-0.6,.5);
\draw (-0.2,0) --(-0.4,.5);
\draw (0,0) --(-0.2,.5);
\draw[wei] (-0.8,0) --(0,.5) node[below,at start]{$Q_2$};
\draw (0.2,0) --(0.2,.5);
\draw (0.4,0) --(0.4,.5);
\draw (0.6,0) --(0.6,.5);
\draw[wei] (-1.0,0) --(-0.8,.5) node[below,at start]{$Q_1$};
\draw (-0.6,0) --(-1.0,.5);
}
$$

Denote by $\iota$ the obvious inclusion of $\Heck$ to $\lHeck$ obtained by adding $\ell$ red strands on the left. This defines an inclusion 
\begin{eqnarray}
\label{inclhigherlevel}
\Phi_\lambda\colon \bfk[x_1^{\pm 1},\ldots,x_d^{\pm 1}]^{\frakS_\lambda}\to m_\lambda \lHeck, \qquad f\mapsto \mathfrak{r}_{\lambda}\iota(\Phi_{\overline\lambda}(f)).
\end{eqnarray}

\begin{ex} Let $\lambda=((2,1),(1,2))$. Then the element $\Phi_\lambda(f)$ is displayed on the right hand side in Figure~\ref{diag-ex_Phi}. It equals the left hand side, since relations \eqref{l-Hecke-diag-2}-\eqref{l-Hecke-diag-3} allow dots and black-black crossings to slide through red strands. This argument shows in general that the element $\Phi_\lambda(f)$ is indeed in $m_\lambda \lHeck$. (Although this is obvious for the left hand side of the equality in Figure~\ref{diag-ex_Phi}, this was not completely obvious for the original definition of $\Phi_\lambda(f)$.)
\begin{figure}
\begin{equation*}
\tikz[thick,xscale=2.5,yscale=1.5]{
\draw (-0.6,1.4) --(-0.6,1.6);
\draw (-0.4,1.4) --(-0.4,1.6);
\draw (-0.1,1) --(-0.1,1.6);
\node at (-0.5,1.25) {$m_2$};
\draw (-0.8,1.1) rectangle (-.2,1.4);
\draw (0.3,1) --(0.3,1.6);
\draw (0.6,1.4) --(0.6,1.6);
\draw (0.8,1.4) --(0.8,1.6);
\node at (0.7,1.25) {$m_2$};
\draw (0.4,1.1) rectangle (1,1.4);
\draw (-0.6,0.5) --(-0.6,.6);
\draw (-0.6,.9) --(-0.6,1.1);
\draw (-0.4,0.5) --(-0.4,.6);
\draw (-0.4,.9) --(-0.4,1.1);
\draw (-0.1,0.5) --(-0.1,1.1);
\node at (-0.5,0.75) {$x_1-qx_2$};
\draw (-0.8,.6) rectangle (-.2,0.9);
\draw[wei] (0.1,0.5) --(0.1,1.6);
\draw (0.3,0.5) --(0.3,1.1);
\draw (0.6,0.5) --(0.6,.6);
\draw (0.6,.9) --(0.6,1.1);
\draw (0.8,0.5) --(0.8,.6);
\draw (0.8,.9) --(0.8,1.1);
\node at (0.7,0.75) {$x_5-qx_6$};
\draw (0.4,.6) rectangle (1,0.9);
\draw (-0.4,0) --(-0.6,.5);
\draw (-0.2,0) --(-0.4,.5);
\draw (0.1,0) --(-0.1,.5);
\draw[wei] (-0.6,0) --(0.1,.5) ;
\draw (0.3,0) --(0.3,.5);
\draw (0.6,0) --(0.6,.5);
\draw (0.8,0) --(0.8,.5);
\node at (0.25,-0.15) {$f$};
\draw (-0.5,0) rectangle (1,-0.3);
\draw[wei] (-0.6,-.4) -- (-0.6,0);
\draw (-0.4,-0.4) --(-0.4,-0.3);
\draw (-0.3,-0.4) --(-0.3,-0.3);
\draw (0.1,-0.4) --(0.1,-0.3);
\draw (0.3,-0.4) --(0.3,-0.3);
\draw (0.6,-0.4) --(0.6,-0.3);
\draw (0.8,-0.4) --(0.8,-0.3);
\node at (0.25,-0.55) {\small{$n'_{\overline\lambda}$}};
\draw (-0.5,-0.4) rectangle (1,-0.7);
\draw[wei] (-0.6,-0.8) -- (-0.6,-0.4) node[below,at start]{$Q_1$};
\draw (-0.4,-0.9) --(-0.4,-0.7);
\draw (-0.2,-0.9) --(-0.2,-0.7);
\draw (0.1,-0.9) --(0.1,-0.7);
\draw (0.3,-0.9) --(0.3,-0.7);
\draw (0.7,-0.9) --(0.7,-0.7);
\draw (0.9,-0.9) --(0.9,-0.7);
\node at (1.6,0.5) {$=$};
\draw[wei] (2.2,0.9) --(3.2,1.6);
\draw (2.6,0.9) -- (2.4,1.6);
\draw (2.8,0.9) -- (2.6,1.6);
\draw (3.1,0.9) -- (2.9,1.6);
\draw (3.3,1) --(3.3,1.6);
\draw (3.6,1.4) --(3.6,1.6);
\draw (3.8,1.4) --(3.8,1.6);
\node at (3.7,1.25) {$m_2$};
\draw (3.4,1.1) rectangle (4,1.4);
\draw (3.3,0.5) --(3.3,1.1);
\draw (3.6,0.5) --(3.6,.6);
\draw (3.6,.9) --(3.6,1.1);
\draw (3.8,0.5) --(3.8,.6);
\draw (3.8,.9) --(3.8,1.1);
\node at (3.7,0.75) {$x_5-qx_6$};
\draw (3.4,.6) rectangle (4,0.9);
%
%
%
%
%
\draw (2.6,0.8) --(2.6,0.9);
\draw (2.8,0.8) --(2.8,0.9);
\draw (3.1,0.5) --(3.1,0.9);
\node at (2.7,0.65) {$m_2$};
\draw (2.4,.5) rectangle (3,0.8);
\draw (2.6,0) --(2.6,0.1);
\draw (2.6,0.4) --(2.6,0.5);
\draw (2.8,0) --(2.8,0.1);
\draw (2.8,0.4) --(2.8,0.5);
\draw (3.1,0) --(3.1,0.5);
\node at (2.7,0.25) {$x_1-qx_2$};
\draw (2.4,.1) rectangle (3,0.4);
\draw (3.3,0) --(3.3,.5);
\draw (3.6,0) --(3.6,.5);
\draw (3.8,0) --(3.8,.5);
\node at (3.25,-0.15) {$f$};
\draw (2.5,0) rectangle (4,-0.3);
%
\draw (2.6,-0.4) --(2.6,-0.3);
\draw (2.8,-0.4) --(2.8,-0.3);
\draw (3.1,-0.4) --(3.1,-0.3);
\draw (3.3,-0.4) --(3.3,-0.3);
\draw (3.6,-0.4) --(3.6,-0.3);
\draw (3.8,-0.4) --(3.8,-0.3);
\node at (3.25,-0.55) {$n'_{\overline\lambda}$};
\draw (2.5,-0.4) rectangle (4,-0.7);
\draw[wei] (2.2,-0.8) -- (2.2,0.9) node[below,at start]{$Q_1$};
\draw (2.6,-0.9) --(2.6,-0.7);
\draw (2.8,-0.9) --(2.8,-0.7);
\draw (3.1,-0.9) --(3.1,-0.7);
\draw (3.3,-0.9) --(3.3,-0.7);
\draw (3.6,-0.9) --(3.6,-0.7);
\draw (3.8,-0.9) --(3.8,-0.7);
}
\end{equation*}
\caption{Well-definedness of the polynomial representation.}
\label{diag-ex_Phi}
\end{figure}
\end{ex}

\begin{lem}
\label{lem-polrep-lS-inside-split-merge}
 Let $\lambda, \mu\in \calC^\ell_d$ and assume that $\mu$ is a split of $\lambda$. 
\begin{enumerate}
\item The split map in $\Hom_{\lHeck}(m_\lambda\lHeck,m_\mu\lHeck)$ applied to the image of $\Phi_\lambda$ is contained in the image of $\Phi_\mu$. 
\item The merge in $\Hom_{\lHeck}(m_\mu\lHeck,m_\lambda\lHeck)$ applied to the image of $\Phi_\mu$ is contained in the image of $\Phi_\lambda$. 
\end{enumerate}
\end{lem}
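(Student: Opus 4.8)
The strategy is to reduce the higher level statement to the ordinary (level $0$) statement from Lemma~\ref{lem-polrep-S-inside}, using the diagrammatic description of $\Phi_\lambda$ via $\mathfrak{r}_\lambda$ and $\iota$. Since a split of $\lambda$ in the sense of Section~\ref{subs-gen_lS} affects exactly one component $\lambda^{(t)}$, and the split and merge maps in $\lS$ are defined componentwise exactly as in $\S$, it is enough to work with the concatenations $\overline\lambda$ and $\overline\mu$ and track how the extra red strands interact. The key observation, already used in the example around Figure~\ref{diag-ex_Phi}, is that by relations~\eqref{l-Hecke-diag-2}--\eqref{l-Hecke-diag-3} dots and black-black crossings slide freely through red strands; consequently the split (resp.\ merge) morphism, which is built from the local pictures \eqref{diag-split-merge} placed in the appropriate black slot, commutes past the left crossings making up $\mathfrak{r}_\lambda$ without introducing correction terms. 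So I would first record this sliding principle as a lemma (or simply invoke the cited relations), then observe that $\mathfrak{r}_\mu$ differs from $\mathfrak{r}_\lambda$ only by the trivial rearrangement of black strands corresponding to the split, which is absorbed by $e^0$.

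\textbf{Key steps.} First I would reduce, exactly as in the proof of Lemma~\ref{lem-polrep-S-inside}, to the case where the split happens inside a single component, so that effectively $\lambda^{(t)}=(a+b)$ and $\mu^{(t)}=(a,b)$ with all other components equal. Second, I would write $\Phi_\lambda(f)=\mathfrak{r}_\lambda\iota(\Phi_{\overline\lambda}(f))$ and apply the split morphism; since the split morphism is supported on the black strands of the $t$-th block and these can be slid through all the red strands of $\mathfrak{r}_\lambda$ by \eqref{l-Hecke-diag-2}--\eqref{l-Hecke-diag-3}, the result equals $\mathfrak{r}_\mu$ applied to $\iota$ of the level-$0$ split of $\Phi_{\overline\lambda}(f)$ (note $\overline\mu$ is a split of $\overline\lambda$). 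Third, by Lemma~\ref{lem-polrep-S-inside}(1) the level-$0$ split of $\Phi_{\overline\lambda}(f)$ lies in the image of $\Phi_{\overline\mu}$, in fact it equals $\Phi_{\overline\mu}(\q'_{a,b}f)$ by the explicit formula in Proposition~\ref{prop-polrep_S}; applying $\mathfrak{r}_\mu\circ\iota$ gives $\Phi_\mu(\q'_{a,b}f)$, which is the desired element. The merge direction is entirely analogous, using Lemma~\ref{lem-polrep-S-inside}(2): the merge morphism slides through the red strands of $\mathfrak{r}_\mu$ down to a level-$0$ merge, which by Proposition~\ref{prop-polrep_S} sends $\Phi_{\overline\mu}(f)$ to $\Phi_{\overline\lambda}(D_{a,b}(f))$, and reattaching $\mathfrak{r}_\lambda\circ\iota$ yields $\Phi_\lambda(D_{a,b}(f))\in\mathrm{Im}\,\Phi_\lambda$.

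\textbf{Main obstacle.} The subtle point is not the level-$0$ input but the compatibility of $\mathfrak{r}_\lambda$ with $\mathfrak{r}_\mu$ under the split: one must check that after sliding the black split/merge piece past all red strands, the resulting diagram is literally $\mathfrak{r}_\mu$ (up to the defining relations of $\lHeck$) and not $\mathfrak{r}_\mu$ plus lower-order terms coming from \eqref{l-Hecke-diag-4} — i.e.\ one must ensure that only the relations \eqref{l-Hecke-diag-2}--\eqref{l-Hecke-diag-3}, which have no correction terms, are needed, and never \eqref{l-Hecke-diag-4}. This holds because $\mathfrak{r}_\lambda$ by its defining three properties contains only left crossings of black strands over red strands, each black strand crossing each red strand at most once, and the black split/merge piece, being a morphism between black objects, only ever slides a bunch of parallel black strands through a red strand — precisely the situation covered by the clean relations. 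I would also need to note that the operator $\Phi_\lambda$ is well-defined (injective) in the higher level setting, which follows from the level-$0$ injectivity in Proposition~\ref{prop-polrep_S} together with the fact, visible from Figure~\ref{diag-ex_Phi}, that $\mathfrak{r}_\lambda\circ\iota$ is injective on $m_{\overline\lambda}\Heck$ since $\mathfrak{r}_\lambda$ can be undone by a right-crossing diagram up to an invertible scalar times lower terms.
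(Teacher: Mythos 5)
Your proposal is correct and fills in exactly the details that the paper compresses into the single sentence ``the proof is totally analogous to the proof of Lemma~\ref{lem-polrep-S-inside}'': factor $\Phi_\lambda=\mathfrak{r}_\lambda\circ\iota\circ\Phi_{\overline\lambda}$ and reduce to the level-$0$ statement. One small simplification worth noting: since a split only subdivides a black block without moving any strand, $e^0(\lambda)=e^0(\mu)$ and hence $\mathfrak{r}_\lambda=\mathfrak{r}_\mu$ on the nose, so the split direction needs no sliding at all, and the merge direction needs only the sliding of $m_{\overline\mu}\p_{\overline\mu}$ past the red strands (i.e.\ the sliding already used for Figure~\ref{diag-ex_Phi}), which, as you correctly observe, invokes only the correction-free relations \eqref{l-Hecke-diag-2}--\eqref{l-Hecke-diag-3} and never \eqref{l-Hecke-diag-4}.
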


\begin{proof}
The proof is totally analogous to the proof of Lemma~\ref{lem-polrep-S-inside}.
\end{proof}

\begin{lem}
\label{lem-polrep-lS-inside-crossings} Let  $\lambda\in \calC^\ell_d$.
Assume that $\mu$ is a left crossing of $\lambda$.
\begin{enumerate}
\item The left crossing in $\Hom_{\lHeck}(m_\lambda\lHeck,m_\mu\lHeck)$ applied to the image of $\Phi_\lambda$ is contained in the image of $\Phi_\mu$. 
\item
  The right crossing in $\Hom_{\lHeck}(m_\mu\lHeck,m_\lambda\lHeck)$ applied to the image of $\Phi_\mu$ is contained in the image of $\Phi_\lambda$. 
  \end{enumerate}
\end{lem}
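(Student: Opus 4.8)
The plan is to compute directly inside the defining representation $\bigoplus_{\lambda\in\calC^\ell_d}m_\lambda\lHeck$, exactly as for the split and merge maps in Lemma~\ref{lem-polrep-lS-inside-split-merge}, using the diagrammatic description of $\Phi_\lambda$ from \eqref{inclhigherlevel}. First I would record two purely combinatorial facts. Since $\mu$ is obtained from $\lambda$ by moving the first part $\lambda^{(t)}_1$ from the front of $\lambda^{(t)}$ to the end of $\lambda^{(t-1)}$, the underlying ordinary compositions agree, $\overline{\lambda}=\overline{\mu}$, hence so do the Young subgroups, $\frakS_\lambda=\frakS_{\overline{\lambda}}=\frakS_{\overline{\mu}}=\frakS_\mu$, as subgroups of $\frakS_d$. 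Consequently $\Phi_{\overline{\lambda}}$ and $\Phi_{\overline{\mu}}$ are literally the same map on the common domain $\bfk[x^{\pm1}_1,\ldots,x^{\pm1}_d]^{\frakS_\lambda}$, so that $\Phi_\lambda(f)=\mathfrak{r}_\lambda\,\iota(\Phi_{\overline{\lambda}}(f))$ and $\Phi_\mu(g)=\mathfrak{r}_\mu\,\iota(\Phi_{\overline{\lambda}}(g))$. Everything then reduces to understanding how the left crossing $L$ (respectively the right crossing $R$) of \eqref{eq_def-lsh} composes with $\mathfrak{r}_\lambda$ and $\mathfrak{r}_\mu$ in $\lHeck$.

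The crux will be the diagrammatic identity $\mathfrak{r}_\mu=L\cdot\mathfrak{r}_\lambda$ in $\lHeck$. I would verify that both sides have top $e^0(\mu)$, have all red strands to the left at the bottom, contain no dots, splits, merges or right crossings, and induce the identity permutation of the black strands; moreover, in $\mathfrak{r}_\lambda$ the block of $\lambda^{(t)}_1$ black strands crosses precisely the red strands $Q_{t+1},\ldots,Q_\ell$, whereas in both $L\cdot\mathfrak{r}_\lambda$ and $\mathfrak{r}_\mu$ it crosses exactly $Q_t,Q_{t+1},\ldots,Q_\ell$ once each and the remaining black strands have unchanged crossing pattern with the reds, so that no bigon is created. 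As $\mathfrak{r}_\lambda$ and $L$ contain no black--black crossings, the only relations relevant to comparing these two reduced diagrams are \eqref{l-Hecke-diag-3} and isotopy, and one concludes $\mathfrak{r}_\mu=L\cdot\mathfrak{r}_\lambda$. Granting this, the left crossing sends $\Phi_\lambda(f)=\mathfrak{r}_\lambda\,\iota(\Phi_{\overline{\lambda}}(f))$ to $L\cdot\mathfrak{r}_\lambda\,\iota(\Phi_{\overline{\lambda}}(f))=\mathfrak{r}_\mu\,\iota(\Phi_{\overline{\mu}}(f))=\Phi_\mu(f)$, which proves part (1).

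For part (2) I would again substitute $\mathfrak{r}_\mu=L\cdot\mathfrak{r}_\lambda$, so the right crossing sends $\Phi_\mu(g)$ to $R\cdot L\cdot\mathfrak{r}_\lambda\,\iota(\Phi_{\overline{\lambda}}(g))$. The composite $R\cdot L$ is a stack of $\lambda^{(t)}_1$ nested black--red bigons formed by the strands of the moved block with the red strand $Q_t$; reducing them from the innermost outwards via relation \eqref{l-Hecke-diag-1}, and using \eqref{l-Hecke-diag-2} to move the resulting dots aside, rewrites $R\cdot L$ as multiplication by the Laurent polynomial $P=\prod_{j\in[\lambda^{(t)}_1]}(x_j-Q_t)$ on the relevant idempotent. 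Since $P$ only involves the variables of a single block of $\overline{\lambda}$, it is $\frakS_{\overline{\lambda}}$-symmetric; hence it slides through $\mathfrak{r}_\lambda$ (no black--black crossings, and dots pass red strands by \eqref{l-Hecke-diag-2}) and it commutes with $m_{\overline{\lambda}}$ in $\Heck$ (the same elementary computation, based on the exchange relation \eqref{eq_exch-(T-q)-pol}, that shows $T_r$ commutes with $s_r$-symmetric Laurent polynomials). Therefore $R\cdot\Phi_\mu(g)=\mathfrak{r}_\lambda\,\iota(P\cdot m_{\overline{\lambda}}\,\p_{\overline{\lambda}}\,g\,n'_{\overline{\lambda}})=\mathfrak{r}_\lambda\,\iota(m_{\overline{\lambda}}\,\p_{\overline{\lambda}}\,(Pg)\,n'_{\overline{\lambda}})=\Phi_\lambda(Pg)$, and $Pg\in\bfk[x^{\pm1}_1,\ldots,x^{\pm1}_d]^{\frakS_\lambda}$, which gives part (2).

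I expect the main obstacle to be the diagrammatic identity $\mathfrak{r}_\mu=L\cdot\mathfrak{r}_\lambda$, together with the careful bookkeeping of the nested black--red bigons and their reduction via \eqref{l-Hecke-diag-1} in part (2); both are routine once one exploits that $\mathfrak{r}_\lambda$ contains neither black--black crossings nor dots, so that only the sliding relations \eqref{l-Hecke-diag-2} and \eqref{l-Hecke-diag-3} come into play. The subsidiary fact that $\frakS_{\overline{\lambda}}$-symmetric Laurent polynomials commute with $m_{\overline{\lambda}}$ in $\Heck$ is already used implicitly in the proof of Lemma~\ref{lem-polrep-S-inside}, but may be worth isolating as a short lemma.
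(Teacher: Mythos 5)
Your proof is correct and takes essentially the same route as the paper's, which asserts part (1) as immediate from the definitions and deduces part (2) from relation \eqref{l-Hecke-diag-1}, arriving at the same polynomial $g=\prod_{i}(x_i-Q_t)$ over the moved block. You merely make explicit the intermediate steps the paper leaves implicit, namely the identity $\mathfrak{r}_\mu = L\cdot\mathfrak{r}_\lambda$, the resolution of the nested red--black bigons, and the commutation of the resulting $\frakS_{\overline\lambda}$-symmetric factor with $m_{\overline\lambda}$.
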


\begin{proof}
Fix $f\in \bfk[x_1^{\pm 1},\ldots,x_d^{\pm 1}]^{\frakS_{\lambda}}$. It is clear from the definitions that the left crossing map $\lambda\to\mu$ acts by sending $\Phi_\lambda(fe(\lambda))$ to $\Phi_\mu(fe(\mu))$. 
Let $t$ be the index such that $\lambda^{(t)}=(\mu^{(t-1)}_{l(\mu^{(t-1)})})\cup\mu^{(t)}$. Set $a=\lambda^{(t)}_1$ and $b=\sum_{i=1}^{t-1}|\lambda^{(i)}|$. Relation \eqref{l-Hecke-diag-1} implies that $\mu\to\lambda$ sends $\Phi_\mu(fe(\mu))$ to $\Phi_\lambda(gfe(\lambda))$, where $g=\prod_{i=b+1}^{b+a}(x_i-Q_t)$. 
\end{proof}

Lemmas~\ref{lem-polrep-lS-inside-split-merge} and~\ref{lem-polrep-lS-inside-crossings} will be used to deduce the following result which as a special case establishes also a proof of Proposition~\ref{prop-polrep_S}.
\begin{prop}
\label{prop-polrep-lS}
There is a unique action of the algebra $\lS$ on  $\lSPol$ satisfying the following properties using the abbreviation $P=\bfk[x^{\pm 1}_1,\hdots,x^{\pm 1}_d]$. 
\begin{itemize}
\item The idempotent $e(\lambda)$,  $\lambda\in \calC^\ell_d$,  acts on $\lSPol$ as projection to $Pe(\lambda)$. 
\item For each $g\in P^{\frakS_\lambda}$, the element $ge(\lambda)$ sends $fe(\lambda)$ to $gfe(\lambda)$.
\item Splits and merges act in the same way as in Proposition~\ref{prop-polrep_S}.
\item Left crossing maps $\lambda\to \mu$ act by sending $fe(\lambda)$, $f\in P^{\frakS_\lambda}$ to $fe(\mu)$.
\item Right crossing maps $\mu\to\lambda$ act by sending $fe(\mu)$, $f\in P^{\frakS_\mu}$ to $gfe(\lambda)$ where $g=\prod_{i\in [\lambda^{(t)}_1]}^{b+a}(Q_t-x_i)$. 
\end{itemize}
Moreover, the obtained representation of $\lS$ in $\lSPol$ is faithful.

\end{prop}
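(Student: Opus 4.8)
\textit{Overview.} The plan is to prove this exactly as its level-zero shadow Proposition~\ref{prop-polrep_S} is proved, the only new ingredients being Lemmas~\ref{lem-polrep-lS-inside-split-merge} and~\ref{lem-polrep-lS-inside-crossings} (replacing Lemma~\ref{lem-polrep-S-inside}) together with a little extra bookkeeping for the red strands. In outline: the action is produced by restricting the defining representation $\bigoplus_{\lambda}m_\lambda\lHeck$ of $\lS$ to the subspace cut out by the maps $\Phi_\lambda$; its uniqueness is immediate because the displayed formulas pin down the action on a generating set; and faithfulness is reduced, through the splits/merges/crossings, to a single composition for which the statement follows from the freeness of $\lHeck$ recorded in Proposition~\ref{prop-basis-Hdl}. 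Specialising to $\ell=0$ then also yields Proposition~\ref{prop-polrep_S}.

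\textit{Existence and uniqueness of the action.} One first records, just as in \cite[Prop.~6.19]{MS}, that $\lS$ is generated by the idempotents $e(\lambda)$, the multiplications $g e(\lambda)$ with $g\in\bfk[x^{\pm 1}_1,\ldots,x^{\pm 1}_d]^{\frakS_\lambda}$, and the splits, merges, left crossings and right crossings (a black crossing being a product of these, cf.\ the remark in \S\ref{subs-gen_lS}). By \eqref{inclhigherlevel} each $\Phi_\lambda$ is injective, so $\lSPol$ is identified with the subspace $\bigoplus_{\lambda\in\calC^\ell_d}\Phi_\lambda(\bfk[x^{\pm 1}_1,\ldots,x^{\pm 1}_d]^{\frakS_\lambda})$ of the faithful $\lS$-module $\bigoplus_{\lambda}m_\lambda\lHeck$. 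It therefore suffices to check that each of the above generators of $\lS$ sends this subspace into itself, and to read off what it does: for $e(\lambda)$ this is clear; for $g e(\lambda)$ it follows from the identity $g\cdot\Phi_\lambda(f)=\Phi_\lambda(gf)$, which holds because a $\frakS_\lambda$-symmetric Laurent polynomial commutes with $m_\lambda$ and with the polynomial part of $\Phi_\lambda$; for splits and merges it is Lemma~\ref{lem-polrep-lS-inside-split-merge}, whose proof reproduces the formulas of Proposition~\ref{prop-polrep_S}; and for left and right crossings it is Lemma~\ref{lem-polrep-lS-inside-crossings}, whose proof shows that a left crossing $\lambda\to\mu$ sends $\Phi_\lambda(f)$ to $\Phi_\mu(f)$, i.e.\ $fe(\lambda)\mapsto fe(\mu)$, while the corresponding right crossing sends $\Phi_\mu(f)$ to $\Phi_\lambda(gf)$ with $g$ the product over $i\in[\lambda^{(t)}_1]$ of the linear factors supplied by relation~\eqref{l-Hecke-diag-1}. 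Transporting the restricted action along the bijections $\Phi_\lambda$ gives the asserted action of $\lS$ on $\lSPol$, and since these data determine the action on a generating set, it is the unique such action.

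\textit{Faithfulness.} This is the crux, and we argue as in the proof of Proposition~\ref{prop-polrep_S}. Suppose the representation on $\lSPol$ were not faithful. Then there are $\lambda,\mu\in\calC^\ell_d$ and a nonzero $\phi\in\Hom_{\lHeck}(m_\lambda\lHeck,m_\mu\lHeck)$ vanishing on $\Phi_\lambda(\bfk[x^{\pm 1}_1,\ldots,x^{\pm 1}_d]^{\frakS_\lambda})$. Precompose $\phi$ with a composite $\rho$ of split maps and left (or right) crossings that takes $m_{\lambda_0}\lHeck$ to $m_\lambda\lHeck$, where $\lambda_0=(\emptyset,\ldots,\emptyset,(d))$ has all $d$ black boxes forming one part of the last component. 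By Lemmas~\ref{lem-polrep-lS-inside-split-merge} and~\ref{lem-polrep-lS-inside-crossings}, $\rho$ carries $\Phi_{\lambda_0}(\bfk[x^{\pm 1}_1,\ldots,x^{\pm 1}_d]^{\frakS_{\lambda_0}})$ into $\Phi_\lambda(\bfk[x^{\pm 1}_1,\ldots,x^{\pm 1}_d]^{\frakS_\lambda})$, so $\psi:=\phi\circ\rho$ is again nonzero (as in the proof of Proposition~\ref{prop-polrep_S}, the splits and crossings being injective, with merges providing the relevant one-sided inverses) and still annihilates $\Phi_{\lambda_0}(\bfk[x^{\pm 1}_1,\ldots,x^{\pm 1}_d]^{\frakS_{\lambda_0}})$. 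For this $\lambda_0$ one has $\mathfrak r_{\lambda_0}=e^0(\lambda_0)$ and $n'_{\overline{\lambda_0}}=1$, hence $\Phi_{\lambda_0}(1)=m_{\lambda_0}\cdot\p_d$ with $\p_d=\prod_{i<j}(x_i-qx_j)$ a nonzero element of the Laurent polynomial subalgebra $\bfk[x^{\pm 1}_1,\ldots,x^{\pm 1}_d]\subset\lHeck$. Therefore $0=\psi(\Phi_{\lambda_0}(1))=\psi(m_{\lambda_0})\,\p_d$; since $\lHeck$ is free, in particular torsion-free, as a right module over the domain $\bfk[x^{\pm 1}_1,\ldots,x^{\pm 1}_d]$ by Proposition~\ref{prop-basis-Hdl}, right multiplication by $\p_d$ is injective on $m_\mu\lHeck\subseteq\lHeck$, so $\psi(m_{\lambda_0})=0$, whence $\psi=0$ and hence $\phi=0$, a contradiction.

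\textit{Expected main obstacle.} Everything except faithfulness is bookkeeping: the generation statement for $\lS$, the verification that each generator preserves the image of $\bigoplus_\lambda\Phi_\lambda$, and the identification of the action formulas are all direct consequences of the constructions and of Lemmas~\ref{lem-polrep-lS-inside-split-merge}--\ref{lem-polrep-lS-inside-crossings}. The delicate point is the faithfulness reduction: one must choose the composite $\rho$ of splits and red--black crossings linking $\lambda_0$ to an arbitrary $\lambda$ so that precomposition with it preserves both non-vanishing and the property of annihilating the image of $\Phi_{\lambda_0}$, and one must keep track of the polynomial ``defects'' the crossings introduce through relation~\eqref{l-Hecke-diag-1}. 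This is precisely the step that already required care for $\S$ in the proof of Proposition~\ref{prop-polrep_S} (and in \cite{MS}); the new content here is simply that the reduction now has to traverse the extra red--black crossings.
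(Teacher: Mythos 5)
Your overall architecture is the right one, but the key technical step in the faithfulness argument diverges from the paper, and the justification you give for it does not hold up.

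The paper reduces not to $\lambda_0=(\emptyset,\ldots,\emptyset,(d))$ but to $\lambda'$, the coarsest $(\ell+1)$-composition refining to $\lambda$ \emph{with the same red-strand positions}, i.e.\ $\lambda'^{(r)}=(|\lambda^{(r)}|)$ for all $r$. This matters because the map $\rho$ one precomposes with is then a single split $\lambda'\to\lambda$, with no red--black crossings. The crossing data is instead carried inside $\Phi_{\lambda'}(1)=m_{\lambda'}\p_{\lambda'}\mathfrak{r}_{\lambda'}$, and the paper disposes of $\mathfrak{r}_{\lambda'}$ by \emph{right}-multiplying by the inverse crossing diagram: by \eqref{l-Hecke-diag-1} the product $\mathfrak{r}_{\lambda'}\cdot(\text{its reflection})$ equals a nonzero Laurent polynomial $Q$ times $e^0(\lambda')$, and since $\lHeck$ is free as a right $\bfk[x^{\pm1}_1,\ldots,x^{\pm1}_d]$-module, $\psi(m_{\lambda'})\p_{\lambda'}Q=0$ forces $\psi(m_{\lambda'})=0$.

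Your reduction to $\lambda_0$ is appealing because it trivialises $\mathfrak r_{\lambda_0}$ and $n'_{\overline{\lambda_0}}$, but the price is that $\rho$ must now traverse the red strands, so $\rho$ is a composite of splits \emph{and} left crossings. The justification you offer for $\psi=\phi\circ\rho\neq0$ — ``the splits and crossings being injective, with merges providing the relevant one-sided inverses'' — is not correct for the crossings: composing a left crossing with the corresponding right crossing gives, by \eqref{l-Hecke-diag-1}, multiplication by $\prod_i(x_i-Q_t)$, which is not the identity (and is not invertible in the Laurent polynomial ring), so crossings have no one-sided inverse in the category-theoretic sense you invoke. Injectivity alone is in any case not sufficient: an injective $\rho$ does not guarantee $\phi\circ\rho\neq 0$. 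The paper deliberately sidesteps exactly this issue by choosing $\lambda'$ so that $\rho$ contains no crossings, and then moving the ``polynomial defects'' produced by \eqref{l-Hecke-diag-1} to the \emph{target} of $\psi$, where right-multiplication by a nonzero polynomial can be cancelled by freeness. Your ``Expected main obstacle'' paragraph correctly identifies where the subtlety lives, but then asserts that the issue was already handled in the same way as in Proposition~\ref{prop-polrep_S} — whereas in fact the paper's whole point in defining $\lambda'$ rather than $\lambda_0$ is to keep that extra issue from arising.

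The existence/uniqueness part of your argument matches the paper's (it is deferred to Lemma~\ref{lem-generate_lS} together with Lemmas~\ref{lem-polrep-lS-inside-split-merge} and~\ref{lem-polrep-lS-inside-crossings}, exactly as you say), so the only real divergence is the faithfulness reduction above. To repair your version, you would need an actual argument that $\phi\circ\rho\neq0$ for your $\rho$ with crossings, rather than a claim about one-sided inverses; the cleaner route is simply to follow the paper's choice of $\lambda'$.
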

\begin{proof}
The proof of the existence and uniqueness will be spread over the whole next section and then finally follow from Lemma~\ref{lem-generate_lS} and the above lemmas.
The proof of faithfulness is similar to Proposition~\ref{prop-polrep_S}: Assume that there exist $\lambda,\mu\in \calC^\ell_d$ and a nonzero element $\phi\in \Hom_{\lHeck}(m_\lambda\lHeck,m_\mu\lHeck)$ such that $\phi$ acts on $\lSPol$ by zero. Consider the split $\lambda'\to \lambda$ such that for each $r\in\{0,1,\ldots,\ell\}$, we have $\lambda'^{(r)}=(|\lambda'^{(r)}|)$  (i.e., $\lambda'$ is the coarsest possible). Then, after composing $\phi$ with this split, we get a nonzero element of $\psi\in\Hom_{\lHeck}(m_{\lambda'}\lHeck,m_{\mu}\lHeck)$ that acts by zero on $\lSPol$. The fact that $\psi$ acts by zero on $\lSPol$ implies 
$
\psi(m_{\lambda'}\p_{\lambda'}\mathfrak{r}_{\lambda'})=\psi(\Phi_{\lambda'}(1))=0,
$
where
$$
\tikz[thick,xscale=2.5,yscale=1.5]{
\node at (-.9,.25) {$\p_{\lambda'}\;\;=$};
\node at (-0.25,0.25) {$\p_{\lambda'^{(0)}}$};
\draw[wei] (0,0) --(0,.5) node[below,at start]{$Q_1$};
\node at (0.25,0.25) {$\p_{\lambda'^{(1)}}$};
\draw[wei] (.5,0) --(.5,.5) node[below,at start]{$Q_2$};
\node at (0.85,.25) {$\cdots$};
\draw[wei] (1.2,0) --(1.2,.5) node[below,at start]{$Q_{\ell-1}$};
\node at (1.45,0.25) {$\p_{\lambda'^{(\ell-1)}}$};
\draw[wei] (1.7,0) --(1.7,.5) node[below,at start]{$Q_\ell$};
\node at (1.95,0.25) {$\p_{\lambda'^{(\ell)}}$};
}
$$
This implies $\psi(m_{\lambda'})\p_{\lambda'}\mathfrak{r}_{\lambda'}=0$. Moreover, it is clear from \eqref{l-Hecke-diag-1} that the element $\mathfrak{r}_{\lambda'}$ can by multiplied by an element of $\lHeck$ on the right such that the product is of the form $Qe^0(\lambda')$, where $Q\in\bfk[x^{\pm 1}_1,\ldots,x^{\pm 1}_d]$, $Q\ne 0$. We get 
$$
\psi(m_{\lambda'})\p_{\lambda'}Q=\psi(m_{\lambda'})\p_{\lambda'}Qe^0(\lambda')=0.
$$  
Thus, $\psi(m_{\lambda'})=0$, because $\lHeck$ is free as a right $\bfk[x^{\pm 1}_1,\ldots,x^{\pm 1}_d]$-module.
\end{proof}

\subsection{A basis of $\lS$}
The goal of this section is to obtain a basis of $\lS$. For this we first describe the space $\Hom(\lambda,\mu)=\Hom_{\lHeck}(m_\lambda\lHeck,m_\mu\lHeck)$,  for $\lambda,\mu\in\calC_d$ in terms of the finite Hecke algebra $\Hfin_d(q)$, see Remark~\ref{ordinaryHecke}. 

For $\lambda\in\calC_d$, denote by $\Hfin_\lambda(q)\subset\Hfin_d(q)$  the  Hecke algebra corresponding to $\frakS_\lambda$ (see \eqref{Young}) and by $\epsilon_\lambda$ the sign representation of $\Hfin_\lambda(q)$. The following is well-known:
\begin{lem}
\label{lem-ind_sing}
We have an isomorphism of right $\Heck$-modules
\begin{eqnarray}
\label{easy}
m_\lambda\Heck&\simeq& \epsilon_\lambda\otimes_{\Hfin_\lambda(q)}\Heck
\end{eqnarray}
\end{lem}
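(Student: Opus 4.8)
The plan is to recognize $m_\lambda\Heck$ as a cyclic right module and match it with the induced module $\epsilon_\lambda\otimes_{\Hfin_\lambda(q)}\Heck$ via an explicit map. First I would recall the defining element $m_\lambda=\sum_{w\in\frakS_\lambda}(-q)^{l(w_\lambda)-l(w)}T_w$ from \eqref{defm}, and observe the key absorption property: for each simple reflection $s_r$ with $s_r\in\frakS_\lambda$ one has $T_{s_r}m_\lambda=-m_\lambda$ (equivalently $(T_{s_r}+1)m_\lambda=0$), which follows by pairing up $w$ with $s_rw$ in the sum and using the quadratic relation $T_{s_r}^2=(q-1)T_{s_r}+q$. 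Iterating, $hm_\lambda=\epsilon_\lambda(h)m_\lambda$ for every $h\in\Hfin_\lambda(q)$, where $\epsilon_\lambda(T_{s_r})=-1$ is the sign character. Hence there is a well-defined right $\Heck$-module homomorphism
\begin{eqnarray*}
\varphi\colon \epsilon_\lambda\otimes_{\Hfin_\lambda(q)}\Heck\longrightarrow m_\lambda\Heck,\qquad 1\otimes h\mapsto m_\lambda h,
\end{eqnarray*}
which is visibly surjective since $m_\lambda\Heck$ is generated by $m_\lambda$.

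Next I would prove injectivity by a dimension/rank count using the standard basis of $\Heck$. Write the affine Hecke algebra as $\Heck=\Hfin_d(q)\otimes\bfk[X_1^{\pm1},\ldots,X_d^{\pm1}]$ and use the basis $\{T_w\,X^{\bfm}\mid w\in\frakS_d,\ \bfm\in\mathbb{Z}^d\}$. By the theory of minimal double coset representatives, every $w\in\frakS_d$ factors uniquely as $w=u\cdot v$ with $u\in\frakS_\lambda$ and $v\in\Drep_{\lambda,\emptyset}$ (the shortest representatives of $\frakS_\lambda\backslash\frakS_d$), and this factorization is length-additive, so $T_w=T_uT_v$. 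Therefore $\epsilon_\lambda\otimes_{\Hfin_\lambda(q)}\Heck$ is free as a right $\bfk[X^{\pm1}]$-module with basis $\{1\otimes T_v\mid v\in\Drep_{\lambda,\emptyset}\}$. Under $\varphi$ these map to $\{m_\lambda T_v\mid v\in\Drep_{\lambda,\emptyset}\}$, and I claim these are linearly independent over $\bfk[X^{\pm1}]$ inside $\Heck$: expanding $m_\lambda T_v=\sum_{u\in\frakS_\lambda}(-q)^{l(w_\lambda)-l(u)}T_{uv}$ and using length-additivity $l(uv)=l(u)+l(v)$, the leading term (in the Bruhat order, picking $u=w_\lambda$ the longest element of $\frakS_\lambda$) is $T_{w_\lambda v}$, and the elements $w_\lambda v$ for distinct $v\in\Drep_{\lambda,\emptyset}$ are distinct. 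A triangularity argument with respect to the Bruhat order then shows the $m_\lambda T_v$ are part of a basis, hence $\bfk[X^{\pm1}]$-linearly independent. This forces $\varphi$ to be an isomorphism.

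Alternatively — and this is probably cleaner to write — I would simply cite the standard fact (e.g.\ from the theory of Hecke algebras, as in the references already used such as \cite{Lus89} or the cyclotomic literature \cite{DJM}, \cite{Mathascycl}) that $m_\lambda\Heck\cong\epsilon_\lambda\otimes_{\Hfin_\lambda(q)}\Heck$, since the lemma is explicitly flagged as "well-known"; the one-line justification is the absorption identity $T_{s_r}m_\lambda=-m_\lambda$ for $s_r\in\frakS_\lambda$ together with the freeness of $\Heck$ over $\Hfin_\lambda(q)$ (which follows from freeness of $\Hfin_d(q)$ over $\Hfin_\lambda(q)$, a consequence of the coset decomposition above). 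The main obstacle, if one writes a full proof rather than citing, is not conceptual but bookkeeping: one must be careful that the normalization of $m_\lambda$ in \eqref{defm} (with the sign $(-q)^{l(w_\lambda)-l(w)}$ rather than a plain sum) really does produce the sign character and not the trivial one, and that the completed/affine setting does not introduce subtleties — but since $\bfk[X^{\pm1}]$ is central-ish only in the sense of commuting past $T_w$ up to lower-order terms, the freeness statement over $\Hfin_\lambda(q)$ still holds because $\Heck$ is free as a left $\Hfin_d(q)$-module with basis $\{X^{\bfm}\}$ and $\Hfin_d(q)$ is free over $\Hfin_\lambda(q)$ with basis $\{T_v\}_{v\in\Drep_{\lambda,\emptyset}}$.
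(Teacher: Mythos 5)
Your proposal is correct and follows essentially the same route as the paper: the obvious map $v\otimes x\mapsto m_\lambda x$, well-definedness from the absorption identity, and bijectivity from the freeness of $\Heck$ as a left $\Hfin_\lambda(q)$-module (your detailed triangularity argument is just this freeness made explicit via the coset basis $\{T_v\}_{v\in\Drep_{\lambda,\emptyset}}$). One small sidedness point: well-definedness of $1\otimes h\mapsto m_\lambda h$ over the balanced tensor product requires the right-handed identity $m_\lambda T_r=-m_\lambda$ rather than $T_rm_\lambda=-m_\lambda$; both hold, by the same pairing argument applied to $w\mapsto ws_r$ instead of $w\mapsto s_rw$, but the one you state is not the one you need.
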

\begin{proof} Let $v$ be a generator of the (one-dimensional) vector space $\epsilon_\lambda$. We have an obvious morphism of $\Heck$-modules 
$$
\epsilon_\lambda\otimes_{\Hfin_\lambda(q)}\Heck\to m_\lambda\Heck,\quad v\otimes x\mapsto m_\lambda\cdot x.
$$
It is well-defined because we have $m_\lambda T_r=-m_\lambda$ for each $r$ such that $s_r\in\frakS_\lambda$. The bijectivity of this morphism follows from the fact that $\Heck$ is a free left $\Hfin_\lambda(q)$-module. 
\end{proof}

Now, we would like to extend \eqref{easy} to the higher level affine Hecke algebra $\lHeck$. 
Given  $\lambda\in\calC^\ell_d$, denote again by $\Hfin_\lambda(q)\subset\Hfin_d(q)$  the  Hecke algebra corresponding to $\frakS_\lambda$ (the group $\frakS_\lambda$ is as in Definition~\ref{def-multicomp}). We can identify $\Hfin_\lambda(q)$ with the unitary subalgebra in $e^0(\lambda)\lHeck e^0(\lambda)$ generated by the elements $T_re^0(\lambda)$ where the indices $r$ correspond to simple reflection in $\frakS_\lambda$.  

\begin{lem}
We have an isomorphism of right $\lHeck$-modules 
\begin{eqnarray}
\label{noteasy}
m_\lambda\lHeck&\simeq& \epsilon_\lambda\otimes_{\Hfin_\lambda(q)} e^0(\lambda)\lHeck
\end{eqnarray}
\end{lem}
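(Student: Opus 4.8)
The plan is to imitate the proof of Lemma~\ref{lem-ind_sing} almost verbatim, with $\Heck$ replaced by $e^0(\lambda)\lHeck$. Fixing a generator $v$ of the one-dimensional space $\epsilon_\lambda$, I would define the right $\lHeck$-module map
\[
\Psi\colon\ \epsilon_\lambda\otimes_{\Hfin_\lambda(q)}e^0(\lambda)\lHeck\ \longrightarrow\ m_\lambda\lHeck,\qquad v\otimes x\longmapsto m_\lambda x .
\]
For well-definedness it suffices to check $m_\lambda\,(T_r e^0(\lambda))=-m_\lambda$ whenever $s_r\in\frakS_\lambda$; such an $s_r$ is an adjacent transposition inside a single part of one component $\lambda^{(k)}$, so, after pulling the other factors $m_{\lambda^{(k')}}$ and the (non-crossing) red strands to the side, this reduces to the classical relation $m_\mu T_r=-m_\mu$ for the element $m_\mu$ of a finite Hecke algebra (recall \eqref{defm}). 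Since $m_\lambda=e^0(\lambda)m_\lambda e^0(\lambda)$, we get $\operatorname{im}\Psi=m_\lambda e^0(\lambda)\lHeck=m_\lambda\lHeck$, so $\Psi$ is surjective.

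The substantial point is injectivity, which I would derive from the claim that $e^0(\lambda)\lHeck$ is free as a left $\Hfin_\lambda(q)$-module. Writing $e^0(\lambda)=e(\uc_\lambda)$ for the colour sequence $\uc_\lambda\in J^{\ell,d}$ recording the colours of the strands of $e^0(\lambda)$, I would decompose $e^0(\lambda)\lHeck=\bigoplus_{\uc\in J^{\ell,d}}e(\uc_\lambda)\lHeck\,e(\uc)$, and let $D_\lambda\subset\frakS_d$ be the set of minimal length representatives of the cosets $\frakS_\lambda\backslash\frakS_d$, so that $(v,\bar w)\mapsto v\bar w$ is a bijection $\frakS_\lambda\times D_\lambda\to\frakS_d$ with $l(v\bar w)=l(v)+l(\bar w)$. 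I claim each summand $e(\uc_\lambda)\lHeck\,e(\uc)$ is free over $\Hfin_\lambda(q)$ on $\{\,T^{\uc_\lambda,\uc}_{\bar w}x_1^{m_1}\cdots x_d^{m_d}\mid \bar w\in D_\lambda,\ m_i\in\bbZ\,\}$. Granting Proposition~\ref{prop-basis-Hdl}, this follows from the observation that stacking a reduced, purely black diagram for $v\in\frakS_\lambda$ on top of $T^{\uc_\lambda,\uc}_{\bar w}$ produces a diagram which is again a legal representative of $T^{\uc_\lambda,\uc}_{v\bar w}$: its black part is a minimal-crossing wiring of $v\bar w$ because $l(v\bar w)=l(v)+l(\bar w)$, and adding the red-free diagram for $v$ creates no new red--black crossings. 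Hence, by the lemma on independence of choices preceding Proposition~\ref{prop-basis-Hdl},
\[
(T_v e^0(\lambda))\cdot T^{\uc_\lambda,\uc}_{\bar w}\ \equiv\ T^{\uc_\lambda,\uc}_{v\bar w}\pmod{\lHeck^{<v\bar w}},
\]
so the transition matrix between $\{(T_v e^0(\lambda))\cdot T^{\uc_\lambda,\uc}_{\bar w}x^{m}\}$ and the basis of Proposition~\ref{prop-basis-Hdl} is unitriangular with respect to $l(w)$, hence invertible; this gives the freeness.

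Finally I would assemble these facts into injectivity of $\Psi$. The set $\{v\otimes T^{\uc_\lambda,\uc}_{\bar w}x^{m}\}$ is then a $\bfk$-basis of $\epsilon_\lambda\otimes_{\Hfin_\lambda(q)}e^0(\lambda)\lHeck$, and $\Psi$ sends it to $\{m_\lambda T^{\uc_\lambda,\uc}_{\bar w}x^{m}\}$. Using $m_\lambda=\sum_{v\in\frakS_\lambda}(-q)^{l(w_\lambda)-l(v)}T_v e^0(\lambda)$ (expand the product of the $m_{\lambda^{(k)}}$, with $w_\lambda$ the longest element of $\frakS_\lambda$) together with the congruence above, the element $m_\lambda T^{\uc_\lambda,\uc}_{\bar w}x^{m}$ equals $T^{\uc_\lambda,\uc}_{w_\lambda\bar w}x^{m}$ plus a $\bfk$-combination of $T^{\uc_\lambda,\uc}_{y}x^{m'}$ with $l(y)<l(w_\lambda)+l(\bar w)$. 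In any nontrivial relation $\sum c_{\bar w,m}\,m_\lambda T^{\uc_\lambda,\uc}_{\bar w}x^{m}=0$, collecting the contributions of maximal length $l(y)$ leaves only distinct basis elements $T^{\uc_\lambda,\uc}_{w_\lambda\bar w}x^{m}$ (for $\bar w$ of maximal occurring length), forcing their coefficients to vanish; iterating kills all $c_{\bar w,m}$. Hence $\Psi$ is bijective, which is the assertion. \textbf{The main obstacle} is making the diagrammatic ``stacking'' step and the resulting unitriangularity of left multiplication by $\Hfin_\lambda(q)$ on the $T^{\uc_\lambda,\uc}_{w}x^{m}$-basis fully rigorous; but this runs parallel to the level-zero computation, the only extra input being relations \eqref{l-Hecke-diag-2}--\eqref{l-Hecke-diag-3}, which slide red strands past black crossings without correction terms.
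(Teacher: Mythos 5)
Your proof is correct, but it is organized differently from the paper's. The paper's argument is a three-line transitivity-of-induction computation: it introduces the intermediate (non-unitary) parabolic affine Hecke algebra $\opH_\lambda(q)\subset e^0(\lambda)\lHeck e^0(\lambda)$ generated by $\Hfin_\lambda(q)$ and the Laurent polynomials, rewrites $\epsilon_\lambda\otimes_{\Hfin_\lambda(q)}e^0(\lambda)\lHeck$ as $\bigl(\epsilon_\lambda\otimes_{\Hfin_\lambda(q)}\opH_\lambda(q)\bigr)\otimes_{\opH_\lambda(q)}e^0(\lambda)\lHeck$, identifies the inner factor with $m_\lambda\opH_\lambda(q)$ by the already-proved level-zero Lemma~\ref{lem-ind_sing}, and concludes using only the freeness of $e^0(\lambda)\lHeck$ over $\opH_\lambda(q)$, which is immediate from Proposition~\ref{prop-basis-Hdl}. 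You instead build the map $\Psi$ explicitly and prove bijectivity by exhibiting a basis of $e^0(\lambda)\lHeck$ as a left $\Hfin_\lambda(q)$-module (minimal coset representatives times monomials) and running a leading-term/unitriangularity argument in the length filtration; in effect you re-prove, in the higher-level diagrammatic setting, the freeness and the sign-isotypic computation that the paper outsources to Lemma~\ref{lem-ind_sing} and to base change. What the paper's route buys is brevity and the avoidance of exactly the point you flag as the main obstacle (the rigorous stacking/triangularity of left multiplication by $T_v$ on the $T^{\uc_\lambda,\uc}_{w}$-basis), since after factoring through $\opH_\lambda(q)$ all the combinatorics happens inside an ordinary affine Hecke algebra. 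What your route buys is an explicit $\bfk$-basis of $m_\lambda\lHeck$ and of the Hom-spaces, which is essentially the content the paper only extracts later in Lemma~\ref{lem-basis_bimodule} and Corollary~\ref{Corbasis}; your stacking argument is sound (the strands of $T_v$, $v\in\frakS_\lambda$, stay within single black blocks and so never meet a red strand, and $l(v\bar w)=l(v)+l(\bar w)$ guarantees minimality of crossings), so no gap remains.
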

\begin{proof}
Let $\opH_\lambda(q)$ be the (non-unitary) subalgebra of $\lHeck$ generated by $\Hfin_\lambda(q)$ and $\bfk[x_1^{\pm 1},\ldots,x_d^{\pm 1}]$. (This algebra is clearly isomorphic to a tensor product of the algebras $H_{\lambda_{i}^{(j)}}(q)$.) We have
$$
\begin{array}{rcl}
\epsilon_\lambda\otimes_{\Hfin_\lambda(q)} e^0(\lambda)\lHeck & \simeq &  \epsilon_\lambda\otimes_{\Hfin_\lambda(q)}\opH_\lambda(q)\otimes_{\opH_\lambda(q)} e^0(\lambda)\lHeck\\
& \simeq & m_\lambda \opH_\lambda(q)\otimes_{\opH_\lambda(q)} e^0(\lambda)\lHeck\\
& \simeq & m_\lambda\lHeck.
\end{array}
$$
The first isomorphism is obvious, the second follows from Lemma~\ref{lem-ind_sing} and the third is true because the right $\opH_\lambda(q)$-module $e^0(\lambda)\lHeck$ is free by Proposition~\ref{prop-basis-Hdl}. 
\end{proof}

We thus have that $\Hom(\lambda,\mu)$ is isomorphic to 
\begin{eqnarray}
\label{eq-Hom_via_sing}
\begin{array}{rcl}
&&\Hom_{\lHeck}\left(\epsilon_\lambda\otimes_{\Hfin_\lambda(q)}e^0(\lambda)\lHeck,\epsilon_\mu\otimes_{\Hfin_\mu(q)}e^0(\mu)\lHeck\right)\\
&\simeq& \Hom_{\Hfin_\lambda(q)}\left(\epsilon_\lambda,\epsilon_\mu\otimes_{\Hfin_\mu(q)}e^0(\mu)\lHeck e^0(\lambda)\right). 
\end{array}
\end{eqnarray}

Above, we used the adjunction 
$$
\Hom_{A_1}(M\otimes_{A_2}B,N)=\Hom_{A_2}(M,\Hom_{A_1}(B,N)),
$$
where $A_1$ and $A_2$ are rings, $N$ is a right $A_1$-module,  $M$ is a right $A_2$-module and $B$ is an $(A_2,A_1)$-bimodule. This adjunction is applied to $A_1=\lHeck$, $A_2=\Hfin_\lambda(q)$, $N=\epsilon_\mu\otimes_{\Hfin_\mu(q)}e^0(\mu)\lHeck$, $M=\epsilon_\lambda$, $B=e^0(\lambda)\lHeck$. 

Now, we see that to get a basis of $\lS$, we should understand the structure of the $(\Hfin_\mu(q),\Hfin_\lambda(q))$-bimodule $e^0(\mu)\lHeck e^0(\lambda)$ for $\lambda,\mu\in\calC^\ell_d$.

\begin{df}
Let $\lambda,\mu, \nu\in\calC_d$. Denote by $\lambda\cap\mu$ the composition in $\calC_d$ such that $\frak{S}_{\lambda\cap\mu}=\frak{S}_\lambda\cap \frak{S}_\mu$.  Recall from Section~\ref{subs-affSchur} that we denote by $\Drep_{\lambda,\mu}$ the set of minimal length representatives of the double cosets $\frak{S}_\lambda\backslash \frak{S}_d/\frak{S}_\mu$. If $\frak{S}_\lambda, \frak{S}_\mu$ are subgroups of $\frak{S}_\nu$ we denote $\Drep^\nu_{\lambda,\mu}=\frak{S}_\nu\cap \Drep_{\lambda,\mu}$. 
\end{df}

Let $\Xint$ be the set of Laurent monomials $x_1^{a_1}x_2^{a_2}\ldots x_d^{a_d}$ with $a_r\in\bbZ$. Denote by $\Xdomla$ the subset of $\Xint$ that contains only monomials such that $(a_1,a_2,\ldots,a_d)$ is non-decreasing inside of each component of $\lambda$, i.e., we have
\begin{eqnarray}
\label{Xdomla}
\Xdomla=\left\{ x^{a_1}\cdots x^{a_d}\in \Xint\mid a_r\leqslant a_{r+1}, \mbox{ unless } r=\lambda_1+\ldots+\lambda_t \mbox{ for some }t \right\}.
\end{eqnarray}
For $p=x_1^{a_1}\cdots x_d^{a_d}\in\Xdomla$, denote by $\lambda\cap p$ the unique composition that is finer than $\lambda$ and such that its components correspond precisely to the segments where $(a_1,a_2,\ldots,a_d)$ is constant. In other words, the indices $r,r+1\in \{1,2\ldots,d\}$ are in the same component of the composition $\lambda\cap p$ if and only if they are in the same component of the composition $\lambda$ and $a_r=a_{r+1}$.

\begin{ex}
If for instance $\lambda=(2,3)$, then  $p=x_1^3x_2^3x_3^2x_4^6x_5^6\in\Xdomla$ because $3\leqslant 3$ and $2\leqslant 6\leqslant 6$, and $\lambda\cap p=(2,1,2)$.
\end{ex}

Assume $\lambda,\mu\in \calC_d$, $w\in \Drep_{\lambda,\mu}$. Denote by $\lambda\cap w(\mu)$ the unique partition in $\calC_d$ such that $\frakS_{\lambda\cap w(\mu)}=\frakS_\lambda\cap w\frakS_\mu w^{-1}$. (But $w(\mu)$ itself has no sense as a partition. Note also that $\lambda\cap w(\mu)$ has no sense for an arbitrary permutation $w$ that is not an element of $\Drep_{\lambda,\mu}$.)

Recall that for each $(\ell+1)$-composition $\lambda\in \calC^\ell$ we denote by $\overline \lambda$ the associated composition (i.e., the concatenation of the components of $\lambda$). If $\lambda$, $\mu$ and $\nu$ are $(\ell+1)$-compositions in $\calC_d^\ell$, we can also use notation $\Drep_{\lambda,\mu}$, $\Drep^{\nu}_{\mu,\lambda}$, $\lambda\cap\mu$, $\lambda\cap w(\mu)$, $\Xdomla$ etc. instead of $\Drep_{\overline\lambda,\overline\mu}$, $\Drep^{\nu}_{\overline\mu,\overline\lambda}$, $\overline\lambda\cap\overline\mu$, $\overline\lambda\cap w(\overline\mu)$, $\calX^+_{\overline\lambda}$ etc.  (in this situations we just consider each $(\ell+1)$-composition as an associated composition).

For $p\in\Xdom_{(d)}$, denote by $\WW_p$ the stabilizer of $p$ in $\WW_d$. Then the notation $\Drep_{p,\emptyset}$ also makes sense.  
\begin{lem}
\label{lem:basis-Heck}
The set
$$
B=\{T_wpT_z\mid w\in \WW_d,p\in \Xdom_{(d)}, z\in \Drep_{p,\emptyset} \}
$$
is a basis of $\Heck$.
\end{lem}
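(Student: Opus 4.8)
The plan is to deduce this from the two bases of $\lHeck$ obtained in Proposition~\ref{prop-basis-Hdl}, specialized to level $\ell=0$ where $\lHeck$ becomes $\Heck$. Recall that in that case $J^{0,d}$ is a single element, the idempotents disappear, and Proposition~\ref{prop-basis-Hdl} gives that $\{T_w^{}x_1^{m_1}\cdots x_d^{m_d}\mid w\in\frakS_d,\ m_i\in\bbZ\}$ (equivalently the monomials $T_w p$ with $p\in\Xint$) is a basis of $\Heck$. So the task is really a reindexing: I must show that the set $B=\{T_wpT_z\mid w\in\frakS_d,\ p\in\Xdom_{(d)},\ z\in\Drep_{p,\emptyset}\}$ is just a reparametrization of this known basis, with the monomial part $p$ replaced by a ``dominant'' monomial $p\in\Xdom_{(d)}$ together with a coset representative $z\in\Drep_{p,\emptyset}$ that moves it into a general position.

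First I would note the counting/indexing bijection. Every Laurent monomial $x^{\ba}$ can be written uniquely as $z^{-1}(p)$ where $p\in\Xdom_{(d)}$ is the unique non-decreasing rearrangement of the exponent vector $\ba$ and $z\in\Drep_{p,\emptyset}=\frakS_{p}\backslash\frakS_d$ (min-length representatives of cosets modulo the stabilizer $\frakS_p$ of $p$) is the permutation with $z^{-1}(p)=x^{\ba}$; here one uses the general fact that $\Drep_{p,\emptyset}$ is a transversal for $\frakS_p\backslash\frakS_d$ and acts simply transitively on the $\frakS_d$-orbit of $p$. Hence the index set $\{(w,p,z)\}$ for $B$ is in bijection with $\{(w,x^{\ba})\}=\frakS_d\times\Xint$, which indexes the standard basis. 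This already shows $|B|$ is correct in the sense of indexing, so it suffices to show the elements of $B$ span $\Heck$ and are linearly independent.

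For spanning, I would argue by a straightening/triangularity argument: using the relation \eqref{eq_exch-(T-q)-pol} (equivalently the commutation relations between $T_r$ and the $X_i$, i.e., the Bernstein--Lusztig presentation), the product $pT_z$ expands as $\sum_{y\le z} a_y\, T_y$ with coefficients $a_y\in\bfk[x_1^{\pm1},\ldots,x_d^{\pm1}]$ and leading term $z(p)T_z$ up to lower terms, exactly in the spirit of the computation in the proof of Proposition~\ref{prop-basis-Hdl} (where $T_w^{\ucp,\uc}$ acting on the polynomial representation produced $\sum_{y\le w}C_y\,y(f)$ with $C_w\ne 0$). Thus the span of $B$ contains all $T_w x^{\ba}$ by descending induction on $\ell(z)$, so $B$ spans $\Heck$. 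For linear independence I would use the faithful polynomial representation from Proposition~\ref{prop-faithfullHecke} (the $\ell=0$ case): if $\sum_{w,p,z}c_{w,p,z}\,T_wpT_z$ acts as zero, then grouping by $w$ and using that the $T_w$ act by linearly independent operators over $\bfk(x_1,\ldots,x_d)$ (again as in the proof of Proposition~\ref{prop-basis-Hdl}), we get $\sum_{p,z}c_{w,p,z}\,pT_z=0$ in $\Heck$ for each $w$; then the triangularity $pT_z=z(p)T_z+(\text{lower})$ together with the fact that distinct pairs $(p,z)$ give distinct monomials $z(p)$ forces all $c_{w,p,z}=0$.

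The main obstacle, and the step I would spend the most care on, is the triangularity statement $pT_z=z(p)T_z+\sum_{y<z}a_y T_y$ with a \emph{nonzero} (indeed unit) leading coefficient: one must check that repeatedly applying the commutation relation $T_r x_i = x_{s_r(i)}T_r + (\text{correction terms supported on shorter elements})$ along a reduced expression for $z$ never kills the top term, which is where the hypothesis $q\ne 0,1$ enters (so that $X_r-qX_{r+1}$ and its relatives are nonzero). This is essentially \cite[Prop.~3.7]{Lus89} (cf.\ Remark~\ref{rk-special}) or \cite[Cor.~3.4]{MS}, but transcribed to the monomial-plus-coset indexing; I would state it as a lemma and either cite it or give the short inductive proof on $\ell(z)$.
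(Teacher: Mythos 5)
Your strategy uses the same two tools as the paper -- a faithful polynomial representation for linear independence and a straightening induction for spanning -- but the key triangularity claim is wrong as you wrote it, and the induction you propose for spanning does not terminate. You claim $pT_z=z(p)T_z+(\text{lower})$ with Laurent--polynomial coefficients on the left; as written this is vacuously false, since $pT_z$ is \emph{already} of that form with coefficient $p$ on $T_z$. What the Bernstein--Lusztig commutation actually gives is
$$
pT_z = T_z\,z^{-1}(p) + \sum_{y<z}T_y\,a_y,\qquad a_y\in\bfk[x_1^{\pm1},\ldots,x_d^{\pm1}],
$$
with the monomial on the \emph{right}, and the leading monomial is $z^{-1}(p)$ -- the monomial your own bijection pairs with $(p,z)$ -- not $z(p)$. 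More seriously, the ``descending induction on $\ell(z)$'' is not well-founded: the correction polynomials $a_y$ contain monomials $q'$ whose coset representative $z_{q'}$ has $\ell(z_{q'})=\ell(z)$. Concretely, for $d=2$ and $p=x_1^ax_2^b$ dominant ($a<b$) one computes $pT_1=T_1 x_1^bx_2^a+(q-1)x_2\,\partial_1(x_1^bx_2^a)$, and if $b-a\ge 3$ the tail $\sum_{j=0}^{b-a-1}x_1^{a+j}x_2^{b-j}$ contains non-dominant monomials, whose minimal coset representative again has length $1=\ell(s_1)$, so the induction makes no progress at this step.

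The paper argues differently and more efficiently for spanning: it shows that $B'=\{pT_z\}$ generates $\Heck$ as a \emph{left} $\Hfin_d(q)$-module by proving, for each Laurent monomial $p\in\Xint$, that $p$ lies in the left $\Hfin_d(q)$-span of $B'$, using the identity
$$
p = q^{-1}T_r\,s_r(p)\,T_r + (q^{-1}-1)\,T_r\,\partial_r(X_r p).
$$
Here the copy of $p$ that hides inside $\partial_r(X_rp)$ can be moved to the left because $1-(q^{-1}-1)T_r$ is invertible in $\Hfin_d(q)$ (using $q\ne 0$), and the remaining monomials on the right have a strictly smaller exponent gap $a_r-a_{r+1}$; this gap, not $\ell(z)$, is the invariant that makes the induction terminate. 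Since $\{T_w\}$ is a $\bfk$-basis of $\Hfin_d(q)$, spanning of $B=\{T_w\}\cdot B'$ follows immediately. Your independence step -- group by $w$ and use faithfulness of the polynomial representation as in Proposition~\ref{prop-basis-Hdl} -- is exactly what the paper sketches, and your observation that it rests on the composite operators $T_wpT_z$ having pairwise distinct leading difference-operator terms is the content the paper leaves implicit; but the spanning step should be replaced by the left-module argument above, or by a corrected triangularity with the gap invariant replacing $\ell(z)$.
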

\begin{proof}
First we show that $B$ spans, that means we prove that the set
$$
B'=\{pT_z\mid p\in \Xdom_{(d)}, z\in \Drep_{p,\emptyset} \}
$$
generates the left $\Hfin_d(q)$-module $\Heck$. 
To do this, it is enough to show that each monomial $p\in \Xint$ can be written as an $\Hfin_d(q)$-linear combination of elements of $B'$. This can be proved by induction using the equality 
$$
p=q^{-1}T_rs_r(p)T_r+(q^{-1}-1)T_r\partial_r(X_rp).
$$

For the linear independence it is enough to check that the elements of $B$ act on the polynomial representation $\bfk[x_1^{\pm 1},\ldots,x_d^{\pm 1}]$ by linearly independent operators. This can be done similarly to the proof of Proposition~\ref{prop-basis-Hdl}. 
\end{proof}

\begin{coro}
\label{coro-two_set_invert}
Fix $\lambda\in \calC_d$. Consider the left $\Hfin_\lambda(q)$-module $\Heck$. Consider two sets in this module:
$$
\Xint\subset \Heck,\qquad\text{and}\qquad\{pT_z\mid p\in \Xdom_\lambda,z\in \Drep^{\lambda}_{\lambda\cap p,\emptyset}\}\subset \Heck.
$$
The elements of the two sets above can be expressed in terms of each other with an invertible change of basis matrix. 
\end{coro}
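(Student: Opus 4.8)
The plan is to fix $\lambda\in\calC_d$ and work inside the left $\Hfin_\lambda(q)$-module $\Heck$. First I would recall that by Lemma~\ref{lem:basis-Heck} the set $\{T_w p T_z\mid w\in\WW_d,\ p\in\Xdom_{(d)},\ z\in\Drep_{p,\emptyset}\}$ is a $\bfk$-basis of $\Heck$; equivalently, picking coset representatives for $\WW_\lambda\backslash\WW_d$ and using that $\Hfin_d(q)$ is free over $\Hfin_\lambda(q)$, the set $B'=\{pT_z\mid p\in\Xdom_{(d)},\ z\in\Drep_{p,\emptyset}\}$ is a basis of $\Heck$ as a left $\Hfin_\lambda(q)$-module. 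So it suffices to compare the two sets $\Xint$ and $S_\lambda:=\{pT_z\mid p\in\Xdom_\lambda,\ z\in\Drep^\lambda_{\lambda\cap p,\emptyset}\}$ with this distinguished basis $B'$.

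Next I would set up the relevant combinatorics. Both $\Xint$ and $S_\lambda$ are indexed by sets of the same cardinality: a monomial $p'=x_1^{a_1}\cdots x_d^{a_d}\in\Xint$ can be uniquely written as $p'=w(p)$ for a unique $p\in\Xdom_\lambda$ (sort the exponents to be non-decreasing within each block of $\lambda$) and a unique shortest representative $w\in\Drep^\lambda_{\lambda\cap p,\emptyset}$ of the coset $w\,\WW_{\lambda\cap p}$ inside $\WW_\lambda$; this is exactly the length-additive decomposition $\WW_\lambda = \Drep^\lambda_{\lambda\cap p,\emptyset}\cdot\WW_{\lambda\cap p}$. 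Thus $\Xint$ and $S_\lambda$ are indexed by the same set $\{(p,w)\mid p\in\Xdom_\lambda,\ w\in\Drep^\lambda_{\lambda\cap p,\emptyset}\}$, with $\Xint$ corresponding to $(p,w)\mapsto w(p)$ and $S_\lambda$ to $(p,w)\mapsto p T_{z}$ — wait, here I should be careful: in $S_\lambda$ the index $z$ runs over $\Drep^\lambda_{\lambda\cap p,\emptyset}$, so $S_\lambda$ is $\{(p,z)\mapsto p T_z\}$ over the same index set. I would then show that the transition matrix between $\Xint$ and $S_\lambda$ (both expanded in the basis $B'$) is triangular with invertible diagonal entries, for a suitable ordering.

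The core computation is a Bruhat-order triangularity statement. Using the standard relations in $\Heck$ — concretely $p' = q^{-1}T_r s_r(p')T_r + (q^{-1}-1)T_r\partial_r(X_r p')$ and the commutation rules $T_r p'' = s_r(p'')T_r - (q-1)X_{r+1}\partial_r(p'')$ — one expands $T_z\cdot p\cdot$(trivial tail) and conversely $p\cdot T_z$. For $z\in\Drep^\lambda_{\lambda\cap p,\emptyset}$, pushing the polynomial $p$ past $T_z$ produces the leading term $z(p)\,T_z^{\mathrm{mod}}$ plus strictly lower terms (in the length of $z$ and/or in a monomial order on exponents), where the leading coefficient is a nonzero scalar (a power of $q$) or a unit in the appropriate sense. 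Symmetrically, expanding $w(p)$ in the basis $B'$ yields $p\,T_w$ up to a nonzero scalar plus lower terms. Arranging the pairs $(p,w)$ by, say, first the monomial partial order on $p$ and then the Bruhat/length order on $w$, both expansions are unitriangular up to invertible diagonal scalars. Composing the two unitriangular matrices gives that the change of basis between $\Xint$ and $S_\lambda$ is invertible over $\bfk$. (The freeness of $\Heck$ as a left $\Hfin_\lambda(q)$-module is used throughout to guarantee that all these identities, which a priori hold in $\Heck$, descend to statements about $\Hfin_\lambda(q)$-coefficients in the basis $B'$.)

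The main obstacle I anticipate is bookkeeping the triangularity precisely: one must choose compatible orderings on monomials and on Weyl group elements so that \emph{both} the passage $\Xint\to B'$ and the passage $S_\lambda\to B'$ are simultaneously triangular, and verify that the "error terms" produced by the Demazure-type relations above genuinely lie in the span of strictly-smaller basis elements. This requires keeping track of how $\partial_r$ interacts with the $\Xdom_\lambda$-normalisation and with the minimal-length coset representatives in $\Drep^\lambda_{\lambda\cap p,\emptyset}$; in particular one needs that applying $\partial_r$ or $s_r$ to a monomial, followed by re-normalising into $\Xdom_\lambda$ form, does not increase the relevant order. Once the ordering is fixed correctly, each individual reduction step is a routine application of the two displayed relations, and the conclusion — invertibility of the transition matrix — is immediate from triangularity with unit diagonal.
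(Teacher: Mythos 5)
There is a concrete error in your setup that would derail the argument if you tried to carry it out. You claim that, after ``picking coset representatives for $\WW_\lambda\backslash\WW_d$ and using that $\Hfin_d(q)$ is free over $\Hfin_\lambda(q)$,'' the set $B'=\{pT_z\mid p\in\Xdom_{(d)},\ z\in\Drep_{p,\emptyset}\}$ becomes a basis of $\Heck$ as a left $\Hfin_\lambda(q)$-module. This is false: Lemma~\ref{lem:basis-Heck} shows $B'$ is a basis of $\Heck$ over $\Hfin_d(q)$, not over the smaller algebra $\Hfin_\lambda(q)$. As an $\Hfin_\lambda(q)$-module $\Heck$ is free of rank $|\WW_d:\WW_\lambda|\cdot|\Xint|$, so $B'$ has the wrong cardinality and cannot be a basis unless $\lambda=(d)$. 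Consequently your plan of expanding both $\Xint$ and $S_\lambda:=\{pT_z\mid p\in\Xdom_\lambda,\ z\in\Drep^\lambda_{\lambda\cap p,\emptyset}\}$ in $B'$ with $\Hfin_\lambda(q)$-coefficients and comparing triangular matrices does not make sense as written, because those $B'$-expansions are over $\Hfin_d(q)$ and have no reason to produce $\Hfin_\lambda(q)$-coefficients.

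The clean route, which is the one the paper takes, is to name the common $\Hfin_\lambda(q)$-submodule first: both $\Xint$ and $S_\lambda$ lie in the parabolic affine Hecke subalgebra $\opH_\lambda(q)=\opH_{\lambda_1}(q)\otimes\cdots\otimes\opH_{\lambda_{l(\lambda)}}(q)\subset\Heck$ (note every $z\in\Drep^\lambda_{\lambda\cap p,\emptyset}$ lies in $\WW_\lambda$), and both form $\Hfin_\lambda(q)$-bases of it --- $\Xint$ by the usual Bernstein/PBW presentation of each block $\opH_{\lambda_i}(q)$, and $S_\lambda$ by applying Lemma~\ref{lem:basis-Heck} blockwise to each $\opH_{\lambda_i}(q)$. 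Two bases of the same free module are related by an invertible matrix; no explicit triangularity is needed. Your indexing bijection between $\Xint$ and the pairs $(p,w)$ with $p\in\Xdom_\lambda$, $w\in\Drep^\lambda_{\lambda\cap p,\emptyset}$ is correct and is implicitly present in this count of basis elements, and a direct triangularity argument along your lines is in principle a valid (if much more laborious) alternative --- it would amount to re-proving the blockwise version of Lemma~\ref{lem:basis-Heck} by hand --- but it must be carried out inside $\opH_\lambda(q)$ and against an $\Hfin_\lambda(q)$-basis of that subalgebra, not against $B'$ as a basis of all of $\Heck$.
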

\begin{proof}
The statement follows from the fact that both of the sets above form bases in the left $\Hfin_\lambda(q)$-module 
$
\opH_{\lambda_1}(q)\otimes\ldots\otimes \opH_{\lambda_{l(\lambda)}}(q).
$
\end{proof}

\begin{rk}
Let $\lambda,\mu\in\calC^\ell_d$ and pick $w\in \Drep_{\mu,\lambda}$ and $z\in  \WW_{\lambda\cap w^{-1}(\mu)}$.  Setting  $z'=wzw^{-1}$ we obtain  the equality $wz=z'w$ and also $T_wT_z=T_{z'}T_w$ in the Hecke algebra $\Heck$. Now, let $\ub,\uc\in J^{\ell,d}$ be such that we have $e^0(\mu)=e(\ub)$ and $e^0(\lambda)=e(\uc)$. We also would like to have the following version of this equality in $\lHeck$ (see see Section~\ref{subs-basis_lHeck} for the notation)
\begin{equation}
\label{eq-lT_{wz}}
T^{\bfb,\bfc}_w T_z=T_{z'}T^{\bfb,\bfc}_w
\end{equation} 
This is slightly delicate, because the element $T^{\bfb,\bfc}_w$ depends on some choices. We can however make these choices in a way such that indeed \eqref{eq-lT_{wz}} holds. To do this, we first choose for each $w\in \Drep_{\mu,\lambda}$ some $T^{\bfb,\bfc}_w$ arbitrarily and then define  $T^{\bfb,\bfc}_y$ for any other $y\in\WW_\mu w \WW_\lambda$ (dependent on these choices) inductively, by induction on the length. 
Assuming we have constructed $T^{\bfb,\bfc}_{y}$ for some $y$ such that $y(\bfc)=\bfb$, then for each simple reflection $s\in \WW_{\lambda}$ such that ${l}(ws)={l}(w){l}(s)$ (resp. for each simple reflection $s'\in \frakS_{\mu}$ such that ${l}(s'w)={l}(s'){l}(w)$) we set $T^{\bfb,s(\bfc)}_{ws}=T^{\bfb,\bfc}_w T_s$ (resp. $T^{s'(\bfb),\bfc}_{s'w}=T_{s'}T^{\bfb,\bfc}_w$).
\end{rk}

\begin{lem}
\label{lem-basis_bimodule}
The set 
\begin{eqnarray*}
\mathcal{B}&=&\left\{T_xT^{\ub,\uc}_wpT_y\mid w\in \Drep_{\mu,\lambda}, x\in \WW_\mu, p\in \Xdom_{\lambda\cap w^{-1}(\mu)}, y\in \Drep^{\lambda}_{\lambda\cap w^{-1}(\mu)\cap p,\emptyset}\right\}
\end{eqnarray*}
is a basis of $e^0(\mu)\lHeck e^0(\lambda)$.
\end{lem}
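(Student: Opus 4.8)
The goal is to produce a basis of the $(\Hfin_\mu(q),\Hfin_\lambda(q))$-bimodule $e^0(\mu)\lHeck e^0(\lambda)$, which via \eqref{eq-Hom_via_sing} translates into a basis of $\Hom(\lambda,\mu)$ and ultimately of $\lS$. The strategy is to combine the basis of $\lHeck$ from Proposition~\ref{prop-basis-Hdl} (in the form with Laurent monomials $x_1^{m_1}\cdots x_d^{m_d}$) with a double-coset analysis of the symmetric group, exactly paralleling the classical description of $e(\mu)\Heck e(\lambda)$ for the ordinary affine Hecke algebra. The first step is to write a general element of $e^0(\mu)\lHeck e^0(\lambda)$ using Proposition~\ref{prop-basis-Hdl}: every element of $e(\uc)\lHeck e(\uc)$ (with $e^0(\mu)=e(\ub)$, $e^0(\lambda)=e(\uc)$) is a combination of $T_v^{\ub,\uc}\, p$ with $v\in\frakS_d$ and $p$ a Laurent monomial. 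Using the braid-type relations \eqref{eq-lT_{wz}} together with the coset decomposition $\frakS_d=\bigsqcup_{w\in\Drep_{\mu,\lambda}}\frakS_\mu w\frakS_\lambda$, one rewrites $T_v^{\ub,\uc}$ as $T_x T_w^{\ub,\uc} T_{y_0}$ with $w\in\Drep_{\mu,\lambda}$, $x\in\frakS_\mu$ chosen so it contributes on the left (inside $\Hfin_\mu(q)$), and $y_0\in\frakS_\lambda$; this produces spanning elements of the form $T_xT_w^{\ub,\uc}T_{y_0}p$.

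The second step is to absorb $T_{y_0}p$ into a canonical form. Here one uses Corollary~\ref{coro-two_set_invert}: in the left $\Hfin_{\lambda}(q)$-module $\opH_\lambda(q)=\opH_{\lambda_1}(q)\otimes\cdots$ the set $\{pT_z : p\in\Xdom_\lambda, z\in\Drep^\lambda_{\lambda\cap p,\emptyset}\}$ is a basis, so one may replace monomials times short elements of $\frakS_\lambda$ by this canonical basis — but one must be careful to work with $\Xdom_{\lambda\cap w^{-1}(\mu)}$ rather than $\Xdom_\lambda$, since the stabilizer of the relevant idempotent configuration after moving past $T_w^{\ub,\uc}$ is governed by $\frakS_\lambda\cap w^{-1}\frakS_\mu w=\frakS_{\lambda\cap w^{-1}(\mu)}$. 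Concretely, one observes that $T_x T_w^{\ub,\uc}$ is already "reduced" with respect to right multiplication by $\frakS_{\lambda\cap w^{-1}(\mu)}$, invokes the braid relation \eqref{eq-lT_{wz}} to commute such elements of $\frakS_{\lambda\cap w^{-1}(\mu)}$ back to the left (where they are killed into $\Hfin_\mu$-coefficients or absorbed into $x$), and is then left with a monomial $p$ modulo $\frakS_{\lambda\cap w^{-1}(\mu)}$; applying the Corollary componentwise (now relative to the finer parabolic $\frakS_{\lambda\cap w^{-1}(\mu)}$) converts the remaining part into $p\,T_y$ with $p\in\Xdom_{\lambda\cap w^{-1}(\mu)}$ and $y\in\Drep^\lambda_{\lambda\cap w^{-1}(\mu)\cap p,\emptyset}$. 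This shows $\mathcal B$ spans.

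For linear independence, the cleanest route is the polynomial representation: recall from Proposition~\ref{prop-pol_rep_lH} (and the proof of Proposition~\ref{prop-basis-Hdl}) that $T_w^{\ub,\uc}$ acts on $\bigoplus_\uc\bfk[x_1^{\pm1},\dots,x_d^{\pm1}]e(\uc)$ by sending $f e(\uc)$ to $\sum_{v\leqslant w}C_v\, v(f)\, e(\ub)$ with $C_w\ne 0$ a rational function. One checks that the leading term (with respect to the Bruhat order, or with respect to a suitable ordering by $w$ then by the monomial $p$ and finally by $y$) of the operator attached to $T_xT_w^{\ub,\uc}pT_y$ determines the triple $(x w ,p, y)$ — here the $\Drep_{\mu,\lambda}$-$\frakS_\mu$-decomposition of the permutation part, the $\Xdom_{\lambda\cap w^{-1}(\mu)}$-normalization, and the $\Drep^\lambda$-representative are precisely arranged so that distinct elements of $\mathcal B$ have distinct leading data. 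Hence the operators are linearly independent, so $\mathcal B$ is a basis. \textbf{The main obstacle} I anticipate is the bookkeeping in the middle step: verifying that after moving $T_x$ to the left and commuting past $T_w^{\ub,\uc}$, the correct residual parabolic is exactly $\frakS_{\lambda\cap w^{-1}(\mu)}$ and that the choices of diagrammatic representatives $T_w^{\ub,\uc}$ can be made compatibly (via the inductive construction in the Remark preceding the lemma) so that the rewriting is unambiguous modulo lower terms; the independence argument is then routine given the structure of the polynomial action, and the spanning argument is a direct, if slightly intricate, application of Proposition~\ref{prop-basis-Hdl} and Corollary~\ref{coro-two_set_invert}.
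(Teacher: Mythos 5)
Your proposal uses the same ingredients as the paper — Proposition~\ref{prop-basis-Hdl}, the double-coset factorization $\frakS_d=\bigsqcup_{w\in\Drep_{\mu,\lambda}}\frakS_\mu w\frakS_\lambda$, and Corollary~\ref{coro-two_set_invert} — and your spanning argument follows the paper's outline closely. The genuine divergence is the linear-independence step. The paper does not re-prove independence at all: starting from Proposition~\ref{prop-basis-Hdl} and the unique factorization $v=xwz$ with $z\in\Drep^\lambda_{\lambda\cap w^{-1}(\mu),\emptyset}$, it immediately identifies a known basis $\calB_1=\{T^{\ub,\uc}_wT_yp\}$ of $e^0(\mu)\lHeck e^0(\lambda)$ as a free left $\Hfin_\mu(q)$-module, passes to $\calB_2=\{T^{\ub,\uc}_wpT_y\}$ by a triangular change of basis, and then passes to $\calB_3=\calB$ by Corollary~\ref{coro-two_set_invert} (applied blockwise for each pair $(w,y)$ with respect to the parabolic $\frakS_{\lambda\cap w^{-1}(\mu)}$). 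Independence is inherited automatically from invertibility of each change-of-basis matrix. You instead propose to re-run a leading-term argument on the polynomial representation, in the spirit of the proof of Proposition~\ref{prop-basis-Hdl}.

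That independence argument, as sketched, has a gap that is more than bookkeeping. Your claim is that the leading term of the operator attached to $T_xT^{\ub,\uc}_wpT_y$ determines the triple $(xw,p,y)$. But the index $y$ ranges over $\Drep^\lambda_{\lambda\cap w^{-1}(\mu)\cap p,\emptyset}$, which (since $\lambda\cap w^{-1}(\mu)\cap p$ refines $\lambda\cap w^{-1}(\mu)$) is strictly \emph{larger} than $\Drep^\lambda_{\lambda\cap w^{-1}(\mu),\emptyset}$. Consequently the permutation $xwy$ is no longer in the canonical $xwz$ form, and two different elements of $\calB$ can a priori share the same leading permutation: if $xwy=x'wy'$ then $y'y^{-1}\in\frakS_{\lambda\cap w^{-1}(\mu)}$, which is entirely compatible with $y\ne y'$ when both lie in the larger set $\Drep^\lambda_{\lambda\cap w^{-1}(\mu)\cap p,\emptyset}$. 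Moreover the monomial $p$ enters the leading coefficient only after being twisted by $(xw)$ and scaled by the rational-function coefficients $C_x,C'_w,C''_y$, so separating out the $p$-data is itself nontrivial. One would need a carefully designed total order, presumably tracking the coset $\frakS_\mu w\frakS_\lambda$ first, then the monomial $p$ relative to $\Xdom_{\lambda\cap w^{-1}(\mu)}$, then the residual $y$ — essentially reconstructing the triangularity that Corollary~\ref{coro-two_set_invert} packages. This is doable but is not "routine", and the paper's change-of-basis argument sidesteps it entirely; I would recommend adopting that route rather than the direct leading-term computation.
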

\begin{proof}

It is a standard fact that each $y\in\frakS_d$ has a unique presentation of the form $y=xwz$, where $w\in \Drep_{\mu,\lambda}$, $x\in \WW_\mu$, $z\in \Drep^{\lambda}_{\lambda\cap w^{-1}(\mu),\emptyset}$ and ${l}(y)={l}(x)+{l}(w)+{l}(z)$. Together with Proposition~\ref{prop-basis-Hdl} this shows that the left $\Hfin_\mu(q)$-module $e^0(\mu)\lHeck e^0(\lambda)$ is free with a basis
\begin{eqnarray*}
\mathcal{B}_1&=&\left\{T^{\ub,\uc}_w T_y p\mid w\in \Drep_{\mu,\lambda}, y\in \Drep^{\lambda}_{\lambda\cap w^{-1}(\mu), \emptyset}, p\in\Xint \right\}
\end{eqnarray*}
or alternatively with a basis
\begin{eqnarray*}
\mathcal{B}_2&=&
\{T^{\ub,\uc}_w pT_y\mid 
w\in \Drep_{\mu,\lambda}, y\in \Drep^{\lambda}_{\lambda\cap w^{-1}(\mu), \emptyset}, p\in\Xint
\}.
\end{eqnarray*}
Indeed, we can find a bijection between $\mathcal{B}_1$ and $\mathcal{B}_2$ such that the base change matrix in an appropriate order on the bases is triangular with invertible elements on the diagonal. For $w\in \Drep_{\mu,\lambda}, y\in \Drep^{\lambda}_{\lambda\cap w^{-1}(\mu), \emptyset}$, we define
\begin{eqnarray*}
\calB_2^{w,y}=\{T^{\ub,\uc}_w pT_y\mid p\in\Xint\},&& \calB_3^{w,y}=\{T^{\ub,\uc}_w pT_{zy}\mid p\in\Xdom_{\lambda\cap w^{-1}(\mu)},z\in \Drep^{\lambda\cap w^{-1}(\mu)}_{{\lambda\cap w^{-1}(\mu)}\cap p,\emptyset}\}.
\end{eqnarray*}
By Corollary~\ref{coro-two_set_invert}, the elements of the sets $\calB_2$ and $\calB_3$ can be written as $\Hfin_{w(\lambda)\cap \mu}(q)$-linear combinations of each other with an invertible change of basis matrix.

Since $\calB_2=\coprod_{w,y}\calB_2^{w,y}$ is a basis of the left $\Hfin_\mu(q)$-module $e^0(\mu)\lHeck e^0(\lambda)$ so is $\calB_3=\coprod_{w,y}\calB_3{w,y}$. The set $\calB_3$ can be written in a slightly different way as
\begin{eqnarray*}
\calB_3&=&\{T^{\ub,\uc}_wpT_y\mid w\in \Drep_{\mu,\lambda}, p\in \Xdom_{\lambda\cap w^{-1}(\mu)}, y\in \Drep^{\lambda}_{\lambda\cap w^{-1}(\mu)\cap p,\emptyset}\}.
\end{eqnarray*}
This implies that $\calB$ is a basis of the vector space $e^0(\mu)\lHeck e^0(\lambda)$.
\end{proof}
For each $w\in \Drep_{\mu,\lambda}$ and $p\in \Xdom_{\lambda\cap w^{-1}(\mu)}$ 
consider the element 
\begin{eqnarray*}
b^{w,p}\in\Hom(m_\lambda\lHeck,m_\mu\lHeck)&&m_\lambda h\mapsto m_\mu T^{\ub,\uc}_{w}p(\sum_{y} (-q)^{r-{l}(y)}T_y) h,
\end{eqnarray*}
where $y$ runs through $\Drep^{\lambda}_{\lambda\cap w^{-1}(\mu)\cap p,\emptyset}$ and $r$ denotes the length of the longest element therein.

\begin{coro}
\label{Corbasis}
The following is a basis of $\Hom_{\lHeck}(m_\lambda\lHeck,m_\mu\lHeck)$
$$
\{b^{w,p}\mid w\in \Drep_{\mu,\lambda},p\in \Xdom_{\lambda\cap w^{-1}(\mu)}\}.
$$
\end{coro}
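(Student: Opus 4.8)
The plan is to extract the basis from the identification \eqref{eq-Hom_via_sing} of $\Hom(\lambda,\mu)=\Hom_{\lHeck}(m_\lambda\lHeck,m_\mu\lHeck)$ with $\Hom_{\Hfin_\lambda(q)}\big(\epsilon_\lambda,\;\epsilon_\mu\otimes_{\Hfin_\mu(q)}B\big)$, where $B:=e^0(\mu)\lHeck e^0(\lambda)$, together with the basis $\mathcal{B}$ of $B$ from Lemma~\ref{lem-basis_bimodule}. Grouping $\mathcal{B}$ by the data $(w,p,y)$ and using that $\{T_x\mid x\in\frakS_\mu\}$ is a $\bfk$-basis of $\Hfin_\mu(q)$, one first sees that $B$ is free as a left $\Hfin_\mu(q)$-module with basis $\{T^{\ub,\uc}_wpT_y\}$, hence $\epsilon_\mu\otimes_{\Hfin_\mu(q)}B$ has $\bfk$-basis $\{v_\mu\otimes T^{\ub,\uc}_wpT_y\}$ indexed by $w\in\Drep_{\mu,\lambda}$, $p\in\Xdom_{\lambda\cap w^{-1}(\mu)}$ and $y\in\Drep^{\lambda}_{\lambda\cap w^{-1}(\mu)\cap p,\emptyset}$, with $v_\mu$ a generator of $\epsilon_\mu$.

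Writing $\nu_w:=\lambda\cap w^{-1}(\mu)$, the next step is to read off the right $\Hfin_\lambda(q)$-module structure on $\epsilon_\mu\otimes_{\Hfin_\mu(q)}B$. Since $p$ is $\frakS_{\nu_w\cap p}$-invariant, and since the relation $T^{\ub,\uc}_wT_z=T_{z'}T^{\ub,\uc}_w$ with $z'=wzw^{-1}$ (from the Remark preceding Lemma~\ref{lem-basis_bimodule}) applies to $z\in\frakS_{\nu_w\cap p}$, one obtains $T^{\ub,\uc}_wp\,T_s=T_{s'}\,T^{\ub,\uc}_wp$ for each simple reflection $s\in\frakS_{\nu_w\cap p}$, with $s'=wsw^{-1}\in\frakS_\mu$. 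It follows that the span $M_{w,p}$ of $\{v_\mu\otimes T^{\ub,\uc}_wpT_y\mid y\in\Drep^{\lambda}_{\nu_w\cap p,\emptyset}\}$ is a right $\Hfin_\lambda(q)$-submodule, that $\epsilon_\mu\otimes_{\Hfin_\mu(q)}B=\bigoplus_{w,p}M_{w,p}$ as right $\Hfin_\lambda(q)$-modules, and that $\bar v\otimes h\mapsto(v_\mu\otimes T^{\ub,\uc}_wp)h$ is an isomorphism of right $\Hfin_\lambda(q)$-modules $\epsilon_{\nu_w\cap p}\otimes_{\Hfin_{\nu_w\cap p}(q)}\Hfin_\lambda(q)\simeq M_{w,p}$. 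Consequently $\Hom(\lambda,\mu)$ is identified with $\bigoplus_{w,p}V_{w,p}$, where $V_{w,p}:=\Hom_{\Hfin_\lambda(q)}(\epsilon_\lambda,M_{w,p})$ is the subspace of $M_{w,p}$ annihilated by all $T_r+1$, $s_r\in\frakS_\lambda$.

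It then remains to prove a purely finite-dimensional statement: for any parabolic $\frakS_{\nu'}\subseteq\frakS_\lambda$, the space of vectors in $N':=\epsilon_{\nu'}\otimes_{\Hfin_{\nu'}(q)}\Hfin_\lambda(q)$ killed by all $T_r+1$, $s_r\in\frakS_\lambda$, is exactly one-dimensional, spanned by $\bar v\otimes\sigma_{\nu'}$ with $\sigma_{\nu'}=\sum_{y\in\Drep^{\lambda}_{\nu',\emptyset}}(-q)^{r-{l}(y)}T_y$ and $r$ the length of the longest such $y$. That $\bar v\otimes\sigma_{\nu'}$ lies in this space is a direct computation with the right action of a simple reflection $s_r\in\frakS_\lambda$ on the basis $\{\bar v\otimes T_y\}_{y\in\Drep^\lambda_{\nu',\emptyset}}$, the only nontrivial point being the standard fact that when $ys_r$ fails to be a minimal coset representative it equals $ty$ for a \emph{simple} $t\in\frakS_{\nu'}$, whence $\bar v\otimes T_{ys_r}=-\bar v\otimes T_y$. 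For the upper bound one compares coefficients: the relations force $c_{ys_r}=-qc_y$ whenever $ys_r\in\Drep^\lambda_{\nu',\emptyset}$ has smaller length than $y$, and since every $y\in\Drep^\lambda_{\nu',\emptyset}$ is joined to the identity by repeatedly deleting the last letter of a reduced word (which stays inside $\Drep^\lambda_{\nu',\emptyset}$), all coefficients are determined by $c_e$; the argument uses only $q\neq0$ and is characteristic-free. Finally, unwinding the adjunction \eqref{eq-Hom_via_sing} — under which $v_\mu\otimes c$ corresponds to the morphism $m_\lambda h\mapsto m_\mu c\,h$ — the generator $v_\mu\otimes T^{\ub,\uc}_wp\,\sigma_{\nu_w\cap p}$ of $V_{w,p}$ maps to precisely the element $b^{w,p}$ of the statement; linear independence of the $b^{w,p}$ is immediate from the direct-sum decomposition $\bigoplus_{w,p}M_{w,p}$, and spanning is the content of $\Hom(\lambda,\mu)=\bigoplus_{w,p}V_{w,p}$ together with $\dim V_{w,p}=1$. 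I expect the main obstacle to be the careful bookkeeping of left/right actions and of the $(w,p)$-block decomposition through the adjunction, rather than the finite Hecke algebra input, which is short and standard.
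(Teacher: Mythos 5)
Your proposal is correct and follows essentially the same route as the paper: the adjunction identification \eqref{eq-Hom_via_sing}, the decomposition of $\epsilon_\mu\otimes_{\Hfin_\mu(q)}e^0(\mu)\lHeck e^0(\lambda)$ into the right $\Hfin_\lambda(q)$-submodules $M_{w,p}\simeq\epsilon_\xi\otimes_{\Hfin_\xi(q)}\Hfin_\lambda(q)$ coming from Lemma~\ref{lem-basis_bimodule}, and the one-dimensionality of the sign-isotypic subspace via the coefficient recursion $a_y=-q\,a_{ys_i}$. You supply a couple of details the paper leaves implicit (why $M_{w,p}$ is stable under the right action, and the Deodhar-type case analysis), but the argument is the same.
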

\begin{proof}
We have seen in \eqref{eq-Hom_via_sing} that $\Hom_{\lHeck}(m_\lambda\lHeck,m_\mu\lHeck)$ is in bijection with the vector subspace of elements of $\epsilon_\mu\otimes_{\Hfin_\mu(q)}e^0(\mu)\lHeck e^0(\lambda)$ on which $\Hfin_\lambda(q)$ acts from the right by the sign representation. By Lemma~\ref{lem-basis_bimodule}, the right $\Hfin_\lambda(q)$-module $\epsilon_\mu\otimes_{\Hfin_\mu(q)}e^0(\mu)\lHeck e^0(\lambda)$ is a direct sum of submodules $M_{w,p}$,  for $w\in \Drep_{\mu,\lambda}$ and $p\in \Xdom_{\lambda\cap w^{-1}(\mu)}$, with vector space basis 
$$
\{\epsilon_\mu \otimes T^{\bfb,\bfc}_w p T_y\mid y\in \Drep^{\lambda}_{\lambda\cap w^{-1}(\mu)\cap p}\}.
$$
We claim that the vector subspace of vectors of $M_{w,p}$ that transform as a sign representation of $\Hfin_\lambda(q)$ is one-dimensional . Indeed, the right $\Hfin_\lambda(q)$-module $M_{w,p}$ is isomorphic to $\epsilon_\xi\otimes_{\Hfin_\xi(q)} \Hfin_\lambda(q)$, where $\xi=\lambda\cap w^{-1}(\mu)\cap p$. An element of $\epsilon_\xi\otimes_{\Hfin_\xi(q)} \Hfin_\lambda(q)$ can be written uniquely in the form $\sum_{y\in \Drep^\lambda_{\xi,\emptyset}}a_y(\epsilon_\xi\otimes T_y)$, where $a_y\in \bfk$. This element transforms as a sign representation of $\Hfin_\lambda(q)$ if an only if for each $i$ we have 
$$
\left(\sum_{y\in \Drep^\lambda_{\xi,\emptyset}}a_y(\epsilon_\xi\otimes T_y)\right)T_i=-\left(\sum_{y\in \Drep^\lambda_{\xi,\emptyset}}a_y(\epsilon_\xi\otimes T_y)\right).
$$
Standard computation shows that this is equivalent to the condition
$-a_y=qa_{ys_i}$ whenever $y,ys_i\in \Drep^\lambda_{\xi,\emptyset}$ with ${l}(ys_i)>{l}(y)$.
But this condition is simply equivalent to the fact that the element is proportional to $\sum_{y\in \Drep^\lambda_{\xi,\emptyset}}\epsilon_\xi\otimes (-q)^{r-{l}(y)}T_y$,
where $r$ is the length of the longest element of $\Drep^\lambda_{\xi,\emptyset}$. Under the isomorphism \eqref{eq-Hom_via_sing} this corresponds to the basis element $b^{w,p}$.
\end{proof}

We can write the morphism $b^{w,p}$ as a composition as follows:
\begin{eqnarray*}
m_\lambda\lHeck\;\stackrel{b^{1,1}}{\longrightarrow}\;m_{\lambda\cap w^{-1}(\mu)}\lHeck&\stackrel{b^{1,p}}{\longrightarrow}&m_{\lambda\cap w^{-1}(\mu)}\lHeck\\ &\stackrel{b^{w,1}}{\longrightarrow}&m_{w(\lambda)\cap \mu}\lHeck\;\stackrel{b^{1,1}}{\longrightarrow}\;m_{\mu}\lHeck.
\end{eqnarray*}

Note that the first and the last morphisms in this decompositions are obviously a split and a merge, the morphism $b^{1,p}$ is a multiplication by a polynomial, whereas, $b^{w,1}$ is a composition of left, right and black crossings. The discussion above together with Lemma~\ref{lem-black_cross} proves the following lemma.

\begin{lem}
\label{lem-generate_lS}
The algebra $\lS$ is generated by the idempotents $e(\lambda)$, for $\lambda\in\calC_d^\ell$, the splits, the merges, the left/right crossings and the polynomials.
\end{lem}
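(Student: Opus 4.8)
The plan is to combine the basis description of $\Hom_{\lHeck}(m_\lambda\lHeck,m_\mu\lHeck)$ from Corollary~\ref{Corbasis} with the explicit factorisation of its basis elements $b^{w,p}$ that immediately precedes the statement. Since $\lS=\End_{\lHeck}\left(\bigoplus_{\lambda\in\calC^\ell_d}m_\lambda\lHeck\right)=\bigoplus_{\lambda,\mu}\Hom_{\lHeck}(m_\lambda\lHeck,m_\mu\lHeck)$ as a vector space, it suffices to show that every $b^{w,p}$, for $w\in\Drep_{\mu,\lambda}$ and $p\in\Xdom_{\lambda\cap w^{-1}(\mu)}$, lies in the subalgebra generated by the listed elements. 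The decomposition already recorded above writes
\[
b^{w,p}=b^{1,1}\circ b^{w,1}\circ b^{1,p}\circ b^{1,1},
\]
so I would treat the four factors separately.

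First, the two outer morphisms $b^{1,1}$ are, by inspection of the formula for $b^{w,p}$ (with $w=1$, $p=1$), the canonical projection/inclusion maps $m_\lambda\lHeck\to m_{\lambda\cap w^{-1}(\mu)}\lHeck$ and $m_{w(\lambda)\cap\mu}\lHeck\to m_\mu\lHeck$ associated to a refinement of compositions; by the associativity of splits and merges (as for $\S$, cf.\ Section~\ref{subs-gen_lS}) these are compositions of the elementary splits and merges. Second, $b^{1,p}$ is by definition left multiplication by the Laurent monomial $p\in\Xint$ on the appropriate summand, hence is a polynomial element $pe(\lambda\cap w^{-1}(\mu))$ in the sense of Section~\ref{subs-gen_lS}; here one uses that $p\in\Xdom_{\lambda\cap w^{-1}(\mu)}$ means $p$ is actually $\frakS_{\lambda\cap w^{-1}(\mu)\cap p}$-invariant on each block, but since a general $fe(\lambda)$ with $f$ only invariant on the blocks of $\lambda$ is already allowed as a generator after passing to a sufficiently fine composition, $b^{1,p}$ is a polynomial generator for the composition $\lambda\cap w^{-1}(\mu)\cap p$ precomposed and postcomposed with splits/merges. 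Third, $b^{w,1}$ is the morphism $m_\lambda h\mapsto m_\mu T^{\ub,\uc}_w h$ for $w\in\Drep_{\mu,\lambda}$; I claim $T^{\ub,\uc}_w$ is a product of the elementary building blocks of the diagrammatic calculus, namely black crossings (which move black strands past one another) and left/right crossings (which move a block of black strands past a red strand). Indeed, a reduced word for $w\in\frakS_d\subset\frakS_{\ell+d}$ together with a choice of how the red strands thread through gives a diagram built entirely from these crossings, and by Lemma~\ref{lem-black_cross} each black crossing is itself expressible via splits, merges and Laurent polynomials; so $b^{w,1}$ lies in the generated subalgebra.

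Assembling the four pieces: each $b^{w,p}$ is a composition of idempotents, splits, merges, left/right crossings and polynomial elements, hence lies in the subalgebra $\langle e(\lambda),\text{splits},\text{merges},\text{left/right crossings},\text{polynomials}\rangle$. Since these $b^{w,p}$ span all the $\Hom$-spaces by Corollary~\ref{Corbasis}, and $\lS$ is the direct sum of these $\Hom$-spaces, the listed elements generate $\lS$. I expect the only genuinely delicate point to be the treatment of $b^{w,1}$: one must check that the diagram realising $m_\mu T^{\ub,\uc}_w(-)$ really can be decomposed into the elementary left/right crossings plus black crossings without introducing terms from $\lHeck^{<w}$ that spoil the argument --- but this is controlled exactly as in the construction of the $T^{\ub,\uc}_w$ in Section~\ref{subs-basis_lHeck} and the relations \eqref{l-Hecke-diag-3}, and all correction terms are again of the required form, so they cause no problem. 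The remaining verifications (that splits, merges and polynomial multiplications genuinely account for the outer factors $b^{1,1}$ and $b^{1,p}$) are routine, being the higher-level analogues of the arguments already used for $\S$ in \cite[Prop.~6.19]{MS}.
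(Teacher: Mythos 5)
Your proof follows the same route as the paper: decompose $b^{w,p}$ into the four-step chain, identify the two outer maps as a split and a merge, identify $b^{1,p}$ with polynomial data, and handle $b^{w,1}$ via black and left/right crossings together with Lemma~\ref{lem-black_cross}. Your treatment of $b^{1,p}$ (split to $\lambda\cap w^{-1}(\mu)\cap p$, multiply by the monomial $p$ there, merge back) is a slightly more explicit variant of the paper's one-line claim that $b^{1,p}$ is multiplication by a polynomial, but the content and conclusion are the same.
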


\subsection{Completion}
This section is very similar to \cite[Sec.~5]{MS}.  As in Section~\ref{subs-compl-Hecke}, we fix $\bfa\in (\bfk^*)^\ell$. The affine Schur algebra considered in \cite{MS} corresponds to the case $\ell=0$ (no red lines). But the completion procedure only does something with black lines.

We consider $m_\lambda\clHeck$ as a right $\clHeck$-module.
\begin{df}
We set $\clS=\End_{\clHeck}\left(\bigoplus_{\lambda\in \calC^\ell_d}m_\lambda \clHeck\right)$.
\end{df}

As for Hecke algebras, the affine Schur algebra gets more idempotents after completion. They can be constructed in the following way. For each $\ui\in \frakS_d\bfa$, we have an idempotent $e(\lambda,\ui)=\sum_{\uj\in\frakS_\lambda\ui}e(\uj)\in\lHeck$. It is clear that $e(\lambda,\ui)$ depends only on the $\frakS_\lambda$-orbit of $\ui$. Similarly to \cite[Lemma~5.3]{MS}, the idempotent $e(\lambda,\ui)$ commutes with $m_\lambda$. Then we obtain
\begin{eqnarray*}
\clS&=&\End_{\clHeck}(\bigoplus_{\lambda\in \calC^\ell_d,\ui\in \frakS_\lambda\backslash\frakS_d\bfa}e(\lambda,\ui)m_\lambda \clHeck).
\end{eqnarray*}
In particular, $\clS$ has idempotents $e(\lambda,\ui)$ projecting to $e(\lambda,\ui)m_\lambda \clHeck$.  

\begin{rk} It is possible to give an equivalent definition of $\clS$ as a completion of $\lS$ with respect to some sequence of ideals (see \cite[Sec.~5.1]{MS}, where this is done for $\ell=0$). In particular, this realizes $\lS$ is a subalgebra of $\clS$. The idempotent $e(\lambda)\in \lS$ is decomposed in $\clS$ as $e(\lambda)=\sum_{\ui\in \frakS_\lambda\backslash\frakS_d\bfa} e(\lambda,\ui)$.
\end{rk}

\subsection{Generators of $\clS$}
Let $\lambda,\mu\in\calC^\ell_d$ be such that $\mu$ is a split of $\lambda$. Fix $\ui\in\frakS_d\bfa$. Then we can define the following elements of $\clS$: 
\begin{equation*}
\begin{array}{lllcl}
\text{the {\it split} element}:&&(\lambda,\ui)\to(\mu,\ui)&=&e(\mu,\ui)(\lambda\to\mu)e(\lambda,\ui),\\
\text{the {\it merge} element}:&&(\mu,\ui)\to(\lambda,\ui)&=&e(\lambda,\ui)(\mu\to\lambda)e(\mu,\ui),
\end{array}
\end{equation*}
where $\lambda\to\mu$ and $\mu\to\lambda$ are the images of the usual split and merge with respect to the inclusion $\lS\subset \clS$.

If now $\mu$ is obtained from $\lambda$ by a left crossing, then we define the left 
$(\lambda,\ui)\to(\mu,\ui)$ respectively right crossing $(\mu,\ui)\to(\lambda,\ui)$ in the same way as for split and merges.

\begin{prop}
The algebra $\clS$ acts faithfully on 
\begin{eqnarray*}
\clSPol=\bigoplus_{\lambda\in \calC^\ell_d,\ui\in \frakS_\lambda\backslash\frakS_d\bfa} \bfk[[x_1-i_1,\ldots,x_d-i_d]]^{\frakS_{\lambda,\ui}}e(\lambda,\ui), 
\end{eqnarray*}
where $\frakS_{\lambda,\ui}$ is the stabilizer of $\ui$ in $\frakS_\lambda$. 
\end{prop}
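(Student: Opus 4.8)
The plan is to transport the faithful polynomial representation of $\lS$ from Proposition~\ref{prop-polrep-lS} through the completion, exactly as was done on the Hecke side in Proposition~\ref{prop_basis-lHecke-comp} and its proof. First I would note that $\clS$ was defined as $\End_{\clHeck}\bigl(\bigoplus_{\lambda}m_\lambda\clHeck\bigr)$ and, after introducing the refined idempotents $e(\lambda,\ui)$, as $\End_{\clHeck}\bigl(\bigoplus_{\lambda,\ui}e(\lambda,\ui)m_\lambda\clHeck\bigr)$. The completed polynomial space $\clSPol$ should be realized as a subrepresentation of this defining representation, via completed analogues $\widehat\Phi_{\lambda,\ui}$ of the inclusions $\Phi_\lambda$ from \eqref{inclhigherlevel}: concretely one sends $f\in\bfk[[x_1-i_1,\ldots,x_d-i_d]]^{\frakS_{\lambda,\ui}}$ to $\mathfrak{r}_\lambda\,\iota(\Phi_{\overline\lambda}(f))\,e(\ui)$, where now $f$ is a power series and $\Phi_{\overline\lambda}$ extends continuously because $m_{\overline\lambda}\p_{\overline\lambda}(-)n'_{\overline\lambda}$ is $\mathfrak{m}_\bfa$-adically continuous. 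The key point, matching Corollary~\ref{coro_exch-mu-Pol}, is that $m_{\overline\lambda}\p_{\overline\lambda}fn'_{\overline\lambda}=\q_{\overline\lambda}fn_d$ still holds over the completion, so $\widehat\Phi_{\lambda,\ui}$ is injective because $\clHeck$ remains free as a right $\bfk[[x_1-i_1,\ldots,x_d-i_d]]$-module by Proposition~\ref{prop_basis-lHecke-comp}(1).

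Next I would check that the images of the completed generators of $\clS$ — the split/merge elements $(\lambda,\ui)\to(\mu,\ui)$, the left/right crossings, multiplication by invariant power series, and the idempotents $e(\lambda,\ui)$ — preserve the subspace $\bigoplus_{\lambda,\ui}\operatorname{Im}\widehat\Phi_{\lambda,\ui}$ and act there by the formulas listed in the Proposition. This is the completed, $\ui$-decorated version of Lemmas~\ref{lem-polrep-lS-inside-split-merge} and~\ref{lem-polrep-lS-inside-crossings} together with Lemma~\ref{lem-generate_lS}: the split acts by multiplication by $\q'_{a,b}$, the merge by the Demazure operator $D_{a,b}$ (which is continuous and preserves completed symmetric-invariants), the left crossing by the identity on coefficients, and the right crossing by multiplication by $g=\prod_{i\in[\lambda^{(t)}_1]}(Q_t-x_i)$, this last coming from relation \eqref{l-Hecke-diag-1} just as in the affine case. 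The idempotent $e(\lambda,\ui)=\sum_{\uj\in\frakS_\lambda\ui}e(\uj)$ projects onto the block $\bfk[[x_1-i_1,\ldots]]^{\frakS_{\lambda,\ui}}e(\lambda,\ui)$ because the generalized $X$-eigenvalue decomposition \eqref{dec} of any finite-dimensional $\clHeck$-module refines the $e(\uc)$-decomposition, and on $\frakS_\lambda$-invariants the surviving symmetry is precisely the stabilizer $\frakS_{\lambda,\ui}$.

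Finally, faithfulness is proved by the same coarsening argument as in Proposition~\ref{prop-polrep_S} and~\ref{prop-polrep-lS}: if $\phi\in\Hom_{\clHeck}(e(\lambda,\ui)m_\lambda\clHeck,e(\mu,\uj)m_\mu\clHeck)$ acts by zero on $\clSPol$, compose on the right with the split $\lambda'\to\lambda$ to the coarsest composition $\lambda'$; then $\psi=\phi\circ(\text{split})$ still acts by zero, so $\psi(\widehat\Phi_{\lambda',\ui}(1))=\psi(m_{\lambda'}\p_{\lambda'}\mathfrak{r}_{\lambda'}e(\ui))=0$, hence $\psi(m_{\lambda'})\p_{\lambda'}\mathfrak{r}_{\lambda'}e(\ui)=0$; multiplying $\mathfrak{r}_{\lambda'}$ on the right by a suitable element of $\clHeck$ to reach $Q\,e^0(\lambda')e(\ui)$ with $Q\ne 0$, and using that $\clHeck$ is free over $\bfk[[x_1-i_1,\ldots,x_d-i_d]]$, we get $\psi(m_{\lambda'})e(\ui)=0$ for all $\ui$, so $\psi=0$, a contradiction. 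The main obstacle I anticipate is not any single deep step but the bookkeeping: making sure the continuous extensions of $\Phi_{\overline\lambda}$, $D_{a,b}$, and the crossing maps genuinely land in the completed $\frakS_{\lambda,\ui}$-invariants (rather than merely $\frakS_\lambda$-invariants), and that the identifications $\widehat\Phi_{\lambda,\ui}$ are compatible with the decomposition $e(\lambda)=\sum_{\ui}e(\lambda,\ui)$ coming from the inclusion $\lS\subset\clS$ — i.e.\ that completing the representation $\lSPol$ of $\lS$ from Proposition~\ref{prop-polrep-lS} reproduces exactly $\clSPol$ block by block.
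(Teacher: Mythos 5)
Your proposal is correct and follows essentially the same route as the paper: the paper's proof is a one-line reference to \cite[Prop.~5.18]{MS} (the completed $\ell=0$ case), and what you spell out — completing the inclusions $\Phi_\lambda$ of Section~\ref{subs:Pol_rep_lS} into $\widehat\Phi_{\lambda,\ui}$, verifying the generators of $\clS$ stabilize the image, and running the coarsening-to-$\lambda'$ faithfulness argument using freeness of $\clHeck$ over the completed polynomial ring from Proposition~\ref{prop_basis-lHecke-comp} — is exactly the adaptation that reference supplies. The bookkeeping point you flag at the end, namely that completing $\lSPol$ at $\frakm_\bfa$ produces the $\frakS_{\lambda,\ui}$-invariants block by block, is indeed the only delicate spot and is resolved just as in the $\ell=0$ case of \cite{MS}.
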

\begin{proof}
This can be proved as \cite[Prop.~5.18]{MS}.
\end{proof}

\subsection{Modified representation of $\lS$}
We now construct a modification of the representation of $\lS$ in $\lSPol$ which will be relevant later, see Remark~\ref{rk:modified}. 

Assume $\lambda\in \calC^\ell_d$. Let $\q'_\lambda$ be the polynomial such that $\q_\lambda \q'_\lambda=\q_d$. (In other words, we have $\q_\lambda=\prod_{1\leqslant i<j\leqslant d}(x_j-qx_i)$, where the product is taken only over $i$ and $j$ that are in different components of $\lambda$.) Note that this notation is a generalization of $\q'_{a,b}$ used above.

\begin{df}
Let $\lSPol'$ be equal to $\lSPol$ as a vector space, but  equipped with a different action of $\lS$. In this new action the element $x\in \Hom(\lambda, \mu)\subset\lS$ acts on $\lSPol'$ as 
$
(\q'_\mu)^{-1} x \q'_\lambda
$
on $\lSPol$.
\end{df}

A priori, the action of $\lS$ defined above is only well-defined on some localization of $\lSPol'$ (not on $\lSPol'$ itself). But it can be checked on generators (idempotents, polynomials, splits, merges, left and right crossings) that this action is also well-defined on $\lSPol'$. The following lemma describes this action.
\begin{lem}
\begin{enumerate}
\item The idemponents $e(\lambda)$, the  ($\frakS_\lambda$-symmetric) Laurent polynomials, and the left and right crossings in $\lS$ act on $\lSPol'$ in the same way as on $\lSPol$.

\item Let  $\mu$ be a split of $\la$. Then in case $\lambda=(a+b)$ and $\mu=(a,b)$, the split map $\la\rightarrow\mu$ acts by sending $fe(\lambda)\in P^{\frakS_\lambda}e(\lambda)$ to $fe(\mu)$, whereas the merge map acts by sending $fe(\mu)\in P^{\frakS_\mu}$ to $D_{a,b}(\q'_{a,b}f)e(\lambda)$  with $a+b=d$. In the general case split and merge act by the same formulae but in the variables from the two blocks of $\mu$ that form one block of $\lambda$. 
\end{enumerate}
\end{lem}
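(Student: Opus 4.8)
The plan is to prove the lemma by direct computation on generators, using the already-established faithful representation of $\lS$ on $\lSPol$ (Proposition~\ref{prop-polrep-lS}). Recall that the modified action is \emph{defined} by conjugation: for $x\in\Hom(\lambda,\mu)$, the operator on $\lSPol'$ is $(\q'_\mu)^{-1}x\,\q'_\lambda$, where $x$ on the right-hand side acts via the original representation. Since $\lS$ is generated by idempotents, $\frakS_\lambda$-symmetric Laurent polynomials, splits, merges, and left/right crossings (Lemma~\ref{lem-generate_lS}), it suffices to check two things for each type of generator: first that the conjugated operator in fact preserves the \emph{integral} subspace $\lSPol'=\lSPol$ (a priori it only makes sense on a localization), and second that it is given by the stated formula.

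First I would dispose of the easy cases. For an idempotent $e(\lambda)$: since $\q'_\lambda$ depends only on $\lambda$ and $e(\lambda)$ acts as a projector onto $Pe(\lambda)$, conjugation changes nothing, so $e(\lambda)$ acts as the same projector. For multiplication by a $\frakS_\lambda$-symmetric Laurent polynomial $g$, the source and target composition are both $\lambda$, so the conjugating factors are $\q'_\lambda$ on both sides and commute past multiplication by $g$; the action is unchanged. For a left crossing $\lambda\to\mu$: from the definitions $\overline\lambda=\overline\mu$ (a left crossing only moves a component between adjacent blocks across a red strand, keeping the underlying concatenation the same), hence $\q'_\lambda=\q'_\mu$, and again nothing changes; likewise for right crossings, where the extra factor $g=\prod_{i\in[\lambda^{(t)}_1]}(Q_t-x_i)$ from Proposition~\ref{prop-polrep-lS} is unaffected by conjugation by $\q'_\lambda=\q'_\mu$. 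This establishes part (1).

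The substantive content is part (2), the split and merge. By the reduction in the proof of Lemma~\ref{lem-polrep-lS-inside-split-merge} it is enough to treat $\lambda=(a+b)$, $\mu=(a,b)$, with the general case following by working inside the relevant pair of blocks. Here $\q'_\lambda=\q'_{(d)}$ is trivial (a single block contributes no factors $x_j-qx_i$ across blocks), while $\q'_\mu=\q'_{(a,b)}=\prod_{1\le i\le a<j\le d}(x_j-qx_i)$, which in the earlier notation is exactly $\q'_{a,b}$. For the split $\lambda\to\mu$, the original action (Proposition~\ref{prop-polrep_S}) sends $fe(\lambda)$ to $\q'_{a,b}fe(\mu)$; the modified action multiplies on the left by $(\q'_\mu)^{-1}=(\q'_{a,b})^{-1}$, giving $fe(\mu)$ as claimed, and this is manifestly integral. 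For the merge $\mu\to\lambda$, the original action sends $fe(\mu)$ to $D_{a,b}(f)e(\lambda)$; the modified action first multiplies by $\q'_\mu=\q'_{a,b}$ on the right (i.e., replaces the input $f$ by $\q'_{a,b}f$) and then, since $\q'_\lambda$ is trivial, applies nothing further, yielding $D_{a,b}(\q'_{a,b}f)e(\lambda)$.

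The one point requiring genuine care — and the main obstacle — is integrality of the merge: a priori $D_{a,b}(\q'_{a,b}f)$ need not lie in $P^{\frakS_\lambda}$ for arbitrary $f\in P^{\frakS_\mu}$, since the Demazure operator involves division by $x_r-x_{r+1}$. I would resolve this exactly as in the existing computations: $\q'_{a,b}$ is $\frakS_a\times\frakS_b$-symmetric, so $\q'_{a,b}f$ is $\frakS_\mu$-symmetric, and then the second part of the Lemma in Section~\ref{subs_Demazure} (that $D_{a,b}$ sends $\frakS_a\times\frakS_b$-symmetric polynomials to symmetric ones) guarantees $D_{a,b}(\q'_{a,b}f)$ is $\frakS_\lambda$-symmetric, hence a genuine element of $P^{\frakS_\lambda}e(\lambda)\subset\lSPol'$. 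For the general-position version one repeats this inside the pair of $\mu$-blocks making up a $\lambda$-block, with $\q'_{a,b}$ and $D_{a,b}$ taken in the corresponding variables, exactly as in Proposition~\ref{prop-polrep_S}. Having checked that each generator acts by the stated integral formula, well-definedness of the whole $\lSPol'$-action follows because these formulas are obtained from the well-defined localized action by restriction, completing the proof.
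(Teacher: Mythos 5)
Your proposal is correct and follows the same route as the paper, whose entire proof is the one line ``The statement follows directly from Proposition~\ref{prop-polrep-lS}'': you simply carry out that verification generator by generator, using the definition of the modified action as conjugation by $\q'_\lambda$, $\q'_\mu$ together with the factorization $\q'_\mu=\q'_\lambda\cdot\q'_{a,b}$ and the symmetrizing property of $D_{a,b}$ to see integrality of the merge. The only point stated a bit loosely is the reduction of the general merge to the two-block case, which tacitly uses that $\q'_\lambda$ is symmetric in the variables of the merged block and hence passes through $D_{a,b}$; this is routine and does not affect correctness.
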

\begin{proof}
The statement follows directly from Proposition~\ref{prop-polrep-lS}.
\end{proof}

The faithfulness of the representation $\lSPol$ implies the faithfulness of the representation $\lSPol'$. A modification $\clSPol'$ of the faithful representation $\clSPol$ of $\clS$ can be defined similarly.

\section{(Higher level) Quiver Schur algebras $\lA$}
\label{sec-QSchur}
\subsection{Quiver Schur algebras}

In this section we restrict the form of the quiver $\Gamma=(I,A)$. We assume that the quiver $\Gamma$ has no loops and each vertex of the quiver has exactly one incoming arrow and exactly one outgoing arrow. (This assumption means that each connected component of the quiver is either an oriented cycle of length $\geqslant 2$ or an infinite oriented chain.) Note that the quiver $\Gamma_\calF$ in Section~\ref{subs-isom_lHeck-tens-compl} always satisfies this assumption. We make this assumption here, because the quiver Schur algebra is defined in \cite{SW} only for type $A$, although the definition from \cite{SW} could easily be generalized, but this is not our focus here.

As above, we fix $\nu\in I^d$ and $\bfQ\in I^\ell$. We first recall the definition of the quiver Schur algebra $\lA$, introduced by the second author and Webster in \cite{SW}. 

For each $\lambda\in \calC^\ell_d$ and $\ui\in I^\nu$, let $\frakS_{\lambda,\ui}$ be the stabilizer of $\ui$ in $\frakS_\lambda$, and let $\calC^\ell_{\nu}$ the set of pairs $(\lambda,\ui)$ such that $\lambda\in \calC^\ell_d,\ui\in \frakS_\lambda\backslash I^\nu$.
Consider the following vector space
\begin{eqnarray}
\label{sPol}
\PolA&=&\bigoplus_{(\lambda,\ui)\in \calC^\ell_\nu} \bfk[y_1,\ldots,y_d]^{\frakS_{\lambda,\ui}}e(\lambda,\ui).
\end{eqnarray}

\begin{rk}
Note that if $\ui,\uj\in I^\nu$ are in the same $\frakS_\lambda$-orbit, and $w$ is an element of $\frakS_\lambda$ such that $w(\ui)=\uj$, then we have a canonical isomorphism 
$$
\bfk[y_1,\ldots,y_d]^{\frakS_{\lambda,\ui}}\simeq \bfk[y_1,\ldots,y_d]^{\frakS_{\lambda,\uj}}, \quad P(y_1,\ldots,y_d)\mapsto P(y_{w(1)},\ldots,y_{w(d)}).
$$ 
This shows that $\PolA$ is well-defined.
\end{rk}

The following was introduced in \cite{SW}.
\begin{df}
\label{def-QS}
The {\it quiver Schur algebra} $\lA$ 
is the subalgebra of $\End(\PolA)$ generated by the following endomorphisms.
\begin{itemize}
\item The {\it idempotents}: $e(\lambda,\ui)$ for $(\lambda,\ui)\in \calC^\ell_\nu$, \\
defined as the projection onto the summand $\bfk[y_1,\ldots,y_d]^{\frakS_{\lambda,\ui}}e(\lambda,\ui)$.
\item The {\it polynomials}: $Pe(\lambda,\ui)$ for any $(\lambda,\ui)\in \calC^\ell_\nu$ and $P\in \bfk[y_1,\ldots,y_d]^{\frakS_{\lambda,\ui}}$, \\
defined as multiplication by $P$ on the summand $\bfk[y_1,\ldots,y_d]^{\frakS_{\lambda,\ui}}e(\lambda,\ui)$ (and by zero on other summands).
\item The {\it split}: $(\lambda,\ui)\to (\mu,\ui)$ for any $(\lambda,\ui), (\mu,\ui)\in\calC^\ell_\nu$ (the $d$-tuple $\ui\in I^\nu$ is the same for both pairs) such that $\mu$ is a split of $\lambda$ in the component $\lambda^{(r)}$ at position $j$. It acts non-trivially only on the component $\bfk[y_1,\ldots,y_d]^{\frakS_{\lambda,\ui}}e(\lambda,\ui)$ and we have there (in the notation from Definition~\ref{def-multicomp})
$$
fe(\lambda,\ui)\mapsto fe(\mu,\ui).
$$
\item The {\it merge}: $(\mu,\ui)\to (\lambda,\ui)$ for any $(\lambda,\ui)$ and $(\mu,\ui)$, as above. It acts non-trivially only on the component $\bfk[y_1,\ldots,y_d]^{\frakS_{\lambda,\ui}}e(\mu,\ui)$. There it acts by 
$$
fe(\mu,\ui)\mapsto(\prod_{i\in I}D_{a_i,b_i})\left (\prod_{n\in [\mu^{(r)}_j],m\in[\mu^{(r)}_{j+1}]}(y_n-y_m)\right)fe(\lambda,\ui),
$$
where the Demazure operator $D_{a_i,b_i}$ is defined as in Section~\ref{subs_Demazure} with respect to the $a_i+b_i$ polynomial variables $y_r$ with indices $r\in[\mu^{(r)}_j]\cup[\mu^{(r)}_{j+1}]$  such that $i_r=i$ and the product is taken only by the indices $n,m$ such that we have $i_n\to i_m$. Hereby $a_i$ (resp. $b_i$) denotes the number of occurrence of $i$ in $\ui$ in the indices in $[\mu^{(r)}_j]$ (resp. $[\mu^{(r)}_{j+1}]$). 
\item The {\it left crossing}: $(\lambda,\ui)\to (\mu,\ui)$ for any $(\lambda,\ui), (\mu,\ui)\in\calC^\ell_\nu$ such that $\mu$ is a left crossing of $\lambda$, 
defined as $fe(\lambda,\ui)\mapsto fe(\mu,\ui)$.
\item The {\it right crossing}: $(\mu,\ui)\to (\lambda,\ui)$ for any $(\lambda,\ui), (\mu,\ui)\in\calC^\ell_\nu$ such that $\lambda$ is a right crossing of $\mu$, moving the last component of $\mu^{(r)}$ to the first of $\mu^{(r+1)}$, is defined as
$
fe(\mu,\ui)\mapsto (\prod_{n\in [\lambda^{(r+1)}_1],i_n=Q_{r+1}}y_n)fe(\lambda,\ui).
$
\end{itemize}
\end{df}

\begin{rk}
\label{rk:modified}
The definition of $\lA$ differs slightly from the original definition in \cite{SW}. The difference is that the multiplication by the Euler class is moved from the split to the merge and the Euler class is also reversed. The two  algebras are however isomorphic, as proved (with an explicit isomorphism) in \cite[Sec.~9.2-9.3]{MS} for $\ell=0$. The arguments directly generalize to arbitrary $\ell$. Passing to this modified quiver Schur algebra is necessary to identify the completion of the algebra $\lA$ with the completion of the algebra $\lS$ via identification of the polynomial representations. This approach does not work if we use the polynomial representation of $\lA$ considered in \cite{SW}. The modification $\lSPol'$ of $\lSPol$ was defined for the same reason. For a geometric interpretation of $\lA$ we refer to \cite{Tomasz}.
\end{rk}

It is possible to introduce a diagrammatic calculus for $\lA$ similarly to the diagrammatic calculus for $\lS$ (see \cite{SW} for more details). The only difference is that black strands in the diagrams for $\lA$ have labels in $\bbZ_{\geqslant 0} I$ instead of $\bbZ_{> 0}$ (here $\bbZ_{\geqslant 0} I$ is the set of formal $\bbZ_{\geqslant 0}$-linear combinations of elements of $I$). 

We draw the idempotent $e(\lambda,\ui)\in \lA$ by the same diagram as the idempotent $e(\lambda)\in \lS$, except that we replace each integer label $\lambda_r^{(t)}$ on a black strand by the label $\sum_{j\in [\lambda_r^{(t)}]}i_j\in \bbZ_{\geqslant 0} I$. We draw polynomials, splits, merges, left and right crossings in $\lA$ in the same way as for $\lS$.

Let $\clA$ be the completion of $\lA$ with respect to the ideal generated by the homogeneous polynomials of positive degrees. The definitions give rise to the following completed version of the faithful representation \eqref{sPol} of $\lA$.

\begin{lem}
The algebra $\clA$ has a faithful representation in 
$$
\cPolA=\bigoplus_{(\lambda,\ui)\in \calC^\ell_\nu}\bfk[[y_1,\ldots,y_d]]^{\frakS_{\lambda,\ui}}e(\lambda,\ui).
$$
\end{lem}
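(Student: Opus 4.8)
The plan is to mimic, essentially verbatim, the passage from the finite-dimensional faithful representation \eqref{sPol} of $\lA$ to the completed faithful representation on $\cPolA$, exactly as it was done for the quiver Hecke side (Lemma~\ref{lem-polrep_tenspr} and the Remark after the completion definition) and for $\lHeck$ (Proposition~\ref{prop_basis-lHecke-comp}\eqref{prop_rep-lHecke-comp}). Concretely, I would first record that $\PolA$, as described in \eqref{sPol}, carries the grading in which each variable $y_r$ has degree $2$, that the generators (idempotents, polynomials, splits, merges, left/right crossings) of $\lA$ act by homogeneous operators of well-defined degree, and that $\lA$ is therefore a graded algebra with $\mathfrak{m}_{\lA}$-adic topology, where $\mathfrak{m}_{\lA}$ is the two-sided ideal generated by the positive-degree homogeneous polynomials. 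The completion $\clA$ is the inverse limit $\plim \lA/\mathfrak{m}_{\lA}^j$.

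Next I would observe that the filtration $\PolA\supset \mathfrak{m}\cdot\PolA\supset \mathfrak{m}^2\cdot\PolA\supset\cdots$ of the faithful module (with $\mathfrak{m}=(y_1,\ldots,y_d)$) is compatible with the $\mathfrak{m}_{\lA}$-adic filtration of $\lA$ in the sense that $\mathfrak{m}_{\lA}^j\cdot\PolA\subseteq \mathfrak{m}^j\cdot\PolA$, so the action map $\lA\to\End(\PolA)$ passes to the completions and yields an action of $\clA=\plim\lA/\mathfrak{m}_{\lA}^j$ on $\cPolA=\plim \PolA/\mathfrak{m}^j\PolA=\bigoplus_{(\lambda,\ui)}\bfk[[y_1,\ldots,y_d]]^{\frakS_{\lambda,\ui}}e(\lambda,\ui)$. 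The description of the action on $\cPolA$ is then given by the same formulae as in Definition~\ref{def-QS}, now interpreted in power series rings; one only has to note that the Demazure operators $D_{a_i,b_i}$ and multiplication by polynomials extend continuously to $\bfk[[y_1,\ldots,y_d]]$, which is immediate since they are $\bfk[y_1,\ldots,y_d]^{\frakS}$-linear and degree-nonincreasing respectively degree-preserving.

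The remaining point — faithfulness — is the one requiring an argument rather than bookkeeping, but it is the standard completion argument: if $a\in\clA$ acts by zero on $\cPolA$, write $a=\lim a_j$ with $a_j\in\lA$; the graded pieces of $a$ with respect to the filtration must annihilate the corresponding graded pieces of $\cPolA$, hence annihilate $\PolA$; by faithfulness of the uncompleted representation \eqref{sPol} each graded component of $a$ is zero, so $a=0$. Equivalently, and perhaps cleaner to write: $\lA$ is a graded algebra whose completion is the $\mathfrak{m}_{\lA}$-adic one, $\PolA$ is a faithful graded $\lA$-module which is finitely generated over the (Noetherian, graded) centre, so the natural map $\clA\to\End_{\text{cont}}(\cPolA)$ is injective because $\lA\hookrightarrow\End(\PolA)$ stays injective after taking the associated graded and then the completion. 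I expect the only mild subtlety — and hence the "hard part" — to be checking the compatibility $\mathfrak{m}_{\lA}^j\PolA\subseteq\mathfrak{m}^j\PolA$ together with the fact that the topology on $\clA$ induced from $\End(\cPolA)$ agrees with its own $\mathfrak{m}_{\lA}$-adic topology; but both follow readily from the explicit generators-and-their-degrees description, so the lemma follows. Since this is entirely parallel to \cite[Sec.~5]{MS} and to the Remark preceding this lemma, I would simply write: \emph{This follows from the faithfulness of \eqref{sPol} exactly as in the proof of Proposition~\ref{prop_basis-lHecke-comp}, passing to completions along the $\mathfrak{m}$-adic filtration.}

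\begin{proof}
The algebra $\lA$ is graded, with $\deg y_r=2$ and the crossings, splits and merges homogeneous of the evident degrees, and its completion $\clA$ is the completion with respect to the two-sided ideal $\mathfrak{m}_{\lA}$ generated by the positive-degree homogeneous polynomials. The faithful representation \eqref{sPol} of $\lA$ on $\PolA$ is a graded representation, and one has $\mathfrak{m}_{\lA}^j\cdot\PolA\subseteq \mathfrak{m}^j\cdot\PolA$, where $\mathfrak{m}=(y_1,\ldots,y_d)\subset\bfk[y_1,\ldots,y_d]$. Hence the action map $\lA\to\End(\PolA)$ is continuous for the corresponding filtrations and passes to the completions, yielding an action of $\clA=\plim_j \lA/\mathfrak{m}_{\lA}^j$ on
$$
\cPolA=\plim_j \PolA/\mathfrak{m}^j\PolA=\bigoplus_{(\lambda,\ui)\in \calC^\ell_\nu}\bfk[[y_1,\ldots,y_d]]^{\frakS_{\lambda,\ui}}e(\lambda,\ui)
$$
by the same formulae as in Definition~\ref{def-QS}, now interpreted over power series rings; note that the Demazure operators and the multiplication operators appearing there extend continuously to $\bfk[[y_1,\ldots,y_d]]$ since they are $\bfk[y_1,\ldots,y_d]^{\frakS}$-linear, respectively degree-preserving. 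For faithfulness, suppose $a\in\clA$ acts by zero on $\cPolA$. Taking associated graded pieces with respect to the $\mathfrak{m}_{\lA}$-adic filtration, each homogeneous component of $a$ annihilates $\PolA$, hence vanishes by the faithfulness of \eqref{sPol}, so $a=0$. (This is entirely parallel to the proof of Proposition~\ref{prop_basis-lHecke-comp} and to \cite[Sec.~5]{MS}.)
\end{proof}
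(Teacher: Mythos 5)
The paper itself gives no proof of this lemma: it is stated right after the definition of $\clA$ with only the preamble "The definitions give rise to the following completed version of the faithful representation~\eqref{sPol} of $\lA$." So there is no official argument to match; your proposal is the one being examined on its own merits.

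The overall strategy — pass to the $\mathfrak{m}_{\lA}$-adic completion on the algebra side and the $\mathfrak{m}$-adic completion on the module side, then propagate faithfulness — is the right idea, and it parallels what the paper does explicitly for $\clHeck$ in Proposition~\ref{prop_basis-lHecke-comp}. However, the specific compatibility you assert, $\mathfrak{m}_{\lA}^j\cdot\PolA\subseteq\mathfrak{m}^j\cdot\PolA$, is false as stated. The reason is that the merge generators of $\lA$ (Definition~\ref{def-QS}) act via Demazure operators, which strictly \emph{lower} degree. For example, with $d=2$, $\ell=0$, $\ui=(i,i)$, take $\mu=(1,1)$, $\lambda=(2)$, and let $b$ denote the merge $(\mu,\ui)\to(\lambda,\ui)$ (which acts by $D_{1,1}=\partial_1$, since $i\not\to i$). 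Then $y_1e(\mu,\ui)$ is a positive-degree polynomial, so $a=b\cdot\bigl(y_1e(\mu,\ui)\bigr)$ lies in $\mathfrak{m}_{\lA}$; but $a\bigl(1\cdot e(\mu,\ui)\bigr)=D_{1,1}(y_1)\,e(\lambda,\ui)=e(\lambda,\ui)$ has degree $0$ and is not in $\mathfrak{m}\cdot\PolA$. So $\mathfrak{m}_{\lA}\cdot\PolA\not\subseteq\mathfrak{m}\cdot\PolA$. The same issue is already present for the quiver Hecke completion $\cR$ (where $\psi_r e(\ui)$ with $i_r=i_{r+1}$ has degree $-2$), so this is not a peculiarity of the Schur setting. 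Relatedly, since $\lA$ is not nonnegatively graded, the phrase "each homogeneous component of $a$ annihilates $\PolA$" for an element $a$ of the completion needs more justification than a one-line appeal to associated graded.

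Both points are repairable, and you should say how. The degree drop caused by merges (or by the $\psi$'s on the KLR side) is uniformly bounded: in rank $d$ the worst possible drop is $2\ell(w_d)=d(d-1)$, since a longest Demazure $\partial_{w_d}$ has degree $-2\ell(w_d)$ and Demazure operators cannot be stacked to lower degree indefinitely (e.g.\ $\partial_r^2=0$). Hence the correct compatibility is an inclusion with a fixed offset, $\mathfrak{m}_{\lA}^j\cdot\PolA\subseteq\mathfrak{m}^{\,j-C}\cdot\PolA$ for a constant $C=C(d)$ and all $j\geq C$, and this is exactly what is needed for the action map $\lA\to\End(\PolA)$ to be continuous and to pass to the completions. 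For faithfulness, the cleaner route, and the one most consistent with the rest of the paper, is the one taken in Proposition~\ref{prop_basis-lHecke-comp}: exhibit a topological basis of $\clA$ of the form $\{b^{w}\cdot(\text{power series})\}$ (as in Corollary~\ref{Corbasis} before completion), show it acts by linearly independent operators on $\cPolA$ using that each $b^{w}$ acts by a lower-triangular rational-function matrix with nonzero leading coefficient, and conclude. Your associated-graded sketch can be made to work, but as written it silently assumes a nonnegative grading; either supply the boundedness-of-degree-drop argument or switch to the topological-basis argument.
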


\subsection{The isomorphisms $\clS\simeq \clA$} 

Fix $q\in\bfk$ such that $q\not\in\{0,1\}$. 
Fix an $\ell$-tuple $\bfQ=(Q_1,\ldots,Q_\ell)\subset (\bfk^*)^\ell$.
As in Section~\ref{subs-isom_lHeck-tens-compl}, we consider the quiver $\Gamma_\calF$ with the vertex set
$$
\calF=\{q^nQ_r\mid n\in\bbZ,r\in[1;\ell]\}\subset\bfk^*.
$$
and consider the algebra $\lA$ defined with respect to this quiver. We take $\nu=\bfa$.

We know, that $\clA$ acts  faithfully on $\cPolA$ and $\clS$ acts faithfully on $\clSPol'$. On the other hand, there is an obvious isomorphism of algebras
\begin{eqnarray*}
\cPolA\simeq \clSPol',&& P(-i_1y_1,\ldots,-i_dy_d)e(\lambda,\ui)\mapsto P(x_1-i_1,\ldots,x_d-i_d)e(\lambda,\ui).
\end{eqnarray*}

To prove that the algebras $\clA$ and $\clS$ are isomorphic, it is enough to identify their actions on $\cPolA\simeq \clSPol'$. As a result obtain such an isomorphism:
\begin{thm}
\label{thm-isom-qS-QS-comp}
There is an isomorphism of algebras $\clA\simeq \clS$.
\end{thm}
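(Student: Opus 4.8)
The strategy mirrors the proof of Theorem~\ref{thm-isom-lHeck-tens-comp}: both algebras act faithfully on the same underlying vector space, so it suffices to write down a candidate correspondence on generators and check that the induced operators agree on that common space. Concretely, $\clA$ acts faithfully on $\cPolA$ (Lemma in Section~\ref{subs-isom_lHeck-tens-compl}) and $\clS$ acts faithfully on $\clSPol'$ (the modified representation), and we have the algebra isomorphism $\cPolA\simeq\clSPol'$, $P(-i_1y_1,\ldots,-i_dy_d)e(\lambda,\ui)\mapsto P(x_1-i_1,\ldots,x_d-i_d)e(\lambda,\ui)$, identifying $y_r$ with $-i_r^{-1}(x_r-i_r)$ on the summand indexed by $\ui$. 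So I would define a map $\clA\to\clS$ on the generating morphisms of $\clA$ listed in Definition~\ref{def-QS} --- idempotents, polynomials, splits, merges, left crossings, right crossings --- sending each to the evident generator of $\clS$ with the same name (twisting polynomials by the identification above), and then verify that the operator on $\cPolA\simeq\clSPol'$ induced by the $\clA$-generator coincides with the operator induced by its image in $\clS$.

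\textbf{Steps in order.} First, record the identification of variables and the resulting dictionary between $\bfk[[y_1,\ldots,y_d]]^{\frakS_{\lambda,\ui}}e(\lambda,\ui)$ and $\bfk[[x_1-i_1,\ldots,x_d-i_d]]^{\frakS_{\lambda,\ui}}e(\lambda,\ui)$; note that the Demazure operators $D_{a,b}$ in the $y$-variables and the $x$-variables correspond up to the monomial rescaling, since $\partial_r$ is insensitive to an invertible common rescaling of $y_r,y_{r+1}$ when $i_r=i_{r+1}$. Second, check idempotents and polynomials --- immediate from the definition of the identification. Third, check the split and left-crossing generators: on $\clSPol'$ these act simply by $fe(\lambda)\mapsto fe(\mu)$ (by the Lemma describing $\lSPol'$, the Euler class has been moved off the split), which matches exactly the $\clA$-formulas $fe(\lambda,\ui)\mapsto fe(\mu,\ui)$ after restricting to generalized eigenspaces. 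Fourth, check the merge: on $\clSPol'$ the merge sends $fe(\mu)$ to $D_{a,b}(\q'_{a,b}f)e(\lambda)$, and one must match this with $(\prod_i D_{a_i,b_i})\big(\prod_{i_n\to i_m}(y_n-y_m)\big)fe(\lambda,\ui)$; the point is that $\q'_{a,b}=\prod_{i\leqslant a<j}(x_j-qx_i)$ becomes, after substituting $x_r=i_r(1-y_r)$ (schematically) and localizing at the maximal ideal, a unit times $\prod_{q i_n=i_m,\,n\le a<m}(y_n-y_m)$, i.e. exactly the Euler-class factor indexed by arrows $i_n\to i_m$ of $\Gamma_\calF$; the remaining factors of $\q'_{a,b}$ with $q i_n\ne i_m$ are units and pass through the Demazure operator, while $D_{a,b}$ factors as a product of the $D_{a_i,b_i}$'s over the distinct residues appearing, since the full symmetrization splits into symmetrizations within each residue class. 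Fifth, check the right crossing: on $\clSPol'$ it sends $fe(\mu)$ to $\big(\prod_{i\in[\lambda^{(t)}_1]}(Q_t-x_i)\big)fe(\lambda)$ (from Proposition~\ref{prop-polrep-lS}), and $Q_t-x_i$ becomes, in the completion, a unit times $y_i$ precisely when $i_i=Q_t$ and a unit otherwise, matching $\prod_{i_n=Q_{r+1}}y_n$ up to the usual rescaling. Finally, having matched all six families of operators, conclude that the generator-level correspondence is a well-defined algebra homomorphism (any relation in $\clA$ holds in $\clS$ because it holds as an operator identity on the faithful module); surjectivity is clear since all generators of $\clS$ listed in Lemma~\ref{lem-generate_lS} are hit; injectivity follows because both act faithfully on the identified space and the operators agree, so the kernel acts by zero on a faithful module.

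\textbf{Main obstacle.} The delicate point is the merge computation: reconciling the single ``global'' Demazure operator $D_{a,b}$ together with the factor $\q'_{a,b}$ appearing in the $\lS$-side with the ``residue-wise'' product $\prod_i D_{a_i,b_i}$ together with the arrow-indexed Euler class on the $\lA$-side. One must argue carefully that, after passing to the completion at the relevant maximal ideal, the factors of $\q'_{a,b}$ split into (i) those whose linear term vanishes along the relevant diagonal --- exactly the pairs $(n,m)$ with $i_m=q i_n$, i.e. an arrow $i_n\to i_m$ in $\Gamma_\calF$ --- which survive as the Euler class, and (ii) the rest, which are units and commute past the Demazure operators; and simultaneously that $D_{a,b}$ restricted to the subspace of functions already symmetric under $\frakS_{\lambda,\ui}$ equals (up to the unit coming from the change of variables) the composite of the partial symmetrizations $D_{a_i,b_i}$. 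This is essentially the same verification as in \cite[Sec.~9.2--9.3]{MS} for $\ell=0$, lifted to arbitrary $\ell$; the red strands (left/right crossings) are genuinely new but, as the computations in Steps three and five show, they contribute only the straightforward substitution $Q_t-x_i\leftrightarrow y_i$, so no essential new difficulty arises there. I would therefore present the merge case in full and treat the others as routine, remarking that the $\ell=0$ arguments of \cite{MS} carry over verbatim for the splits, merges, and black crossings, while the red crossings are handled by the explicit formulas above.
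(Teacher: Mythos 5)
Your proposal is correct and follows essentially the same strategy as the paper: identify $\cPolA$ with $\clSPol'$ via $y_r\leftrightarrow -i_r^{-1}(x_r-i_r)$, then match the generators' actions (idempotents and power series trivially, splits/merges by reduction to the $\ell=0$ case already handled in \cite[Sec.~9]{MS}, left crossings as a pure change of idempotent, and right crossings by the observation that $x_n-Q_{t+1}$ is a unit times $y_n$ in the completion when $i_n=Q_{t+1}$ and simply a unit otherwise). The only cosmetic difference is that you indicate you would re-prove the merge calculation in detail, whereas the paper simply invokes \cite{MS} since splits and merges involve only black strands; both are legitimate, and your proposed expansion could serve as a useful self-contained check.
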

\begin{proof}
It is clear that the idempotents $e(\lambda,\ui)$ act on the faithful representation in the same way. Obviously, the power series in $\clA$ yield the same operators on the faithful representation as the power series in $\clS$. It remains to match splits, merges and left/right crossings.

Since splits and merges only use black strands, it is enough to treat the case $\ell=0$. This is already done in \cite[Sec.~9]{MS}. It is also easy to see that the left crossings in $\clA$ and $\clS$ act in the same way on the polynomial representations. Indeed, both of them just change the idempotent without changing the power series. 

Let now $\lambda$ be a right crossing of $\mu$, moving the last component of $\mu^{(t)}$ to the first component of $\mu^{(t+1)}$, and fix $\ui$. We compare the actions of the right crossings $(\mu,\ui)\to (\lambda,\ui)$ in $\clA$ and $\clS$.
The right crossing in $\clA$ acts by
$$
P(y_1,\ldots,y_d)e(\mu,\ui)\mapsto\left (\prod_{n\in[\lambda^{(t+1)}_1],i_n=Q_{r+1}}y_n\right)P(y_1,\ldots,y_d)e(\lambda,\ui).
$$
The right crossing in $\clS$ acts by
$$
P(x_1,\ldots,x_d)e(\mu,\ui)\mapsto \left(\prod_{n\in[\lambda^{(t+1)}_1]}(x_n-Q_{t+1})\right)P(x_1,\ldots,x_d)e(\lambda,\ui).
$$
Then it is clear that these operators can be expressed in terms of each other because we can divide by $(x_n-Q_{t+1})$ if $i_n\ne Q_{t+1}$. This proves the theorem.
\end{proof}

\section{Cyclotomic quotients and the isomorphism theorem}
\label{sec-cycl-quot}
We finish by establishing a higher level version of the (cyclotomic) Brundan-Kleshchev-Rouquier isomorphism. 
As above we fix $\bfQ=(Q_1,\ldots,Q_\ell)\in(\bfk^*)^\ell$ and $q\in \bfk^*$, $q\ne 1$ and consider the quiver $\Gamma_\calF$ as in section~\ref{subs-isom_lHeck-tens-compl}. We assume that all KLR algebras and tensor product algebras in this section are defined with respect to the quiver $\Gamma_\calF$. We take $\nu=\bfa$.

\subsection{Cyclotomic $\ell$-Hecke algebras and tensor product algebras}
\label{subs-cycl-lHeck}

\begin{df}
\label{defcylHeck}
The \emph{cyclotomic $\ell$-Hecke algebra} $\cylHeck$ is the quotient of the algebra $\lHeck$ by the ideal generated by the idempotents $e(\bfc)$ such that $\bfc$ is of the form $\bfc=(0,\ldots)$. In other words, we kill all diagrams that have a piece of a black strand on the left of all red strands. 
\end{df}

\begin{lem}
\label{lem-eigenv_H2}
Let $X_1$, $X_2$ and $T$ be three endomorphisms of a vector space $V$, satisfying the relations of $H_2(q)$, i.e.,
$$
\begin{array}{lclcrcl}
X_1T&=&TX_2-(q-1)X_2,&&
(T-q)(T+1)&=&0,\\
X_2T&=&TX_1+(q-1)X_2,&&
X_1X_2&=&X_2X_1.
\end{array}
$$
(We do not assume that $X_1$ and $X_2$ are invertible.)
Let $\lambda_1,\lambda_2\in \bfk^*$ be such that $\lambda_1\ne q^{\pm 1}\lambda_2$. 
Then if $V$ has a simultaneous eigenvector for $X_1$, $X_2$ with eigenvalues $\lambda_1$, $\lambda_2$, then $V$ has also  a simultaneous eigenvector with eigenvalues $\lambda_2$, $\lambda_1$ respectively.
\end{lem}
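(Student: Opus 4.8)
The plan is to carry out everything inside the subspace $W=\bfk v+\bfk Tv$, where $v$ is the given simultaneous eigenvector, and to produce the desired new eigenvector as an explicit combination of $v$ and $Tv$. The first step is to observe that $W$ is stable under $X_1$, $X_2$ and $T$. Stability under $T$ is immediate from $T^2=(q-1)T+q$, and stability under $X_1$, $X_2$ follows by applying the two mixed relations to $v$: using $X_iv=\lambda_iv$ one gets
\[
X_1(Tv)=\lambda_2\,Tv-(q-1)\lambda_2\,v,\qquad X_2(Tv)=\lambda_1\,Tv+(q-1)\lambda_2\,v .
\]

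The second step is to reduce to the case $\dim_\bfk W=2$. If $\lambda_1=\lambda_2$ there is nothing to prove, so assume $\lambda_1\ne\lambda_2$. Suppose $Tv=\alpha v$ for a scalar $\alpha$; then $(\alpha-q)(\alpha+1)=0$, and substituting into the formula for $X_1(Tv)$ above yields $\alpha\lambda_1=\lambda_2(\alpha-q+1)$, hence $\lambda_1=q^{-1}\lambda_2$ when $\alpha=q$ and $\lambda_1=q\lambda_2$ when $\alpha=-1$. Both are forbidden by the hypothesis $\lambda_1\ne q^{\pm1}\lambda_2$; therefore $v$ and $Tv$ are linearly independent and $\dim_\bfk W=2$.

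The third and final step is the two-dimensional linear algebra. In the basis $(v,Tv)$ the operator $X_1|_W$ is the upper triangular matrix $\left(\begin{smallmatrix}\lambda_1&-(q-1)\lambda_2\\0&\lambda_2\end{smallmatrix}\right)$, so, since $\lambda_1\ne\lambda_2$, it has a genuine $\lambda_2$-eigenvector; an explicit one is
\[
w=(q-1)\lambda_2\,v+(\lambda_1-\lambda_2)\,Tv ,
\]
which is nonzero because $\lambda_1\ne\lambda_2$ and $v,Tv$ are independent. A direct computation using $X_2v=\lambda_2v$ and the formula for $X_2(Tv)$ above gives $X_2w=\lambda_1w$, while $X_1w=\lambda_2w$ holds by construction. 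Thus $w$ is a simultaneous eigenvector of $X_1,X_2$ with eigenvalues $\lambda_2,\lambda_1$, as required.

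I do not anticipate a real obstacle: once $W$ is seen to be invariant under all three operators, everything is explicit $2\times 2$ (or $1\times 1$) linear algebra, and no invertibility of $X_1$, $X_2$ is needed. The only place that genuinely uses the hypothesis $\lambda_1\ne q^{\pm1}\lambda_2$ — and hence the only point requiring a little care — is the exclusion of the degenerate one-dimensional case; everything else is a short verification of the relations on a basis.
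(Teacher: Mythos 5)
Your proof is correct and is essentially the paper's argument: the paper also writes down exactly the vector $w=(q-1)\lambda_2 v+(\lambda_1-\lambda_2)Tv$, verifies it is a simultaneous eigenvector by direct computation, and shows $w\ne 0$ by the same contradiction (if $Tv$ were proportional to $v$, then $Tv=qv$ or $Tv=-v$, forcing $\lambda_1=q^{\pm1}\lambda_2$). The framing via the invariant two-dimensional subspace and the explicit upper-triangular matrix is a tidy way to package it, but the key vector and the key contradiction are identical.
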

\begin{proof}
Let $v\in V$, $v\not=0$ such that $X_1(v)=\lambda_1v$ and $X_2(v)=\lambda_2v$. Consider the vector
$
w=(q-1)\lambda_2v+(\lambda_1-\lambda_2)T(v).
$
It follows directly from the relations that  $X_1(w)=\lambda_2w$ and $X_1(w)=\lambda_2w$. Note that $w=0$ implies that $T(v)$ is proportional to $v$. In this case we have either $T(v)=-v$ or $T(v)=qv$ and then $\lambda_2$ must equal $q\lambda_1$ or $q^{-1}\lambda_1$. But this is impossible by the assumptions on $\lambda_1$ and $\lambda_2$.
\end{proof}

\begin{coro}
\label{coro-eigenv_lHeck}
Let $V$ be a finite dimensional representation of $\cylHeck$. Then for each $r\in\{1,2,\ldots,d\}$, all eigenvalues of the action of $x_r$ on $V$ are in $\calF$. 
\end{coro}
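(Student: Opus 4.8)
The plan is to reduce the statement about the cyclotomic quotient $\cylHeck$ to the two-variable statement of Lemma~\ref{lem-eigenv_H2}, combined with a boundary argument using the defining relation $T_r^2e(\uc)=(X_{r+1}-Q_{\sum_{j=1}^r c_j})e(\uc)$ when a black strand sits immediately to the right of a red one. First I would fix a finite-dimensional $\cylHeck$-module $V$ and an index $r\in\{1,\ldots,d\}$, and let $\lambda\in\bfk$ be an eigenvalue of the operator $x_r$ on $V$. Recall $x_r=\sum_{\uc}X_{r_\uc}e(\uc)$, so picking a simultaneous generalized eigenspace decomposition for the commuting family $x_1,\ldots,x_d$ (Lemma in Section~\ref{subs-basis_lHeck}), I may choose a genuine common eigenvector $v\neq 0$ with $x_j(v)=\mu_j v$ for all $j$ and with $\mu_r=\lambda$; moreover $v=e(\uc)v$ for a single $\uc\in J^{\ell,d}$ since the $e(\uc)$ are orthogonal idempotents commuting with all $x_j$. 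The goal is then to show $\lambda\in\calF$.

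The key step is a descending induction on the position $r_\uc$ of the $r$-th black strand inside $\uc$, i.e.\ on how many red strands lie to the left of that black strand. If no red strand lies to the left of the $r$-th black strand, then $e(\uc)$ is of the form $(0,\ldots)$, so $e(\uc)=0$ in $\cylHeck$, forcing $v=0$, a contradiction; hence this base case cannot occur. Otherwise there is a red strand immediately to the left of the $r$-th black strand at some position, say strand $s$ is red and strand $s+1$ carries the $r$-th black label, with the red label being $Q_k$ where $k=\sum_{j=1}^{s}c_j$. I would apply $T_s$ to $v$. Using $T_s^2 e(\uc)=(X_{s+1}-Q_k)e(\uc)$ together with the commutation relations $T_s X_i=X_i T_s$ for $|s-i|>1$, $T_sX_{s+1}-X_sT_s=0$ (since $c_s=1$), and $T_sX_s-X_{s+1}T_s=0$ (same reason), one checks that $w:=T_s v$ lies in $e(s_s(\uc))V$ and satisfies: for the positions other than $s,s+1$ it is still a common eigenvector of the $X_i$ with the same eigenvalues, while on the swapped pair $x_r$ now reads off $X_s$, and $X_s w=X_s T_s v=T_s X_{s+1}v=\lambda w$ (using $c_s=1$ so $X_s$ acts as $0$ when hitting $e(\uc)$, but acts via the relation on the new idempotent). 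More precisely $T_s$ intertwines the $X$-action across the colour swap, so $w$ is again a $\lambda$-eigenvector for the operator reading off the $r$-th black label, now located at position $s$, which has strictly fewer red strands to its left. The one thing to rule out is $w=0$: if $T_s v=0$ then $0=T_s^2 v=(X_{s+1}-Q_k)v=(\lambda-Q_k)v$ after identifying $X_{s+1}v$ on $e(\uc)$ with $x_r v=\lambda v$, so $\lambda=Q_k\in\calF$ and we are done immediately. Thus either we conclude directly, or we reduce the position by one and continue; since the position is a positive integer bounded above, the induction terminates, and the only way to terminate without hitting $\lambda=Q_k$ would be to reach the forbidden base case, which is impossible.

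Wait — this sketch as written reaches the contradiction too easily; the actual content is that after pushing the black strand all the way to the left past red strands we must have produced some intermediate $\lambda=Q_k$. Let me restate the structure cleanly: by descending induction on $r_\uc$ I show that $\lambda\in\{Q_1,\ldots,Q_\ell,q^{\pm1}Q_1,\ldots\}$, i.e.\ $\lambda\in\calF$. In the inductive step, if the strand immediately left of the $r$-th black strand is red, the computation above gives $\lambda=Q_k$ directly (from $T_sv=0$) or a new eigenvector at a smaller position (from $T_sv\neq0$), so I recurse. If instead the strand immediately to the left is black — this happens when there are other black strands between the red region and our strand — I would instead invoke Lemma~\ref{lem-eigenv_H2} applied to $X_r,X_{r+1}$ (in the original numbering, the two adjacent black strands) and $T$ the corresponding $T_s$: either $\mu_s=q^{\pm1}\lambda$ and then by induction on the strand to the left (which has eigenvalue $\mu_s$) we get $\mu_s\in\calF$ hence $\lambda=q^{\mp1}\mu_s\in\calF$, or $\mu_s\neq q^{\pm1}\lambda$ and Lemma~\ref{lem-eigenv_H2} produces a common eigenvector with the values $\lambda$ and $\mu_s$ swapped, again moving our $\lambda$-eigenvalue one position to the left without changing whether we are still inside the black block. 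In this way every move decreases $r_\uc$ by one; the process cannot reach the all-black-prefix idempotent (it is zero in $\cylHeck$), so it must terminate via the red-crossing step, yielding $\lambda=q^{m}Q_k$ for some $m\in\bbZ$ and $k\in[1;\ell]$, i.e.\ $\lambda\in\calF$.

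The main obstacle I anticipate is bookkeeping in the inductive step: making sure that applying $T_s$ genuinely produces a \emph{simultaneous} eigenvector for all the $x_j$, not just for $x_r$, and correctly tracking the eigenvalue on the newly-swapped pair of strands when colours change (the relation $T_s^2e(\uc)=(X_{s+1}-Q_k)e(\uc)$ means $T_s$ is not invertible, so one cannot simply conjugate; one must argue on the explicit $2$-dimensional relations). The black-black case needs Lemma~\ref{lem-eigenv_H2} with the subtlety that $X_i$ need not be invertible, but the lemma is already stated without that hypothesis, so it applies verbatim. Finally one must confirm that a black strand starting at position $r_\uc$ with no red to its left is exactly the situation $\uc=(0,\ldots)$, which kills $e(\uc)$ in $\cylHeck$ by Definition~\ref{defcylHeck}; this is immediate. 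Assembling these pieces gives the corollary.
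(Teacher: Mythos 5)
Your argument is correct and is essentially the paper's own proof: the same case split on whether the strand immediately to the left of the relevant black strand is red (using $T_s^2e(\uc)=(X_{s+1}-Q_k)e(\uc)$ to either extract $\lambda=Q_k$ or move the eigenvector one position left) or black (using Lemma~\ref{lem-eigenv_H2} to either get $\lambda=q^{\pm1}\mu$ or swap the eigenvalues), terminating because $e(\uc)=0$ in $\cylHeck$ when $c_1=0$. The only difference is presentational — you run a descending induction on the position $r_\uc$ where the paper takes a minimal such position and derives a contradiction — which is logically the same argument.
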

\begin{proof}

Assume that some $x_r$ has an eigenvalue $\lambda\notin \calF$. Since $x_r$ is invertible, we have $\lambda\ne 0$. Then there exists an idempotent $e(\uc)\in J^{\ell,d}$ such that $\lambda$ is an eigenvalue of $x_re(\uc)$. (This simply means that $e(\uc)$ does not annihilate the $\lambda$-eigenspace of $x_r$.) Let $t$ be such that $X_te(\uc)=x_re(\uc)$ in $\lHeck$ and set $k=\sum_{i=1}^tc_i$ (i.e., $k$ is the number of red strands to the left of the dot in the diagram of $x_re(\uc)$). We assume that the index $t$ as above is as minimal as possible (for all possible $r$ and $\bfc$). We clearly have $t>1$, because $X_1=0$ in $\cylHeck$ and $\lambda\ne 0$.  

Assume $c_{t-1}=1$. Let $v$ be an eigenvector of $x_re(\uc)$ with eigenvalue $\lambda$, (in particular $e(\uc)(v)=v$). Then $T_{t-1}(v)\ne 0$. Indeed, we have $T^2_{t-1}e(\uc)=(X_t-Q_k)e(\ui)$. This implies
$
T_{t-1}^2(v)=T_{t-1}^2e(\uc)(v)=(X_t-Q_k)e(\uc)(v)=(\lambda-Q_k)v\ne 0.
$
Moreover, the vector $T_t(v)$ is clearly an eigenvector of $x_re(s_{t-1}(\uc))=X_{t-1}e(s_{t-1}(\uc))$ corresponding to the eigenvalue $\lambda$. This contradicts the minimality of $t$.

Assume $c_{t-1}=0$. Then we can find a vector $v\in V$ such that $v$ is a common eigenvector for $x_{r-1}$ and $x_r$ with $e(\ui)(v)=v$ and $x_r(v)=\lambda v$. Let $\mu$ be such that $x_{r-1}(v)=\mu v$. We have $\mu\ne 0$ because $x_{r-1}$ is invertible. Moreover, the eigenvalue $\mu$ must be in $\calF$ (else, this contradicts the minimality of $t$). Then we can apply Lemma~\ref{lem-eigenv_H2} to $x_{r-1}e(\uc)$, $x_re(\uc)$ and $T_{t-1}e(\uc)$. This shows that $\lambda$ is an eigenvalue of $x_{r-1}e(\uc)=X_{t-1}e(\uc)$. This contradicts the minimality of $t$.
\end{proof}

In $\cylHeck$, we have the idempotents $e(\ui)$ such that $1=\sum_{\ui\in\calF^d}e(\ui)$ and for each index $r$, the element $(x_r-i_r)e(\ui)$ is nilpotent (see Corollary~\ref{coro-eigenv_lHeck}). Moreover, for each $\bfa\in\calF^d$ we have a central idempotent $1_\bfa=\sum_{\bfa\in \frakS_d \bfa}e(\ui)$. Set $\cylHecka=1_\bfa\cylHeck$. Then there is the following direct sum decomposition of algebras $\cylHeck=\bigoplus_{\bfa\in\calF^d}\cylHecka$.

\begin{df}
The \emph{cyclotomic tensor product algebra} $\cylR$ is the quotient of the algebra $\lR$ by the ideal generated by the idempotents $e(\ui)$ such that $\ui\in \Icolnu$ is such that $c(i_1)=0$. In other words, we kill all diagrams that have a piece of a black strand on the left of all red strands. 
\end{df}

It is clear from the definitions that the algebra $\cylHecka$ is a quotient of $\clHeck$ and the algebra $\cylR$ is a quotient of $\clR$. We obtain 

\begin{thm}
\label{prop-isom-lHeck-tens-cycl}
There is an isomorphism of algebras $\cylHecka\simeq \cylR$.
\end{thm}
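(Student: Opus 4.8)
The plan is to deduce this cyclotomic isomorphism from the affine isomorphism established in Theorem~\ref{thm-isom-lHeck-tens-comp}, exactly as one usually obtains a Brundan--Kleshchev--Rouquier type statement for cyclotomic quotients from the affine version. First I would fix $\bfa\in\calF^d$ and recall that by Proposition~\ref{lem-cen-Hdl} the central character $\chi_\bfa$ makes sense for $\lHeck$, so the completion $\clHeck$ is defined. On the KLR side, $\clR$ is the completion of $\lR$ at the maximal ideal $\mathfrak m$ of homogeneous polynomials of positive degree. Theorem~\ref{thm-isom-lHeck-tens-comp} gives an isomorphism $\clHeck\simeq\clR$. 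The key point is that both cyclotomic quotients $\cylHecka$ and $\cylR$ can be realized as quotients of these completions: $\cylR$ is obtained from $\lR$ by killing the idempotents $e(\ui)$ with $c(i_1)=0$, and this makes equal sense after completion, giving that $\cylR$ is a quotient of $\clR$ (this is already observed in the excerpt right before the theorem statement); similarly, $\cylHecka=1_\bfa\cylHeck$ is a quotient of $\clHeck$, using Corollary~\ref{coro-eigenv_lHeck} which guarantees that on any finite dimensional $\cylHeck$-module the eigenvalues of $x_r$ lie in $\calF$, so that $\cylHecka$ is genuinely a block cut out by the central idempotent $1_\bfa$ and is annihilated by a power of $\frakm_\bfa$, hence factors through $\clHeck$.

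The core of the argument is then to check that the isomorphism $\clR\simeq\clHeck$ of Theorem~\ref{thm-isom-lHeck-tens-comp} carries the defining ideal of $\cylR$ onto the defining ideal of $\cylHecka$. Concretely, the two-sided ideal of $\clR$ generated by the $e(\ui)$ with $c(i_1)=0$ must match the two-sided ideal of $\clHeck$ generated by the $e(\uc)$ with $\uc=(0,\ldots)$, after intersecting with the block $1_\bfa$. This is immediate from the explicit description of the isomorphism: it sends $e(\ui)\in\clR$ to $e(\ui)\in\clHeck$ under the bijection $\Icolnu\simeq J^{\ell,d}\times\frakS_d\bfa$ of Section~\ref{subs-isom_lHeck-tens-compl}, and this bijection identifies the condition "$c(i_1)=0$" on the KLR side with the condition "$c_1=0$" (i.e.\ $\uc$ begins with a black strand) on the Hecke side, precisely because the colour sequence of $\ui$ is $c(\ui)=\uc$. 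Thus the generators of the one ideal map bijectively to the generators of the other, so the isomorphism descends to the quotients. Since the restriction of $\chi_\bfa$ picks out exactly the block $\cylHecka$, and the corresponding block on the KLR side (the summand of $\cylR$ cut out by the $e(\ui)$ with black labels forming a permutation of $\bfnu=\bfa$) is $\cylR$ itself (recall we fixed $\nu=\bfa$), we obtain $\cylHecka\simeq\cylR$.

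I expect the only genuine subtlety to be a completeness/closure issue: the ideal generated by the offending idempotents inside the completed algebra $\clHeck$ is already closed (being generated by idempotents, its quotient is the corresponding idempotent truncation, which is finite dimensional over the completed centre and hence complete), so the quotient of the completion agrees with the cyclotomic quotient of the uncompleted algebra; one should spell out why $\cylHecka$ is finite dimensional so that no further completion is needed, and this is exactly what Corollary~\ref{coro-eigenv_lHeck} and the standard finite-dimensionality of cyclotomic Hecke algebras provide. An alternative, perhaps cleaner, route is to avoid completions entirely: the isomorphism of Theorem~\ref{thm-isom-lHeck-tens-comp} restricts to an isomorphism $1_\bfa\lHeck/(\text{black-on-the-left})\simeq\cylR$ once one checks that killing the black-on-the-left idempotents already forces the $\calF^d$-grading on eigenvalues, but this essentially re-proves Corollary~\ref{coro-eigenv_lHeck}, so I would simply cite it. In either presentation the proof is short: the content is entirely in Theorem~\ref{thm-isom-lHeck-tens-comp} and Corollary~\ref{coro-eigenv_lHeck}, and the remaining step is the bookkeeping that the isomorphism is compatible with the two descriptions of the cyclotomic ideal.
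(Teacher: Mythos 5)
Your argument is correct and takes essentially the same route as the paper, whose proof of this theorem is the single line ``This follows immediately from Theorem~\ref{thm-isom-lHeck-tens-comp}''; you have simply made explicit the bookkeeping that the isomorphism of completions identifies the bad idempotents $e(\uc)$ with $c_1=0$ with the bad idempotents $e(\ui)$ with $c(i_1)=0$, hence descends to the cyclotomic quotients, using Corollary~\ref{coro-eigenv_lHeck} to see that $\cylHecka$ genuinely factors through $\clHeck$. One minor inaccuracy in your side remark about completeness: the quotient $A/AeA$ of a ring by the two-sided ideal generated by an idempotent $e$ is in general only a quotient of the idempotent truncation $(1-e)A(1-e)$, not equal to it, but this does not affect the main argument since the finite-dimensionality you invoke suffices on its own.
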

\begin{proof}
This follows immediately from Theorem~\ref{thm-isom-lHeck-tens-comp}. 
\end{proof}

\subsection{Classical Brundan-Kleshchev-Rouquier isomorphism} 

In this section we show how to deduce from Theorem~\ref{prop-isom-lHeck-tens-cycl} the usual Brundan-Kleshchev-Rouquier isomorphism for cyclotomic KLR and Hecke algebras.

\begin{df}
The \emph{cyclotomic Hecke algebra} $\cyHeck$ is the quotient of the algebra $\Heck$ by the ideal generated by the polynomial $(X_1-Q_1)\ldots(X_1-Q_\ell)$.
\end{df}

For each $\ui=(i_1,\ldots,i_\ell)\in \calF^d$ we have an idempotent $e(\ui)\in\cyHeck$ such that $1=\sum_{\ui\in\calF^d}e(\ui)$ and for each index $r$, the element $(X_r-i_r)e(\ui)$ is nilpotent. Moreover, with $\bfa\in\calF^d$ comes a central idempotent $1_\bfa=\sum_{\bfa\in \frakS_d \bfa}e(\ui)$. Set $\cyHecka=1_\bfa\cyHeck$. There is a direct sum decomposition of algebras $\cyHeck=\bigoplus_{\bfa\in\calF^d}\cyHecka$.

\begin{df}
The \emph{cyclotomic KLR algebra} $\cyR$ is the quotient of the algebra $\R$ by the ideal generated by $y_1^{\Lambda_{i_1}}e(\ui)$. Here,  $\Lambda_i$ the multiplicity of $i\in \calF$ in $\bfQ$. 
\end{df}

Recall the idempotent $e(\omega)\in\lHeck$ such that $e(\omega)\lHeck e(\omega)\simeq \Heck$, see   Section~\ref{subs-centre-lHecke}. We have a similar idempotent $e(\omega)\in\lR$ with $e(\omega)\lR e(\omega)\simeq \R$.

The following is proved in \cite[Thm.~4.18]{Webster}.
\begin{lem}
There is an isomorphism of algebras $e(\omega)\cylR e(\omega)\simeq \cyR$.
\end{lem}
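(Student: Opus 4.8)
The statement to establish is that the idempotent truncation $e(\omega)\cylR e(\omega)$ of the cyclotomic tensor product algebra is isomorphic to the cyclotomic KLR algebra $\cyR$ (with respect to the quiver $\Gamma_\calF$). Since this is attributed to \cite[Thm.~4.18]{Webster}, the work is really to explain why Webster's result applies to our normalisation of the algebras and how the idempotent $e(\omega)\in\lR$ interacts with the cyclotomic relations. The plan is to proceed in three steps. First I would recall from Section~\ref{subs-centre-lHecke} that we already have the affine statement $e(\omega)\lR e(\omega)\simeq\R$, realised diagrammatically by adding (respectively deleting) the $\ell$ red strands to the far left of every black KLR diagram. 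This isomorphism is visibly compatible with the idempotents $e(\ui)$, the dots $y_r$, and the crossings $\psi_r$ on black strands. Second, I would identify what the two-sided ideal defining $\cylR$ becomes after truncation by $e(\omega)$: by Definition, $\cylR$ is the quotient of $\lR$ by the ideal generated by the $e(\ui)$ with $c(i_1)=0$, i.e.\ by diagrams where some black strand sits to the left of all red strands. Truncating by $e(\omega)$, which puts \emph{all} red strands to the left, the surviving part of this ideal inside $e(\omega)\lR e(\omega)$ is generated by what a black strand can do once it is pushed past the leftmost red strand $Q_1$; by the red--black crossing relations in Figure~\ref{reltensor2} this produces exactly the factor $y_1^{\Lambda_{i_1}}$ on the corresponding KLR idempotent, where $\Lambda_i$ is the multiplicity of $i$ among $Q_1,\dots,Q_\ell$.

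More precisely, the key step is to trace, under the isomorphism $e(\omega)\lR e(\omega)\simeq\R$, the image of $e(\omega)\,(\text{cyclotomic ideal})\,e(\omega)$. A black strand with label $i$ that is dragged to the left of all red strands and back passes each red strand $Q_m$; the relation in Figure~\ref{reltensor2} says that passing a red strand $Q_m$ with $\gamma$-label equal to $i$ contributes a dot ($\delta_{i,Q_m}$), while passing one with a different label contributes nothing extra. Hence the minimal way to ``create then destroy'' a configuration in which the black strand is leftmost forces the element $y_1^{\Lambda_i}e(\ui)$ into the picture, precisely matching the generator of the cyclotomic ideal defining $\cyR$. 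Conversely every diagram in $e(\omega)\lR e(\omega)$ that factors through a black-leftmost idempotent can be rewritten, using the braid-type and red--black relations, as a multiple of such a $y_1^{\Lambda_{i_1}}e(\ui)$-term. This shows that the truncated ideal maps exactly onto the cyclotomic ideal of $\R$, so that the affine isomorphism descends to $e(\omega)\cylR e(\omega)\simeq\cyR$.

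\textbf{Main obstacle.} The genuinely delicate point is the second step: controlling the ideal $e(\omega)\,I\,e(\omega)$ where $I$ is the two-sided ideal of $\lR$ generated by the black-leftmost idempotents. A priori this truncated ideal could be larger than the ideal generated by $e(\omega)\,(\text{minimal black-leftmost terms})\,e(\omega)$, because elements of $I$ need not themselves lie in the corner $e(\omega)\lR e(\omega)$ — one has $e(\omega)\,(\text{diagram})\,x\,(\text{diagram})\,e(\omega)$ with $x$ a black-leftmost idempotent sandwiched between arbitrary diagrams. Handling this requires a normal-form argument: using the relations in Figures~\ref{defKLR} and~\ref{reltensor2} (in particular the red--black crossings and the fact that two red strands never cross) one pushes all red strands to one side, thereby rewriting any such sandwich as an element of the corner times the minimal cyclotomic term $y_1^{\Lambda}e(\ui)$. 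This is exactly the computation carried out in \cite[Sec.~4]{Webster}, so I would cite it; but if a self-contained argument were wanted, this bookkeeping with the straightening relations is where essentially all the effort lies. Everything else — well-definedness on generators, surjectivity, and matching of the faithful polynomial representations $\cPolR$ on both sides (Lemma~\ref{lem-polrep_KLR} and \eqref{sPol} with $\ell=0$) — is routine.
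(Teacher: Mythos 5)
Your proposal is correct and takes essentially the same route as the paper: the paper gives no independent argument for this lemma but simply invokes \cite[Thm.~4.18]{Webster}, and your sketch (truncate the affine isomorphism $e(\omega)\lR e(\omega)\simeq\R$ and match the ideal $e(\omega)I e(\omega)$ with the one generated by $y_1^{\Lambda_{i_1}}e(\ui)$ via the red--black bigon relation of Figure~\ref{reltensor2}) is precisely the content of Webster's proof, and it mirrors the paper's own proof of the Hecke analogue $e(\omega)\cylHeck e(\omega)\simeq\cyHeck$ given immediately afterwards. You correctly isolate the only delicate point --- showing the truncated ideal is no larger than the ideal generated by the minimal double-crossing terms, which needs the normal-form/basis argument you defer to \cite[Sec.~4]{Webster} --- so nothing is missing.
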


We can prove the following analogue of this statement.
\begin{lem}
There is an isomorphism of algebras $e(\omega)\cylHeck e(\omega)\simeq \cyHeck$.
\end{lem}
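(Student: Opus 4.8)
The plan is to mimic the argument that identified $e(\omega)\lHeck e(\omega)\simeq\Heck$ (Lemma~\ref{lem-Hd_in_Hdl}), but now on the level of cyclotomic quotients. First I would recall that by Lemma~\ref{lem-Hd_in_Hdl} there is an isomorphism $\Psi\colon\Heck\xrightarrow{\sim}e(\omega)\lHeck e(\omega)$ that adds $\ell$ red strands to the left of a Hecke diagram; this sends the standard basis of $\Heck$ to the basis of $e(\omega)\lHeck e(\omega)$ coming from Proposition~\ref{prop-basis-Hdl}. The goal is to show that $\Psi$ descends to an isomorphism of the cyclotomic quotients, i.e.\ that $\Psi$ carries the defining ideal of $\cyHeck$ (generated by $(X_1-Q_1)\cdots(X_1-Q_\ell)$) onto the ideal $e(\omega)\cdot(\text{defining ideal of }\cylHeck)\cdot e(\omega)$ inside $e(\omega)\lHeck e(\omega)$. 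Equivalently, I want $e(\omega)\cylHeck e(\omega)$ to be the quotient of $e(\omega)\lHeck e(\omega)\simeq\Heck$ by exactly the relation $(X_1-Q_1)\cdots(X_1-Q_\ell)=0$.

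The key step is a diagrammatic computation. Write $\omega=(1^\ell,0^d)$. In $\lHeck$, consider $e(\omega)$ and slide the leftmost black strand (position $\ell+1$) to the far left past the $\ell$ red strands, one red strand at a time, using relation \eqref{l-Hecke-diag-1} (equivalently relations \eqref{l-Hecke-diag-3}, \eqref{l-Hecke-diag-4}). Each such slide, when the black strand crosses the red strand labelled $Q_m$ and comes back, produces — via the second-type relation in Figure~\ref{reltensor2}, i.e.\ \eqref{l-Hecke-diag-1} — a factor $(X-Q_m)$ together with a term in which the black strand actually passes to the left of that red strand; the latter term lies in the ideal defining $\cylHeck$ (it has a piece of black strand left of all reds, once we have pushed it all the way). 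Composing the $\ell$ slides, the "diagonal" term picks up the product $(X_1-Q_1)(X_1-Q_2)\cdots(X_1-Q_\ell)$ acting on the first black strand, while all the remaining terms have been pushed into the cyclotomic ideal. In other words, in $e(\omega)\lHeck e(\omega)$ modulo the image of the cyclotomic ideal of $\lHeck$, the element corresponding to $(X_1-Q_1)\cdots(X_1-Q_\ell)\in\Heck$ is zero, and conversely the cyclotomic ideal of $\lHeck$ truncated by $e(\omega)$ is generated by this single element. Care is needed about the precise normalization of the polynomial factors appearing in \eqref{l-Hecke-diag-1} (the signs and the $Q_m$ versus $Q_m^{-1}$ via the $\#$-twist of Lemma~\ref{lem-hash_isom}), but since each such factor is a unit times $(X_1-Q_m)$, it does not affect the generated ideal.

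Concretely I would proceed as follows. (1) Observe that $\cylHeck=\lHeck/\mathcal{I}$ where $\mathcal{I}$ is generated by the idempotents $e(\bfc)$ with $\bfc=(0,\dots)$, and that $e(\omega)\mathcal{I}e(\omega)$ is a two-sided ideal of $e(\omega)\lHeck e(\omega)$, so that $e(\omega)\cylHeck e(\omega)=e(\omega)\lHeck e(\omega)/e(\omega)\mathcal{I}e(\omega)$; here one uses that $e(\omega)$ is an idempotent so truncation is exact on two-sided ideals in the relevant sense (this is the same reasoning as in \cite[Sec.~5]{MS} and in the proof of Proposition~\ref{lem-cen-Hdl}). (2) Transport everything through $\Psi$ and show $\Psi^{-1}(e(\omega)\mathcal{I}e(\omega))$ equals the ideal of $\Heck$ generated by $(X_1-Q_1)\cdots(X_1-Q_\ell)$: the inclusion $\supseteq$ is the sliding computation above, and $\subseteq$ follows because any diagram in $\mathcal{I}$ with source and target $\omega$ can, using \eqref{l-Hecke-diag-3} and \eqref{l-Hecke-diag-4} to move black-black crossings and dots freely through red strands, be rewritten as a $\Heck$-multiple (via $\Psi$) of a diagram that has exactly one black strand entirely left of the reds, and applying \eqref{l-Hecke-diag-1} in reverse expresses that as $(X_1-Q_m)$ times a diagram with that black strand back to the right of the $m$-th red, i.e.\ ultimately a multiple of $(X_1-Q_1)\cdots(X_1-Q_\ell)$. (3) Conclude the isomorphism $e(\omega)\cylHeck e(\omega)\simeq\Heck/((X_1-Q_1)\cdots(X_1-Q_\ell))=\cyHeck$.

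The main obstacle I expect is bookkeeping in step (2): controlling precisely which terms land in $\mathcal{I}$ when sliding a black strand leftward through several red strands, because each crossing relation \eqref{l-Hecke-diag-1} creates a correction term with a dot, and one must iterate and check that after pushing all the way left every correction term genuinely has a black segment to the left of all reds (so lies in $\mathcal{I}$) and that the surviving "diagonal" term is exactly a unit multiple of $(X_1-Q_1)\cdots(X_1-Q_\ell)$ with no leftover lower-order garbage outside $\mathcal{I}$. This is a finite, purely diagrammatic induction on the number of red strands, entirely analogous to the classical derivation of the cyclotomic relation from the affine picture, but it is the only place where real care is required; everything else is formal nonsense about idempotent truncation of ideals plus the already-established isomorphism of Lemma~\ref{lem-Hd_in_Hdl}.
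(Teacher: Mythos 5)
Your proposal follows essentially the same strategy as the paper: identify $\Heck$ with $e(\omega)\lHeck e(\omega)$ via Lemma~\ref{lem-Hd_in_Hdl}, show the cyclotomic polynomial $(X_1-Q_1)\cdots(X_1-Q_\ell)$ lies in the truncated cyclotomic ideal by the ``slide the black strand past all reds and back'' computation (one inclusion), and conversely show every element of $e(\omega)\lHeck e(\uc)\lHeck e(\omega)$ with $c_1=0$ factors through a polynomial divisible by the cyclotomic one (the other inclusion). The paper makes your step~(2)~$\subseteq$ precise by invoking the basis theorem Proposition~\ref{prop-basis-Hdl} to write every element of $e(\omega)\lHeck e(\uc)$ as $a\cdot(\uc\to\omega)$ and every element of $e(\uc)\lHeck e(\omega)$ as $(\omega\to\uc)\cdot b$, then computing $(\uc\to\omega)\cdot(\omega\to\uc)=e(\omega)P$ with $P$ divisible by $(x_1-Q_1)\cdots(x_1-Q_\ell)$ — this factorization is the rigorous replacement for your informal ``rewrite as a $\Heck$-multiple of a diagram with a black strand entirely left of the reds,'' and it is the one place you correctly flagged as requiring care.
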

\begin{proof}
We will identify $\Heck$ with $e(\omega)\lHeck e(\omega)$ as in Lemma~\ref{lem-Hd_in_Hdl}.

Denote by $K_1$ the kernel of $\Heck\to \cyHeck$. Denote by $K_2$ the kernel of $e(\omega)\lHeck e(\omega)\to e(\omega)\cylHeck e(\omega)$. We have to prove that $K_1=K_2$.

First of all, it is clear that  $K_1\subset K_2$, because we have 

$$
    \begin{tikzpicture}[thick, scale=0.6]

      \draw[wei] (6.5,0)  +(-2,-1) -- +(-2,1) node[at start,below]{$Q_1$};
      \draw[wei] (6.5,0) +(-1,-1) -- +(-1,1) node [at start,below]{$Q_2$};

\node at (6.5,0) {$\cdots$}  ;

      \draw[wei] (6.5,0)  +(1,-1) -- +(1,1) node[at start,below]{$Q_\ell$};
   
     \draw  (8.5,-1) .. controls (2.5,0) ..  (8.5,1);
     
     \node at (9.5,0) {$=$};
     
     \draw[wei] (12.5,0)  +(-2,-1) -- +(-2,1) node[at start,below]{$Q_1$};
     \draw[wei] (12.5,0) +(-1,-1) -- +(-1,1) node [at start,below]{$Q_2$};

\node at (12.5,0) {$\cdots$}  ;

      \draw[wei] (12.5,0)  +(1,-1) -- +(1,1) node[at start,below]{$Q_\ell$};
      \draw (14.5,-1) -- (14.5,-0.5);
      \draw (14.5,0.5) -- (14.5,1);
      \draw (14,-0.5) rectangle (20,0.5);
      \node at (17,0) {$(x_1-Q_1)\ldots (x_1-Q_\ell)$};
 \end{tikzpicture}
$$

Let us show $K_2\subset K_1$. We need to show for each $\uc\in J^{\ell,d}$ such that $c_1=0$, it holds $e(\omega)\lHeck e(\uc)\lHeck e(\omega)\subset K_1$.

Denote by $(\uc\to \omega)$ the unique element of $e(\omega)\lHeck e(\uc)$ that is presented by a diagram that contains right crossings only. Similarly, denote by $(\omega\to \uc)$ the unique element of $e(\uc)\lHeck e(\omega)$ that is presented by a diagram that contains left crossings only. For example, for $\uc=(0,1,0,0,0,1)$, we have
$$
\tikz[thick,xscale=2.5,yscale=1.5]{
\node at (-1.5,.25) {$(\omega\to\uc)=$};
\draw (-0.4,0) --(-0.6,.5);
\draw (-0.2,0) --(-0.4,.5);
\draw (0,0) --(-0.2,.5);
\draw[wei] (-0.8,0) --(0,.5);
\draw[wei] (-1.0,0) --(-0.8,.5);
\draw (-0.6,0) --(-1.0,.5);
\node at (1,.25) {$(\uc\to\omega)=$};
\draw (1.9,0) -- (2.1,0.5);
\draw (2.1,0) -- (2.3,0.5);
\draw (2.3,0) -- (2.5,0.5);
\draw[wei] (2.5,0) -- (1.7,0.5);
\draw[wei] (1.7,0) -- (1.5,0.5);
\draw (1.5,0) -- (1.9,0.5);
}
$$

By Proposition~\ref{prop-basis-Hdl}, each element of $e(\omega)\lHeck e(\uc)$ can be written as $a\cdot(\uc\to\omega)$ with $a\in e(\omega) \lHeck e(\omega)$. Similarly, each element of $e(\uc) \lHeck e(\omega)$ can be written as $(\omega\to\uc)\cdot b$ with $b\in e(\omega) \lHeck e(\omega)$. Then each element of $e(\omega)\lHeck e(\uc)\lHeck e(\omega)$ can be written as $a\cdot(\uc\to\omega)\cdot (\omega\to\uc)\cdot b$. Since $c_1=0$, the element $(\uc\to\omega)\cdot (\omega\to\uc)$ can be written as $e(\omega)P$, where $P\in \bfk[x_1,\ldots,x_\ell]$ is a polynomial divisible by $(x_1-Q_1)\ldots (x_1-Q_\ell)$. This implies $K_2\subset K_1$.
\end{proof}
Consequently, we get the  Brundan-Kleshchev-Rouquier isomorphism,  \cite[Thm.~1.1]{BKKL}, \cite[Cor.~3.20]{Rou2KM}:

\begin{coro}
\label{prop-isom-Heck-KLR-cycl}
There is an isomorphism of algebras $\cyHecka\simeq \cyR$.
\end{coro}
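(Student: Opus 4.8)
The goal is to deduce Corollary~\ref{prop-isom-Heck-KLR-cycl} from the two lemmas immediately preceding it, which identify $\cyHeck$ and $\cyR$ as idempotent truncations $e(\omega)\cylHeck e(\omega)$ and $e(\omega)\cylR e(\omega)$ respectively, together with the isomorphism $\cylHecka\simeq \cylR$ from Theorem~\ref{prop-isom-lHeck-tens-cycl}. The basic idea is therefore just: apply the idempotent $e(\omega)$ on both sides of an isomorphism, restricting to the block indexed by $\bfa$.

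\textbf{Key steps.} First I would note that the isomorphism $\Psi\colon\cylHecka\xrightarrow{\ \sim\ }\cylR$ of Theorem~\ref{prop-isom-lHeck-tens-cycl} (which in turn is induced by Theorem~\ref{thm-isom-lHeck-tens-comp}) sends the idempotent $e(\omega)$ of $\cylHecka$ — more precisely the part of $e(\omega)$ lying in the block $1_\bfa$, i.e.\ $\sum_{\ui\in\frakS_d\bfa}e(\omega,\ui)$ — to the corresponding idempotent $e(\omega)$ of $\cylR$ (again in the appropriate block). This is because on the level of the algebras $\clHeck$ and $\clR$ the isomorphism of Theorem~\ref{thm-isom-lHeck-tens-comp} is constructed so as to match the idempotents $e(\ui)$ with $e(\ui)$ under the bijection $\Icolnu\simeq J^{\ell,d}\times\frakS_d\bfa$, and $e(\omega)$ is by definition a sum of such idempotents over the tuples $\ui$ whose underlying colour sequence is $\omega$; this property passes to the cyclotomic quotients since both ideals being quotiented out are generated by idempotents $e(\uc)$ resp.\ $e(\ui)$ with a black strand on the far left, and these are preserved by $\Psi$. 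Second, for any algebra isomorphism $\Psi\colon A\xrightarrow{\sim} B$ and any idempotent $f\in A$ with $\Psi(f)=g$, restriction gives an isomorphism $fAf\xrightarrow{\sim} gBg$. Applying this with $A=\cylHecka$, $B=\cylR$, $f=g=e(\omega)$ yields $e(\omega)\cylHecka e(\omega)\simeq e(\omega)\cylR e(\omega)$. Third, I would combine this with the two preceding lemmas: the lemma $e(\omega)\cylHeck e(\omega)\simeq\cyHeck$ restricts, on the block $\bfa$, to $e(\omega)\cylHecka e(\omega)\simeq \cyHecka$ (since $1_\bfa$ is central and corresponds on both sides to the same central idempotent, as both are built from the $e(\ui)$ via the same combinatorics on $\calF^d$), and likewise $e(\omega)\cylR e(\omega)\simeq\cyR$ — here I should observe that the algebra $\cyR$ in the statement is already the one attached to the block $\bfa=\nu$, since we have fixed $\nu=\bfa$ throughout Section~\ref{sec-cycl-quot}. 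Chaining the three isomorphisms gives $\cyHecka\simeq\cyR$.

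\textbf{Main obstacle.} The only genuinely delicate point is the compatibility of the block decompositions and of the idempotent $e(\omega)$ under the various identifications — i.e.\ making sure that "passing to the block $\bfa$" and "truncating by $e(\omega)$" are done consistently on all three algebras $\cyHeck$, $\cylHeck$, $\cylR$, $\cyR$, so that the composite really lands on $\cyR$ and not on some other block or a Morita-equivalent variant. This is bookkeeping rather than hard mathematics: the central idempotents $1_\bfa$ on the Hecke side and on the KLR side are, by construction (Corollary~\ref{coro-eigenv_lHeck} guarantees all eigenvalues lie in $\calF$, so the decomposition is indexed by $\calF^d/\frakS_d$ on both sides), matched by $\Psi$, and $e(\omega)$ is a sum of primitive-in-the-obvious-sense idempotents that $\Psi$ matches. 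Once these identifications are spelled out, the proof is a one-line diagram chase, exactly as the short proof "This follows immediately from Theorem~\ref{thm-isom-lHeck-tens-comp}" for Theorem~\ref{prop-isom-lHeck-tens-cycl} suggests the authors intend.
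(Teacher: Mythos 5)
Your proposal is correct and follows exactly the route the paper intends: the paper gives no explicit proof beyond the word ``Consequently,'' meaning precisely the chain $\cyHecka\simeq e(\omega)\cylHecka e(\omega)\simeq e(\omega)\cylR e(\omega)\simeq\cyR$ obtained by truncating the isomorphism of Theorem~\ref{prop-isom-lHeck-tens-cycl} by the idempotent $e(\omega)$ and invoking the two preceding lemmas. Your careful bookkeeping of how $e(\omega)$ and the block idempotents $1_\bfa$ are matched under the bijection $\Icolnu\simeq J^{\ell,d}\times\frakS_d\bfa$ is exactly the implicit content of that one-word proof.
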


\subsection{The DJM $q$-Schur algebra}
We establish now a connection with the cyclotomic $q$-Schur algebra $\DJM$ defined in \cite{DJM}. Denote by $\calC^{0,\ell}_d$ the subset of $\calC^\ell_d$ that contains all $\lambda$ such that $\lambda^{(0)} = 0$ (here $0$ is the unique (empty) composition of $0$). 

For each $\lambda\in{\calC^{\ell}_d}$, set $u_\lambda=\prod(X_r-Q_t)\in \cyHeck$, where the product is taken over all indices $r$ and $t$ such that $r\leqslant |\lambda^{(0)}|+\ldots+|\lambda^{(t-1)}|$. 

\begin{ex}
For example, for $\ell=3$ and $\lambda=(0,(1,1),(2),(1,2))$, we have
$$
|\lambda^{(0)}|=0,\quad |\lambda^{(1)}|=2,\quad|\lambda^{(2)}|=2,\quad |\lambda^{(3)}|=3
$$
and
$
u_\lambda=(X_1-Q_2)(X_2-Q_2)(X_1-Q_3)(X_2-Q_3)(X_3-Q_3)(X_4-Q_3).
$
\end{ex}

We consider $u_\lambda n_\lambda \cyHeck$ as a right $\cyHeck$-module.

\begin{df}
The Dipper-James-Mathas cyclotomic $q$-Schur algebra $\DJM$ is the algebra
\begin{eqnarray*}
\DJM&=&\End_{\cyHeck}(\bigoplus_{\lambda\in\calC^{\ell}_d}u_\lambda n_\lambda \cyHeck).
\end{eqnarray*}
\end{df}
\begin{rk}
The algebra $\DJM$ is defined in \cite{DJM} with respect to the set $\calC^{0,\ell}_d$ instead of $\calC^{\ell}_d$. But there is no difference because,  $u_\lambda=0$ in $\cyHeck$ if $\lambda\in \calC^\ell_d\backslash \calC^{0,\ell}_d$. Indeed, note that if $\lambda\in \calC^\ell_d\backslash \calC^{0,\ell}_d$, then $(X_1-Q_1)\ldots (X_1-Q_\ell)$ divides $u_\lambda$. This means that $u_\lambda=0$ in $\cyHeck$. 
\end{rk}
\begin{lem}
\label{lem:qSchur-from-lHeck}
There is an isomorphism of algebras
\begin{eqnarray*}
\DJM\simeq \End_{\cylHeck}\left(\bigoplus_{\lambda\in \calC^{\ell}_d}n_\lambda \cylHeck\right).
\end{eqnarray*}
\end{lem}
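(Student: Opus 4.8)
\textbf{Proof plan for Lemma~\ref{lem:qSchur-from-lHeck}.}
The goal is to compare two endomorphism algebras: one over $\cyHeck$ with the summands $u_\lambda n_\lambda\cyHeck$, and one over $\cylHeck$ with the summands $n_\lambda\cylHeck$. The plan is to reduce the second endomorphism algebra to the first by using the idempotent truncation $e(\omega)\cylHeck e(\omega)\simeq\cyHeck$ established just above. First I would recall that $\cylHeck$ decomposes its regular representation according to the idempotents $e(\uc)$, $\uc\in J^{\ell,d}$, but that in $\cylHeck$ every $e(\uc)$ with a black strand to the left of all red strands is killed; so the only surviving idempotents are those of the form $e(\uc)$ with a certain number of leading red strands. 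For $\lambda\in\calC^\ell_d$ the element $n_\lambda$ is supported on a specific $\uc=\uc(\lambda)$ (reading $\ell$ red and $d$ black strands in the pattern dictated by $\lambda$), and crucially $n_\lambda\cylHeck = n_\lambda e(\uc(\lambda))\cylHeck$.

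The key step is to produce, for each $\lambda\in\calC^\ell_d$, an isomorphism of right $\cyHeck$-modules (via the identification $\cyHeck\simeq e(\omega)\cylHeck e(\omega)$)
\[
n_\lambda\cylHeck e(\omega)\;\simeq\; u_\lambda n_\lambda\cyHeck .
\]
Concretely, right multiplication by the canonical "straightening" diagram $(\uc(\lambda)\to\omega)$ (all red strands pushed to the left, as in the proof of the lemma $e(\omega)\cylHeck e(\omega)\simeq\cyHeck$) maps $n_\lambda\cylHeck e(\omega)$ into $e(\omega)\cylHeck e(\omega)$; I would compute that $n_\lambda\cdot(\uc(\lambda)\to\omega)$ equals, under the identification, exactly $u_\lambda n_\lambda$, since each red strand $Q_t$ that a black strand must cross on its way to the right contributes a factor $(X_r-Q_t)$ by relation~\eqref{l-Hecke-diag-1}, and these factors are precisely the ones appearing in the definition of $u_\lambda$. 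Conversely, right multiplication by $(\omega\to\uc(\lambda))$ (all left crossings) gives a map back, and the composite $(\uc(\lambda)\to\omega)(\omega\to\uc(\lambda))$ acts as multiplication by $u_\lambda$ on $e(\omega)\cylHeck e(\omega)$ — this is the analogue of the computation $ (\uc\to\omega)(\omega\to\uc)=e(\omega)P$ in the proof of $e(\omega)\cylHeck e(\omega)\simeq\cyHeck$. Using the basis of $e(\ucp)\lHeck e(\uc)$ from Proposition~\ref{prop-basis-Hdl} (passed to the cyclotomic quotient) one sees that $n_\lambda\cylHeck$ is free as a module over $e(\uc(\lambda))\cylHeck e(\omega)\cdot$(stuff), so that the two displayed modules are indeed isomorphic after applying $-\,e(\omega)$, and that the functor $M\mapsto Me(\omega)$ is fully faithful on the relevant summands (because the idempotent $e(\omega)$ does not annihilate any of the modules $n_\lambda\cylHeck$ and every such module is generated in its "$e(\omega)$-part" — again by the basis theorem).

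Granting this, I would conclude:
\[
\End_{\cylHeck}\!\Bigl(\bigoplus_\lambda n_\lambda\cylHeck\Bigr)
\;\simeq\;\End_{e(\omega)\cylHeck e(\omega)}\!\Bigl(\bigoplus_\lambda n_\lambda\cylHeck e(\omega)\Bigr)
\;\simeq\;\End_{\cyHeck}\!\Bigl(\bigoplus_\lambda u_\lambda n_\lambda\cyHeck\Bigr)
\;=\;\DJM,
\]
where the first isomorphism is the fully-faithfulness of $-\,e(\omega)$ just discussed and the second is the module identification.

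\textbf{Main obstacle.} The delicate point is the first isomorphism: showing that the idempotent truncation functor $-\,e(\omega)$ is fully faithful on the additive subcategory of $\cylHeck$-modules generated by the $n_\lambda\cylHeck$. Equivalently, one must check $\Hom_{\cylHeck}(n_\lambda\cylHeck,n_\mu\cylHeck)\xrightarrow{\ \sim\ }\Hom_{e(\omega)\cylHeck e(\omega)}(n_\lambda\cylHeck e(\omega),n_\mu\cylHeck e(\omega))$. For this I would argue that $e(\omega)$ is a "full" idempotent relative to these modules: each $n_\lambda\cylHeck$ is a direct summand of a direct sum of copies of $e(\uc(\lambda))\cylHeck$, and $e(\uc(\lambda))\cylHeck e(\omega)\cylHeck = e(\uc(\lambda))\cylHeck$ because one can reach $e(\uc(\lambda))$ from $e(\omega)$ by crossing red strands to the right, which is invertible up to the polynomial factors that are units on the relevant summand (here one uses that $n_\lambda$ already contains the idempotent $n_\lambda$ absorbing the sign, and that the offending polynomials $(x_i - Q_t)$ become units once we are not in the kernel). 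Making this precise — i.e.\ that $\cylHeck e(\omega)\cylHeck = \cylHeck$ on the support of the $n_\lambda$'s — is the technical heart, and it follows again from the basis in Proposition~\ref{prop-basis-Hdl} together with relation~\eqref{l-Hecke-diag-1}. Once that is in place, everything else is bookkeeping.
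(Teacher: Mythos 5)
Your proposal takes a genuinely different route from the paper. The paper's proof is a two--line transport of structure: it cites the analogous description of the Dipper--James--Mathas algebra as endomorphisms over the cyclotomic tensor product algebra from \cite[(5.8)]{SW}, and then uses the Brundan--Kleshchev--Rouquier--type isomorphism $\cylHecka\simeq\cylR$ of Theorem~\ref{prop-isom-lHeck-tens-cycl} to move from the tensor product side to the $\ell$-Hecke side. Your approach instead stays entirely on the Hecke side and tries to reduce $\End_{\cylHeck}(\bigoplus n_\lambda\cylHeck)$ to $\End_{\cyHeck}(\bigoplus u_\lambda n_\lambda\cyHeck)$ via the idempotent truncation $e(\omega)$ and the isomorphism $e(\omega)\cylHeck e(\omega)\simeq\cyHeck$. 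That is a legitimate and arguably more self-contained strategy, and the intermediate module identification $n_\lambda\cylHeck e(\omega)\simeq u_\lambda n_\lambda\cyHeck$ (after fixing the left/right confusion in the composition with $(\uc(\lambda)\to\omega)$ and $(\omega\to\uc(\lambda))$) looks plausible: the double crossings of a black strand with the red strands $Q_{s+1},\ldots,Q_\ell$ do produce exactly the factors of $u_\lambda$ by relation~\eqref{l-Hecke-diag-1}.

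However, there is a genuine gap in the step you yourself flag as the ``main obstacle,'' namely the full faithfulness of $(-)e(\omega)$ on the additive subcategory generated by the $n_\lambda\cylHeck$. Your justification --- that $\cylHeck e(\omega)\cylHeck=\cylHeck$ on the relevant support because the ``offending polynomials $(x_i-Q_t)$ become units'' --- is false whenever the parameters $Q_t$ coincide. Concretely, take $d=1$, $\ell=2$, $Q_1=Q_2$ and $\uc_1=(1,0,1)$. Then in $\cylHeck$ one has $(x_1-Q_1)e(\uc_1)=T_1^2e(\uc_1)=0$ because $T_1e(\uc_1)$ factors through the killed idempotent $e((0,1,1))$; hence $(x_1-Q_2)e(\uc_1)=(x_1-Q_1)e(\uc_1)=0$ is \emph{not} a unit. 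As a consequence the double crossing $(\omega\to\uc_1)(\uc_1\to\omega)=(x_1-Q_2)e(\uc_1)$ vanishes, and $e(\uc_1)\notin\cylHeck e(\omega)\cylHeck$, so $e(\omega)$ is not a full idempotent on the support of $n_{\lambda_1}\cylHeck$ (where $\lambda_1=(\emptyset,(1),\emptyset)$). In fact in this example one checks by hand that both sides of the lemma have dimension $5$, so the full faithfulness you need does hold --- but it holds despite, not because of, fullness of $e(\omega)$. Thus the isomorphism $\Hom_{\cylHeck}(n_\lambda\cylHeck,n_\mu\cylHeck)\xrightarrow{\ \sim\ }\Hom_{\cyHeck}(n_\lambda\cylHeck e(\omega),n_\mu\cylHeck e(\omega))$ genuinely requires a separate argument (e.g.\ an explicit basis comparison on both sides, or an induction along adjunction $n_\lambda\cylHeck\simeq (u_\lambda n_\lambda\cyHeck)\otimes_{\cyHeck}e(\omega)\cylHeck$, neither of which is established by what you wrote), and this is exactly the content the paper outsources to \cite[(5.8)]{SW}.
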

\begin{proof}
A similar description of the $q$-Schur algebra is given in \cite[(5.8)]{SW}. To get the statement we only need to identify $\cylHeck$ with $\cylRd$, where $\cylRd=\bigoplus_{\nu\in \frakS_d\backslash\calF^d}\cylR$.
\end{proof}

\subsection{The Schur version}
In this section we give the most general version of the isomorphism above: the (higher level) Schur version. 

We consider $m_\lambda \cylHeck$ as a right $\cylHeck$-module.
\begin{df}
The \emph{cyclotomic $q$-Schur algebra} $\cylS$ is the algebra 
\begin{eqnarray*}
\cylS &=& \End_{\cylHeck}(\bigoplus_{\lambda\in \calC^{\ell}_d}m_\lambda \cylHeck).
\end{eqnarray*}
\end{df}
It is clear from the definition that the algebra $\cylS$ is a quotient of $\lS$.
\begin{rk}
\label{rk-cyS-DJM}
By Lemmas~\ref{lem-isom-Sbar-Sop} and~\ref{lem:qSchur-from-lHeck}  we have $\cylS\simeq \DJMop$ as algebras.
\end{rk}

Similarly to the set $\calC_\nu^\ell$ defined above, we denote by $\calC_\ba^\ell$ the set of pairs $(\lambda,\ui)$, where $\lambda\in\calC_d^\ell$ and $\ui\in \frakS_\lambda\backslash \frakS_d\ba$. The algebra $\cylS$ contains idempotents $e(\lambda,\ui)\in\cylS$ such that $1=\sum_{(\lambda,\ui)\in\calC^\ell_\ba}e(\lambda,\ui)$ and such that for each Laurent polynomial $P(x_1,\ldots,x_d)\in \bfk[x^{\pm 1}_1,\ldots,x^{\pm 1}_d]^{\frakS_\lambda}$, the element $(P(x_1,\ldots,x_d)-P(i_1,\ldots,i_d))e(\lambda,\ui)$ is nilpotent. Moreover, for each $\bfa\in\calF^d$ we have a central idempotent $1_\bfa=\sum_{(\lambda,\ui)\in \calC^\ell_\bfa}e(\lambda,\ui)$. Set $\cylSa=1_\bfa\cylS$. We have the following direct sum decomposition of algebras $\cylS=\bigoplus_{\bfa\in\calF^d}\cylSa$.

\begin{df}
The \emph{cyclotomic quiver Schur algebra} $\cylA$ is the quotient of the algebra $\lA$ by the ideal generated by the idempotents of the form $e(\lambda,\ui)$ such that $l(\lambda^{(0)})\ne 0$. In other words, we kill all diagrams that have a piece of a black strand on the left of all red strands. 
\end{df}

It is clear from the definitions that the algebra $\cylSa$ is a quotient of $\clS$ and the algebra $\cylA$ is a quotient of $\clA$. Theorem~\ref{thm-isom-qS-QS-comp} implies the following:

\begin{prop}
\label{prop-isom-qS-QS-cycl}
There is an isomorphism of algebras $\cylSa\simeq \cylA$.
\end{prop}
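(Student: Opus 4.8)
The plan is to derive this as a routine consequence of the affine Schur isomorphism $\clS\simeq\clA$ from Theorem~\ref{thm-isom-qS-QS-comp}, by checking that the isomorphism descends to the cyclotomic quotients. First I would recall the two facts just stated in the text: the cyclotomic $q$-Schur algebra $\cylSa$ is a quotient of $\clS$, and the cyclotomic quiver Schur algebra $\cylA$ is a quotient of $\clA$. So it suffices to identify the two defining ideals under the isomorphism $\Theta\colon\clA\xrightarrow{\sim}\clS$ of Theorem~\ref{thm-isom-qS-QS-comp}.

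The key step is to describe both ideals in terms of idempotents that the isomorphism $\Theta$ matches up. On the quiver Schur side, $\cylA$ is $\clA$ modulo the ideal generated by the $e(\lambda,\ui)$ with $l(\lambda^{(0)})\ne 0$. On the Hecke-Schur side, I would argue that the ideal defining $\cylSa$ inside $\clS$ is likewise generated by the idempotents $e(\lambda,\ui)$ with $l(\lambda^{(0)})\ne 0$: indeed, by the ``thick calculus'' description, $\cylS$ is the quotient of $\lS$ killing all diagrams with a piece of a black strand to the left of all red strands, equivalently killing the idempotents $e(\lambda)$ with $l(\lambda^{(0)})\ne 0$, and after completion this becomes the ideal generated by all $e(\lambda,\ui)$ with $l(\lambda^{(0)})\ne 0$ (using $e(\lambda)=\sum_{\ui}e(\lambda,\ui)$ together with the central idempotent decomposition $\clS=\bigoplus_\bfa \clS_\bfa$, which is compatible with taking the $1_\bfa$-part to isolate $\cylSa$ from $\cylS$). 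Since the isomorphism $\Theta$ from Theorem~\ref{thm-isom-qS-QS-comp} sends $e(\lambda,\ui)\mapsto e(\lambda,\ui)$ (this is explicit in that proof: the idempotents act the same way on the identified faithful representations $\cPolA\simeq\clSPol'$), it carries the generating set $\{e(\lambda,\ui): l(\lambda^{(0)})\ne 0\}$ of one ideal bijectively onto that of the other. Hence $\Theta$ induces an isomorphism on quotients $\cylA\simeq\cylSa$, which is the claim.

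I would also double-check the bookkeeping on the parameter $\bfa$: on the quiver side $\nu=\bfa$ is fixed from the outset, and $\calC^\ell_\nu=\calC^\ell_\bfa$, so $\cylA$ already lives over the single weight $\bfa$; on the Hecke side one passes from the full $\cylS$ to its block $\cylSa=1_\bfa\cylS$, and the isomorphism $\clS\simeq\clA$ was set up with $\nu=\bfa$ precisely so these two match. This is the same passage already used for the non-Schur cyclotomic statement (compare Theorem~\ref{prop-isom-lHeck-tens-cycl}, which is deduced from Theorem~\ref{thm-isom-lHeck-tens-comp} in exactly this way), so no new subtlety arises.

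The main obstacle, and the only point needing genuine (if short) justification, is the claim that the defining ideal of $\cylSa$ inside $\clS$ is generated by the idempotents $e(\lambda,\ui)$ with $l(\lambda^{(0)})\ne 0$ — i.e., that completing commutes with this particular quotient in the expected way, and that the ideal of ``diagrams with a black piece left of all red strands'' is exactly the idempotent ideal one expects. This is where I would invoke the basis results (Corollary~\ref{Corbasis}, Lemma~\ref{lem-generate_lS}) to see that any morphism factoring through such an idempotent lies in the ideal and conversely, together with the fact that $\cylHeck$ is the corresponding quotient of $\clHeck$ (Definition~\ref{defcylHeck} and the remark that $\cylHecka$ is a quotient of $\clHeck$), so that $\cylS=\End_{\cylHeck}(\bigoplus_\lambda m_\lambda\cylHeck)$ is visibly the quotient of $\clS$ by that idempotent ideal. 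Everything else is a formal consequence of Theorem~\ref{thm-isom-qS-QS-comp}.
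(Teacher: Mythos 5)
The approach you describe is genuinely different from the paper's, and it contains a real gap at precisely the point you flag as needing "genuine (if short) justification."

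You try to show that the isomorphism $\Theta\colon\clA\xrightarrow{\sim}\clS$ matches the two defining ideals exactly, by claiming that the kernel of $\clS\to\cylSa$ is the two-sided ideal generated by the idempotents $e(\lambda,\ui)$ with $l(\lambda^{(0)})\ne0$. One inclusion is easy (and the paper uses it): those idempotents clearly die in $\cylSa$, since $m_\lambda\cylHeck=0$ when $l(\lambda^{(0)})\ne 0$. But the reverse inclusion — that every element of the kernel of $\clS\to\cylSa$ lies in that idempotent ideal — is not established by Corollary~\ref{Corbasis} or Lemma~\ref{lem-generate_lS} in any immediate way. Those results give a basis of $\Hom_{\lHeck}(m_\lambda\lHeck,m_\mu\lHeck)$ and a generating set of $\lS$; they say nothing about which linear combinations of basis elements become zero after passing to $\cylHeck$, nor whether the resulting kernel is an idempotent ideal rather than something larger. (Recall that $m_\lambda\lHeck$ is an induced module, not obviously projective, so the endomorphism ring need not behave as simply under quotients as one might hope, and the DJM-type Schur algebras are known to have a more delicate relationship to their Hecke algebras than the idempotent-truncation picture suggests.) In fact, that the two ideals coincide is \emph{a consequence} of Proposition~\ref{prop-isom-qS-QS-cycl}, not an independently available input; using it without proof makes the argument circular.

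The paper's proof deliberately sidesteps this. It proves only the easy inclusion, obtaining a \emph{surjection} $\cylA\to\cylSa$, and then closes the gap with a dimension count: $\dim(\cylSa)=\dim(\op{S}^{\op{DJM}}_{\bfa,\bfQ^{-1}}(q))$ by Remark~\ref{rk-cyS-DJM} (via the isomorphism $\#$ and Lemma~\ref{lem:qSchur-from-lHeck}), which equals $\dim(\cylA)$ by \cite[Thm.~6.2]{SW} and the symmetry $\Gamma_\calF(\bfQ)\cong\Gamma_\calF(\bfQ^{-1})$. To salvage your approach you would need to actually prove the ideal equality — which would in fact yield a somewhat cleaner, dimension-count-free argument — but as written the proposal asserts the key fact rather than proving it.
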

\begin{proof}
It is clear from the definitions that for each $\lambda\in \calC^\ell_d$ such that $l(\lambda^{(0)})\ne 0$, the idemponent $e(\lambda)$ is in the kernel of $\lS\to \cylS$. This implies that the isomorphism $\clS\simeq \clA$ in Theorem~\ref{thm-isom-qS-QS-comp} yields a surjective homomorphism $\cylA\to \cylSa$. To prove that this is an isomorphism, it is enough to show that these algebras have the same dimensions. 
We have
$$
\dim(\cylSa)=\dim(\op{S}^{DJM}_{\bfa,\bfQ^{-1}}(q))=\dim(A^{\bfQ^{-1}}_{\nu,\bfQ^{-1}})=\dim(\cylA).
$$
The first equality holds by Remark~\ref{rk-cyS-DJM},  the second by \cite[Thm.~6.2]{SW}, and the third since the quivers $\Gamma_\calF$ defined with respect to $\bfQ$ and $\bfQ^{-1}$ are isomorphic.
\end{proof}

\bibliographystyle{abbrv}
\bibliography{references}
\end{document}